\providecommand{\scr}{\mathcal}
\newtheorem{prop}{Proposition}[subsection]
\newtheorem{theo}[prop]{Théor\`eme}
\newtheorem*{theo**}{Théorème}
\newtheorem{coro}[prop]{Corollaire}
\newtheorem*{conj*}{Conjecture}
\newtheorem{lemm}[prop]{Lemme}
\newtheorem{lemm*}{Lemme}[prop]
\theoremstyle{definition}
\newtheorem{vide}[prop]{}
\newtheorem{defi}[prop]{Définition}
\newtheorem*{defi*}{Définition}
\newtheorem{nota}[prop]{Notations}
\theoremstyle{remark}
\newtheorem{rema}[prop]{Remarques}
\newtheorem{exem}[prop]{Exemples}
\numberwithin{equation}{prop}
\newcommand{\riso}{ \overset{\sim}{\longrightarrow}\, }
\newcommand{\liso}{ \overset{\sim}{\longleftarrow}\, }
\newcommand{\Spf}{\mathrm{Spf}\,}
\newcommand{\FF}{{\mathcal{F}}}
\newcommand{\E}{{\mathcal{E}}}
\newcommand{\G}{{\mathcal{G}}}
\newcommand{\M}{{\mathcal{M}}}
\newcommand{\D}{{\mathcal{D}}}
\newcommand{\I}{{\mathcal{I}}}
\newcommand{\PP}{{\mathcal{P}}}
\renewcommand{\O}{{\mathcal{O}}}
\newcommand{\V}{\mathcal{V}}
\renewcommand{\S}{\mathcal{S}}
\newcommand{\Y}{\mathcal{Y}}
\newcommand{\ZZ}{\mathcal{Z}}
\newcommand{\X}{\mathfrak{X}}
\newcommand{\U}{\mathfrak{U}}
\newcommand{\A}{\mathbb{A}}
\newcommand{\DD}{\mathbb{D}}
\renewcommand{\L}{\mathbb{L}}
\newcommand{\R}{\mathbb{R}}
\newcommand{\Q}{\mathbb{Q}}
\newcommand{\Z}{\mathbb{Z}}
\newcommand{\N}{\mathbb{N}}
\newcommand{\hdag}{  \phantom{}{^{\dag} }    }
\begin{document}

\selectlanguage{frenchb}

\title{La surcohérence entraîne l'holonomie}
%Overcoherence implies holonomicity}
\author{Daniel Caro}

\date{}

\maketitle

\selectlanguage{english}
\begin{abstract}
Let $\V$ be a mixed characteristic complete discrete valuation ring with perfect residue field.
Let $\X$ be a smooth formal scheme over $\V$ and $D$ a divisor of its special fiber.
We define the notion of $\D ^\dag _{\X}(\hdag D) _{\Q}$-overcoherence in $\X$ (after any change of basis), 
which is a priori a weaker notion than the $\D ^\dag _{\X}(\hdag D) _{\Q}$-overcoherence. 
We prove that a $\D ^\dag _{\X}(\hdag D) _{\Q}$-overcoherent after any change of basis module is
$\D ^\dag _{\X}(\hdag D) _{\Q}$-holonomic.
Furthermore, we check that this implies the following property of stability of the overholonomicity:  a bounded complex $\E$ of 
$\D ^\dag _{\X,\,\Q}$-modules is overholonomic after any change of basis if and only if, for any integer $j$,  
$\mathcal{H} ^{j} (\E) $ is overholonomic after any change of basis.
\end{abstract}

\selectlanguage{frenchb}

\begin{abstract}
Soit $\V$ un anneau de valuation discrète complet d'inégales caractéristiques, de corps résiduel parfait. 
Soit $\X$ un schéma formel lisse sur $\V$.
Nous définissons la notion de $\D ^\dag _{\X}(\hdag D) _{\Q}$-surcohérence dans $\X$ (après tout changement de base), 
ce qui correspond a priori à une notion plus faible que celle de $\D ^\dag _{\X}(\hdag D) _{\Q}$-surcohérence. 
Nous établissons qu'un module $\D ^\dag _{\X}(\hdag D) _{\Q}$-surcohérent après tout changement de base est $\D ^\dag _{\X}(\hdag D) _{\Q}$-holonome.
De plus, nous en déduisons la propriété suivante de stabilité de la surholonomie: un complexe borné de 
$\D ^\dag _{\X,\,\Q}$-modules $\E$ est surholonome après tout changement de base si et seulement si, 
pour tout entier $j$,  
$\mathcal{H} ^{j} (\G) $ est surholonome après tout changement de base.
\end{abstract}

\tableofcontents

\section*{Introduction}

Ce travail s'inscrit dans la théorie des $\D$-modules arithmétiques de P. Berthelot.
Cette théorie constitue une version arithmétique de celle des modules sur le faisceau des opérateurs différentiels 
(pour une introduction générale consulter
\cite{Beintro2}, autrement lire dans l'ordre \cite{Be0}, \cite{Be1}, \cite{Be2}).
Comme l'avait conjectuée Berthelot,
cette théorie, via la notion de surholonomie (voir \cite{caro_surholonome}),
permet d'obtenir dans un travail en commun avec Tsuzuki (voir \cite{caro-Tsuzuki})
une cohomologie $p$-adique sur les variétés algébriques sur un corps de caractéristique $p>0$
stable par les six opérations cohomologiques de Grothendieck, i.e., image directe, image directe extraordinaire,
image inverse, image inverse extraordinaire, produit tensoriel, foncteur dual. 
Voici le contexte de cette théorie :
soit $\V$ un anneau de valuation discrète complet d'inégales caractéristiques $(0,p)$,
de corps résiduel supposé parfait.
Soit $\X$ un $\V$-schéma formel intègre et lisse, $X$ sa fibre spéciale.
Dans la version arithmétique de Berthelot, le faisceau des opérateurs différentiels usuels $\D$  est remplacé par $\D ^{\dag} _{\Q}$.
Plus précisément, il construit le faisceau sur $\X$ des opérateurs différentiels de niveau fini et d'ordre infini
noté $\D ^\dag _{\X,\Q}$ ; ce dernier correspondant à la tensorisation par $\Q$ (indiqué par l'indice $\Q$) du complété faible $p$-adique (indiqué par le symbole {\og $\dag$\fg})
du faisceau classique $\D  _{\X}$ des opérateurs différentiels sur $\X$ (i.e. au sens de Grothendieck \cite[16]{EGAIV4}).
Lorsque l'on parlera de structure de Frobenius (seulement dans ce cas-là), 
on suppose qu'il existe un isomorphism $\sigma \colon \V \riso \V$ relevant l'endomorphisme de Frobenius de $k$.
Berthelot a aussi obtenu la notion de $F\text{-}\D ^\dag _{\X,\Q} $-module {\og holonome \fg}  
en s'inspirant du cas classique : un $F\text{-}\D ^\dag _{\X,\Q} $-module cohérent (i.e un $\D ^\dag _{\X,\Q} $-module cohérent avec une structure de Frobenius) est holonome
s'il est nul ou si la dimension de sa variété caractéristique est égale à la dimension de $X$.
On dispose de plus de la caractérisation de Virrion de l'holonomie (voir \cite{virrion}): 
un $F\text{-}\D ^\dag _{\X,\Q} $-module cohérent $\E$ est holonome si et seulement si 
le foncteur dual $\D ^\dag _{\X,\Q}$-linéaire est acyclique pour $\E$. 
Plus généralement, soit $D$ un diviseur de $X$.
Le critère de Virrion permet d'étendre la notion d'holonomie de la manière suivante:
un $\D ^\dag _{\X}(\hdag D) _{\Q} $-module cohérent $\E$ est $\D ^\dag _{\X}(\hdag D) _{\Q} $-holonome 
si le foncteur dual $\D ^\dag _{\X}(\hdag D) _{\Q} $-linéaire est acyclique pour $\E$. 
La conjecture la plus forte (car elle implique toutes les autres) 
de Berthelot sur la stabilité de l'holonomie (voir \cite[5.3.6]{Beintro2}) 
prédit qu'un $F\text{-}\D ^\dag _{\X}(\hdag D) _{\Q} $-module holonome 
est un $F\text{-}\D ^\dag _{\X,\Q} $-module holonome. 
Lorsque $\X$ est projectif,
cette conjecture a été établie (voir \cite{caro-stab-holo}).
Par contre, un $\D ^\dag _{\X}(\hdag D) _{\Q} $-module holonome n'est pas toujours
un $\D ^\dag _{\X,\Q} $-module holonome, i.e., il faut faire plus attention sans structure de Frobenius. 
En effet, les $\D ^\dag _{\X}(\hdag D) _{\Q} $-modules cohérents  $\O _{\X}(\hdag D) _{\Q} $-cohérent (i.e. les isocristaux surconvergents sur $( X \setminus D, X)$
dans le langage de la cohomologie rigide) sont des $\D ^\dag _{\X}(\hdag D) _{\Q} $-modules holonomes mais ils ne sont
pas toujours $\D ^\dag _{\X,\Q} $-module cohérent (et donc encore moins $\D ^\dag _{\X,\Q} $-holonome).
Pour espérer généraliser cette conjecture de Berthelot sans structure de Frobenius, 
il faut rajouter des conditions supplémentaires
de type non-Liouville (voir \cite{caro-stab-holo-courbe} pour un tout premier travail dans cette direction).

Soit $\E \in (F$-$)D _\mathrm{coh} ^\mathrm{b} (\D ^\dag _{\X} (\hdag D) _{\Q})$.
Dans ce papier nous introduisons la notion de {\og $\D ^\dag _{\X} (\hdag D) _{\Q}$-surcohérence dans $\X$\fg} :
le complexe $\E$ est $\D ^\dag _{\X} (\hdag D) _{\Q}$-surcohérent dans $\X$ si,
    pour tout diviseur $T $ de $X$, on ait 
    $(\hdag T )  (\E) \in (F$-$)D _\mathrm{coh} ^\mathrm{b} (\D ^\dag _{\X} (\hdag D) _{\Q})$, où 
    $(\hdag T ) $ est le foncteur localisation en dehors de $T$. 
    Cette notion rejoint celle de la $\D ^\dag _{\X} (\hdag D) _{\Q}$-surcohérence définie dans 
    \cite{caro_surcoherent}. En effet, 
    $\E$ est $\D ^\dag _{\X} (\hdag D) _{\Q}$-surcohérent si et seulement si pour tout morphisme lisse de la forme
    $f \,:\, \X ' \to \X$, le complexe $f ^! (\E) $ est $\D ^\dag _{\X'} (\hdag f ^{-1}(D)) _{\Q}$-surcohérent dans $\X'$.
    Nous prouvons dans ce papier l'implication $(*)$ : si 
    $\E$ est $\D ^\dag _{\X} (\hdag D) _{\Q}$-surcohérent dans $\X$ (et après tout changement de la base $\V$)
    alors $\E$ est $\D ^\dag _{\X} (\hdag D) _{\Q}$-holonome. 
    En particulier, si $\E$ est $\D ^\dag _{\X} (\hdag D) _{\Q}$-surcohérent (et après tout changement de la base $\V$)
    alors $\E$ est $\D ^\dag _{\X} (\hdag D) _{\Q}$-holonome.

Cette implication $(*)$ améliore le théorème plus faible déjà connu établissant qu'un 
$F$-$\D ^\dag _{\X} (\hdag D) _{\Q}$-module surcohérent est
surholonome et donc holonome 
(voir \cite[2.3.17]{caro-Tsuzuki} ou \cite[6.2.4]{caro-pleine-fidelite} pour la version la plus générale possible). 
Lorsque $\X$ est de plus projectif, il découle de \cite{caro-stab-holo}
que les notions de $F$-$\D ^\dag _{\X}(\hdag D) _{\Q} $-holonomie 
et de $F$-$\D ^\dag _{\X}(\hdag D) _{\Q} $-surcohérence dans $\X$ sont équivalentes, i.e. la réciproque est valable au moins dans
cette situation géométrique et avec une structure de Frobenius. 

Même dans le cas de la surcohérence (notion plus restrictive à priori que la surcohérence dans $\X$)
avec structure de Frobenius évoquée ci-dessus,
la preuve de ce papier est aussi intéressante 
puisqu'elle est directe et n'utilise pas le puissant théorème de la réduction semi-stable de Kedlaya.
En effet, la preuve avec structure de Frobenius de l'équivalence entre surcohérence et surholonomie 
est le résultat d'un long processus dont voici une esquisse : 
Afin d'obtenir de surcroît des coefficients stables par dualité,  
la notion de surcohérence avait été raffinée via la notion de {\og surholonomie \fg}.
Nous avions établi comme son nom l'indique qu'un module surholonome est holonome (voir \cite{caro_surholonome}). 
Après avoir fait le lien avec la cohomologie rigide de Berthelot (voir \cite{Berig}, \cite{LeStum-livreRigCoh}), lien qui a été initié par Berthelot dans \cite{Be0,Be1,Be2}
puis prolongé dans l'ordre \cite{caro-construction}, \cite{caro_devissge_surcoh,caro-2006-surcoh-surcv} (avec structure de Frobenius),
\cite{caro-pleine-fidelite} (sans structure de Frobenius), 
la notion de {\og dévissabilité en isocristaux surconvergents \fg} a été développée. 
Lorsque que l'on dispose d'une structure de Frobenius, 
d'après \cite[5.2.4]{caro-pleine-fidelite} (qui étend très légèrement \cite{caro-Tsuzuki}),
on démontre en fait que ces trois notions de surholonomie, de surcohérence et de dévissabilité en $F$-isocristaux surconvergents sont toujours identiques.
Pour prouver cette équivalence, on remarque d'abord par définition que la surholonomie entraîne la surcohérence. 
Puis, on a établi que la surcohérence implique la dévissabilité en $F$-isocristaux surconvergents (voir \cite{caro-2006-surcoh-surcv}). 
Enfin, pour boucler la preuve, il reste par dévissage à prouver la surholonomie
des $F$-isocristaux surconvergents, ce qui est établi dans \cite{caro-Tsuzuki}. 
Un ingrédient fondamental de cette preuve est le théorème de la réduction semi-stable de Kedlaya (voir \cite{kedlaya-semistableI,kedlaya-semistableII,kedlaya-semistableIII,kedlaya-semistableIV}), théorème qui utilise la structure de Frobenius, qui est faux en général et qui prolonge de manière très remarquable le théorème de la monodromie $p$-adique établi séparément par André, Kedlaya, Mebkhout (voir \cite{andreHA,Kedlaya-localmonodromy,mebkhout-monodr-padique}) ou du théorème analogue de Tsuzuki dans le cas des $F$-isocristaux unités (voir \cite{Tsuzuki-mono-unitroot}). 

On termine ce travail par une application sur la stabilité de la surholonomie: un complexe de $\D ^\dag _{\X,\Q} $-modules est surholonome après tout changement de base si et seulement si ses espaces de cohomologie le sont (voir \ref{coro2-stab-surcoh-dansX}). 

\medskip 

Décrivons maintenant l'organisation  de ce papier. 
Il s'articule en trois parties. Les deux premiers chapitres établissent des résultats préliminaires qui permettront d'obtenir les deux résultats principaux de ce travail donnés dans le dernier. Précisons à présent leur contenu.  

Dans le premier chapitre, dans le cas d'une immersion fermée de schémas affines, lisses sur une base noethérienne
et munis de coordonnées locales,
nous calculons explicitement les morphismes d'adjonction entre l'image directe et l'image inverse extraordinaire dans les catégories
de $\D$-modules. Nous vérifions ensuite que l'image directe par une immersion fermée commute à l'image inverse extraordinaire
par une immersion fermée. 

Dans le cas d'une immersion fermée de $\V$-schémas formels affines, lisses et munis de coordonnées locales, 
nous étendons dans le deuxième chapitre le calcul des morphismes d'adjonction par passage à la limite projective de la situation du premier chapitre. 
Nous y expliquons pourquoi, afin d'obtenir des catégories stables par image directe et image inverse extraordinaire d'une immersion fermée, il s'agit de travailler cette fois-ci exclusivement avec des modules sur les sections globales de faisceaux d'opérateurs différentiels de niveau fixé, 
et non plus avec des modules sur des faisceaux d'opérateurs différentiels de niveau fixé. 

Enfin, dans la dernière partie, après quelques compléments sur la notion d'holonomie sans structure de Frobenius développée dans \cite{caro-holo-sansFrob}, grâce aux isomorphismes de changement de base du premier chapitre et aux morphismes d'adjonction du deuxième chapitre, nous établissons le résultat principal de ce papier : un $\D ^\dag _{\X} (\hdag D) _{\Q}$-module surcohérent dans $\X$ après tout changement de base
est $\D ^\dag _{\X} (\hdag D) _{\Q}$-holonome. On vérifie ensuite que cela implique que la notion de {\og surholonomie après tout changement de base\fg} est stable pour tout entier $j$ par le foncteur $j$-ième espace de cohomologie, i.e. par $\mathcal{H} ^{j}$.

\section*{Notations}
Soient $\V$ un anneau de valuations discrètes complet d'inégales caractéristiques $(0,p)$, 
de corps résiduels $k$ supposé parfait, d'uniformisant $\pi$, de corps des fractions $K$. 
Lorsque l'on parlera de structure de Frobenius (seulement dans ce cas-là), 
on suppose qu'il existe un isomorphism $\sigma \colon \V \riso \V$ relevant l'endomorphisme de Frobenius de $k$.

Les faisceaux seront notés avec des lettres calligraphiques, 
les lettres droites correspondantes signifiant alors leur section globale. 
Le symbole chapeau désigne toujours la complétion $p$-adique. 
Si $f \,:\, \ZZ \to \X$ est un morphisme de $\V$-schémas formels lisses, 
on notera 
pour tout $i \in \N$ par 
$f _i \,:\, Z _i \to X _i $ le morphisme de schémas induit par réduction modulo $\pi ^{i+1}$
(et de même pour d'autres lettres de l'alphabet). On notera simplement $X $ ou $Z$ à la place de $X _0$ ou $Z _0$.

Nous reprendrons les notations standards concernant les faisceaux d'opérateurs différentiels apparaissant dans la théorie des $\D$-modules arithmétiques de Berthelot 
(e.g. voir \cite{Be1} pour les constructions détaillées de ces faisceaux et \cite{Beintro2} pour les constructions des opérations cohomologiques telles que l'image directe ou l'image inverse extraordinaire).
\medskip

\section*{Remerciement}

Je remercie Tomoyuki Abe pour une erreur décelée dans une version précédente de ce papier.

\section{Immersion fermée de schémas lisses et $\D$-modules quasi-cohérents}

Dans ce chapitre, on désigne par $m$ un entier positif, $S$ un schéma noethérien. Sauf mention explicite du contraire, 
nous travaillerons dans la catégorie des $S$-schémas (lisses) et 
nous omettrons alors d'indiquer $S$ dans les notations concernant les opérateurs différentiels.

\subsection{Préliminaires: quelques calculs en coordonnées locales}

\label{u+-sch}

Soit $u\,:\, Z \hookrightarrow X$ une immersion fermée de $S$-schémas lisses. 
On note $\omega _{X}$ le faisceau des formes différentielles sur $X$ de degré maximal, de même pour $Z$.
On dispose du $(\D ^{(m)} _{Z}, u ^{-1} \D ^{(m)} _{X})$-bimodule
$\D ^{(m)} _{Z \hookrightarrow X}:= u ^{*} \D ^{(m)} _X$
et du 
$(u ^{-1} \D ^{(m)} _{X}, \D ^{(m)} _{Z})$-bimodule
$\D ^{(m)} _{X \hookleftarrow Z}:= u ^{*} _{\mathrm{d}} (\D ^{(m)} _X \otimes _{\O _X} \omega _X ^{-1}) \otimes _{\O _Z} \omega _{Z}$, 
l'indice $d$ signifiant que 
l'on choisit la structure droite du $\D ^{(m)} _X$-bimodule à droite $\D ^{(m)} _X \otimes _{\O _X} \omega _X ^{-1}$.
D'après \cite[3.4.1]{Be2}, 
via l'isomorphisme de transposition échangeant les structures droite et gauche de $\D ^{(m)} _X \otimes _{\O _X} \omega _X ^{-1}$,
on rappelle que $\D ^{(m)} _{X \hookleftarrow Z}$ est isomorphe 
à $\omega _{Z} \otimes _{\O _Z} u ^{*} _{\mathrm{g}} (\D ^{(m)} _X \otimes _{\O _X} \omega _X ^{-1}) $, 
l'indice $g$ signifiant que 
l'on choisit la structure gauche cette fois-ci.
Cependant, par souci de précisions (e.g. pour les calculs locaux), nous prendrons exclusivement la définition
$\D ^{(m)} _{X \hookleftarrow Z}:= u ^{*} _{\mathrm{d}} (\D ^{(m)} _X \otimes _{\O _X} \omega _X ^{-1}) \otimes _{\O _Z} \omega _{Z}$.

Dans cette section, supposons $S$ affine, $X$ affine et muni de coordonnées locales $t _1, \dots, t _n$ relativement à $S$
telles que $Z = V ( t _1, \dots, t _r)$. Notons $\I$ l'idéal de l'immersion fermée $u$. 
Conformément aux notations adoptées dans le préambule de ce papier,
on note $O _S$, $O _X$ et $O _Z$ les sections globales de 
$\O _S$, $\O _X$ et de $\O _Z$ ; de même pour les sections globales d'autres faisceaux. 
On notera alors $\partial  _i ^{< k> _{(m)}}$ pour $i =1,\dots, d$ et $k \in \N$ les opérateurs de $D ^{(m)} _{X/S}$ associés canoniquement à ces coordonnées locales.
Dans ce cas, les éléments de $D  ^{(m)} _{X}$ s'écrivent de manière unique sous la forme
$\sum _{\underline{k} \in \N ^n} a _{\underline{k}} \underline{\partial} ^{<\underline{k}> _{(m)}}$
(resp. $\sum _{\underline{k} \in \N ^n} \underline{\partial} ^{<\underline{k}> _{(m)}} a _{\underline{k}} $),  
la somme n'ayant qu'un nombre fini de termes 
$a _{\underline{k}} \in O _X$ non nuls
et où 
$\underline{\partial} ^{<\underline{k}> _{(m)}} := \prod _{i=1} ^{n} \partial _i ^{<k _i> _{(m)}}$.

\begin{nota}
\label{nota-OSpartial-i}
La sous-$O _S$-algèbre de $D  ^{(m)} _{X}$ engendrée par les éléments
$\{ \partial _1 ^{< k _1> _{(m)}}, \partial _2 ^{< k _2> _{(m)}}, \dots, \partial _r ^{< k _r> _{(m)}} \, |\, k _1, \dots, k _r \in \N\}$
sera notée 
$O _S \{\partial _1,\dots, \partial _r \} ^{(m)}$. 
S'il n'y a pas de risque de confusion (avec les éléments de $D ^{(m)} _{X'}$ où $X ' = V ( t _{r+1},\dots, t _n)$), 
les éléments 
$\underline{\partial} ^{<(\underline{i},\underline{0})> _{(m)}} $, avec
$\underline{i} \in \N ^{r}$ et $\underline{0} \in \N ^{n-r}$ de composantes nulles,
constituant
la $O _S$-base canonique de cette $O _S$-algèbre commutative seront simplement notés
$\underline{\partial} ^{<\underline{i}> _{(m)}}$.

On ne confondra cette $O _S$-algèbre commutative avec le polynôme à puissances divisées de niveau $m$ en $r$ variables à coefficients dans $O _S$ de \cite[1.5.3]{Be1}
et noté
$O _S <\partial _1,\dots, \partial _r > _{(m)}$.
On pourrait d'ailleurs appeler la $O_S$-algèbre $O _S \{\partial _1,\dots, \partial _r \} ^{(m)}$, le polynôme à puissances {\og symétriquement \fg} divisées de niveau $m$ en $r$ variables à coefficients dans $O _S$.
\end{nota}

\begin{nota}
Dans ce qui suit, plusieurs notations et isomorphismes dépendront du choix des coordonnées locales mais
pour alléger les notations nous ne les indiquerons pas.
 
On dispose de l'égalité
$D ^{(m)} _{Z \hookrightarrow X}:= \Gamma (Z, \D ^{(m)} _{Z \hookrightarrow X})
=
D ^{(m)} _{X} /ID ^{(m)} _{X}$.
Cette égalité est compatible avec les structures canoniques 
de $D ^{(m)}_X$-modules à gauche respectives. 
Pour tout $P \in D ^{(m)} _{X}$, on note $[P ] _Z$ son image dans 
$D ^{(m)} _{X} / I D ^{(m)} _{X} $. En particulier, si $a \in O _X$, 
alors $[a  ] _Z$ désigne l'image de 
$a$ dans $O _Z$.

La structure de $D ^{(m)} _Z$-module à gauche sur $D ^{(m)} _{X} /ID ^{(m)} _{X}$
se décrira via l'isomorphisme \ref{D_(Z->X)} ci-dessous. 
\end{nota}

\begin{nota} 
Le module $D ^{(m)} _{Z}   \otimes _{O _S} O _S  \{\partial _1,\dots, \partial _r \} ^{(m)}$
est muni canoniquement d'une
structure de  
$D ^{(m)} _{Z} $-module à gauche.
On bénéficie du morphisme canonique 
$\sigma ^{(m)}   _{Z,X/S}\,:\,
D ^{(m)} _{X}
\to
D ^{(m)} _{Z}   \otimes _{O _S} O _S  \{\partial _1,\dots, \partial _r \} ^{(m)}$
défini en posant
$$\sigma ^{(m)}  _{Z,X/S}
\left (
\sum _{\underline{k} \in \N ^n} a _{\underline{k}} \underline{\partial} ^{<\underline{k}> _{(m)}}
\right)
:=
\sum _{\underline{i} \in \N ^{r}} 
\left (
\sum _{\underline{j} \in \N ^{n-r}}
[ a _{(\underline{i},\underline{j})}
] _Z 
\,
\underline{\partial} ^{<\underline{j}> _{(m)}} \right )
\otimes 
\underline{\partial} ^{<\underline{i}> _{(m)}},$$
les $a _{\underline{k}}\in O _X $ étant nuls sauf pour un nombre fini.
S'il n'y a pas de doute sur la base ou sur le niveau, on notera simplement $\sigma _{Z,X}$.

En considérant la structure de 
$O _X$-module induite par la structure de 
$D ^{(m)} _{X} $-module à gauche
sur $D ^{(m)} _{X} $ (resp. induite par la structure de $D ^{(m)} _{Z} $-module à gauche sur $D ^{(m)} _{Z}   \otimes _{O _S} O _S  \{\partial _1,\dots, \partial _r \} ^{(m)}$), 
on vérifie que 
$\sigma _{Z,X}$ est $O _X$-linéaire  (pour les structures gauches, à ne pas confondre avec
la linéarité comme $D ^{(m)} _{X} $-modules à droite de \ref{sigma-bil}).
De plus, $\sigma _{Z,X}$ est surjective et son noyau est $I D ^{(m)} _{X}$.
On notera alors par 
$\overline{\sigma} ^{(m)} _{Z,X/S}\,:\,D ^{(m)} _{X}/ I D ^{(m)} _{X}
\riso
D ^{(m)} _{Z}   \otimes _{O _S} O _S  \{\partial _1,\dots, \partial _r \} ^{(m)}$
ou simplement $\overline{\sigma} _{Z,X}$
l'isomorphisme $O _Z$-linéaire induit caractérisé par la formule
$$\overline{\sigma} _{Z,X}
\left(
[\sum _{\underline{k} \in \N ^n} a _{\underline{k}} \underline{\partial} ^{<\underline{k}> _{(m)}}] _Z
\right)
:=
\sigma _{Z,X}
\left(
\sum _{\underline{k} \in \N ^n} a _{\underline{k}} \underline{\partial} ^{<\underline{k}> _{(m)}}
\right)
=
\sum _{\underline{i} \in \N ^{r}} 
\left (
\sum _{\underline{j} \in \N ^{n-r}}
[a _{(\underline{i},\underline{j})}] _Z 
\underline{\partial} ^{<\underline{j}> _{(m)}} \right )
\otimes 
\underline{\partial} ^{<\underline{i}> _{(m)}}.$$
\end{nota}

\begin{vide}
[Changement de base et de niveau]
\label{nota-chgt-base-niv}
Soient $m' \geq m$ un entier, 
$S'\to S$ un morphisme de schémas noethériens affines (pour simplifier). 
On note $X' := X \times _S S'$, $Z' := Z \times _S S'$, 
$u'\,:\,Z' \hookrightarrow X'$ l'immersion fermée induite
et
$f \,:\, X' \to X$ 
la projection canonique. 
D'après \cite[2.2.2.2]{Be1} et \cite[2.2.16]{Be1}, on dispose des morphismes d'anneaux 
$f ^{-1} \D ^{(m)} _{X/S} \to f ^{*} \D ^{(m)} _{X/S} \liso \D ^{(m)} _{X'/S'}
\to \D ^{(m')} _{X'/S'}$. 
On dispose ainsi des homomorphismes d'anneaux
$D ^{(m)} _{X/S} \to D ^{(m')} _{X'/S'}$,
$D ^{(m)} _{Z/S} \to D ^{(m')} _{Z'/S'}$.

Les coordonnées locales $t _1, \dots, t _n$ sur $X$ relativement à $S$
induisent canoniquement des coordonnées locales $t '_1, \dots, t '_n$ sur $X'$ relativement à $S'$.
On notera alors $\partial  _i ^{\prime< k> _{(m)}}$ pour $i =1,\dots, d$ et $k \in \N$ les opérateurs de $D ^{(m')} _{X'/S'}$ associés. 
Notons  $O _{S'}  \{\partial ' _1,\dots, \partial '_r \} ^{(m')}$
la sous-$O _{S'}$-algèbre de $D  ^{(m')} _{X'/S'}$ engendrée par les éléments
$\partial _i ^{\prime<  k> _{(m)}}$ pour $1\leq i\leq r$ et $k\in \N$.
Comme l'homomorphisme $D ^{(m)} _{X/S} \to D ^{(m')} _{X'/S'}$ envoie
$\partial  _i ^{< k> _{(m)}}$ sur $\partial  _i ^{\prime< k> _{(m)}}$, il induit la factorisation
$O _S  \{\partial _1,\dots, \partial _r \} ^{(m)}
\to O _{S'}  \{\partial ' _1,\dots, \partial '_r \} ^{(m')}$,
Avec de plus la formule \cite[2.2.3.1]{Be1}, on calcule que les morphismes
$\sigma ^{(m)}  _{Z,X/S}$ commutent aux changements de base et aux changements de niveau, i.e. 
le diagramme canonique commutatif : 
\begin{equation}
\label{sigma-chgt-niv}
\xymatrix{
{D ^{(m)} _{X/S}} 
\ar[r] ^-{\sigma ^{(m)}   _{Z,X/S}} \ar[d] ^-{}
& 
{D ^{(m)} _{Z/S}   \otimes _{O _S} O _S  \{\partial _1,\dots, \partial _r \} ^{(m)} } 
\ar[d] ^-{}
\\ 
{D ^{(m')} _{X'/S'}} 
\ar[r] ^-{\sigma ^{(m')}   _{Z',X'/S'}} 
& 
{ D ^{(m')} _{Z'/S'}   \otimes _{O _{S'}} O _{S'}  \{\partial '_1,\dots, \partial '_r \} ^{(m')}} }
\end{equation}
est commutatif.
\end{vide}

\begin{nota}
\label{nota-ident-DZX-DZ}
Soit $\underline{j} \in \N ^{n-r}$. 
On ne confondra pas l'élément 
$\underline{\partial} ^{<(\underline{0},\underline{j})> _{(m)}} 
\in D ^{(m)} _{X}$
avec l'élément 
$\underline{\partial} ^{<\underline{j}> _{(m)}} 
\in D ^{(m)} _{Z}$. 
Pour éviter les risques de confusion (e.g. pour les formules \ref{partial-Z-X-actionpartial} ou \ref{u_+u^!toif-form}), 
contrairement à la situation de 
\ref{nota-OSpartial-i},
on ne simplifiera pas dans cette situation l'écriture de 
$\underline{\partial} ^{<(\underline{0},\underline{j})> _{(m)}} $
via
$\underline{\partial} ^{<\underline{j}> _{(m)}} $.

On note $D ^{(m)} _{Z,X}$ 
la sous-$O _X$-algèbre
de $D ^{(m)} _{X}$ engendrée par les éléments 
de la forme 
$\underline{\partial} ^{<(\underline{0},\underline{j})> _{(m)}}$,
avec 
$\underline{j} \in \N ^{n-r}$.
Via la formule \cite[2.2.4.(iv)]{Be1}, 
les éléments de
$D ^{(m)} _{Z,X}$
s'écrivent (de manière unique) de la forme
$\sum _{\underline{j} \in \N ^{n-r}}
b _{\underline{j}} 
\underline{\partial} ^{<(\underline{0},\underline{j})> _{(m)}}$ (ou de la forme 
$\sum _{\underline{j} \in \N ^{n-r}}
\underline{\partial} ^{<(\underline{0},\underline{j})> _{(m)}}
b _{\underline{j}} $)
avec $b _{\underline{j}}  \in O _X$ et nuls sauf pour un nombre fini.

Comme $I$ est engendré comme $O _X$-module par $t _1,\dots, t _r$ qui sont dans le centre de 
$D ^{(m)} _{Z,X}$, on obtient 
$D ^{(m)} _{Z,X} I =I D ^{(m)} _{Z,X} $.
Par unicité des écritures décrites ci-dessus,
on dispose de plus
de l'égalité
$I D ^{(m)} _{X}  \cap D ^{(m)} _{Z,X} = I D ^{(m)} _{Z,X}$.
En identifiant $D ^{(m)} _{Z}$ aux éléments de 
$D ^{(m)} _{Z}   \otimes _{O _S} O _S  \{\partial _1,\dots, \partial _r \} ^{(m)}$
dont les coefficients de $\underline{\partial} ^{<\underline{i}> _{(m)}}$ sont nuls pour 
$\underline{i}\not = \underline{0}$, on vérifie la formule
$ \sigma _{Z,X} ^{-1} (D ^{(m)} _{Z}) = D ^{(m)} _{Z,X}$.
L'isomorphisme $\overline{\sigma} _{Z,X}$ se factorise ainsi en 
l'isomorphisme $O _Z$-linéaire (pour les structures gauches)
\begin{equation}
\label{DZXmodI}
\overline{\sigma} _{Z,X}\,:\, 
D ^{(m)} _{Z,X} /D ^{(m)} _{Z,X} I = D ^{(m)} _{Z,X} / I D ^{(m)} _{Z,X} \riso D ^{(m)} _{Z},
\end{equation}
toujours noté $\overline{\sigma} _{Z,X}$ 
si aucune confusion n'est à craindre.
\end{nota}

\begin{vide}
Soient $Z' \hookrightarrow Z$ une seconde immersion fermée, 
$P \in D ^{(m)} _{Z',X}$. Avec les identifications 
du type de \ref{nota-ident-DZX-DZ} (e.g. on voit $\sigma ^{(m)}   _{Z,X} (P)$ comme un élément de
$D ^{(m)} _{Z}$),
on dispose alors de l'égalité de transitivité dans $D ^{(m)} _{Z'}$: 
\begin{equation}
\label{Eg-trans-sigma}
\sigma ^{(m)}   _{Z',Z} \circ \sigma ^{(m)}   _{Z,X} (P)
=
\sigma ^{(m)}   _{Z',X} (P ).
\end{equation}
\end{vide}

\begin{lemm}
\label{Z-X-actionpartial}
Soient $\E$ un $\D ^{(m)} _X$-module à gauche, $x \in E$,
$P \in D ^{(m)} _{Z,X}$, $[ -] _Z$ la surjection canonique $E \to E /IE$.
En se rappelant que $\sigma _{Z,X} (P   ) =\overline{\sigma} _{Z,X} ([P ] _Z )\in D ^{(m)} _{Z}$ (voir \ref{DZXmodI}),
on dispose de la formule 
\begin{equation}
\label{fcoro-Z-X-actionpartial}
\sigma _{Z,X} (P   )\cdot [x] _Z
 = [P \cdot x ] _Z.
\end{equation}

\end{lemm}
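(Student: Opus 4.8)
L'approche consiste à ramener la vérification de la formule \ref{fcoro-Z-X-actionpartial} à une vérification en coordonnées locales, puis à procéder par récurrence sur la complexité de l'opérateur $P \in D ^{(m)} _{Z,X}$. Comme $\E$ est un $\D ^{(m)} _X$-module à gauche et que $x \in E$, l'égalité en jeu est une égalité dans $E/IE$, donc il suffit de la prouver après tensorisation, et les deux membres dépendent $O _S$-linéairement de $x$. D'abord je remarquerais que, d'après l'écriture unique des éléments de $D ^{(m)} _{Z,X}$ rappelée en \ref{nota-ident-DZX-DZ}, on peut écrire $P = \sum _{\underline{j} \in \N ^{n-r}} b _{\underline{j}} \underline{\partial} ^{<(\underline{0},\underline{j})> _{(m)}}$ avec $b _{\underline{j}} \in O _X$ presque tous nuls; par additivité des deux membres de \ref{fcoro-Z-X-actionpartial} en $P$ et par $O _X$-linéarité à gauche, on se ramène au cas où $P = \underline{\partial} ^{<(\underline{0},\underline{j})> _{(m)}}$ pour un unique multi-indice $\underline{j}$, puis (en décomposant $\underline{\partial} ^{<(\underline{0},\underline{j})> _{(m)}}$ comme produit des $\partial _i ^{<j _i> _{(m)}}$ pour $i > r$) on peut supposer $P = \partial _i ^{<k> _{(m)}}$ avec $i \in \{r+1,\dots,n\}$ et $k \in \N$.

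Reste alors le cœur du calcul: vérifier que $\sigma _{Z,X}(\partial _i ^{<k> _{(m)}}) \cdot [x] _Z = [\partial _i ^{<k> _{(m)}} \cdot x] _Z$ pour $i > r$. Ici il faut déplier la définition de $\sigma _{Z,X}$ donnée plus haut: appliquée à $\partial _i ^{<k> _{(m)}} = 1 \cdot \underline{\partial} ^{<(\underline{0},\underline{j})> _{(m)}}$ (où $\underline{j}$ a sa $(i-r)$-ième composante égale à $k$ et les autres nulles), elle donne $\sigma _{Z,X}(\partial _i ^{<k> _{(m)}}) = [1] _Z \, \underline{\partial} ^{<\underline{j}> _{(m)}} \otimes \underline{\partial} ^{<\underline{0}> _{(m)}}$, c'est-à-dire l'élément $\underline{\partial} ^{<\underline{j}> _{(m)}}$ de $D ^{(m)} _Z$ via l'identification \ref{DZXmodI} — c'est exactement la mise en garde de \ref{nota-ident-DZX-DZ}: on ne confond pas $\underline{\partial} ^{<(\underline{0},\underline{j})> _{(m)}} \in D ^{(m)} _X$ avec $\underline{\partial} ^{<\underline{j}> _{(m)}} \in D ^{(m)} _Z$, mais $\sigma _{Z,X}$ envoie précisément le premier sur le second. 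L'action de $\underline{\partial} ^{<\underline{j}> _{(m)}} \in D ^{(m)} _Z$ sur $E/IE$ se calcule via la structure de $\D ^{(m)} _Z$-module décrite par l'isomorphisme de \ref{D_(Z->X)}, soit $E/IE = \D ^{(m)} _{Z\hookrightarrow X} \otimes _{\D ^{(m)} _X} \E$ (restriction scalaire par $u$); par construction de cette structure, l'action de $\underline{\partial} ^{<\underline{j}> _{(m)}} \in D ^{(m)} _Z$ sur $[x] _Z$ coïncide avec celle de n'importe quel relèvement dans $D ^{(m)} _{Z,X}$, en particulier de $\underline{\partial} ^{<(\underline{0},\underline{j})> _{(m)}}$ lui-même, d'où $\underline{\partial} ^{<\underline{j}> _{(m)}} \cdot [x] _Z = [\underline{\partial} ^{<(\underline{0},\underline{j})> _{(m)}} \cdot x] _Z$.

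Le point délicat est de justifier proprement cette dernière compatibilité, c'est-à-dire que la structure de $\D ^{(m)} _Z$-module à gauche sur $E/IE$ est bien celle induite par l'isomorphisme $\overline{\sigma} _{Z,X}$ de \ref{DZXmodI} via la structure de bimodule sur $\D ^{(m)} _{Z\hookrightarrow X}$: il s'agit de revenir à la définition de $u ^* \D ^{(m)} _X$ comme $\D ^{(m)} _Z$-module à gauche (structure provenant de $\D ^{(m)} _Z \to \D ^{(m)} _{Z\hookrightarrow X}$, cf. \cite[2.1.3]{Be2} ou \cite[3.4]{Be2}) et de vérifier la commutation de cette structure avec $\overline{\sigma} _{Z,X}$ sur les générateurs $\partial _i ^{<k> _{(m)}}$, $i \le r$ et $i > r$ — pour $i > r$ c'est la formule cherchée, pour $i \le r$ c'est la compatibilité avec la structure de $D ^{(m)} _Z$-module déjà prise en compte dans la définition de $\sigma _{Z,X}$ via le facteur tensoriel $O _S\{\partial _1,\dots,\partial _r\} ^{(m)}$. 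Une fois cette identification de structures établie (elle est essentiellement contenue dans \ref{DZXmodI} et dans l'énoncé précédant \ref{D_(Z->X)}), la formule \ref{fcoro-Z-X-actionpartial} en découle par la réduction aux générateurs décrite ci-dessus. Alternativement, on peut éviter toute récurrence en observant directement que, $\sigma _{Z,X}$ étant $O _X$-linéaire à gauche et compatible à la multiplication à droite par $D ^{(m)} _X$ au sens de \ref{sigma-bil}, l'application $P \mapsto \sigma _{Z,X}(P) \cdot [x] _Z - [P\cdot x] _Z$ est $O _X$-linéaire à gauche et s'annule sur $1 \in D ^{(m)} _{Z,X}$, donc s'annule sur tout $D ^{(m)} _{Z,X}$ pourvu qu'on vérifie qu'elle est compatible à l'action à droite des $\partial _i ^{<k> _{(m)}}$ pour $i > r$ — ce qui ramène au même calcul local ponctuel.
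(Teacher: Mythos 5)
Votre d\'emarche est essentiellement celle du papier~: on se ram\`ene par $O _X$-lin\'earit\'e \`a gauche au cas $P = \underline{\partial} ^{<(\underline{0},\underline{j})> _{(m)}}$, puis on effectue le calcul en revenant \`a la d\'efinition de la structure de $\D ^{(m)} _Z$-module \`a gauche sur $u ^* (\E)$ obtenue fonctoriellement via les stratifications (voir \cite{Be2}). Un seul avertissement~: la Proposition~\ref{sigma-bil} est d\'emontr\'ee \emph{en utilisant} le pr\'esent lemme (appliqu\'e \`a $\E = \D ^{(m)} _X$), donc l'argument {\og alternatif\fg} de votre dernier paragraphe, qui s'appuie sur la bilin\'earit\'e de~\ref{sigma-bil}, est circulaire tel quel~; vous le remarquez d'ailleurs vous-m\^eme en concluant qu'il se ram\`ene au m\^eme calcul local ponctuel, qui reste le c\oe{}ur de la preuve et ne saurait \^etre contourn\'e.
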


\begin{proof}
Comme la formule \ref{fcoro-Z-X-actionpartial} est $O _X$-linéaire par rapport à $P$ (pour la structure gauche de $\O _X$-module
sur $D ^{(m)} _{Z,X}$),
comme le cas où $P \in \O _X$ est clair,
il suffit de prouver que, pour tout $\underline{j} \in \N ^{n-r}$, on dispose de la formule :
\begin{equation}
\label{partial-Z-X-actionpartial}
\underline{\partial} ^{<\underline{j}> _{(m)}} \cdot [ x ] _Z
= 
[ \underline{\partial} ^{<(\underline{0},\underline{j})> _{(m)}} \cdot x ] _Z,
\end{equation}
où $\underline{0} \in \N ^{r}$ est de composantes nulles, 
$\underline{\partial} ^{<\underline{j}> _{(m)}} \in D _Z ^{(m)}$
et
$\underline{\partial} ^{<(\underline{0},\underline{j})> _{(m)}} 
\in 
D _X ^{(m)}$.
Cela découle d'un calcul aisé en se rappelant de la définition 
de la structure de $\D ^{(m)} _Z$-module à gauche sur $u ^* (\E)$
construite fonctoriellement à partir des stratifications correspondant à la structure 
de $\D ^{(m)} _X$-module à gauche sur $\E$ (voir \cite{Be2}).
\end{proof}

\begin{lemm}
\label{DZXmodI-ann}
L'isomorphisme \ref{DZXmodI} 
\begin{equation}
\overline{\sigma} _{Z,X} \,:\,
D ^{(m)} _{Z,X} /D ^{(m)} _{Z,X} I = D ^{(m)} _{Z,X} / I D ^{(m)} _{Z,X} \riso D ^{(m)} _{Z}
\end{equation}
est un isomorphisme de $O _Z$-algèbres.
On dispose ainsi du morphisme de $O _X$-algèbres
$\sigma _{Z,X} \,:\,
D ^{(m)} _{Z,X} \to D ^{(m)} _{Z}$.
\end{lemm}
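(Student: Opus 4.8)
The plan is to check that the $O_X$-linear bijection $\sigma_{Z,X}\colon D^{(m)}_{Z,X}\to D^{(m)}_Z$ is a unital ring homomorphism. Since $I D^{(m)}_{Z,X}=D^{(m)}_{Z,X}I$ is a two-sided ideal of the $O_X$-subalgebra $D^{(m)}_{Z,X}$ (as recalled in \ref{nota-ident-DZX-DZ}), the quotient $D^{(m)}_{Z,X}/I D^{(m)}_{Z,X}$ carries a ring structure for which the projection is a ring map, so this is equivalent to the assertion that the factorisation $\overline{\sigma}_{Z,X}$ of \ref{DZXmodI} is an isomorphism of $O_Z$-algebras. Unitality is immediate from the defining formula of $\sigma_{Z,X}$, which sends $1$ to $1$; only multiplicativity has to be proved.

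For the latter I would recover the product of $D^{(m)}_{Z,X}$ from the left module structure of $\D^{(m)}_X$ over itself and transport it through Lemma \ref{Z-X-actionpartial}. Take $\E=\D^{(m)}_X$ with its tautological left $\D^{(m)}_X$-module structure, so that $E=D^{(m)}_X$ and $E/IE=D^{(m)}_X/I D^{(m)}_X$, and fix $P,Q\in D^{(m)}_{Z,X}\subseteq D^{(m)}_X$. Formula \ref{fcoro-Z-X-actionpartial}, applied with $x=Q$, gives $\sigma_{Z,X}(P)\cdot[Q]_Z=[PQ]_Z$ inside $E/IE$, the dot denoting the canonical left $D^{(m)}_Z$-module structure on $E/IE=D^{(m)}_X/I D^{(m)}_X$.

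It then remains to feed in the description of that $D^{(m)}_Z$-module structure: $\overline{\sigma}_{Z,X}$ identifies it with $D^{(m)}_Z\otimes_{O_S}O_S\{\partial_1,\dots,\partial_r\}^{(m)}$ on which $D^{(m)}_Z$ acts through its left tensor factor. As $P$, $Q$ and $PQ$ all lie in the sub-$O_X$-algebra $D^{(m)}_{Z,X}$, the classes $[Q]_Z$ and $[PQ]_Z$ lie in the sub-$D^{(m)}_Z$-module $\sigma_{Z,X}^{-1}(D^{(m)}_Z)=D^{(m)}_{Z,X}/I D^{(m)}_{Z,X}$ of \ref{DZXmodI}, on which $\overline{\sigma}_{Z,X}$ restricts to an identification with $D^{(m)}_Z$ acting on itself by left multiplication, and $\overline{\sigma}_{Z,X}([Q]_Z)=\sigma_{Z,X}(Q)$, $\overline{\sigma}_{Z,X}([PQ]_Z)=\sigma_{Z,X}(PQ)$. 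Applying $\overline{\sigma}_{Z,X}$ to the equality $\sigma_{Z,X}(P)\cdot[Q]_Z=[PQ]_Z$ then yields $\sigma_{Z,X}(P)\,\sigma_{Z,X}(Q)=\sigma_{Z,X}(PQ)$, as wanted.

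The one point deserving care --- the main, rather minor, obstacle --- is the identification of the left $D^{(m)}_Z$-module structure on $D^{(m)}_X/I D^{(m)}_X$, defined abstractly via the stratifications (as in Lemma \ref{Z-X-actionpartial}), with the action on the first factor of $D^{(m)}_Z\otimes_{O_S}O_S\{\partial_1,\dots,\partial_r\}^{(m)}$; this is built into the very definition of that module structure, namely the one through which $\overline{\sigma}_{Z,X}$ is $D^{(m)}_Z$-linear. Should one prefer to avoid it, multiplicativity can instead be checked directly --- at the price of a longer computation --- on the $O_X$-algebra generators $\underline{\partial}^{<(\underline 0,\underline j)>_{(m)}}$ and on $O_X$, comparing both sides with the structure constants of $D^{(m)}_Z$ furnished by \cite[2.2.4]{Be1}.
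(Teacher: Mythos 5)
Your main argument has a circularity. The step ``on which $\overline{\sigma}_{Z,X}$ restricts to an identification with $D^{(m)}_Z$ acting on itself by left multiplication'' --- equivalently, that $\overline{\sigma}_{Z,X}$ intertwines the stratification-defined $D^{(m)}_Z$-action on $D^{(m)}_X/ID^{(m)}_X$ with the action on the first tensor factor of $D^{(m)}_Z\otimes_{O_S}O_S\{\partial_1,\dots,\partial_r\}^{(m)}$ --- is precisely the $D^{(m)}_Z$-linearity assertion of Proposition~\ref{sigma-bil}, whose proof explicitly invokes the present Lemma~\ref{DZXmodI-ann}. Your parenthetical acknowledgement that ``this is built into the very definition of that module structure'' is not correct in the logic of the paper: the $D^{(m)}_Z$-module structure on $D^{(m)}_{Z\hookrightarrow X}=D^{(m)}_X/ID^{(m)}_X$ is the canonical (bimodule / stratification) one, defined \emph{before} $\overline{\sigma}_{Z,X}$ and independently of it; the paper's sentence ``se d\'ecrira via l'isomorphisme \ref{D_(Z->X)}'' announces a description to be \emph{proved} (Prop.~\ref{sigma-bil}), not a definition. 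So as written, your proof assumes what is to be established.

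The underlying idea is nonetheless sound and can be made non-circular with a small repair that stays entirely within Lemma~\ref{Z-X-actionpartial}. Apply \ref{fcoro-Z-X-actionpartial} (for $\E=\D^{(m)}_X$) first with $x=1$: one gets $\sigma_{Z,X}(P)\cdot[1]_Z=[P]_Z$, so the map $P'\mapsto P'\cdot[1]_Z$ is the bijection $\overline{\sigma}_{Z,X}^{-1}$, in particular injective. Then, using only the associativity of the (a priori given) $D^{(m)}_Z$-action and \ref{fcoro-Z-X-actionpartial} once more,
\[
\bigl(\sigma_{Z,X}(P)\,\sigma_{Z,X}(Q)\bigr)\cdot[1]_Z
=\sigma_{Z,X}(P)\cdot\bigl(\sigma_{Z,X}(Q)\cdot[1]_Z\bigr)
=\sigma_{Z,X}(P)\cdot[Q]_Z
=[PQ]_Z
=\sigma_{Z,X}(PQ)\cdot[1]_Z,
\]
and injectivity of $P'\mapsto P'\cdot[1]_Z$ gives $\sigma_{Z,X}(P)\sigma_{Z,X}(Q)=\sigma_{Z,X}(PQ)$. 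This is genuinely different from the paper's proof, which is the direct computation you offer as an alternative in your last paragraph: by $O_Z$-linearity reduce to $P=\underline{\partial}^{<(\underline 0,\underline j)>_{(m)}}$, $Q=b\underline{\partial}^{<(\underline 0,\underline l)>_{(m)}}$ and compare using \cite[2.2.4.(iii),(iv)]{Be1} together with \ref{fcoro-Z-X-actionpartial} applied to $\E=\O_X$. Your repaired argument is shorter and more conceptual, at the cost of relying on the module-structure associativity; the paper's is a self-contained bare-hands computation.
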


\begin{proof}
On sait déjà que $\overline{\sigma} _{Z,X}$ est un isomorphisme $O _Z$-linéaire (pour les structures gauches).
Soient $P,Q \in D ^{(m)} _{Z,X}$.
Il reste à établir la formule 
$\overline{\sigma} _{Z,X} ([PQ] _Z)= 
\overline{\sigma} _{Z,X} ([P] _Z)
\overline{\sigma} _{Z,X} ([Q] _Z)$.
Par $O _Z$-linéarité de $\overline{\sigma} _{Z,X}$ (pour les structures gauches), 
il suffit de l'établir pour 
$P = \underline{\partial} ^{<(\underline{0},\underline{j})> _{(m)}}$
et $Q = b\underline{\partial} ^{<(\underline{0},\underline{l})> _{(m)}}$, où $b \in O _X$
et
$\underline{j}, \underline{l} \in \N ^{n-r}$.
D'après la formule 
\cite[2.2.4.(iv)]{Be1}, on calcule dans $D ^{(m)}  _X$:
$$\underline{\partial} ^{<(\underline{0},\underline{j})> _{(m)}}
b\underline{\partial} ^{<(\underline{0},\underline{l})> _{(m)}}
=
\sum _{\underline{j}'\leq \underline{j}}
\left \{ \begin{smallmatrix}   \underline{j} \\   \underline{j}' \\ \end{smallmatrix} \right \}
\underline{\partial} ^{<(\underline{0},\underline{j}-\underline{j}')> _{(m)}}
(b)
\underline{\partial} ^{<(\underline{0},\underline{j}')> _{(m)}}
\underline{\partial} ^{<(\underline{0},\underline{l})> _{(m)}}.
$$
Avec \cite[2.2.4.(iii)]{Be1} utilisé sur $D ^{(m)}  _X$ puis sur $D ^{(m)}  _Z$, on en déduit la formule \ref{1.1.5.2}:
\begin{gather}
\label{1.1.5.2}
\overline{\sigma} _{Z,X}  ([\underline{\partial} ^{<(\underline{0},\underline{j})> _{(m)}}b\underline{\partial} ^{<(\underline{0},\underline{l})> _{(m)}}] _Z)
=
\sum _{\underline{j}'\leq \underline{j}}
\left \{ \begin{smallmatrix}   \underline{j} \\   \underline{j}' \\ \end{smallmatrix} \right \}
[\underline{\partial} ^{<(\underline{0},\underline{j}-\underline{j}')> _{(m)}}
(b)] _Z
\underline{\partial} ^{<\underline{j}'> _{(m)}}
\underline{\partial} ^{<\underline{l}> _{(m)}}\\
\label{1.1.5.3}
=
\sum _{\underline{j}'\leq \underline{j}}
\left \{ \begin{smallmatrix}   \underline{j} \\   \underline{j}' \\ \end{smallmatrix} \right \}
\underline{\partial} ^{<(\underline{j}-\underline{j}'> _{(m)}}
([b] _Z)
\underline{\partial} ^{<\underline{j}'> _{(m)}}
\underline{\partial} ^{<\underline{l}> _{(m)}}
=\underline{\partial} ^{<(\underline{j})> _{(m)}}[b] _Z \underline{\partial} ^{<\underline{l}> _{(m)}}.
\end{gather}
D'après \ref{fcoro-Z-X-actionpartial} utilisé pour $\E = \O_X$, pour tout $\underline{j}''\leq \underline{j}$, on dispose de la formule 
$\underline{\partial} ^{<\underline{j}''> _{(m)}} ([b] _Z)
=
[\underline{\partial} ^{<(\underline{0},\underline{j}'')> _{(m)}}(b)] _Z$, ce qui donne la première égalité de \ref{1.1.5.3}.
La dernière résulte de 
\cite[2.2.4.(iv)]{Be1} sur $D ^{(m)}  _Z$.
D'où le résultat.
\end{proof}

\begin{prop}
\label{sigma-bil}
Les éléments de
$D ^{(m)} _{X}$ s'écrivent de manière unique de la forme
$\sum _{\underline{i} \in \N ^{r}} 
P _{\underline{i}}
\underline{\partial} ^{<(\underline{i},\underline{0})> _{(m)}}
$,
avec 
$P _{\underline{i}} \in D ^{(m)} _{Z,X}$ nuls sauf pour un nombre fini.
L'isomorphisme $\overline{\sigma} _{Z,X}$ est en fait $D ^{(m)} _{Z}$-linéaire et 
se caractérise par la formule:
\begin{align}
\notag
D ^{(m)} _{Z \hookrightarrow X}
=D ^{(m)} _{X} /I D ^{(m)} _{X} 
& \riso
D ^{(m)} _{Z}  \otimes _{O _S} O _S  \{\partial _1,\dots, \partial _r \} ^{(m)}
\\
\label{D_(Z->X)}
[\sum _{\underline{i} \in \N ^{r}} 
P _{\underline{i}}
\underline{\partial} ^{<(\underline{i},\underline{0})> _{(m)}}] _Z&
\mapsto
\sum _{\underline{i} \in \N ^{r}} 
\overline{\sigma} _{Z,X} ([P _{\underline{i}}] _Z )
\otimes
\underline{\partial} ^{<\underline{i}> _{(m)}}.
\end{align}

Via cet isomorphisme $\overline{\sigma} _{Z,X}$,
on munit alors 
$D ^{(m)} _{Z}  \otimes _{O _S} O _S  \{\partial _1,\dots, \partial _r \} ^{(m)}$ 
d'une structure canonique de 
$(D ^{(m)} _{Z}, D ^{(m)} _{X})$-bimodule
prolongeant sa structure de $D ^{(m)} _{Z}$-module à gauche.
Le morphisme $\sigma _{Z,X}$ est alors $D ^{(m)} _{X}$-linéaire à droite.

\end{prop}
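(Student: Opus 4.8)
Le plan est d'\'etablir successivement les quatre assertions. Pour l'unicit\'e de l'\'ecriture $\sum_{\underline{i}}P_{\underline{i}}\underline{\partial}^{<(\underline{i},\underline{0})>_{(m)}}$, j'utiliserais la commutativit\'e des op\'erateurs $\partial_{l}$ d'indices distincts : comme $\partial_{l}^{<a>_{(m)}}\partial_{l'}^{<b>_{(m)}}=\partial_{l'}^{<b>_{(m)}}\partial_{l}^{<a>_{(m)}}$ pour $l\neq l'$, on a $\underline{\partial}^{<\underline{k}>_{(m)}}=\underline{\partial}^{<(\underline{0},\underline{j})>_{(m)}}\,\underline{\partial}^{<(\underline{i},\underline{0})>_{(m)}}$ lorsque $\underline{k}=(\underline{i},\underline{j})$. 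En regroupant selon $\underline{i}$ l'\'ecriture standard $\sum_{\underline{k}}a_{\underline{k}}\underline{\partial}^{<\underline{k}>_{(m)}}$ d'un \'el\'ement de $D^{(m)}_{X}$, on obtient une \'ecriture de la forme voulue, o\`u $P_{\underline{i}}:=\sum_{\underline{j}}a_{(\underline{i},\underline{j})}\underline{\partial}^{<(\underline{0},\underline{j})>_{(m)}}\in D^{(m)}_{Z,X}$ (forme normale des \'el\'ements de $D^{(m)}_{Z,X}$ rappel\'ee en \ref{nota-ident-DZX-DZ}) ; r\'eciproquement, en \'ecrivant chaque $P_{\underline{i}}$ sous cette forme normale, $\sum_{\underline{i}}P_{\underline{i}}\underline{\partial}^{<(\underline{i},\underline{0})>_{(m)}}$ redonne une \'ecriture standard, d'o\`u l'unicit\'e, celle-ci \'etant unique.

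La formule \ref{D_(Z->X)} se v\'erifie alors en appliquant la d\'efinition de $\overline{\sigma}_{Z,X}$ \`a un \'el\'ement $P_{\underline{i}}\underline{\partial}^{<(\underline{i},\underline{0})>_{(m)}}$ avec $P_{\underline{i}}\in D^{(m)}_{Z,X}$, puis par additivit\'e, compte tenu de la factorisation \ref{DZXmodI} de $\overline{\sigma}_{Z,X}$ sur $D^{(m)}_{Z,X}/ID^{(m)}_{Z,X}$. Le c\oe ur de la d\'emonstration est la $D^{(m)}_{Z}$-lin\'earit\'e \`a gauche de $\overline{\sigma}_{Z,X}$, la structure de $D^{(m)}_{Z}$-module \`a gauche sur la source $D^{(m)}_{Z\hookrightarrow X}=D^{(m)}_{X}/ID^{(m)}_{X}$ \'etant la structure canonique issue de $u^{*}$, et celle sur $D^{(m)}_{Z}\otimes_{O_{S}}O_{S}\{\partial_{1},\dots,\partial_{r}\}^{(m)}$ \'etant la multiplication \`a gauche sur le facteur $D^{(m)}_{Z}$. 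Par additivit\'e en les deux variables et gr\^ace \`a la premi\`ere assertion, il suffit de v\'erifier, pour $Q\in D^{(m)}_{Z}$, pour $\underline{i}\in\N^{r}$ fix\'e et pour $P\in D^{(m)}_{Z,X}$, l'\'egalit\'e $\overline{\sigma}_{Z,X}(Q\cdot[P\underline{\partial}^{<(\underline{i},\underline{0})>_{(m)}}]_{Z})=Q\cdot\overline{\sigma}_{Z,X}([P\underline{\partial}^{<(\underline{i},\underline{0})>_{(m)}}]_{Z})$.

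Comme $\overline{\sigma}_{Z,X}\colon D^{(m)}_{Z,X}/ID^{(m)}_{Z,X}\riso D^{(m)}_{Z}$ est surjective (\ref{DZXmodI}), j'\'ecrirais $Q=\sigma_{Z,X}(P')$ avec $P'\in D^{(m)}_{Z,X}$. Le lemme \ref{Z-X-actionpartial} appliqu\'e \`a $\E=\D^{(m)}_{X}$ donne alors $Q\cdot[P\underline{\partial}^{<(\underline{i},\underline{0})>_{(m)}}]_{Z}=[P'P\underline{\partial}^{<(\underline{i},\underline{0})>_{(m)}}]_{Z}$ ; comme $P'P\in D^{(m)}_{Z,X}$, la formule \ref{D_(Z->X)} jointe au fait que $\overline{\sigma}_{Z,X}$ est un morphisme d'alg\`ebres (\ref{DZXmodI-ann}) identifie le membre de gauche \`a $(Q\,\overline{\sigma}_{Z,X}([P]_{Z}))\otimes\underline{\partial}^{<\underline{i}>_{(m)}}$, qui n'est autre que $Q\cdot\overline{\sigma}_{Z,X}([P\underline{\partial}^{<(\underline{i},\underline{0})>_{(m)}}]_{Z})$ d'apr\`es \ref{D_(Z->X)}. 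L'obstacle principal sera d'articuler correctement les trois lemmes pr\'eliminaires \ref{Z-X-actionpartial}, \ref{DZXmodI} et \ref{DZXmodI-ann}, et d'op\'erer proprement la r\'eduction au cas d'un mono-indice $\underline{i}$.

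Enfin, la structure de bimodule s'obtiendra par transport, via l'isomorphisme $\overline{\sigma}_{Z,X}$, de la structure de $(\D^{(m)}_{Z},u^{-1}\D^{(m)}_{X})$-bimodule d\'ej\`a connue sur $\D^{(m)}_{Z\hookrightarrow X}=u^{*}\D^{(m)}_{X}$ : la $D^{(m)}_{Z}$-lin\'earit\'e \`a gauche \'etablie ci-dessus garantit que la structure droite ainsi transport\'ee commute \`a la structure de $D^{(m)}_{Z}$-module \`a gauche canonique de $D^{(m)}_{Z}\otimes_{O_{S}}O_{S}\{\partial_{1},\dots,\partial_{r}\}^{(m)}$ et la prolonge en une structure de bimodule. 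Comme $\sigma_{Z,X}$ est la compos\'ee de $\overline{\sigma}_{Z,X}$ avec la projection canonique $D^{(m)}_{X}\to D^{(m)}_{X}/ID^{(m)}_{X}$, qui est $D^{(m)}_{X}$-lin\'eaire \`a droite ($ID^{(m)}_{X}$ \'etant un id\'eal \`a droite, et la structure droite sur $u^{*}\D^{(m)}_{X}$ \'etant donn\'ee par la multiplication \`a droite), on en d\'eduit qu'elle est $D^{(m)}_{X}$-lin\'eaire \`a droite.
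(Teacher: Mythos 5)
Votre d\'emonstration est correcte et suit essentiellement la m\^eme d\'emarche que celle du texte : la $D^{(m)}_{Z}$-lin\'earit\'e d\'ecoule, comme dans l'article, de l'isomorphisme d'anneaux \ref{DZXmodI-ann} combin\'e \`a la formule \ref{fcoro-Z-X-actionpartial} appliqu\'ee \`a $\E=\D^{(m)}_{X}$, et le reste (unicit\'e de l'\'ecriture, transport de la structure de bimodule) est standard. Vous avez simplement rendu explicites les \'etapes interm\'ediaires que le texte omet.
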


\begin{proof}
La $D ^{(m)} _{Z}$-linéarité résulte de l'isomorphisme d'anneaux du lemme \ref{DZXmodI-ann}
et de la formule \ref{fcoro-Z-X-actionpartial} utilisée pour 
$\E = \D ^{(m)} _{X}$.
\end{proof}

\begin{nota}
On identifie $\omega _X$ et $\O _X$, 
$\omega _Z$ et $\O _Z$ (ces identifications dépendent du choix des coordonnées locales que l'on a fixées une bonne fois pour toute dans cette section).
Modulo ces identifications, on obtient l'égalité
$D ^{(m)} _{X \hookleftarrow Z} = D ^{(m)} _{X} / D ^{(m)} _{X} I$.
On vérifie par construction que la structure de $ D ^{(m)} _{X}$-module à gauche sur $D ^{(m)} _{X \hookleftarrow Z} $
correspond à la structure de $ D ^{(m)} _{X}$-module à gauche sur $D ^{(m)} _{X} / D ^{(m)} _{X}I$.
On munit ainsi, via cette identification,
$D ^{(m)} _{X} /D ^{(m)} _{X} I$
d'une structure de 
$(D ^{(m)} _{X}, D ^{(m)} _{Z})$-bimodule prolongeant sa structure canonique de $D ^{(m)} _{X}$-module à gauche.
Pour tout $P \in D ^{(m)} _{X}$, on note $[P ] ' _Z$ son image dans 
$D ^{(m)} _{X} / D ^{(m)} _{X} I$.

Comme $I D ^{(m)} _{X}  \cap D ^{(m)} _{Z,X} = I D ^{(m)} _{Z,X}=  D ^{(m)} _{Z,X} I= D ^{(m)} _{X} I \cap D ^{(m)} _{Z,X}$,
on obtient alors pour tout $P \in D ^{(m)} _{Z,X}$ la formule
$[P ]  _Z= [P ] ' _Z$.
\end{nota}

\begin{nota}
\label{tau(m)ZXS}
Considérons $O _S \{\partial _1,\dots, \partial _r \} ^{(m)} \otimes _{O _S} D ^{(m)} _{Z}   $
muni de sa structure canonique de $D ^{(m)} _{Z}   $-module à droite.
On construit le morphisme 
$\tau ^{(m)} _{Z,X/S}\,:\, D ^{(m)} _{X}
\to
O _S \{\partial _1,\dots, \partial _r \} ^{(m)} \otimes _{O _S} D ^{(m)} _{Z}   $
en posant
$$\sum _{\underline{k} \in \N ^n} \underline{\partial} ^{<\underline{k}> _{(m)}} a _{\underline{k}} 
\mapsto
\sum _{\underline{i} \in \N ^{r}} 
\left (  \underline{\partial} ^{<\underline{i}> _{(m)}}
\otimes 
\sum _{\underline{j} \in \N ^{n-r}}
\underline{\partial} ^{<\underline{j}> _{(m)}} 
[a _{(\underline{i},\underline{j})}] _Z
\right ),
$$
les $a _{\underline{k}}\in O _X $ étant nuls sauf pour un nombre fini.
S'il n'y a pas de doute sur la base et le niveau, on notera simplement $\tau _{Z,X}$.
De plus, les morphismes
$\tau ^{(m)}  _{Z,X/S}$ commutent aux changements de base et de niveau, i.e., 
on dispose du diagramme commutatif analogue à 
\ref{sigma-chgt-niv}.

En considérant la structure de 
$O _X$-module induite par la structure de 
$D ^{(m)} _{X} $-module à droite
sur $D ^{(m)} _{X} $ (resp. induite par la structure de $D ^{(m)} _{Z} $-module à droite sur $O _S \{\partial _1,\dots, \partial _r \} ^{(m)} \otimes _{O _S} D ^{(m)} _{Z} $), 
on vérifie que 
$\tau _{Z,X}$ est $O _X$-linéaire (pour les structures droites, à ne pas confondre avec
la linéarité comme $D ^{(m)} _{X} $-modules à gauche de \ref{tau-DZlin}).
Le morphisme $\tau _{Z,X}$ est surjective, 
de noyau $D ^{(m)} _{X}I$.
On notera 
$\overline{\tau} ^{(m)}_{Z,X/S}\,:\,D ^{(m)} _{X}/ D ^{(m)} _{X} I
\riso
O _S \{\partial _1,\dots, \partial _r \} ^{(m)} \otimes _{O _S} D ^{(m)} _{Z}   $
ou simplement $\overline{\tau} _{Z,X}$
l'isomorphisme $O _Z$-linéaire induit.

\end{nota}

\begin{lemm}
\label{tau=sigma}
L'isomorphisme de la forme 
$D ^{(m)} _{Z,X} /D ^{(m)} _{Z,X} I \riso D ^{(m)} _{Z}$
induit par $\tau _{Z,X}$ est identique à celui induit par $\sigma _{Z,X}$.
\end{lemm}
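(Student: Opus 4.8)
Le plan est de comparer directement ces deux isomorphismes au niveau des \'el\'ements. Soit $P \in D ^{(m)} _{Z,X}$ ; d'apr\`es \ref{nota-ident-DZX-DZ} on l'\'ecrit de mani\`ere unique sous les deux formes $P = \sum _{\underline{j} \in \N ^{n-r}} b _{\underline{j}} \underline{\partial} ^{<(\underline{0},\underline{j})> _{(m)}} = \sum _{\underline{j} \in \N ^{n-r}} \underline{\partial} ^{<(\underline{0},\underline{j})> _{(m)}} c _{\underline{j}}$, avec $b _{\underline{j}}, c _{\underline{j}} \in O _X$ nuls sauf un nombre fini. Par d\'efinition de $\sigma _{Z,X}$ et de $\tau _{Z,X}$ et via les identifications de \ref{nota-ident-DZX-DZ}, l'image de $[P] _Z$ par l'isomorphisme \ref{DZXmodI} est $\sum _{\underline{j}} [b _{\underline{j}}] _Z \, \underline{\partial} ^{<\underline{j}> _{(m)}} \in D ^{(m)} _{Z}$, tandis que son image par l'isomorphisme induit par $\tau _{Z,X}$ (dont l'existence r\'esulte du calcul de son noyau $D ^{(m)} _{X} I \cap D ^{(m)} _{Z,X} = D ^{(m)} _{Z,X} I$) est $\sum _{\underline{j}} \underline{\partial} ^{<\underline{j}> _{(m)}} [c _{\underline{j}}] _Z \in D ^{(m)} _{Z}$. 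Il s'agit donc d'\'etablir l'\'egalit\'e de ces deux \'el\'ements de $D ^{(m)} _{Z}$.

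Pour cela, j'appliquerais la formule de commutation \cite[2.2.4.(iv)]{Be1} \`a chaque $\underline{\partial} ^{<(\underline{0},\underline{j})> _{(m)}} c _{\underline{j}}$ dans $D ^{(m)} _{X}$ ; l'unicit\'e de l'\'ecriture dans $D ^{(m)} _{X}$ fournit alors, pour tout $\underline{j}'$, la relation $b _{\underline{j}'} = \sum _{\underline{j} \geq \underline{j}'} \left\{ \begin{smallmatrix} \underline{j} \\ \underline{j}' \end{smallmatrix} \right\} \underline{\partial} ^{<(\underline{0},\underline{j}-\underline{j}')> _{(m)}} (c _{\underline{j}})$. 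On lui applique la surjection $[-] _Z$, puis la formule \ref{fcoro-Z-X-actionpartial} du lemme \ref{Z-X-actionpartial} prise pour $\E = \O _X$ et $P = \underline{\partial} ^{<(\underline{0},\underline{j}-\underline{j}')> _{(m)}}$, qui donne $[\underline{\partial} ^{<(\underline{0},\underline{j}-\underline{j}')> _{(m)}} (c _{\underline{j}})] _Z = \underline{\partial} ^{<\underline{j}-\underline{j}'> _{(m)}} ([c _{\underline{j}}] _Z)$ ; en resommant sur $\underline{j}'$ on obtient dans $D ^{(m)} _{Z}$
$$\sum _{\underline{j}'} [b _{\underline{j}'}] _Z \, \underline{\partial} ^{<\underline{j}'> _{(m)}} = \sum _{\underline{j}} \sum _{\underline{j}' \leq \underline{j}} \left\{ \begin{smallmatrix} \underline{j} \\ \underline{j}' \end{smallmatrix} \right\} \underline{\partial} ^{<\underline{j}-\underline{j}'> _{(m)}} ([c _{\underline{j}}] _Z) \, \underline{\partial} ^{<\underline{j}'> _{(m)}} = \sum _{\underline{j}} \underline{\partial} ^{<\underline{j}> _{(m)}} [c _{\underline{j}}] _Z,$$
la derni\`ere \'egalit\'e r\'esultant \`a nouveau de \cite[2.2.4.(iv)]{Be1} appliqu\'ee cette fois dans $D ^{(m)} _{Z}$. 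D'o\`u le lemme.

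Une variante consisterait \`a v\'erifier d'abord, par l'argument sym\'etrique de celui du lemme \ref{DZXmodI-ann}, que l'isomorphisme induit par $\tau _{Z,X}$ est lui aussi un isomorphisme de $O _Z$-alg\`ebres, puis \`a comparer les deux morphismes d'anneaux sur les g\'en\'erateurs $[a] _Z$ ($a \in O _X$) et $[\underline{\partial} ^{<(\underline{0},\underline{j})> _{(m)}}] _Z$ de la $O _X$-alg\`ebre $D ^{(m)} _{Z,X} / D ^{(m)} _{Z,X} I$, sur lesquels les deux valent respectivement $[a] _Z$ et $\underline{\partial} ^{<\underline{j}> _{(m)}}$. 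Dans les deux approches, le seul point d\'elicat est de constater que les coefficients binomiaux divis\'es $\left\{ \begin{smallmatrix} \underline{j} \\ \underline{j}' \end{smallmatrix} \right\}$ de la formule de commutation sont litt\'eralement les m\^emes dans $D ^{(m)} _{X}$ (pour les op\'erateurs $\underline{\partial} ^{<(\underline{0},\underline{j})> _{(m)}}$) et dans $D ^{(m)} _{Z}$ (pour les $\underline{\partial} ^{<\underline{j}> _{(m)}}$), ce qu'assure pr\'ecis\'ement la compatibilit\'e \ref{fcoro-Z-X-actionpartial} entre les actions sur $O _X$ et sur $O _Z$ ; le reste n'est qu'un suivi d'indices de routine.
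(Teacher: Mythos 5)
Your proof is correct. It differs slightly in emphasis from the paper's own, so let me compare. The paper reduces, by $O_S$-linearity of both $\sigma_{Z,X}$ and $\tau_{Z,X}$, to a single ``right monomial'' $P=\underline{\partial}^{<(\underline{0},\underline{j})>_{(m)}}b$, for which $\tau_{Z,X}(P)=\underline{\partial}^{<\underline{j}>_{(m)}}[b]_Z$ falls out of the definition of $\tau$; it then computes $\sigma_{Z,X}(P)=\sigma_{Z,X}(\underline{\partial}^{<(\underline{0},\underline{j})>_{(m)}})\,\sigma_{Z,X}(b)=\underline{\partial}^{<\underline{j}>_{(m)}}[b]_Z$ by invoking Lemma~\ref{DZXmodI-ann} (the ring-homomorphism property of $\sigma_{Z,X}$) as a black box. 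You, in contrast, do not reduce to a monomial: you write $P$ in both normal forms, derive the coefficient relation $b_{\underline{j}'}=\sum_{\underline{j}\geq\underline{j}'}\left\{\begin{smallmatrix}\underline{j}\\\underline{j}'\end{smallmatrix}\right\}\underline{\partial}^{<(\underline{0},\underline{j}-\underline{j}')>_{(m)}}(c_{\underline{j}})$ from \cite[2.2.4.(iv)]{Be1}, push it through $[-]_Z$ using Lemma~\ref{Z-X-actionpartial}, and re-assemble with the same commutation formula in $D^{(m)}_Z$. In effect you are re-deriving the content of Lemma~\ref{DZXmodI-ann} inline rather than citing it, which makes your argument more self-contained but longer; your ``variante'' at the end — first noting that the $\tau$-induced map is also an $O_Z$-algebra isomorphism, then comparing the two on generators — is actually the closest to the paper's proof in spirit. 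Both are sound, and you have correctly identified the genuinely delicate point, namely that the compatibility in \ref{fcoro-Z-X-actionpartial} is what makes the divided binomial coefficients match between $D^{(m)}_X$ and $D^{(m)}_Z$.
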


\begin{proof}
Soit $P \in D ^{(m)} _{Z,X}$. 
Il s'agit de prouver $\sigma _{Z,X} (P ) =\tau _{Z,X} (P ) $.
Par $\O _S$-linéarité de $\sigma _{Z,X}$ et $\tau _{Z,X}$, il suffit de considérer le cas où
$P=\underline{\partial} ^{<(\underline{0},\underline{j})> _{(m)}}b$, avec $\underline{j} \in \N ^{n-r}$ et $b \in O _X$.
Or, il résulte de \ref{DZXmodI-ann} la première égalité: 
$\sigma _{Z,X} (P )  =\sigma _{Z,X} (\underline{\partial} ^{<(\underline{0},\underline{j})> _{(m)}} )  \sigma _{Z,X} (b ) 
=
\underline{\partial} ^{<\underline{j}> _{(m)}} [b] _Z=
\tau _{Z,X} (P )  $.
\end{proof}

\begin{nota}
\label{rappel-adjoint}
Soient $\E$ un $\D _X$-module à gauche, $\M$ un $\D _X$-module à droite, 
$x \in E$, $y \in \M$, $P\in D _X$.
On rappelle que, via l'identification entre $\O _X$ et $\omega _X$ (resp. entre $\O _X$ et  $\omega _X ^{-1}$) 
l'action de $P$ sur $x$ (resp. $y$) pour la structure de 
$\D _X$-module à droite (resp. à gauche) tordue de $\E \otimes _{\O _X} \omega _X$ 
(resp. de $\M \otimes _{\O _X} \omega _X ^{-1}$) 
est $\overset{^t}{} P \cdot x$ (resp. $y \cdot \overset{^t}{} P)$,
où {\og $ \overset{^t}{} P$\fg} désigne l'adjoint de $P$ (voir \cite[1.2]{Be2}).
Il est immédiat que l'on dispose pour tout $P \in D ^{(m)} _{Z,X}$ de la formule 
$ \overset{^t}{} \overline{\sigma} _{Z,X} ([P ] _Z )  = \overline{\tau} _{Z,X} ( [\overset{^t}{} P] _Z ) $.
Avec \ref{tau=sigma}, on obtient ainsi
\begin{equation}
\label{adjoint-comm-sigma}
 \overset{^t}{} \overline{\sigma} _{Z,X} ([P ] _Z )  = \overline{\sigma} _{Z,X} ( [\overset{^t}{} P] _Z ).
\end{equation}
\end{nota}

\begin{lemm}
\label{calcul-local-action-transp}
Soient $\M$ un $\D ^{(m)} _X$-module à droite, 
 $y\in M$,
$P \in D ^{(m)} _{Z,X}$.
Via l'identification de 
$\Gamma (Z, u ^{*}  (\M \otimes _{\O _X} \omega _X ^{-1}) \otimes _{\O _Z} \omega _{Z})$
avec $M /M I$, 
ce dernier est muni d'une structure canonique 
de $D ^{(m)} _Z$-module à droite. 
Notons 
$[ -] '_Z$ la surjection canonique $M \to M /MI$.
On a la formule  
\begin{equation}
\label{fcoro-Z-X-actionpartial-trans}
[y] '_Z \cdot \overline{\sigma} _{Z,X} ([P ] _Z )			    = 				[y \cdot P ] '_Z.
\end{equation}
\end{lemm}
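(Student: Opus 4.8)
The plan is to obtain \ref{fcoro-Z-X-actionpartial-trans} from its left-module counterpart \ref{fcoro-Z-X-actionpartial} (Lemme \ref{Z-X-actionpartial}) by transposition, using the adjunction formulas recalled in \ref{rappel-adjoint}. Set $\E := \M \otimes _{\O _X} \omega _X ^{-1}$, which is a left $\D ^{(m)} _X$-module. Via the identifications of $\omega _X$ with $\O _X$ and of $\omega _Z$ with $\O _Z$ fixed in this section, the underlying $\O _X$-module of $\E$ is $\M$ itself, and by \ref{rappel-adjoint} the left action of $P \in D ^{(m)} _X$ on $y \in E = M$ is $P \cdot y = y \cdot {}^{t}P$. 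In particular, since ${}^{t}t _i = t _i$ for $1 \leq i \leq r$, one gets $IE = MI$, so $E/IE = M/MI$ and the two canonical surjections $[-] _Z$ and $[-] ' _Z$ coincide on $M$. Moreover $u ^{*} (\M \otimes _{\O _X} \omega _X ^{-1}) \otimes _{\O _Z} \omega _Z = u ^{*} (\E) \otimes _{\O _Z} \omega _Z$, and the right $\D ^{(m)} _Z$-module structure carried by its global sections $M/MI$ is, by the very definition of the twist by $\omega _Z$ (again \ref{rappel-adjoint}, now on $Z$), the transpose of the left $\D ^{(m)} _Z$-module structure on $u ^{*} (\E)$: for $Q \in D ^{(m)} _Z$ and $z \in \Gamma (Z, u ^{*} (\E)) = E/IE$ one has $z \cdot Q = {}^{t}Q \cdot z$, the right-hand action being the left action on $u ^{*} (\E)$.

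First I would record that ${}^{t}P \in D ^{(m)} _{Z,X}$ whenever $P \in D ^{(m)} _{Z,X}$, the adjoint being an $O _S$-algebra anti-automorphism of $D ^{(m)} _X$ which fixes $O _X$ and carries each $\partial _i ^{<k> _{(m)}}$ with $i > r$ to an $O _X$-linear combination of the $\partial _i ^{<k'> _{(m)}}$ with $k' \leq k$; this is already implicit in the statement of \ref{adjoint-comm-sigma}. Then the desired identity follows by stringing together: the description of the right $\D ^{(m)} _Z$-structure on $M/MI$ just recalled, then \ref{adjoint-comm-sigma} together with the factorization \ref{DZXmodI} of $\sigma _{Z,X}$ through $\overline{\sigma} _{Z,X}$, then Lemme \ref{Z-X-actionpartial} applied to the left $\D ^{(m)} _X$-module $\E$, the element $y$ and the operator ${}^{t}P$, and finally the involutivity of the adjoint together with $[-] _Z = [-] ' _Z$:
\[
[y] ' _Z \cdot \overline{\sigma} _{Z,X} ([P] _Z)
= {}^{t}\bigl( \overline{\sigma} _{Z,X} ([P] _Z) \bigr) \cdot [y] _Z
= \sigma _{Z,X} ({}^{t}P) \cdot [y] _Z
= [ {}^{t}P \cdot y ] _Z
= [ y \cdot P ] ' _Z .
\]

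I do not expect a genuine obstacle here: all the inputs — the adjunction formulas \ref{rappel-adjoint} and \ref{adjoint-comm-sigma}, and Lemme \ref{Z-X-actionpartial} — are already in hand, and what remains is purely a matter of keeping track of left versus right structures and of the coordinate-dependent identifications of $\omega _X$ with $\O _X$ and of $\omega _Z$ with $\O _Z$; the one step deserving a line of justification is that the right $\D ^{(m)} _Z$-module structure on $M/MI$ coming from $u ^{*} (\M \otimes _{\O _X} \omega _X ^{-1}) \otimes _{\O _Z} \omega _Z$ really is the transpose of the left structure on $u ^{*} (\E)$, which is exactly what \ref{rappel-adjoint} delivers. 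Should one prefer to avoid transposition, an alternative is to argue directly as in the proof of \ref{Z-X-actionpartial}: both sides of \ref{fcoro-Z-X-actionpartial-trans} are $O _X$-linear in $P$ for the right structure (writing $P = \sum _{\underline{j}} \underline{\partial} ^{<(\underline{0},\underline{j})> _{(m)}} b _{\underline{j}}$), so one reduces to $P = \underline{\partial} ^{<(\underline{0},\underline{j})> _{(m)}}$, and there the identity drops out of the definition of the right $\D ^{(m)} _Z$-module structure on $u ^{*} (\M \otimes _{\O _X} \omega _X ^{-1}) \otimes _{\O _Z} \omega _Z$ built from the co-stratification attached to $\M$.
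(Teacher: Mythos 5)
Your proof is correct and takes essentially the same route as the paper's: set $\E := \M \otimes_{\O_X} \omega_X^{-1}$, pass to the corresponding statement in $E/IE$ via the adjunction recipe of \ref{rappel-adjoint}, and then conclude by stringing together \ref{adjoint-comm-sigma} and Lemme \ref{Z-X-actionpartial}. The only cosmetic difference is that the paper keeps an explicit symbol $x \in E$ for the element corresponding to $y \in M$ and records the commutative diagram \ref{diag-[]'[]}, whereas you identify $E$ with $M$ outright and note $IE = MI$; both bookkeeping choices lead to the same chain of equalities.
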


\begin{proof}

Posons $\E := \M \otimes _{\O _X} \omega _X ^{-1}$.
Afin de clarifier la preuve, 
les identifications entre $\omega _X ^{-1}$ et $\O _X$ ou 
entre $\omega _Z$ et $\O _Z$ seront indiquées explicitement
par le symbole $\leftrightarrow$. 
On dispose ainsi du diagramme commutatif par définition:
\begin{equation}
\label{diag-[]'[]}
\xymatrix{
{M}                         \ar[d] ^-{[-] ' _Z}  \ar[r] ^-{}				& { E} \ar[l] ^-{} \ar[d] ^-{[-]  _Z}
\\ 
{M/MI}                    \ar[r] ^-{}				& { E/IE.} \ar[l] ^-{}
}
\end{equation}
Nous allons maintenant traduire (puis vérifier) l'égalité \ref{fcoro-Z-X-actionpartial-trans}
correspondant dans $E /IE$.  
Notons $x \in E$,
l'élément correspondant à $y \in M$. 
En regardant le diagramme \ref{diag-[]'[]}, 
l'élément de $E /IE$ correspondant à 
$[y] '_Z $ est $[x ] _Z$. 
Avec les rappels de \ref{rappel-adjoint}, cela implique que l'élément de $E /IE$ qui correspond à 
$ [y] '_Z \cdot  \overline{\sigma} _{Z,X} ([P ] _Z )$
 est
$\overset{^t}{} ( \overline{\sigma} _{Z,X} ([P ] _Z )) \cdot [x] _Z$.
D'un autre côté, avec \ref{rappel-adjoint}, 
l'élément de $E$ correspond à 
$y \cdot P $ est $\overset{^t}{}  P \cdot x$. 
Via le diagramme commutatif \ref{diag-[]'[]},
il en résulte que l'élément de $E /IE$ qui correspond à 
$[y \cdot P ] '_Z$ est $[\overset{^t}{}  P \cdot x ] _Z $.

Il s'agit ainsi de vérifier dans $E /IE$ l'égalité : 
$\overset{^t}{} ( \overline{\sigma} _{Z,X} ([P ] _Z )) \cdot [x] _Z
=
[\overset{^t}{}  P \cdot x ] _Z $.
Cela découle de \ref{adjoint-comm-sigma} puis \ref{Z-X-actionpartial}
qui induisent les égalités
$ \overset{^t}{} \overline{\sigma} _{Z,X} ([P ] _Z ) \cdot [x] _Z
=
\overline{\sigma} _{Z,X} ( [\overset{^t}{} P] _Z )\cdot [x] _Z
 = 
 [\overset{^t}{}  P \cdot x ] _Z $.
\end{proof}

\begin{prop}
\label{tau-DZlin}
Les éléments de
$D ^{(m)} _{X}$ s'écrivent de manière unique de la forme
$\sum _{\underline{i} \in \N ^{r}} 
\underline{\partial} ^{<(\underline{i},\underline{0})> _{(m)}}
P _{\underline{i}}
$,
avec 
$P _{\underline{i}} \in D ^{(m)} _{Z,X}$ nuls sauf pour un nombre fini.
L'isomorphisme $\overline{\tau} _{Z,X}$ est en fait $D ^{(m)} _{Z}   $-linéaire et est caractérisé par la formule:
\begin{align}
\notag
\overline{\tau} _{Z,X}\,:\,
 D ^{(m)} _{X \hookleftarrow Z} = D ^{(m)} _{X}/D ^{(m)} _{X} I
&
\riso
O _S \{\partial _1,\dots, \partial _r \} ^{(m)} \otimes _{O _S} D ^{(m)} _{Z}  
\\
\label{D_(X<-Z)}
[\sum _{\underline{i} \in \N ^{r}} 
\underline{\partial} ^{<(\underline{i},\underline{0})> _{(m)}} P _{\underline{i}}] ^\prime _Z&
\mapsto
\sum _{\underline{i} \in \N ^{r}} 
\underline{\partial} ^{<(\underline{i})> _{(m)}}
\otimes
\overline{\sigma} _{Z,X} ([P _{\underline{i}}] _Z ).
\end{align}
Via cet isomorphisme $\overline{\tau} _{Z,X}$,
on munit alors 
$O _S \{\partial _1,\dots, \partial _r \} ^{(m)} \otimes _{O _S} D ^{(m)} _{Z}  $ 
d'une structure canonique de 
$(D ^{(m)} _{X},D ^{(m)} _{Z})$-bimodule
prolongeant sa structure de $D ^{(m)} _{Z}$-module à droite.
Le morphisme $\tau _{Z,X}$ est alors $D ^{(m)} _{X}$-linéaire.
\end{prop}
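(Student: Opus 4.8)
The plan is to mirror the proof of Proposition \ref{sigma-bil}, replacing $\sigma_{Z,X}$ by $\tau_{Z,X}$ and the left-module computation \ref{Z-X-actionpartial} by its right-module counterpart \ref{calcul-local-action-transp}.

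First I would establish the uniqueness of the writing $\sum_{\underline{i} \in \N^r} \underline{\partial}^{<(\underline{i},\underline{0})>_{(m)}} P_{\underline{i}}$: starting from the unique expression $\sum_{\underline{k} \in \N^n} \underline{\partial}^{<\underline{k}>_{(m)}} a_{\underline{k}}$ of an element of $D^{(m)}_X$ (uniqueness from \cite[2.2.4.(iv)]{Be1}), I split $\underline{k} = (\underline{i},\underline{j})$ with $\underline{i} \in \N^r$, $\underline{j} \in \N^{n-r}$, use $\underline{\partial}^{<(\underline{i},\underline{j})>_{(m)}} = \underline{\partial}^{<(\underline{i},\underline{0})>_{(m)}} \underline{\partial}^{<(\underline{0},\underline{j})>_{(m)}}$ and set $P_{\underline{i}} := \sum_{\underline{j}} \underline{\partial}^{<(\underline{0},\underline{j})>_{(m)}} a_{(\underline{i},\underline{j})} \in D^{(m)}_{Z,X}$; uniqueness then follows from that of the $a_{\underline{k}}$. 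The characterizing formula \ref{D_(X<-Z)} for $\overline{\tau}_{Z,X}$ is then immediate: expanding $\underline{\partial}^{<(\underline{i},\underline{0})>_{(m)}} P_{\underline{i}} = \sum_{\underline{j}} \underline{\partial}^{<(\underline{i},\underline{j})>_{(m)}} a_{(\underline{i},\underline{j})}$, the definition of $\tau_{Z,X}$ in \ref{tau(m)ZXS} gives $\underline{\partial}^{<\underline{i}>_{(m)}} \otimes \sum_{\underline{j}} \underline{\partial}^{<\underline{j}>_{(m)}} [a_{(\underline{i},\underline{j})}]_Z$, and the inner sum is $\sigma_{Z,X}(P_{\underline{i}}) = \overline{\sigma}_{Z,X}([P_{\underline{i}}]_Z)$ by \ref{tau=sigma} (or directly by the definition of $\tau_{Z,X}$ on $D^{(m)}_{Z,X}$).

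Next, for the $D^{(m)}_Z$-linearity on the right: since $\overline{\sigma}_{Z,X} : D^{(m)}_{Z,X}/ID^{(m)}_{Z,X} \riso D^{(m)}_Z$ is surjective, any $R \in D^{(m)}_Z$ can be written $R = \overline{\sigma}_{Z,X}([P]_Z)$ with $P \in D^{(m)}_{Z,X}$. Applying \ref{calcul-local-action-transp} with $\M = \D^{(m)}_X$ gives, for $Q \in D^{(m)}_X$, the relation $[Q]'_Z \cdot R = [QP]'_Z$. Writing $Q = \sum_{\underline{i}} \underline{\partial}^{<(\underline{i},\underline{0})>_{(m)}} Q_{\underline{i}}$ with $Q_{\underline{i}} \in D^{(m)}_{Z,X}$, one has $QP = \sum_{\underline{i}} \underline{\partial}^{<(\underline{i},\underline{0})>_{(m)}} (Q_{\underline{i}} P)$ with $Q_{\underline{i}} P \in D^{(m)}_{Z,X}$ since the latter is a subalgebra; then \ref{D_(X<-Z)} followed by the ring isomorphism \ref{DZXmodI-ann} yields $\overline{\tau}_{Z,X}([QP]'_Z) = \sum_{\underline{i}} \underline{\partial}^{<\underline{i}>_{(m)}} \otimes \overline{\sigma}_{Z,X}([Q_{\underline{i}}]_Z)\,\overline{\sigma}_{Z,X}([P]_Z) = \overline{\tau}_{Z,X}([Q]'_Z) \cdot R$, the last equality being the definition of the right $D^{(m)}_Z$-action on the second tensor factor. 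This is exactly the analogue of the one-line argument proving \ref{sigma-bil}.

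Finally, I transport the canonical left $D^{(m)}_X$-module structure of $D^{(m)}_X/D^{(m)}_X I$ (well defined since $D^{(m)}_X I$ is a left ideal) through $\overline{\tau}_{Z,X}$ to get the left $D^{(m)}_X$-action on $O_S\{\partial_1,\dots,\partial_r\}^{(m)} \otimes_{O_S} D^{(m)}_Z$; that this extends the right $D^{(m)}_Z$-structure into a bimodule structure is automatic, since left multiplication and the right $D^{(m)}_Z$-action commute on $D^{(m)}_X / D^{(m)}_X I$ (equivalently, this is simply the $(u^{-1}\D^{(m)}_X,\D^{(m)}_Z)$-bimodule structure of $D^{(m)}_{X \hookleftarrow Z}$ of \cite[3.4]{Be2} read off through the identification of the preceding notation block). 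The $D^{(m)}_X$-linearity of $\tau_{Z,X}$ is then immediate, as $\tau_{Z,X}$ factors as the left $D^{(m)}_X$-linear projection $D^{(m)}_X \twoheadrightarrow D^{(m)}_X/D^{(m)}_X I$ followed by $\overline{\tau}_{Z,X}$. The only genuinely delicate point is the compatibility invoked in the third paragraph: one must be sure that the right $D^{(m)}_Z$-structure appearing in \ref{calcul-local-action-transp} — which passes through the twists by $\omega_X^{-1}$ and $\omega_Z$ and the transposition isomorphism — is indeed the one induced on $D^{(m)}_X/D^{(m)}_X I = D^{(m)}_{X \hookleftarrow Z}$; but this has been arranged precisely by the chain \ref{adjoint-comm-sigma}, \ref{tau=sigma}, \ref{calcul-local-action-transp}, so no new computation is needed.
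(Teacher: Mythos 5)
Your proof is correct and uses exactly the same ingredients the paper invokes in its (much terser) proof: the formula via Lemma \ref{tau=sigma}, and the right $D^{(m)}_Z$-linearity via the ring isomorphism of Lemma \ref{DZXmodI-ann} combined with Lemma \ref{calcul-local-action-transp} applied to $\M = \D^{(m)}_X$, mirroring the proof of Proposition \ref{sigma-bil}. You have simply spelled out the details that the paper leaves implicit; your closing worry about the $\omega$-twists is already discharged, as you note, by the identifications set up in the notation block preceding \ref{tau(m)ZXS} together with \ref{adjoint-comm-sigma} and \ref{tau=sigma}.
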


 \begin{proof}
 Pour la validité de la formule, cela découle du lemme \ref{tau=sigma}.
 La $D ^{(m)} _{Z}$-linéarité se déduit de l'isomorphisme d'anneaux du lemme \ref{DZXmodI-ann}
et du lemme
 \ref{calcul-local-action-transp} utilisé pour 
 $\M=\D ^{(m)} _X$.
\end{proof}

\subsection{Adjonction: images directes, images inverses extraordinaire par une immersion fermée}

On reprend les notations et hypothèses de la section \ref{u+-sch}.

\begin{vide}
Soit $\FF$ un  $\D ^{(m)} _{Z}$-module à gauche quasi-cohérent. L'image directe par $u$ de niveau $m$ de $\FF$
est définie dans \cite{Beintro2} en posant $u _+ ^{(m)} (\FF) := u _* ( \D ^{(m)} _{X \hookleftarrow Z} 
\otimes ^{\L}  _{\D ^{(m)} _{Z}} 
\FF)$. 
Comme $\D ^{(m)} _{X \hookleftarrow Z} $ est un $\D ^{(m)} _{Z}$-module localement libre (voir \ref{tau-DZlin}),
le symbole $\L$ est inutile. 
On obtient de plus les isomorphismes
\begin{equation}
\label{u+-descpt} 
\Gamma (X, u ^{(m)}_+ (\FF) )
= 
D ^{(m)} _{X \hookleftarrow Z} 
\otimes   _{D ^{(m)} _{Z}} 
F
\underset{\overline{\tau} _{Z,X} \otimes Id}{\riso}
(O _S \{\partial _1,\dots, \partial _r \} ^{(m)} \otimes _{O _S} D ^{(m)} _{Z} )
\otimes  _{D ^{(m)} _{Z}} 
F
\riso 
O _S \{\partial _1,\dots, \partial _r \} ^{(m)} \otimes _{O _S} F.
\end{equation}
\end{vide}

\begin{vide}
\label{u+-chgt-base-niv}
On reprend les notations de \ref{nota-chgt-base-niv}.
Par abus de notations, on écrira $\O _S$ le faisceau déduit de $\O _S$ par image inverse pour les catégories de faisceaux d'ensemble 
(e.g. $u ^{-1}$).
De même pour $\O _{S'}$.
On dispose ainsi de l'isomorphisme 
$u ^{(m)}_+ (\FF)  \riso 
\O _S \{\partial _1,\dots, \partial _r \} ^{(m)} \otimes _{\O _S} \FF$.
Comme 
$\O _{S'}\otimes _{\O _S} (\D ^{(m)} _{X \hookleftarrow Z/S}) \riso \D ^{(m)} _{X' \hookleftarrow Z'/S'}$
et 
$\O _{S'}\otimes _{\O _S}\D ^{(m)} _{Z/S} 
\riso \D ^{(m)} _{Z'/S'}$, 
on vérifie facilement l'isomorphisme de changement
\begin{equation}
\label{Iso-u+-chgt-base-niv}
\O _{S'}\otimes _{\O _S}  u ^{(m)}_+ (\FF)
\riso 
u ^{\prime (m)}_+ (\O _{S'}\otimes _{\O _S} \FF).
\end{equation}
Pour le cas général, on pourra d'ailleurs consulter \cite[2.4.2]{Beintro2}.
Comme le morphisme $\overline{\tau} ^{(m)} _{Z,X/S}$ 
est compatible aux changements de base et de niveau, 
on vérifie qu'il en est de même de l'isomorphisme \ref{u+-descpt}, i.e., 
pour tout morphisme $\D ^{(m)} _{Z'/S'}$-linéaire de la forme
$\O _{S'}\otimes _{\O _S}\FF \to \FF'$ avec $\FF'$ un $\D ^{(m')} _{Z'/S'}$-module à gauche quasi-cohérent, 
le diagramme canonique
\begin{equation}
\label{u+-descpt-chgt-de-base-niv}
\xymatrix{
{\O _{S'}\otimes _{\O _S} u ^{(m)}_+ (\FF) } 
\ar[d] ^-{\overline{\tau} ^{(m)} _{Z,X/S}} _-{\sim}
\ar[r] _-{\sim}
& 
{u ^{\prime (m)}_+ (\O _{S'}\otimes _{\O _S} \FF) }  
\ar[r] 
\ar[d] ^-{\overline{\tau} ^{(m)} _{Z',X'/S'}} _-{\sim}
& 
{u ^{\prime (m')}_+ (\FF') }  
\ar[d] ^-{\overline{\tau} ^{(m')} _{Z',X'/S'}} _-{\sim}
\\ 
{\O _{S'}\otimes _{\O _S} (\O _S \{\partial _1,\dots, \partial _r \} ^{(m)} \otimes _{\O _S} \FF) }
\ar[r]_-{\sim}
& 
{\O _{S'} \{\partial '_1,\dots, \partial '_r \} ^{(m)} \otimes _{\O _{S'}} (\O _{S'}\otimes _{\O _S} \FF)   } 
\ar[r]
& 
{\O _{S'} \{\partial '_1,\dots, \partial '_r \} ^{(m')} \otimes _{\O _{S'}} (\FF')   } 
} 
\end{equation}
est commutatif.

\end{vide}

\begin{vide}
\label{vide-inv!-nota}
Soit $\E$ un $\D ^{(m)} _{X}$-module à gauche quasi-cohérent.
On dispose de la formule $\mathcal{H} ^0 u ^{!} ( \E)= \cap _{s=1} ^r \ker ( \E \overset{t _s}{\longrightarrow} \E)$
 (on devrait plutôt écrire
$u ^{-1} ( \cap _{s=1} ^r \ker ( \E \overset{t _s}{\longrightarrow} \E))$).
On notera $\mathcal{H} ^0 u ^! (E): = \Gamma (Z, \mathcal{H} ^0 u ^! (\E))
= \cap _{s=1} ^r \ker ( E \overset{t _s}{\longrightarrow} E)$.
On munit 
$\mathcal{H} ^0 u ^{!} ( \E)$
d'une structure canonique de $\D  ^{(m)} _{Z}$-module via l'isomorphisme
$\overline{\sigma} _{Z,X}$
de \ref{DZXmodI-ann} (égal à celui induit par $\tau _{Z,X}$ d'après \ref{tau=sigma}) de la manière suivante: 
pour tout $P \in D ^{(m)} _{Z,X} $, pour tout $x \in \mathcal{H} ^0 u ^! (E)$, 
on pose 
\begin{equation}
\label{def-D-modu!}
\sigma _{Z,X} (P )\cdot x : = P\cdot x.
\end{equation}
\end{vide}

\begin{lemm}
\label{lemm-sigma-tau-act-O-ind}
Soient $\underline{i} \in \N ^{r}$, 
$P \in D ^{(m)} _{Z}$, $Q \in  D ^{(m)} _{Z,X}$. 
Si on dispose dans $D ^{(m)} _{X}$ 
de l'égalité $\underline{\partial} ^{<(\underline{i},\underline{0})> _{(m)}} Q=
Q \underline{\partial} ^{<(\underline{i},\underline{0})> _{(m)}}$, 
on obtient alors dans respectivement 
$D ^{(m)} _{Z}   \otimes _{O _S} O _S \{\partial _1,\dots, \partial _r \} ^{(m)}$
et 
$O _S \{\partial _1,\dots, \partial _r \} ^{(m)}\otimes _{O _S} D ^{(m)} _{Z} $
les formules
\begin{equation}
\label{sigma-tau-act-O-ind}
(P \otimes \underline{\partial} ^{<\underline{i}>} )\cdot  Q
= 
(P  \sigma _{Z,X} (Q) )\otimes \underline{\partial} ^{<\underline{i}> }, 
\hspace{1cm}
Q\cdot (\underline{\partial} ^{<\underline{i}>} \otimes P)
 =
 \underline{\partial} ^{<\underline{i}> } \otimes  ( \sigma _{Z,X} (Q) P ). 
\end{equation}

\end{lemm}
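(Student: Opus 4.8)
Le plan est de ramener chacune des deux \'egalit\'es \`a un calcul de multiplication dans l'anneau $D ^{(m)} _{X}$, via les descriptions explicites des deux bimodules obtenues en \ref{sigma-bil} et \ref{tau-DZlin}. En effet, la structure de $D ^{(m)} _{X}$-module \`a droite sur $D ^{(m)} _{Z} \otimes _{O _S} O _S \{\partial _1,\dots, \partial _r \} ^{(m)}$ (resp. de $D ^{(m)} _{X}$-module \`a gauche sur $O _S \{\partial _1,\dots, \partial _r \} ^{(m)} \otimes _{O _S} D ^{(m)} _{Z}$) est par d\'efinition transport\'ee, via $\overline{\sigma} _{Z,X}$ (resp. $\overline{\tau} _{Z,X}$), de la structure \`a droite (resp. \`a gauche) de simple multiplication sur $D ^{(m)} _{Z \hookrightarrow X} = D ^{(m)} _{X}/I D ^{(m)} _{X}$ (resp. sur $D ^{(m)} _{X \hookleftarrow Z} = D ^{(m)} _{X}/D ^{(m)} _{X} I$). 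L'hypoth\`ese de commutation de $Q$ avec $\underline{\partial} ^{<(\underline{i},\underline{0})> _{(m)}}$ servira pr\'ecis\'ement \`a conserver inerte le facteur $\underline{\partial} ^{<\underline{i}>}$ lorsqu'on transporte l'action.

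Pour la premi\`ere formule, je choisirais d'abord, gr\^ace \`a l'isomorphisme \ref{DZXmodI}, un rel\`evement $P ' \in D ^{(m)} _{Z,X}$ de $P$, i.e. tel que $\sigma _{Z,X} (P ') = P$. D'apr\`es la formule \ref{D_(Z->X)}, $P \otimes \underline{\partial} ^{<\underline{i}>}$ est alors l'image par $\overline{\sigma} _{Z,X}$ de la classe de $P ' \underline{\partial} ^{<(\underline{i},\underline{0})> _{(m)}}$ dans $D ^{(m)} _{X}/I D ^{(m)} _{X}$ ; par d\'efinition de la structure transport\'ee, $(P \otimes \underline{\partial} ^{<\underline{i}>}) \cdot Q$ est donc l'image de la classe de $P ' \underline{\partial} ^{<(\underline{i},\underline{0})> _{(m)}} Q$. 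L'hypoth\`ese permet d'\'ecrire $P ' \underline{\partial} ^{<(\underline{i},\underline{0})> _{(m)}} Q = (P 'Q) \underline{\partial} ^{<(\underline{i},\underline{0})> _{(m)}}$ avec $P 'Q \in D ^{(m)} _{Z,X}$ (sous-$O _X$-alg\`ebre de $D ^{(m)} _{X}$), et il ne reste qu'\`a appliquer de nouveau \ref{D_(Z->X)} puis l'isomorphisme d'anneaux du lemme \ref{DZXmodI-ann} : on a $\overline{\sigma} _{Z,X} ([P 'Q] _Z) = \overline{\sigma} _{Z,X} ([P '] _Z) \, \overline{\sigma} _{Z,X} ([Q] _Z) = P \, \sigma _{Z,X} (Q)$, d'o\`u l'\'egalit\'e voulue.

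La seconde formule se traite de fa\c{c}on sym\'etrique : avec le m\^eme rel\`evement $P '$, la formule \ref{D_(X<-Z)} identifie $\underline{\partial} ^{<\underline{i}>} \otimes P$ \`a l'image par $\overline{\tau} _{Z,X}$ de la classe de $\underline{\partial} ^{<(\underline{i},\underline{0})> _{(m)}} P '$ dans $D ^{(m)} _{X}/D ^{(m)} _{X} I$ ; on transporte la multiplication \`a gauche par $Q$, on utilise la commutation pour \'ecrire $Q \underline{\partial} ^{<(\underline{i},\underline{0})> _{(m)}} P ' = \underline{\partial} ^{<(\underline{i},\underline{0})> _{(m)}} (QP ')$ avec $QP ' \in D ^{(m)} _{Z,X}$, puis on conclut par \ref{D_(X<-Z)} et \ref{DZXmodI-ann} (le lemme \ref{tau=sigma} assurant au passage la coh\'erence des deux descriptions du morphisme d'alg\`ebres $D ^{(m)} _{Z,X} \to D ^{(m)} _{Z}$). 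Je ne m'attends pas \`a un obstacle s\'erieux : l'unique point demandant de l'attention est la tenue des comptes sur les structures de bimodule transport\'ees, et le fait de n'invoquer la structure multiplicative de $D ^{(m)} _{Z,X}$ et l'hypoth\`ese de commutation qu'\`a l'endroit voulu ; aucun calcul local suppl\'ementaire de type \cite[2.2.4]{Be1} n'est n\'ecessaire, ceux-ci ayant d\'ej\`a \'et\'e effectu\'es dans \ref{DZXmodI-ann} et \ref{calcul-local-action-transp}.
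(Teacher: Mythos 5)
Votre preuve est correcte et suit essentiellement la m\^eme d\'emarche que celle du papier : le papier introduit aussi un rel\`evement $Q'\in D^{(m)}_{Z,X}$ avec $P=\sigma_{Z,X}(Q')$, utilise l'hypoth\`ese pour \'echanger $Q$ et $\underline{\partial}^{<(\underline{i},\underline{0})>_{(m)}}$, puis conclut par la $D^{(m)}_X$-lin\'earit\'e \`a droite de $\sigma_{Z,X}$ (issue de \ref{sigma-bil}) et l'isomorphisme d'anneaux \ref{DZXmodI-ann}. La seule diff\'erence est cosm\'etique : le papier \'ecrit la cha\^ine d'\'egalit\'es dans le sens inverse du v\^otre, et votre traitement de la seconde formule via $\overline{\tau}_{Z,X}$, \ref{D_(X<-Z)} et \ref{tau=sigma} est exactement l'argument sym\'etrique que le papier sous-entend par \og on proc\`ede de m\^eme\fg.
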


\begin{proof}
Soit $Q ' \in D ^{(m)} _{Z,X}$ tel que 
$P= \sigma _{Z,X} (Q')$. D'après \ref{D_(Z->X)} et \ref{DZXmodI-ann},
on obtient dans $D ^{(m)} _{Z}   \otimes _{O _S} O _S \{\partial _1,\dots, \partial _r \} ^{(m)}$ la première égalité: 
\begin{equation}
\notag
P \sigma _{Z,X} (Q) \otimes \underline{\partial} ^{<\underline{i}> }
=\sigma _{Z,X} (Q 'Q \underline{\partial} ^{<(\underline{i},\underline{0})> })
=
\sigma _{Z,X} (Q' \underline{\partial} ^{<(\underline{i},\underline{0})>} Q)
=
\sigma _{Z,X} (Q' \underline{\partial} ^{<(\underline{i},\underline{0})> } )
\cdot  Q =
( P \otimes \underline{\partial} ^{<\underline{i}>} )
\cdot  Q, 
\end{equation}
l'avant dernière égalité résultant de la $D ^{(m)} _{X}$-linéarité à droite de 
$\sigma _{Z,X}$ (par définition de \ref{sigma-bil}).
On procède de même pour établir la seconde égalité. 
\end{proof}

\begin{vide}
[Morphisme d'adjonction I]

Soit $\FF$ un  $\D ^{(m)} _{Z}$-module à gauche quasi-cohérent.
On construit le morphisme canonique d'adjonction 
\begin{gather}
\label{idtou+u!iso}
\mathrm{adj}\,:\,
\FF 
\to
\mathcal{H} ^{0} u ^{ !} \left (u ^{(m)} _{+}  (\FF) \right ),
\end{gather}
en posant, pour toute section $x\in F$, 
$\mathrm{adj} (x)=1 \otimes x$ (i.e., via l'isomorphisme \ref{u+-descpt}, $\mathrm{adj} (x)=\sum _{\underline{k}\in \N ^{r}} \underline{\partial} ^{<\underline{k}>} \otimes x _{\underline{k}}$ 
avec $x _{\underline{k}} =x$ si $\underline{k}=0$ et $x _{\underline{k}} =0$ sinon).
Comme $1$ est dans le centre de $\D ^{(m)} _{X}$, la formule de droite de \ref{sigma-tau-act-O-ind}
pour $\underline{i}=\underline{0}$ est valable pour tous 
$P \in D ^{(m)} _{Z}$, $Q \in  D ^{(m)} _{Z,X}$.
Via la formule \ref{def-D-modu!},
on en déduit 
que le morphisme \ref{idtou+u!iso} est $\D ^{(m)} _{Z}$-linéaire.
Si $S$ est annulé par une puissance de $p >0$, 
ce morphisme est injectif mais pas forcément surjectif (e.g., considérer $\partial _1 ^{<p ^{N}>} \otimes x$ pour $N$ assez grand).

\end{vide}

\begin{prop}
[Morphisme d'adjonction II]
\label{prop-u_+u^!toif}
Pour tout $\D ^{(m)} _{X}$-module à gauche quasi-cohérent $\E$, on dispose du morphisme canonique de
$\D ^{(m)} _{X}$-modules à gauche :
\begin{equation}
\label{u_+u^!toif}
\mathrm{adj}\,:\,u _+ ^{(m)} (\mathcal{H} ^0 u ^! (\E)) 
\to \E.
\end{equation}
Ce morphisme est caractérisé par la formule: 
pour tout élément de $O _S \{\partial _1,\dots, \partial _r \} ^{(m)} \otimes _{O _S} \mathcal{H} ^0 u ^! (E)$
de la forme
$\sum _{\underline{i}\in \N ^{r}} \underline{\partial} ^{<\underline{i}> _{(m)}} \otimes x _{\underline{i}}$,
avec $x _{\underline{i}} \in \mathcal{H} ^{0} u ^! (\E)$ nuls sauf pour un nombre fini de termes, 
on a modulo l'isomorphisme \ref{u+-descpt} 
\begin{equation}
\label{u_+u^!toif-form}
\mathrm{adj} 
(\sum _{\underline{i}\in \N ^{r}} \underline{\partial} ^{<\underline{i}> _{(m)}} \otimes x _{\underline{i}})
=
\sum _{\underline{i}\in \N ^{r}} \underline{\partial} ^{<(\underline{i},\underline{0})> _{(m)}}  \cdot x _{\underline{i}}.
\end{equation}

\end{prop}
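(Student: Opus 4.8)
The plan is to construct the morphism \eqref{u_+u^!toif} by first defining it on global sections via the formula \eqref{u_+u^!toif-form} and then checking it is well-defined, $\D^{(m)}_X$-linear, and glues. First I would observe that, via the isomorphism \eqref{u+-descpt}, giving a $\D^{(m)}_X$-linear map $u_+^{(m)}(\mathcal{H}^0 u^!(\E)) \to \E$ on the affine $X$ amounts to giving a $D^{(m)}_X$-linear map $O_S\{\partial_1,\dots,\partial_r\}^{(m)} \otimes_{O_S} \mathcal{H}^0 u^!(E) \to E$, where the source carries the $(D^{(m)}_X, D^{(m)}_Z)$-bimodule structure of \ref{tau-DZlin} (here viewing $O_S\{\partial_1,\dots,\partial_r\}^{(m)} \otimes_{O_S} D^{(m)}_Z \otimes_{D^{(m)}_Z} \mathcal{H}^0 u^!(E)$). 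The map is prescribed by $\underline{\partial}^{<\underline{i}>_{(m)}} \otimes x_{\underline{i}} \mapsto \underline{\partial}^{<(\underline{i},\underline{0})>_{(m)}} \cdot x_{\underline{i}}$, i.e. it is $(\underline{\partial}^{<\underline{i}>_{(m)}} \mapsto \underline{\partial}^{<(\underline{i},\underline{0})>_{(m)}})$ tensored with the inclusion $\mathcal{H}^0 u^!(E) \hookrightarrow E$.

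The key point is $D^{(m)}_X$-linearity. Using the decomposition of \ref{tau-DZlin}, an element of $D^{(m)}_X$ is written uniquely as $\sum_{\underline{i}} \underline{\partial}^{<(\underline{i},\underline{0})>_{(m)}} P_{\underline{i}}$ with $P_{\underline{i}} \in D^{(m)}_{Z,X}$, and the left $D^{(m)}_X$-action on the source is read off from \eqref{D_(X<-Z)} together with the right $D^{(m)}_Z$-module structure. Concretely, for $Q \in D^{(m)}_{Z,X}$ acting, one needs $Q \cdot (\underline{\partial}^{<\underline{i}>_{(m)}} \otimes x_{\underline{i}})$ to map to $Q \cdot (\underline{\partial}^{<(\underline{i},\underline{0})>_{(m)}} \cdot x_{\underline{i}})$; this is exactly where \ref{lemm-sigma-tau-act-O-ind} and the definition \eqref{def-D-modu!} of the $\D^{(m)}_Z$-structure on $\mathcal{H}^0 u^!(\E)$ come in — the second formula in \eqref{sigma-tau-act-O-ind} rewrites $Q \cdot (\underline{\partial}^{<\underline{i}>} \otimes P)$ as $\underline{\partial}^{<\underline{i}>} \otimes (\sigma_{Z,X}(Q) P)$ whenever $\underline{\partial}^{<(\underline{i},\underline{0})>_{(m)}}$ commutes with $Q$ in $D^{(m)}_X$, and since $\sigma_{Z,X}(Q)$ acts on $\mathcal{H}^0 u^!(E)$ literally as $Q$ does, this is compatible with the target. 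One must also handle the action of $\underline{\partial}^{<(\underline{j},\underline{0})>_{(m)}}$ for $\underline{j} \in \N^r$ on the source, which by \eqref{D_(X<-Z)} is just multiplication $\underline{\partial}^{<\underline{i}>} \mapsto$ (the expression of $\underline{\partial}^{<(\underline{j},\underline{0})>_{(m)}}\underline{\partial}^{<(\underline{i},\underline{0})>_{(m)}}$ back in the normal form), and matches on the target by the corresponding identity \cite[2.2.4.(iv)]{Be1} in $D^{(m)}_X$. Combining, $D^{(m)}_X$-linearity follows; well-definedness over $D^{(m)}_Z$ is automatic since the formula is visibly $D^{(m)}_Z$-balanced through $\sigma_{Z,X}$ being an algebra map (\ref{DZXmodI-ann}).

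Finally I would note that the construction is local only in the auxiliary sense of using coordinates: the formula \eqref{u_+u^!toif-form} does not actually depend on the choice of coordinates once one unwinds it (it is the counit of an adjunction), so it glues to a morphism of sheaves $u_+^{(m)}(\mathcal{H}^0 u^!(\E)) \to \E$ over any $X$; alternatively, on the affine model one checks directly that changing coordinates changes both sides compatibly, using the functoriality already recorded in \ref{u+-chgt-base-niv} and \ref{u+-descpt-chgt-de-base-niv}. The main obstacle is purely bookkeeping: keeping straight the two $\D^{(m)}_Z$-module structures (the one on $\mathcal{H}^0 u^!(\E)$ defined through \eqref{def-D-modu!} versus the right $\D^{(m)}_Z$-structure entering $u_+^{(m)}$) and verifying the commutativity hypothesis $\underline{\partial}^{<(\underline{i},\underline{0})>_{(m)}} Q = Q \underline{\partial}^{<(\underline{i},\underline{0})>_{(m)}}$ in \ref{lemm-sigma-tau-act-O-ind} holds in the relevant instances — which it does because $\underline{\partial}^{<(\underline{i},\underline{0})>_{(m)}}$ involves only the $\partial_s$ with $s \le r$ while $Q \in D^{(m)}_{Z,X}$ involves only the $\partial_s$ with $s > r$ and the coefficients, and the mixed commutators land back in the same subalgebras in a controlled way. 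So the proof is a direct verification assembling \ref{DZXmodI-ann}, \ref{lemm-sigma-tau-act-O-ind}, \eqref{def-D-modu!}, and the normal-form decomposition of \ref{tau-DZlin}.
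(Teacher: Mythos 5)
Your strategy mirrors the paper's: write elements of $D^{(m)}_X$ in the normal form $\sum_{\underline{i}'} \underline{\partial}^{<(\underline{i}',\underline{0})>_{(m)}} P_{\underline{i}'}$ with $P_{\underline{i}'} \in D^{(m)}_{Z,X}$ from \ref{tau-DZlin}, reduce the $D^{(m)}_X$-linearity of \ref{u_+u^!toif-form} to the action of the two kinds of factors, and use \ref{def-D-modu!} to match source and target. However, there is a genuine gap in your treatment of the action of $Q \in D^{(m)}_{Z,X}$: you invoke Lemma~\ref{lemm-sigma-tau-act-O-ind} and assert that its hypothesis $\underline{\partial}^{<(\underline{i},\underline{0})>_{(m)}} Q = Q \underline{\partial}^{<(\underline{i},\underline{0})>_{(m)}}$ holds \emph{in the relevant instances} because $Q$ only involves the $\partial_s$ with $s > r$. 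This is false: $D^{(m)}_{Z,X}$ is an $O_X$-algebra, so $Q$ may be any $a \in O_X$, and $a$ does not commute with $\underline{\partial}^{<(\underline{i},\underline{0})>_{(m)}}$ when $\underline{i} \neq \underline{0}$ (the commutator produces the terms $\underline{\partial}^{<(\underline{i}-\underline{i}',\underline{0})>_{(m)}}(a)$ for $\underline{i}' < \underline{i}$). This is exactly the hard case, and it is not covered by \ref{lemm-sigma-tau-act-O-ind}.

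Your parenthetical remark that the mixed commutators land back in the same subalgebras in a controlled way is correct in spirit, but it is precisely the content that has to be proved: the paper's proof expands $a\underline{\partial}^{<(\underline{i},\underline{0})>_{(m)}}$ via the adjoint of \cite[2.2.4.(iv)]{Be1} (formula \ref{adj-2.2.4iv}), pushes the result through $\tau_{Z,X}$, transfers the resulting $O_X$-action on $\mathcal{H}^0 u^!(E)$ to an action on $E$ via \ref{def-D-modu!}, and then reassembles by reading the same formula backwards. Without that explicit commutator computation, your verification of $O_X$-linearity is incomplete, so the proposal does not yet constitute a proof. As a minor additional slip, for the case $P = \underline{\partial}^{<(\underline{j},\underline{0})>_{(m)}}$ you cite \cite[2.2.4.(iv)]{Be1}; the relevant identity there is \cite[2.2.4.(iii)]{Be1}.
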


\begin{proof}
Il s'agit de prouver que le morphisme noté $\mathrm{adj}$ et défini par la formule
\ref{u_+u^!toif-form} est $D ^{(m)} _X$-linéaire.
Soient $P \in D ^{(m)} _X$, $x \in \mathcal{H} ^{0} u ^! (\E)$, $\underline{i}\in \N ^{r}$. 
Par $\O _S$-linéarité de $\mathrm{adj}$, il suffit de vérifier 
$\mathrm{adj} (P \cdot (\underline{\partial} ^{<\underline{i}> _{(m)}}\otimes x ))
=
P\underline{\partial} ^{<(\underline{i},\underline{0})> _{(m)}} \cdot x $.
Or, 
comme $\tau _{Z,X}$ est $D ^{(m)} _{X}$-linéaire (voir \ref{tau-DZlin}), 
on a dans $(O _S \{\partial _1,\dots, \partial _r \} ^{(m)} \otimes _{O _S} D ^{(m)} _{Z} )
\otimes _{D ^{(m)} _{Z}} E$ : 
$$P \cdot (\underline{\partial} ^{<\underline{i}> _{(m)}} \otimes 1 ) \otimes x )
= \tau _{Z,X} (P \underline{\partial} ^{<(\underline{i},\underline{0})> _{(m)}} ) \otimes x.$$
Par $\O _S$-linéarité en $P$ de la formule à établir, on se ramène à l'un des trois cas ci-dessous traités séparément. 

\noindent $\bullet$ Si $P= \underline{\partial} ^{<(\underline{0},\underline{j})> _{(m)}}$ avec $\underline{j} \in \N ^{n-r}$,
on calcule 
$\tau _{Z,X} (P \underline{\partial} ^{<(\underline{i},\underline{0})> _{(m)}} ) \otimes x
= ( \underline{\partial} ^{<\underline{i}> _{(m)}} \otimes 1)
\otimes \underline{\partial} ^{<\underline{j}> _{(m)}} \cdot x$.
Comme d'après \ref{def-D-modu!}
$\underline{\partial} ^{<\underline{j}> _{(m)}} \cdot x= 
\underline{\partial} ^{<(\underline{0},\underline{j})> _{(m)}} \cdot x$,
on en tire:
$$\mathrm{adj} (P \cdot (\underline{\partial} ^{<\underline{i}> _{(m)}}\otimes x ))=
\underline{\partial} ^{<(\underline{i},\underline{0})> _{(m)}}  \underline{\partial} ^{<(\underline{0},\underline{j})> _{(m)}} \cdot x
=
 \underline{\partial} ^{<(\underline{0},\underline{j})> _{(m)}}  \underline{\partial} ^{<(\underline{i},\underline{0})> _{(m)}} \cdot x
 =
 P \underline{\partial} ^{<(\underline{i},\underline{0})> _{(m)}} \cdot x.$$
 
 \noindent $\bullet$ Si $P= \underline{\partial} ^{<(\underline{i}',\underline{0})> _{(m)}}$ avec $\underline{i}' \in \N ^{r}$,
 cela découle d'un calcul analogue en utilisant de plus la formule \cite[2.2.4.(iii)]{Be1}.
 
\noindent $\bullet$ Il reste à traiter le cas où
$P =a \in O _X$.
En prenant l'adjoint de la formule
la formule 
\cite[2.2.4.(iv)]{Be1}, on calcule dans $D ^{(m)}  _X$:
\begin{equation}
\label{adj-2.2.4iv}
a\underline{\partial} ^{<(\underline{i}, \underline{0})> _{(m)}}
=
\sum _{\underline{i}'\leq \underline{i}}
(-1) ^{| \underline{i}-\underline{i}'| }
\left \{ \begin{smallmatrix}   \underline{i} \\   \underline{i}' \\ \end{smallmatrix} \right \}
\underline{\partial} ^{<(\underline{i}', \underline{0})> _{(m)}}
\underline{\partial} ^{<(\underline{i}-\underline{i}', \underline{0})> _{(m)}}
(a).
\end{equation}
D'où 
$\tau _{Z,X} (a \underline{\partial} ^{<(\underline{i},\underline{0})> _{(m)}} ) \otimes x
=
\sum _{\underline{i}'\leq \underline{i}}
(-1) ^{| \underline{i}-\underline{i}'| }
\left \{ \begin{smallmatrix}   \underline{i} \\   \underline{i}' \\ \end{smallmatrix} \right \}
(\underline{\partial} ^{<\underline{i}'> _{(m)}} \otimes 1)
\otimes 
[\underline{\partial} ^{<(\underline{i}-\underline{i}', \underline{0})> _{(m)}} 
(a)] _{Z} x.$
Il en résulte 
$$\mathrm{adj} (a \cdot (\underline{\partial} ^{<\underline{i}> _{(m)}}\otimes x ))
=
\sum _{\underline{i}'\leq \underline{i}}
(-1) ^{| \underline{i}-\underline{i}'| }
\left \{ \begin{smallmatrix}   \underline{i} \\   \underline{i}' \\ \end{smallmatrix} \right \}
\underline{\partial} ^{<(\underline{i}', \underline{0})> _{(m)}}
\cdot 
([\underline{\partial} ^{<(\underline{i}-\underline{i}', \underline{0})> _{(m)}}  (a)] _Z x).$$
D'après \ref{def-D-modu!}, on a par définition:
$[\underline{\partial} ^{<(\underline{i}-\underline{i}', \underline{0})> _{(m)}}  (a)] _Z x
=
\underline{\partial} ^{<(\underline{i}-\underline{i}', \underline{0})> _{(m)}}  (a) x$.
En réutilisant la formule \ref{adj-2.2.4iv} (à l'envers), on termine la vérification de l'égalité.
\end{proof}

\begin{rema}
L'isomorphisme de transposition $\gamma \,:\, 
D ^{(m)} _{X} \to D ^{(m)} _{X}$ se factorise par l'isomorphisme
$\gamma \,:\, 
D ^{(m)} _{X} / I D ^{(m)} _{X}
\riso D ^{(m)} _{X} /D ^{(m)} _{X} I$.
\end{rema}

\subsection{Changement de base pour les faisceaux quasi-cohérents sur les schémas}

\begin{nota}
\label{nota-diag-chgt-base}
Soit $T\to S$ un morphisme de schémas noethériens affines.
Dans toute cette section, nous considérons les  deux carrés cartésiens 
\begin{equation}
\xymatrix{
{Z} 
\ar@{^{(}->}[r] ^-{u}
& 
{X } 
\\ 
{Z'} 
\ar@{^{(}->}[r] ^-{u'}
\ar@{^{(}->}[u] ^-{b}
& 
{X',} 
\ar@{^{(}->}[u] ^-{a}
}
\
\
\
\xymatrix{
{Z _T} 
\ar@{^{(}->}[r] ^-{u _T}
& 
{X _T} 
\\ 
{Z' _T} 
\ar@{^{(}->}[r] ^-{u' _T}
\ar@{^{(}->}[u] ^-{b _T}
& 
{X' _T,} 
\ar@{^{(}->}[u] ^-{a _T}
}
\end{equation}
dont les morphismes du carré de gauche sont des immersions fermées de $S$-schémas lisses, 
dont le carré de droite se déduit du carré de gauche par changement de base via $T\to S$.

On suppose de plus $X$ affine, muni de coordonnées locales $t _1, \dots, t _d$ telles qu'il existe des entiers $1\leq r \leq d-1$, $1\leq s \leq d-r$ vérifiant
$X'= V ( t_1, \dots, t _r)$ et $Z=V ( t _{r+1},\dots, t _{r+s})$.
On note $I$ l'idéal engendré par  $t_1, \dots, t _r$
et
$J$ l'idéal engendré par $t _{r+1},\dots, t _{r+s}$.

L'image inverse (en tant que $\O_X$-module, i.e., à ne pas confondre avec l'image inverse en tant que 
$\D ^{(m)} _{X}$-module) par $a$ d'un $\D ^{(m)} _{X}$-module $\E $ se notera
\begin{equation}
\label{def-a^*-sch}
\L a ^{*} (\E) := \D  ^{(m)} _{Z \hookrightarrow X} \otimes ^{\L} _{a ^{-1}\D  ^{(m)} _{X}}
a ^{-1} \E \in D ^{\mathrm{b}} (\D ^{(m)} _{Z}) .
\end{equation}
De même, l'image inverse (en tant que $\O_Z$-module) par $b$ d'un $\D ^{(m)} _{Z}$-module $\FF$ se notera 
$\L b ^{*} (\FF)$. 
\end{nota}

\begin{lemm}
\label{formules-I-J-ind}
On dispose des égalités 
\begin{gather}
\label{formules-I-J-ind1}
(D ^{(m)} _{X'} \otimes _{O _S} O _S \{\partial _1,\dots, \partial _r \} ^{(m)} ) J 
=
D ^{(m)} _{X'} J \otimes _{O _S} O _S \{\partial _1,\dots, \partial _r \} ^{(m)} 
\\
\label{formules-I-J-ind2}
I(O _S \{\partial _{r+1},\dots, \partial _{r+s} \} ^{(m)} \otimes _{O _S} D ^{(m)} _{Z}  )
=
O _S \{\partial _{r+1},\dots, \partial _{r+s} \} ^{(m)} \otimes _{O _S} I D ^{(m)} _{Z}  .
\end{gather}
\end{lemm}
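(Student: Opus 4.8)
The plan is to prove \ref{formules-I-J-ind2} in detail; the formula \ref{formules-I-J-ind1} will then follow by the symmetric argument, exchanging left and right module structures, the morphisms $\sigma$ and $\tau$, and the Propositions \ref{sigma-bil} and \ref{tau-DZlin}.

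First I would apply \ref{tau-DZlin} to the closed immersion $u\,:\, Z = V ( t _{r+1},\dots, t _{r+s}) \hookrightarrow X$. Writing $N := O _S \{\partial _{r+1},\dots, \partial _{r+s} \} ^{(m)} \otimes _{O _S} D ^{(m)} _{Z}$, the isomorphism $\overline{\tau} _{Z,X}$ identifies $N$, as a left $D ^{(m)} _{X}$-module, with $D ^{(m)} _{X \hookleftarrow Z} = D ^{(m)} _{X}/ D ^{(m)} _{X} J$ (where $J$ is the ideal of $u$), every element of $D ^{(m)} _{X}$ being written uniquely as $\sum _{\underline{i}} \underline{\partial} ^{<\underline{i}> _{(m)}} P _{\underline{i}}$ with $P _{\underline{i}} \in D ^{(m)} _{Z,X}$, the divided powers $\underline{\partial} ^{<\underline{i}> _{(m)}}$ being taken in the directions $\partial _{r+1},\dots,\partial _{r+s}$ transverse to $Z$. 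Next, since $I = \sum _{k=1} ^{r} O _X t _k$ and $a t _k = t _k a$ in the commutative ring $O _X$, one checks that each $t _k \cdot N$ is already stable under the left $O _X$-action on $N$, so that $I \cdot N = \sum _{k=1} ^{r} t _k \cdot N$; this reduces the whole statement to the computation of $t _k \cdot N$ for each $k \leq r$.

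This last computation is the heart of the matter, and it rests on a single commutation fact: for $k \leq r$ and $l \in \{r+1,\dots,r+s\}$ one has $\partial _l ^{<j> _{(m)}} (t _k) = 0$, whence by \cite[2.2.4.(iv)]{Be1} the element $t _k \in O _X \subseteq D ^{(m)} _{X}$ commutes with every $\underline{\partial} ^{<\underline{i}> _{(m)}}$ occurring above. Consequently $t _k \cdot (\underline{\partial} ^{<\underline{i}> _{(m)}} P _{\underline{i}}) = \underline{\partial} ^{<\underline{i}> _{(m)}} (t _k P _{\underline{i}})$ with $t _k P _{\underline{i}} \in D ^{(m)} _{Z,X}$; that is, the left action of $t _k$ on $N$ is componentwise on the factor $D ^{(m)} _{Z}$. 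More precisely, via $\overline{\tau} _{Z,X}$ and the ring isomorphism $\overline{\sigma} _{Z,X} \,:\, D ^{(m)} _{Z,X}/I D ^{(m)} _{Z,X} \riso D ^{(m)} _{Z}$ of \ref{DZXmodI-ann} (which carries the class of $t _k$ to $[t _k] _Z$), one gets $t _k \cdot (\underline{\partial} ^{<\underline{i}> _{(m)}} \otimes Q) = \underline{\partial} ^{<\underline{i}> _{(m)}} \otimes [t _k] _Z Q$ for all $Q \in D ^{(m)} _{Z}$, hence $t _k \cdot N = O _S \{\partial _{r+1},\dots,\partial _{r+s} \} ^{(m)} \otimes _{O _S} [t _k] _Z D ^{(m)} _{Z}$. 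Summing over $k$, using $I D ^{(m)} _{Z} = \sum _{k=1} ^{r} [t _k] _Z D ^{(m)} _{Z}$ (again by commutativity, now in $O _Z$) and the $O _S$-flatness of the free module $O _S \{\partial _{r+1},\dots,\partial _{r+s} \} ^{(m)}$, yields \ref{formules-I-J-ind2}.

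For \ref{formules-I-J-ind1} I would repeat the argument for the closed immersion $a\,:\, X' = V ( t_1, \dots, t _r) \hookrightarrow X$, working now with the right $D ^{(m)} _{X}$-module $D ^{(m)} _{X' \hookrightarrow X} = D ^{(m)} _{X}/ I D ^{(m)} _{X}$ and its explicit description in \ref{sigma-bil}, with the right action of $J = \sum _{j=r+1} ^{r+s} O _X t _j$ (each $t _j$ now commuting with the transverse divided powers in the directions $\partial _1,\dots,\partial _r$, by the same vanishing $\partial _i ^{<k> _{(m)}} (t _j) = 0$), and with the analogous reduction to right multiplication by the single $t _j$'s. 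The only point requiring genuine care throughout is to keep the two groups of coordinates distinct --- which derivations are transverse to which subscheme --- and to justify the reduction $I \cdot N = \sum _k t _k \cdot N$: the left action of an arbitrary function $a \in O _X$ on $N$ is \emph{not} componentwise, but this causes no problem because only the products $a t _k \in I$, with $a t _k = t _k a$, intervene.
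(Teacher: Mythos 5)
Your proof is correct and follows essentially the same route as the paper's: the paper's proof is a one-line invocation of Lemma \ref{lemm-sigma-tau-act-O-ind} (taking $Q = t_j$, resp.\ $Q = t_k$) together with the observation that these generators commute with the transverse divided powers, and your argument simply re-derives the relevant instances of that lemma inline via \ref{tau-DZlin}, \ref{sigma-bil} and the commutation formula of \cite[2.2.4.(iv)]{Be1}. One small notational slip worth flagging: when you invoke \ref{DZXmodI-ann} you write $\overline{\sigma}_{Z,X}\colon D^{(m)}_{Z,X}/I D^{(m)}_{Z,X} \riso D^{(m)}_Z$, but in the present notation the ideal cutting out $Z$ is $J$, not $I$; the argument is unaffected since you correctly compute $\sigma_{Z,X}(t_k) = [t_k]_Z$.
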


\begin{proof}
L'égalité \ref{formules-I-J-ind1} découle de la formule \ref{sigma-tau-act-O-ind} (en remplaçant dans les notations le symbole $Z$ par $X'$)
et du fait que $J$ est engendré par $t _{r+1},\dots, t _{r+s}$ qui commutent aux 
éléments de la forme 
$\underline{\partial} ^{<(\underline{i},\underline{0})> _{(m)}}$,
où $\underline{i} \in \N ^{r}$.
De même, on obtient l'égalité \ref{formules-I-J-ind2} à partir  de la seconde formule de \ref{sigma-tau-act-O-ind}.
\end{proof}

\begin{lemm}
\label{lemm-iso-chgt-de-basebis}
Avec les notations de \ref{nota-diag-chgt-base}, 
soit
$\FF $ un $\D ^{(m)} _{Z}$-module quasi-cohérent. 
Dans ce cas, on dispose de l'isomorphisme de $\D ^{(m)} _{X'}$-modules:
\begin{equation}
\label{iso-chgt-de-basebis}
a ^{*} \circ u ^{(m)} _+ (\FF) 
\riso 
u ^{\prime^{(m)}} _+  \circ b ^{*} (\FF) .
\end{equation}
En outre, celui-ci commute aux morphismes de changement de niveau et de base, i.e., avec les notations de \ref{u+-chgt-base-niv}, pour tout $m'\geq m$, pour
tout $\D ^{(m')} _{Z _T /T}$-module quasi-cohérent $\FF '$, pour tout morphisme $\D ^{(m)} _{Z/S}$-linéaire de la forme
$\O _T \otimes _{\O_S}\FF \to \FF'$, 
le diagramme canonique 
\begin{equation}
\label{iso-chgt-de-basebis-chgtbase}
\xymatrix{
{\O _T \otimes _{\O_S} ( a ^{*} \circ u ^{(m)} _+ (\FF) )} 
\ar[r] ^-{\sim}
\ar[d] ^-{\sim}
& 
{a _T ^{*} \circ u  ^{(m)} _{T+} (\O _T \otimes _{\O_S}\FF) } 
\ar[d] ^-{\sim}
\ar[r] ^-{}
&
{a _T ^{*} \circ u  ^{(m')} _{T+} (\FF') } 
\ar[d] ^-{\sim}
\\ 
{\O _T \otimes _{\O_S} (u ^{\prime^{(m)}} _+  \circ b ^{*} (\FF) )} 
\ar[r] ^-{\sim}
& 
{u  ^{\prime^{(m)}} _{T+}  \circ b _T ^{*} (\O _T \otimes _{\O_S}\FF) } 
\ar[r] ^-{}
&
{u  ^{\prime^{(m')}} _{T+}  \circ b _T ^{*} (\FF') ,} 
}
\end{equation}
où les isomorphismes verticaux sont ceux de la forme \ref{iso-chgt-de-basebis} et 
où les isomorphismes horizontaux de gauche se déduisent de \ref{Iso-u+-chgt-base-niv}, 
est commutatif. 
\end{lemm}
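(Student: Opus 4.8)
The plan is to reduce both sides of \ref{iso-chgt-de-basebis} to the explicit global-sections descriptions furnished by \ref{u+-descpt} and to match them by means of \ref{formules-I-J-ind2}. First observe that in the situation of \ref{nota-diag-chgt-base} the closed immersions $u$ and $u'$ have the \emph{same} normal coordinates: $Z=V(t_{r+1},\dots,t_{r+s})$ inside $X$ and $Z'=V(t_{r+1},\dots,t_{r+s})$ inside $X'=V(t_1,\dots,t_r)$. Hence \ref{u+-descpt}, applied to $u$ and to $u'$, gives compatible $O_S$-linear identifications
$\Gamma(X,u^{(m)}_+(\FF))\riso O_S\{\partial_{r+1},\dots,\partial_{r+s}\}^{(m)}\otimes_{O_S}F$ and
$\Gamma(X',u'^{(m)}_+(b^*\FF))\riso O_S\{\partial_{r+1},\dots,\partial_{r+s}\}^{(m)}\otimes_{O_S}(F/IF)$, the latter because $\mathcal{H}^0 b^{!}(\FF)$, resp.\ $b^*(\FF)$ in degree $0$, is computed by reduction modulo $I=(t_1,\dots,t_r)$.

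Next I would compute the left-hand side. As a complex of $\O_X$-modules, $\L a^*$ is the Koszul complex on $t_1,\dots,t_r$ applied to $u^{(m)}_+(\FF)$. The key point is that $I$ acts on $O_S\{\partial_{r+1},\dots,\partial_{r+s}\}^{(m)}\otimes_{O_S}F$ only through the factor $F$: this is \ref{formules-I-J-ind2} tensored over $D^{(m)}_Z$ with $F$, and concretely it reflects the commutation $[t_i,\partial_j]=0$ for $i\le r<j$, so that $t_i\cdot(\underline{\partial}^{<\underline{j}>}\otimes x)=\underline{\partial}^{<\underline{j}>}\otimes([t_i]_Z\,x)$ by \ref{sigma-tau-act-O-ind} (each $t_i$, $i\le r$, lying in $D^{(m)}_{Z,X}$ and acting via $\sigma_{Z,X}(t_i)=[t_i]_Z$). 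Since $O_S\{\partial_{r+1},\dots,\partial_{r+s}\}^{(m)}$ is $O_S$-free, the Koszul complex therefore identifies with $O_S\{\partial_{r+1},\dots,\partial_{r+s}\}^{(m)}\otimes_{O_S}\bigl(K_\bullet(t_1,\dots,t_r)\otimes_{\O_Z}F\bigr)$; and $K_\bullet(t_1,\dots,t_r)\otimes_{\O_Z}F$ computes $\L b^*(\FF)$ because $t_1,\dots,t_r$ are part of a coordinate system on $Z$ cutting out $Z'$, hence a regular sequence. On the other hand $u'^{(m)}_+$ is exact (as $\D^{(m)}_{X'\hookleftarrow Z'}$ is $\D^{(m)}_{Z'}$-locally free by \ref{tau-DZlin}) and commutes with $O_S\otimes_{O_S}-$, so $u'^{(m)}_+\circ\L b^*(\FF)\riso O_S\{\partial_{r+1},\dots,\partial_{r+s}\}^{(m)}\otimes_{O_S}\L b^*(\FF)$ as well. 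Composing the three identifications yields \ref{iso-chgt-de-basebis} at the level of $\O_{X'}$-modules (complexes).

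Third, I would promote this to $\D^{(m)}_{X'}$-linearity. By \ref{tau-DZlin} the left $\D^{(m)}_{X'}$-module structures on $O_S\{\partial_{r+1},\dots,\partial_{r+s}\}^{(m)}\otimes_{O_S}F$ and on $O_S\{\partial_{r+1},\dots,\partial_{r+s}\}^{(m)}\otimes_{O_S}(F/IF)$ transported from $u^{(m)}_+(\FF)$ and $u'^{(m)}_+(b^*\FF)$ are both described by the formulas \ref{sigma-tau-act-O-ind}, i.e.\ through $D^{(m)}_{X'}\to D^{(m)}_{Z',X'}\to D^{(m)}_{Z'}$ together with the divided-power operators attached to $t_{r+1},\dots,t_{r+s}$; these are plainly intertwined by the reduction $F\to F/IF$. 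Hence the isomorphism just built is $\D^{(m)}_{X'}$-linear. (Alternatively one may invoke the general base-change isomorphism \cite[2.4.2]{Beintro2} for the mere existence of \ref{iso-chgt-de-basebis}, the substance of the lemma being its agreement with the explicit local descriptions.)

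Finally, for the compatibility with change of base and level — commutativity of \ref{iso-chgt-de-basebis-chgtbase} — I would note that the isomorphism just constructed is assembled entirely out of \ref{u+-descpt} (for $u$ and for $u'$) and of the ``diagonal $I$-action'' \ref{formules-I-J-ind2}, each of which is already known to be compatible with change of base and level: \ref{u+-descpt} through the commutative diagram \ref{u+-descpt-chgt-de-base-niv}, and \ref{formules-I-J-ind2} because it follows from \ref{sigma-tau-act-O-ind}, hence ultimately from the morphisms $\sigma_{Z,X}$ and $\tau_{Z,X}$, which commute with changes of base and level by \ref{nota-chgt-base-niv} and \ref{tau(m)ZXS}. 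Tracing a section around the two paths of \ref{iso-chgt-de-basebis-chgtbase} then reduces to these established compatibilities. The main obstacle in the whole argument is purely organizational: keeping straight the three groups of coordinates ($t_1,\dots,t_r$ normal to $X'$; $t_{r+1},\dots,t_{r+s}$ normal to both $Z$ and $Z'$; $t_{r+s+1},\dots,t_d$ tangential) and the associated families of operators; the single genuine input is the commutation $[t_i,\partial_j]=0$ for $i\le r<j$, which is exactly what lets $\L a^*$ pass through $u^{(m)}_+$.
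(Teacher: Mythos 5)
Your proof is correct, but it takes a genuinely different route from the paper's. The paper works directly at the level of transfer bimodules: it identifies both $(D^{(m)}_X/ID^{(m)}_X)\otimes_{D^{(m)}_X}(D^{(m)}_X/D^{(m)}_XJ)$ and $(D^{(m)}_{X'}/D^{(m)}_{X'}J)\otimes_{D^{(m)}_{Z'}}(D^{(m)}_Z/ID^{(m)}_Z)$ with the one explicit model $O_S\{\partial_{r+1},\dots,\partial_{r+s}\}^{(m)}\otimes_{O_S}D^{(m)}_{Z'}\otimes_{O_S}O_S\{\partial_1,\dots,\partial_r\}^{(m)}$ by verifying the two commutative squares \ref{carre-but-bili} and \ref{carre-but-bili-bis}, and then reads off the $(D^{(m)}_{X'},D^{(m)}_Z)$-bilinearity from the fact that the bordering arrows are each linear on one side; applying $-\otimes_{D^{(m)}_Z}F$ then gives \ref{iso-chgt-de-basebis}, and the compatibility with change of base and level is again checked on those same squares. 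You instead compute $\L a^{*}$ of $u_+^{(m)}(\FF)$ via the Koszul resolution of $\O_{X'}$ over $\O_X$, use \ref{formules-I-J-ind2} (equivalently \ref{sigma-tau-act-O-ind}) to push the Koszul differential through the factor $O_S\{\partial_{r+1},\dots,\partial_{r+s}\}^{(m)}$, and recognise what remains as $u'^{(m)}_+$ of the Koszul complex on $F$, i.e.\ of $\L b^{*}(\FF)$. This is economical and, as a bonus, yields the derived isomorphism $\L a^{*}u_+\riso u'_+\L b^{*}$ in one stroke, so your argument effectively subsumes \ref{acycl-u+}, which the paper proves separately by induction. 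The trade-off is that your approach makes the $\D^{(m)}_{X'}$-linearity of the resulting isomorphism, and its compatibility with change of base and level, rather less immediate: you argue that the transported module structures "are plainly intertwined by the reduction $F\to F/IF$" and that the base/level compatibility "reduces to established compatibilities," which is plausible but is precisely the content that the paper's explicit element-level computations in \ref{carre-but-bili} and \ref{carre-but-bili-bis} pin down; a fully rigorous version of your route would still have to carry out an equivalent verification. In the end both arguments rest on the same single fact, the commutation $[t_i,\partial_j]=0$ for $i\le r<j$, packaged once as a bimodule identity and once as a statement about the Koszul differential.
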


\begin{proof}
Via les théorèmes de type $A$, il suffit de le vérifier pour les sections globales. 
On obtient 
\begin{align}
a ^{*} \circ u _+ ^{(m)} (F) &=
(D ^{(m)} _{X}/ I D ^{(m)} _{X} ) \otimes _{D ^{(m)} _{X}}(D ^{(m)} _{X}/ D ^{(m)} _{X} J )\otimes _{D ^{(m)} _{Z}} F\\
u ^{\prime(m)} _+   \circ b ^{*} (F) & =
(D ^{(m)} _{X'}/ D ^{(m)} _{X'} J )\otimes _{D ^{(m)} _{Z'}} (D ^{(m)} _{Z}/ I D ^{(m)} _{Z} ) \otimes _{D ^{(m)} _{Z}} F .
\end{align}

On bénéficie des isomorphismes $D ^{(m)} _{X'}$-linéaires :
\begin{align}
\notag
(D ^{(m)} _{X}/ I D ^{(m)} _{X} ) \otimes _{D ^{(m)} _{X}}(D ^{(m)} _{X}/ D ^{(m)} _{X} J )
& \underset{\overline{\sigma} _{X',X}\otimes Id}{\riso} 
(D ^{(m)} _{X'} \otimes _{O _S} O _S \{\partial _1,\dots, \partial _r \} ^{(m)} )
\otimes _{D ^{(m)} _{X}}(D ^{(m)} _{X}/ D ^{(m)} _{X} J )
\\
\notag
&
\riso 
(D ^{(m)} _{X'} \otimes _{O _S} O _S \{\partial _1,\dots, \partial _r \} ^{(m)} ) 
/
(D ^{(m)} _{X'} \otimes _{O _S} O _S \{\partial _1,\dots, \partial _r \} ^{(m)} ) J\\
\label{1225}
& \riso 
(D ^{(m)} _{X'}/ D ^{(m)} _{X'} J )\otimes _{O _S} O _S \{\partial _1,\dots, \partial _r \} ^{(m)},
\end{align}
le dernier isomorphisme résultant de la première égalité de \ref{formules-I-J-ind}.
On dispose de plus des isomorphismes $D ^{(m)} _{Z}$-linéaires :
\begin{align}
\notag
(D ^{(m)} _{X}/ I D ^{(m)} _{X} ) \otimes _{D ^{(m)} _{X}}(D ^{(m)} _{X}/ D ^{(m)} _{X} J )
& 
\underset{Id \otimes \overline{\tau} _{Z,X}}{\riso} 
(D ^{(m)} _{X}/ I D ^{(m)} _{X} ) \otimes _{D ^{(m)} _{X}}
(O _S \{\partial _{r+1},\dots, \partial _{r+s} \} ^{(m)} \otimes _{O _S} D ^{(m)} _{Z}  )
\\
\label{1226}
& \riso 
O _S \{\partial _{r+1},\dots, \partial _{r+s} \} ^{(m)} \otimes _{O _S} D ^{(m)} _{Z} / I D ^{(m)} _{Z},
\end{align}
dont le dernier isomorphisme découle de la seconde égalité de \ref{formules-I-J-ind}.
Il en dérive le carré suivant:
\begin{equation}
\label{carre-but-bili}
\xymatrix{
{(D ^{(m)} _{X}/ I D ^{(m)} _{X} ) \otimes _{D ^{(m)} _{X}}(D ^{(m)} _{X}/ D ^{(m)} _{X} J )} 
\ar[r] ^-{\sim} _-{\ref{1225}}
\ar[d] ^-{\sim} _-{\ref{1226}}
& 
{(D ^{(m)} _{X'}/ D ^{(m)} _{X'} J )\otimes _{O _S} O _S \{\partial _1,\dots, \partial _r \} ^{(m)} } 
\ar[d] ^-{\sim} _-{\overline{\tau} _{Z',X'}\otimes Id}
\\ 
{O _S \{\partial _{r+1},\dots, \partial _{r+s} \} ^{(m)} \otimes _{O _S} D ^{(m)} _{Z}/  I D ^{(m)} _{Z}} 
\ar[r] ^-{\sim} _-{Id \otimes \overline{\sigma} _{Z',Z}}
& 
{O _S \{\partial _{r+1},\dots, \partial _{r+s} \} ^{(m)} \otimes _{O _S}
D ^{(m)} _{Z'}
\otimes _{O _S} O _S \{\partial _1,\dots, \partial _r \} ^{(m)}. } 
}
\end{equation}
Le carré \ref{carre-but-bili} est commutatif. En effet, soient 
$P \in D ^{(m)} _{Z',X}$,
$\underline{i} \in \N ^r$, $\underline{j} \in \N ^s$.  
Les éléments du terme en haut à gauche $(D ^{(m)} _{X}/ I D ^{(m)} _{X} ) \otimes _{D ^{(m)} _{X}}(D ^{(m)} _{X}/ D ^{(m)} _{X} J )$
sont des sommes d'éléments de la forme 
$Q:=[\underline{\partial} ^{<(\underline{0},\underline{j}, \underline{0})> _{(m)}}
P
\underline{\partial} ^{<(\underline{i},\underline{0}, \underline{0})> _{(m)}}] _{X'} \otimes [1] ^\prime  _Z$.
On calcule que cet élément 
$Q$
s'envoie via \ref{1225} sur 
$[\underline{\partial} ^{<(\underline{j}, \underline{0})> _{(m)}}
\sigma ^{(m)}   _{X',X}  (P )  ] _{Z'}
\otimes 
\underline{\partial} ^{<\underline{i}> _{(m)}}$.
Avec la formule de transitivité \ref{Eg-trans-sigma},
ce dernier s'envoie via la flèche de droite du carré \ref{carre-but-bili}
sur 
$\underline{\partial} ^{<\underline{j}> _{(m)}} \otimes  \sigma ^{(m)}   _{Z',X}  (P )  \otimes \underline{\partial} ^{<\underline{i}> _{(m)}}$.
D'un autre côté, 
comme 
$Q
=
[1] _{X'} \otimes [\underline{\partial} ^{<(\underline{0},\underline{j}, \underline{0})> _{(m)}}
P
\underline{\partial} ^{<(\underline{i},\underline{0}, \underline{0})> _{(m)}}] ^\prime _Z$,
la flèche de gauche du carré de \ref{carre-but-bili}
envoie 
$Q$ sur 
$\underline{\partial} ^{<\underline{j}> _{(m)}} \otimes [\sigma ^{(m)}   _{Z,X}  (P )\underline{\partial} ^{<(\underline{i},\underline{0})> _{(m)}} ] _{Z'}$.
Ce dernier s'envoie via la flèche du bas sur
$\underline{\partial} ^{<\underline{j}> _{(m)}} \otimes \sigma ^{(m)}   _{Z',X}  (P ) \otimes \underline{\partial} ^{<\underline{i}> _{(m)}}$.
On en déduit la commutativité du diagramme \ref{carre-but-bili}.

Considérons à présent le diagramme canonique: 
\begin{equation}
\label{carre-but-bili-bis}
\xymatrix{
{(D ^{(m)} _{X'}/ D ^{(m)} _{X'} J )\otimes _{D ^{(m)} _{Z'}} (D ^{(m)} _{Z}/ I D ^{(m)} _{Z} )} 
\ar[r] ^-{\sim} _-{Id \otimes \overline{\sigma} _{Z',Z}}
\ar[d] ^-{\sim}_-{\overline{\tau} _{Z',X'}\otimes Id}
& 
{(D ^{(m)} _{X'}/ D ^{(m)} _{X'} J )\otimes _{O _S} O _S \{\partial _1,\dots, \partial _r \} ^{(m)} } 
\ar[d] ^-{\sim} _-{\overline{\tau} _{Z',X'}\otimes Id}
\\ 
{O _S \{\partial _{r+1},\dots, \partial _{r+s} \} ^{(m)} \otimes _{O _S} D ^{(m)} _{Z} / I D ^{(m)} _{Z}} 
\ar[r] ^-{\sim} _-{Id \otimes \overline{\sigma} _{Z',Z}}
& 
{O _S \{\partial _{r+1},\dots, \partial _{r+s} \} ^{(m)} \otimes _{O _S}
D ^{(m)} _{Z'}
\otimes _{O _S} O _S \{\partial _1,\dots, \partial _r \} ^{(m)}.} 
}
\end{equation}
Ce carré \ref{carre-but-bili-bis} est commutatif. 
En effet, pour tous $\underline{i} \in \N ^r$, $\underline{j} \in \N ^s$,
$P \in  D ^{(m)} _{Z',X'}$, $Q\in  D ^{(m)} _{Z',Z}$, on calcule que
$[\underline{\partial} ^{<(\underline{j},\underline{0})> _{(m)}} P ] ' _{Z'}
\otimes 
[ Q \underline{\partial} ^{<(\underline{i},\underline{0})> _{(m)}}  ] _{Z'}$
s'envoie via les deux chemins possibles de \ref{carre-but-bili-bis} sur 
$\underline{\partial} ^{<\underline{j}> _{(m)}} \otimes \sigma _{Z',X'}  (P )
\sigma   _{Z',Z}  (Q)  \otimes \underline{\partial} ^{<\underline{i}> _{(m)}}$.

Comme les flèches de droite (resp. du bas) des carrés 
\ref{carre-but-bili} et \ref{carre-but-bili-bis}
sont identiques, 
comme les flèches de gauche (resp. du haut) des carrés 
\ref{carre-but-bili} et \ref{carre-but-bili-bis}
sont $D ^{(m)} _{Z}$-linéaires (resp. $D ^{(m)} _{X'}$-linéaires), 
il en résulte l'isomorphisme canonique
de $(D ^{(m)} _{X'}, D ^{(m)} _{Z})$-bimodules
\begin{equation}
(D ^{(m)} _{X}/ I D ^{(m)} _{X} ) \otimes _{D ^{(m)} _{X}}(D ^{(m)} _{X}/ D ^{(m)} _{X} J )
\riso 
(D ^{(m)} _{X'}/ D ^{(m)} _{X'} J )\otimes _{D ^{(m)} _{Z'}} (D ^{(m)} _{Z}/ I D ^{(m)} _{Z} ).
\end{equation}
D'où l'isomorphisme \ref{iso-chgt-de-basebis}.
Vérifions à présent sa commutation au changement de niveaux, i.e. supposons $S=T$. 
Par fonctorialité de l'isomorphisme \ref{iso-chgt-de-basebis}, 
il s'agit de vérifier la commutativité du diagramme 
\begin{equation}
\label{iso-chgt-de-basebis-chgtbase-2}
\xymatrix{
{a ^{*} \circ u ^{(m)} _+ (\FF') } 
\ar[r] ^-{}
\ar[d] ^-{\sim}
& 
{a ^{*} \circ u ^{(m')} _+ (\FF') } 
\ar[d] ^-{\sim}
\\ 
{u ^{\prime^{(m)}} _+  \circ b ^{*} (\FF') } 
\ar[r] ^-{}
& 
{u ^{\prime^{(m')}} _+  \circ b ^{*} (\FF') .} 
}
\end{equation}
Pour cela, il suffit de constater que les isomorphismes des carrés \ref{carre-but-bili} et \ref{carre-but-bili-bis} 
commutent au changement de niveaux.
Cette vérification résulte facilement de la commutation aux changements de niveau
des isomorphismes de la forme $\overline{\sigma}$ (i.e., on dispose du carré commutatif \ref{sigma-chgt-niv})
et de ceux de la forme $\overline{\tau}$ de 
\ref{tau-DZlin}. De même, on vérifie qu'il commute aux changements de base, i.e. on valide la commutativité du carré de gauche de 
\ref{iso-chgt-de-basebis-chgtbase}.
\end{proof}

\begin{lemm}
\label{acycl-u+}
Soit $\FF $ un $\D ^{(m)} _{Z}$-module quasi-cohérent plat.
On dispose alors de l'annulation: 
\begin{equation}
\forall\, n \not = 0,\?  \mathcal{H} ^{n} \L a ^{*} u _+ (\FF) = 0.
\end{equation}
\end{lemm}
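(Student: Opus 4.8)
The plan is to reduce to an explicit Koszul-complex computation on global sections. The assertion is local on $X$, so I keep the hypotheses of \ref{nota-diag-chgt-base}, and by the Theorem $A$ type equivalence between quasi-coherent $\D ^{(m)}$-modules and modules over the rings of global sections it is enough to show that the complex $(D ^{(m)} _{X}/ I D ^{(m)} _{X}) \otimes ^{\L} _{D ^{(m)} _{X}} u _+ ^{(m)} (F)$ computing $\L a ^{*} u _+ ^{(m)} (\FF)$ (see \ref{def-a^*-sch}, and recall $u _+ ^{(m)} (F) = (D ^{(m)} _{X}/ D ^{(m)} _{X} J) \otimes _{D ^{(m)} _{Z}} F$, no $\L$ being needed) is concentrated in degree $0$; here the tensor product uses the right $D ^{(m)} _{X}$-structure on $D ^{(m)} _{X}/ I D ^{(m)} _{X}$.

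First I would observe that the Koszul complex $K _\bullet (t _1, \dots, t _r ;\, D ^{(m)} _{X})$ built from the operators of left multiplication by $t _1, \dots, t _r$ is a resolution of $D ^{(m)} _{X}/ I D ^{(m)} _{X}$ by \emph{free right} $D ^{(m)} _{X}$-modules: left multiplication commutes with the right module structure, so all terms are free right $D ^{(m)} _{X}$-modules and all differentials are right $D ^{(m)} _{X}$-linear; and since $D ^{(m)} _{X}$ is free as a left $O _X$-module on the basis $\{\underline{\partial} ^{<\underline{k}> _{(m)}}\}$ while $t _1, \dots, t _r$ is a regular sequence of $O _X$ (cutting out the smooth subscheme $X'$), the complex is acyclic in positive degrees. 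Tensoring it over $D ^{(m)} _{X}$ with $u _+ ^{(m)} (F)$ then represents $\L a ^{*} u _+ ^{(m)} (\FF)$ by the Koszul complex $K _\bullet (t _1, \dots, t _r ;\, u _+ ^{(m)} (F))$ of the left multiplications by the $t _i$ on $u _+ ^{(m)} (F)$, so that $\mathcal{H} ^{-n} \L a ^{*} u _+ ^{(m)} (\FF) = H _n$ of that Koszul complex for $n \geq 0$, and $\mathcal{H} ^{n} = 0$ for $n > 0$.

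Next I would compute this last Koszul complex. Through the isomorphism \ref{u+-descpt} one has $u _+ ^{(m)} (F) \riso O _S \{\partial _{r+1}, \dots, \partial _{r+s}\} ^{(m)} \otimes _{O _S} F$, and since the $t _i$ with $i \leq r$ commute with $\partial _{r+1}, \dots, \partial _{r+s}$, the second formula of \ref{sigma-tau-act-O-ind} together with \ref{DZXmodI} shows that left multiplication by $t _i$ acts as $\mathrm{Id} \otimes [t _i] _Z$, where $[t _i] _Z \in O _Z$ is the image of $t _i$ under $O _X \to O _Z$. Hence $K _\bullet (t _1, \dots, t _r ;\, u _+ ^{(m)} (F)) \simeq O _S \{\partial _{r+1}, \dots, \partial _{r+s}\} ^{(m)} \otimes _{O _S} K _\bullet ([t _1] _Z, \dots, [t _r] _Z ;\, F)$, and as $O _S \{\partial _{r+1}, \dots, \partial _{r+s}\} ^{(m)}$ is $O _S$-free its homology equals $O _S \{\partial _{r+1}, \dots, \partial _{r+s}\} ^{(m)} \otimes _{O _S} H _n (K _\bullet ([t _1] _Z, \dots, [t _r] _Z ;\, F))$. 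Since $[t _1] _Z, \dots, [t _r] _Z$ is a regular sequence of $O _Z$ (cutting out the smooth subscheme $Z'$ of $Z$), this Koszul complex computes $\mathrm{Tor} ^{O _Z} _n (O _{Z'}, F)$; and $F$, being flat over $D ^{(m)} _{Z}$ which is itself free (hence flat) as a left $O _Z$-module, is flat over $O _Z$, so these groups vanish for $n > 0$, which yields the claim.

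The main obstacle I foresee is justifying the first step cleanly — namely that the Koszul complex on the left multiplications $t _1, \dots, t _r$ really is a \emph{right} $D ^{(m)} _{X}$-free resolution of $D ^{(m)} _{X}/ I D ^{(m)} _{X}$ (which rests entirely on $D ^{(m)} _{X}$ being free as a left $O _X$-module) — and then identifying the $t _i$-action on $u _+ ^{(m)} (F)$ precisely through \ref{sigma-tau-act-O-ind}; once those two points are secured, the flatness transitivity ($D ^{(m)} _{Z}$-flat $\Rightarrow$ $O _Z$-flat) and the Koszul/$\mathrm{Tor}$ dictionary are routine.
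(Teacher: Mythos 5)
Your proof is correct, and it takes a genuinely different route from the paper's. The paper proceeds by induction on $r$: for $r=1$ it uses the two-term free resolution $D ^{(m)} _{X} \xrightarrow{t_1} D ^{(m)} _{X}$ of $D ^{(m)} _{X}/t_1 D ^{(m)} _{X}$ and observes, via \ref{lemm-sigma-tau-act-O-ind}, that left multiplication by $t_1$ is injective on $u _+(\FF)$ because $\FF$ is torsion-free for $[t_1]_Z$; for $r\geq 2$ it peels off one hypersurface at a time, invoking the change-of-base isomorphism \ref{iso-chgt-de-basebis} to rewrite $a ^{\prime *} u _+ (\FF)$ as $u ^{\prime\prime} _+ b ^{\prime *}(\FF)$ before applying the inductive hypothesis and the $r=1$ case again. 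You instead do it in one stroke: you take the full Koszul complex $K _\bullet(t_1,\dots,t_r; D ^{(m)} _{X})$ as a resolution by free right $D ^{(m)} _{X}$-modules, transport it through $u _+^{(m)}(F)$, identify the $t_i$-action via the second formula of \ref{sigma-tau-act-O-ind}, and reduce to $\mathrm{Tor} ^{O _Z}_n(O _{Z'},F)=0$. The essential ingredients are the same in both arguments — the action formula \ref{sigma-tau-act-O-ind} and the flatness transitivity $D ^{(m)} _{Z}$-flat $\Rightarrow$ $O _Z$-flat — but your version avoids \ref{iso-chgt-de-basebis} entirely, which makes it more self-contained; the paper's inductive slicing is essentially a step-by-step unwinding of your Koszul complex, packaged through the base-change isomorphism it will also reuse in \ref{prop-iso-chgt-de-base}.
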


\begin{proof}
On procède par récurrence sur $r \geq 1$.
Dans un premier temps, supposons
$X ' = V (t _1)$. Dans ce cas, pour $n \not \in \{-1,0 \}$, $\mathcal{H} ^{n} \L a ^{*} u _+ (\FF)=0$.
De plus, $\mathcal{H} ^{-1} \L a ^{*} u _+ (\FF)  = \ker (u _+ (\FF) \underset{ t_1}{\longrightarrow}
u _+ (\FF))$.
Or, d'après \ref{u+-descpt},
$ u _+ (\FF) \riso O _S \{\partial _{r+1},\dots, \partial _{r+s} \} ^{(m)} \otimes _{O _S} \FF$.
Comme $\FF$ n'a pas de $t _1$ torsion, 
le lemme \ref{lemm-sigma-tau-act-O-ind} nous permet de conclure 
(en effet $t _1$ commute aux 
$\underline{\partial} ^{<(\underline{0},\underline{j}, \underline{0})> _{(m)}}$,
avec $\underline{j} \in \N ^s$).

Supposons à présent $r \geq 2$ et posons
$X''= V ( t_1, \dots, t _{r-1})$ et $Z''=Z \cap X''$, 
$b ' \,:\, Z '' \hookrightarrow Z$, 
$a ' \,:\, X '' \hookrightarrow X$, 
$a '' \,:\, X ' \hookrightarrow X''$, 
$u'' \,:\, Z '' \hookrightarrow X''$
les immersions fermées induites.
Par hypothèse de récurrence et d'après \ref{iso-chgt-de-basebis},
on obtient:
$\L a ^{\prime *} \circ u _+ (\FF) 
\riso 
a ^{\prime *} \circ u _+ (\FF) 
\riso 
u ^{\prime \prime} _+  \circ b ^{\prime *} (\FF)$. D'après le cas $r =1$,
on vérifie aussi 
$\L a ^{\prime \prime *} \circ u ^{\prime \prime} _+  \circ b ^{\prime *} (\FF) 
\riso 
a ^{\prime \prime *} \circ u ^{\prime \prime} _+  \circ b ^{\prime *} (\FF) $.
Il en dérive: 
$\L a ^{\prime\prime *} \circ \L a ^{\prime  *} \circ u _+ (\FF)
\riso a ^{\prime\prime *} \circ  a ^{\prime  *} \circ u _+ (\FF) \riso a ^{*} \circ u _+ (\FF)$.
L'isomorphisme $\L a ^{*} \riso \L a ^{\prime\prime *} \circ \L a ^{\prime  *}$ nous permet de conclure. 
\end{proof}

\begin{prop}
\label{prop-iso-chgt-de-base}
On bénéficie pour tout $\D ^{(m)} _{Z}$-quasi-cohérent $\FF $ de l'isomorphisme 
\begin{equation}
\label{iso-chgt-de-base}
a ^{!} \circ u _+ (\FF) \riso u ^{\prime} _+  \circ b ^{!} (\FF) .
\end{equation}
\end{prop}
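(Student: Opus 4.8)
The plan is to reduce the assertion for the extraordinary inverse image $a^{!}$ to the non-derived base-change isomorphism of Lemma~\ref{lemm-iso-chgt-de-basebis} by means of a flat resolution of $\FF$. Recall that, under the identifications between the $\omega$'s and the structure sheaves fixed in Section~\ref{u+-sch}, one has $a^{!}(\G)=\L a^{*}(\G)[-c]$ where $c$ is the codimension of the regular immersion $a\,:\,X'\hookrightarrow X$; since the square of~\ref{nota-diag-chgt-base} is cartesian and $Z,Z'$ are smooth, the immersion $b\,:\,Z'\hookrightarrow Z$ also has codimension $c$ and $b^{!}=\L b^{*}[-c]$. The two shifts therefore cancel, so it suffices to produce a functorial isomorphism $\L a^{*}\circ u_{+}(\FF)\riso u'_{+}\circ\L b^{*}(\FF)$ in $D^{\mathrm{b}}(\D^{(m)}_{Z'})$.

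First I would choose a resolution $\PP_{\bullet}\to\FF$, bounded above, by flat quasi-coherent $\D^{(m)}_{Z}$-modules; on the affine scheme $Z$ one may take the sheafification of a free resolution of the $D^{(m)}_{Z}$-module $\Gamma(Z,\FF)$, and these terms are flat over $\O_{Z}$ because $\D^{(m)}_{Z}$ is $\O_{Z}$-locally free. Since $u_{+}$ and $u'_{+}$ are exact, the relevant transfer modules being locally free over $\D^{(m)}_{Z}$ and $\D^{(m)}_{Z'}$ respectively by~\ref{tau-DZlin}, the complex $u_{+}(\PP_{\bullet})$ is a resolution of $u_{+}(\FF)$ and $u'_{+}(b^{*}(\PP_{\bullet}))$ represents $u'_{+}\circ\L b^{*}(\FF)$, each $b^{*}(\PP_{i})$ computing $\L b^{*}(\PP_{i})$ by flatness. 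On the other side, Lemma~\ref{acycl-u+} says exactly that each $u_{+}(\PP_{i})$ is $\L a^{*}$-acyclic; as $\L a^{*}$ has finite cohomological amplitude (the transfer bimodule $\D^{(m)}_{X'\hookrightarrow X}$ having a finite resolution by free $a^{-1}\D^{(m)}_{X}$-modules, coming from the Koszul resolution of $\O_{X'}$ over $a^{-1}\O_{X}$), the complex $\L a^{*}(u_{+}(\FF))$ is represented by applying the naive inverse image degree by degree, i.e. by $a^{*}(u_{+}(\PP_{\bullet}))$.

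It then remains to invoke Lemma~\ref{lemm-iso-chgt-de-basebis} in each degree, obtaining isomorphisms $a^{*}\circ u_{+}(\PP_{i})\riso u'_{+}\circ b^{*}(\PP_{i})$, and to observe that they commute with the differentials of $\PP_{\bullet}$: this is precisely the functoriality in $\FF$ recorded in the right-hand square of diagram~\ref{iso-chgt-de-basebis-chgtbase} taken with $T=S$ and $m'=m$. Hence these degreewise isomorphisms assemble into an isomorphism of complexes $a^{*}(u_{+}(\PP_{\bullet}))\riso u'_{+}(b^{*}(\PP_{\bullet}))$, whence $\L a^{*}\circ u_{+}(\FF)\riso u'_{+}\circ\L b^{*}(\FF)$, and the shift $[-c]$ yields~\ref{iso-chgt-de-base}. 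I do not anticipate a genuine obstacle, since Lemmas~\ref{lemm-iso-chgt-de-basebis} and~\ref{acycl-u+} already carry the weight; the only point needing attention is to stay throughout in the category of quasi-coherent $\D^{(m)}$-modules, so that the theorems of type $A$ underlying those lemmas remain applicable, which is automatic because $u_{+}$, $u'_{+}$ and the naive inverse images all preserve quasi-coherence.
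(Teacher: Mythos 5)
Your proof is correct and follows essentially the same path as the paper's: a flat quasi-coherent resolution of $\FF$, exactness of $u_+$ and $u'_+$, acyclicity from Lemma~\ref{acycl-u+}, and the degreewise identification from Lemma~\ref{lemm-iso-chgt-de-basebis}. The only additions you make — spelling out that the shifts in $a^!=\L a^*[-r]$ and $b^!=\L b^*[-r]$ cancel, and noting via diagram~\ref{iso-chgt-de-basebis-chgtbase} that the degreewise isomorphisms are compatible with the differentials — are points the paper leaves implicit, but they are correct and do not change the argument.
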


\begin{proof}
Soit $\PP$ une résolution gauche de $\FF$ par des $\D ^{(m)} _{Z}$-modules quasi-cohérents plats 
(il suffit de prendre une résolution de $F$ par des $D ^{(m)} _{Z}$-modules plats puis on applique le foncteur 
$\D ^{(m)} _{Z} \otimes _{D ^{(m)} _{Z}}-)$. 
Comme les foncteurs $u _+$ et $u ^{\prime} _+$ sont exactes, 
par \ref{acycl-u+}, on obtient 
$\L a ^{*} u _+ (\FF) \riso a ^{*} u _+ (\PP)$
et
$u ^{\prime} _+  \circ \L b ^{*} (\FF) \riso 
u ^{\prime} _+  \circ b ^{*} (\PP)$.
Le lemme \ref{lemm-iso-chgt-de-basebis}
nous permet de conclure.
\end{proof}

\section{Immersion fermée de $\V$-schémas formels lisses: adjonction pour les $D$-modules}
\label{section2.1}
Soient $m$ un entier positif, $u\,:\,\ZZ \hookrightarrow \X$ une immersion fermée de $\V$-schémas formels lisses d'idéal $\I$. 
Berthelot a défini la catégorie dérivée des complexes de $\widehat{\D}  ^{(m)} _{\X}$-modules quasi-cohérents à cohomologie bornée,
notée $D ^\mathrm{b} _{\mathrm{qc}} (\widehat{\D}  ^{(m)} _{\X})$
(voir \cite{Beintro2}). De même sur $\ZZ$. Il a de plus construit les foncteurs image directe par 
$u$ de niveau $m$ noté
$ u _+ ^{(m)}\,:\, D ^\mathrm{b} _{\mathrm{qc}} (\widehat{\D}  ^{(m)} _{\ZZ}) \to D ^\mathrm{b} _{\mathrm{qc}} (\widehat{\D}  ^{(m)} _{\X})$
et le foncteur image inverse extraordinaire par 
$u$ de niveau $m$ noté
$ u  ^{!(m)}\,:\, D ^\mathrm{b} _{\mathrm{qc}} (\widehat{\D}  ^{(m)} _{\X}) \to D ^\mathrm{b} _{\mathrm{qc}} (\widehat{\D}  ^{(m)} _{\ZZ})$ (e.g., voir \cite{Beintro2}).
Pour alléger, on pourra simplement noter $ u _+ $ ou $ u  ^{!}$.

\subsection{Manque de stabilité des faisceaux quasi-cohérents sans $p$-torsion sur un schéma formel}
La notion de faisceau quasi-cohérent sans $p$-torsion apparaît naturellement dans la théorie des $\D$-modules arithmétiques. 
Malheureusement, cette catégorie n'est pas stable a priori par le foncteur $\mathcal{H} ^{0} u ^{!}$ (voir 
\ref{pas-stab-qc}). C'est pour cette raison que nous travaillerons par la suite (e.g. dans la section \ref{section2.2}) avec leur section globale
et non avec le faisceau tout entier.

\begin{defi}
\label{def-qcsstor}
\begin{itemize}
\item Le séparé complété (pour la topologie $p$-adique) d'un $\D  ^{(m)} _{\X}$-module ${\E}$ est le $\widehat{\D}  ^{(m)} _{\X}$-module défini en posant 
$\widehat{\E}:=  \underset{\underset{i}{\longleftarrow}}{\lim\,} 
\left ( 
\D  ^{(m)} _{X _i} \otimes _{\D  ^{(m)} _{\X} }{\E}
\right )$.

\item Un $\D  ^{(m)} _{\X}$-module ${\E}$ est séparé complet  si le morphisme canonique 
\begin{equation}
\label{def-qcmod}
{\E}
\to
\underset{\underset{i}{\longleftarrow}}{\lim\,} 
\left ( 
\D  ^{(m)} _{X _i} \otimes _{\D  ^{(m)} _{\X} }{\E}
\right )=\widehat{\E}
\end{equation}
est un isomorphisme. 

\item Un $\D  ^{(m)} _{\X}$-module ${\E}$ est sans $p$-torsion si pour tout ouvert $\U$ de $\X$, 
$\Gamma (\U, {\E}) $ est sans $p$-torsion.

\item Un $\D  ^{(m)} _{\X}$-module ${\E}$  
est pré-pseudo-quasi-cohérent   
si, pour tout entier $i\in \N$, le $\O _{X _i}$-module
$\D  ^{(m)} _{X _i} \otimes _{\D  ^{(m)} _{\X} }{\E}$ est quasi-cohérent.

\item Un $\widehat{\D}  ^{(m)} _{\X}$-module ${\E}$  
est pseudo-quasi-cohérent s'il est séparé complet et pré-pseudo-quasi-cohérent.  

\item Un $\widehat{\D}  ^{(m)} _{\X}$-module ${\E}$ sans $p$-torsion 
est quasi-cohérent s'il est pseudo-quasi-cohérent.

\end{itemize}
\end{defi}

\begin{lemm}
\label{completion-adjoint}
Si $\E$ est un $\D  ^{(m)} _{\X}$-module pré-pseudo-quasi-cohérent, 
alors 
$\widehat{\E}$ est
un $\widehat{\D}  ^{(m)} _{\X}$-module pseudo-quasi-cohérent. 
\end{lemm}

\begin{proof}
Soit $\E$ un $\D  ^{(m)} _{\X}$-module pré-pseudo-quasi-cohérent.
Il résulte de \cite[3.3.1.2]{Be1} que le morphisme canonique
$\widehat{\E}/ \pi ^{i+1} \widehat{\E}= \D  ^{(m)} _{X _i}  \otimes _{\D  ^{(m)} _{\X} } \widehat{\E} 
\to
\D  ^{(m)} _{X _i}  \otimes _{\D  ^{(m)} _{\X} } \E $ est un isomorphisme (on remarque que l'hypothèse de pré-pseudo-quasi-cohérence est a priori nécessaire).
Il en résulte que 
$\widehat{\E}$ est
un $\widehat{\D}  ^{(m)} _{\X}$-module pseudo-quasi-cohérent. 
\end{proof}

\begin{vide}
\label{const-pseudo-qc}
Supposons $\X$ affine.
Soit 
${E}$ un 
$\widehat{D}  ^{(m)} _{\X} $-module séparé et complet pour la topologie $p$-adique. 
On définit canoniquement un $\widehat{\D}  ^{(m)} _{\X}$-module en posant 
$\widehat{\D}  ^{(m)} _{\X}  \widehat{\otimes} _{\widehat{D}  ^{(m)} _{\X} } {E}:=
\underset{\underset{i}{\longleftarrow}}{\lim\,}  \D  ^{(m)} _{X _i}  \otimes _{\widehat{D}  ^{(m)} _{\X} } {E}$, i.e., le séparé complété de
$\widehat{\D}  ^{(m)} _{\X} \otimes_{\widehat{D}  ^{(m)} _{\X} } {E}$.
Comme $\widehat{\D}  ^{(m)} _{\X} \otimes_{\widehat{D}  ^{(m)} _{\X} } {E}$
est pré-pseudo-quasi-cohérent, il résulte du lemme \ref{completion-adjoint} que 
${\E}:= 
\widehat{\D}  ^{(m)} _{\X}  \widehat{\otimes} _{\widehat{D}  ^{(m)} _{\X} } {E}$
est
un $\widehat{\D}  ^{(m)} _{\X}$-module pseudo-quasi-cohérent. 

Comme $\Gamma( \X, \D  ^{(m)} _{X _i}  \otimes _{\widehat{D}  ^{(m)} _{\X} } {E} ) \riso  D  ^{(m)} _{X _i}  \otimes _{\widehat{D}  ^{(m)} _{\X} } {E}$,
comme le foncteur $\Gamma( \X,-)$ commute aux limites projectives, 
on obtient alors
$\Gamma( \X,{\E}) \riso {E}$.
\end{vide}

\begin{vide}
\label{ss-section-ThB-coro}
Soient ${\E}$ un $\widehat{\D}  ^{(m)} _{\X}$-module 
pseudo-quasi-cohérent
et $\U$ un ouvert affine de $\X$.
Il résulte de \cite[3.3.2.1]{Be1} que pour tout entier $n \geq 1$ on ait
l'annulation
$H ^n ( \U, {\E}) =0$.
En appliquant le foncteur $\Gamma (\U,-)$ à la suite exacte
$0\to {\E} \to {\E} \to {\E} _i \to 0$, 
on en déduit que le morphisme canonique
\begin{equation}
\label{ThB-coro}
D  ^{(m)} _{U _i} \otimes _{\widehat{D}  ^{(m)} _{\U} } \Gamma ( \U, {\E} )
\to 
\Gamma (\U, {\E} _i) 
\end{equation}
est un isomorphisme. 
On en déduit les trois points suivants: 

$\bullet$ Le morphisme canonique
$\widehat{\Gamma ( \U, {\E} )}
\to
\underset{\underset{i}{\longleftarrow}}{\lim\,} 
\Gamma (\U, {\E} _i)$
est un isomorphisme.
En appliquant le foncteur 
$\Gamma (\U, -)$ à \ref{def-qcmod}, 
on obtient
l'isomorphisme
$\Gamma (\U, {\E} )
\riso
\underset{\underset{i}{\longleftarrow}}{\lim\,} 
\Gamma (\U, {\E} _i)$.
Ainsi, $\Gamma (\U, {\E} )$
est séparé complet pour la topologie $p$-adique. 

$\bullet$ On déduit de \ref{ThB-coro} que le morphisme canonique
$\D  ^{(m)} _{U _i} \otimes _{\widehat{D}  ^{(m)} _{\U} } \Gamma ( \U, {\E} )
\to 
{\E} _i |\U$
est un isomorphisme.
En passant à la limite projective, on en déduit le morphisme canonique
$\widehat{\D}  ^{(m)} _{\U}  \widehat{\otimes} _{\widehat{D}  ^{(m)} _{\U} }
\Gamma ( \U, {\E} )
\riso
{\E}|\U$ (avec les notations de \ref{const-pseudo-qc}  pour le premier terme)
est un isomorphisme.

$\bullet$ Comme le système $(\Gamma (\U, {\E} _i)) _{i\in \N}$ satisfait la condition
de Mittag-Leffler, 
comme $H ^n ( \U, {\E}) =0$ pour tout $n \geq 1$,
il résulte alors de  \cite[7.20]{Berthelot-Ogus-cristalline} l'isomorphisme
canonique
$\underset{\underset{i}{\longleftarrow}}{\lim\,} 
\left ( 
\D  ^{(m)} _{X _i} \otimes _{\widehat{\D}  ^{(m)} _{\X} }{\E}
\right )
\riso 
\R \underset{\underset{i}{\longleftarrow}}{\lim\,} 
\left ( 
\D  ^{(m)} _{X _i} \otimes _{\widehat{\D}  ^{(m)} _{\X} }{\E}
\right ).$ 
\end{vide}

\begin{rema}
Soit ${\E}$ un $\widehat{\D}  ^{(m)} _{\X}$-module sans $p$-torsion et quasi-cohérent.
Il découle de troisième point de \ref{ss-section-ThB-coro}, 
l'isomorphisme canonique
$${\E} 
\riso 
\R \underset{\underset{i}{\longleftarrow}}{\lim\,} 
\left ( 
\D  ^{(m)} _{X _i} \otimes ^\L _{\widehat{\D}  ^{(m)} _{\X} }{\E}
\right ).$$
On obtient ${\E}  \in 
D ^\mathrm{b} _{\mathrm{qc}} (\widehat{\D}  ^{(m)} _{\X})$ (définition de Berthelot), ce qui justifie notre terminologie de quasi-cohérence sans $p$-torsion.

\end{rema}

\begin{vide}
On remarque que le fait qu'un $\widehat{\D}  ^{(m)} _{\X} $-module soit 
pseudo-quasi-cohérent
(resp. sans $p$-torsion et quasi-cohérent)
est local en $\X$. Avec \ref{const-pseudo-qc} et \ref{ss-section-ThB-coro},
on obtient alors la description locale suivante de ces modules: 
Supposons que $\X$ soit affine. 
Les foncteurs $\widehat{\D}  ^{(m)} _{\X}  \widehat{\otimes} _{\widehat{D}  ^{(m)} _{\X} } -$ 
et $\Gamma (\X, -)$ induisent des équivalences quasi-inverses entre la catégorie 
des $\widehat{D}  ^{(m)} _{\X} $-modules séparés et complets pour la topologie $p$-adique
(resp. sans $p$-torsion et séparés et complets)
et la catégorie des $\widehat{\D}  ^{(m)} _{\X} $-module 
pseudo-quasi-cohérents
(resp. sans $p$-torsion et quasi-cohérents).

\end{vide}

\begin{vide}
[Image directe par $u$ d'un $\D$-module]
\label{compar-u+-qc-mod}

Soit ${\FF}$ un 
$\widehat{\D}  ^{(m)} _{\ZZ}$-module sans $p$-torsion et quasi-cohérent (voir \ref{def-qcsstor}).
Comme ${\FF}$ est sans $p$-torsion, 
alors l'image directe de niveau $m$ de 
${\FF}$ par $u$ telle que définie par Berthelot  (voir \cite[3.5.1]{Beintro2}) est donnée par la formule 
$u ^{ (m)} _{+} (\smash{{\FF}} ):= 
\R \underset{\underset{i}{\longleftarrow}}{\lim\,} 
u  _{i+} \smash{{\FF_i}} ^{(m)}$.
Or, d'après les isomorphismes de changement de base (\ref{u+-chgt-base-niv}), pour tous entiers $i' \geq i$, on a l'isomorphisme
$\D  ^{(m)} _{X _i} \otimes _{\D  ^{(m)} _{X _{i'}}} u  _{i'+} (\smash{{\FF_{i'} }} ^{(m)}) \riso u  _{i+} (\smash{{\FF_i}} ^{(m)})$. D'après la version \cite[7.20]{Berthelot-Ogus-cristalline} de Mittag-Leffler, on en déduit le premier isomorphisme
$$u ^{ (m)} _{+} (\smash{{\FF}} )
\liso
\underset{\underset{i}{\longleftarrow}}{\lim\,} 
u  _{i+} \smash{{\FF_i}} ^{(m)}
\liso 
u _* \left (\widehat{\D}  ^{(m)} _{\X \hookleftarrow \ZZ} \widehat{\otimes} _{\widehat{\D}  ^{(m)} _{\ZZ}} {\FF} \right ),
$$
le dernier résultant du fait que $u _* $ commute aux limites projectives.

Supposons de plus que $\X$ possède des coordonnées locales $t _1, \dots, t _n$  telles que $\ZZ = V ( t _1, \dots, t _r)$. 
La sous-$\V$-algèbre de $D  ^{(m)} _{\X}$ engendrée par la famille d'éléments
$\{ \partial _1 ^{< k _1> _{(m)}}, \partial _2 ^{< k _2> _{(m)}}, \dots, \partial _r ^{< k _r> _{(m)}} \, |\, k _1, \dots, k _r \in \N\}$
sera notée 
$\V \{\partial _1,\dots, \partial _r \} ^{(m)}$. 
On note $D ^{(m)} _{\ZZ,\X}$ 
la sous-$O _\X$-algèbre
de $D ^{(m)} _{\X}$ engendrée par les éléments 
de la forme 
$\underline{\partial} ^{<(\underline{0},\underline{j})> _{(m)}}$,
avec 
$\underline{j} \in \N ^{n-r}$.
Comme pour \ref{tau(m)ZXS},
on construit le morphisme canonique
$\tau ^{(m)} _{\ZZ,\X/\V}\,:\, D ^{(m)} _{\X}
\to
\V \{\partial _1,\dots, \partial _r \} ^{(m)} \otimes _{\V} D ^{(m)} _{\ZZ}   $.
Comme pour \ref{tau-DZlin}, 
on vérifie qu'il induit l'isomorphisme d'anneaux 
$\overline{\tau} ^{(m)} _{\ZZ,\X/\V}\,:\, D ^{(m)} _{\ZZ,\X}/ID ^{(m)} _{\ZZ,\X}
\riso
D ^{(m)} _{\ZZ}   $
et l'isomorphisme $(D ^{(m)} _{\X},D ^{(m)} _{\ZZ})$-bilinéaire
$\overline{\tau} _{\ZZ,\X}\,:\,
 D ^{(m)} _{\X \hookleftarrow \ZZ} = D ^{(m)} _{\X}/D ^{(m)} _{\X} I
\riso
\V \{\partial _1,\dots, \partial _r \} ^{(m)} \otimes _{\V} D ^{(m)} _{\ZZ} $. 

Si aucune ambiguïté n'est à craindre (sur le niveau par exemple), 
on notera simplement $\widehat{\tau} _{\ZZ,\X}$
la complétion $p$-adique de $\overline{\tau} ^{(m)} _{\ZZ,\X/\V}$, i.e.
$\underset{\underset{i}{\longleftarrow}}{\lim\,} \overline{\tau} ^{(m)}_{Z_i,X_i/S _i}$, où $\S:= \Spf \V$ et $S _i$ sont les réductions modulo $\pi ^{i+1}$. 
On notera de même l'isomorphisme d'anneaux 
$\widehat{\tau} _{\ZZ,\X}\,:\, \widehat{D} ^{(m)} _{\ZZ,\X}/I\widehat{D} ^{(m)} _{\ZZ,\X}
\riso
\widehat{D} ^{(m)} _{\ZZ}   $ induit.

Les éléments de
$\widehat{D} ^{(m)} _{\X}$ s'écrivent de manière unique de la forme
$\sum _{\underline{i} \in \N ^{r}} 
\underline{\partial} ^{<(\underline{i},\underline{0})> _{(m)}}
P _{\underline{i}}
$,
avec 
$P _{\underline{i}} \in \widehat{D} ^{(m)} _{\ZZ,\X}$, les $P _{\underline{i}} $
convergeant vers $0$ pour la topologie $p$-adique.
L'isomorphisme $(\widehat{D} ^{(m)} _{\X},\widehat{D} ^{(m)} _{\ZZ})$-bilinéaire $\widehat{\tau} _{\ZZ,\X}$ 
est donné par la formule:
\begin{align}
\notag
\widehat{\tau} _{\ZZ,\X}\,:\,
 \widehat{D} ^{(m)} _{\X \hookleftarrow \ZZ} = \widehat{D} ^{(m)} _{\X}/\widehat{D} ^{(m)} _{\X} I
&
\riso
\V\{\partial _1,\dots, \partial _r \} ^{(m)} \widehat{\otimes} _{\V} \widehat{D} ^{(m)} _{\ZZ}  
\\
\label{widehatD_(X<-Z)}
[\sum _{\underline{i} \in \N ^{r}} 
\underline{\partial} ^{<(\underline{i},\underline{0})> _{(m)}} P _{\underline{i}}] ^\prime _Z&
\mapsto
\sum _{\underline{i} \in \N ^{r}} 
\underline{\partial} ^{<(\underline{i})> _{(m)}}
\otimes
\widehat{\tau} _{\ZZ,\X} ([P _{\underline{i}}]' _{\ZZ} ),
\end{align}
où 
$[-]' _{\ZZ}$ désigne le morphisme canonique
$\widehat{D} ^{(m)} _{\X}
\to 
\widehat{D} ^{(m)} _{\X}/\widehat{D} ^{(m)} _{\X} I$.
Il en résulte l'isomorphisme
\begin{align}
\label{u+-comm-Gamma}
\Gamma (\X, u ^{(m)} _+ ( {\FF}))
\riso
\widehat{D}  ^{(m)} _{\X \hookleftarrow \ZZ} \widehat{\otimes} _{\widehat{D}  ^{(m)} _{\ZZ}} {F}
\underset{\widehat{\tau} _{\ZZ,\X}}{\riso}
\V \{\partial _1,\dots, \partial _r \} ^{(m)} \widehat{\otimes} _\V  {F}.
\end{align}
\end{vide}
Par stabilité de la quasi-cohérence par image directe par $u$ (voir \cite{Beintro2}), 
on en déduit que $u ^{ (m)} _{+} (\smash{{\FF}} )$  est  un 
$\widehat{\D}  ^{(m)} _{\X}$-module sans $p$-torsion et quasi-cohérent.

\begin{rema}
\label{pas-stab-qc}
Soit  ${\E}$ un $\widehat{\D}  ^{(m)} _{\X}$-module sans $p$-torsion et quasi-cohérent.
Comme ${\E}  \in 
D ^\mathrm{b} _{\mathrm{qc}} (\widehat{\D}  ^{(m)} _{\X})$, par stabilité de la quasi-cohérence (voir \cite{Beintro2}),
on obtient 
$u ^{!} ( {\E}) \in D ^\mathrm{b} _{\mathrm{qc}} (\widehat{\D}  ^{(m)} _{\ZZ})$.
On remarque que 
$\mathcal{H} ^0 u ^{!} ( {\E}) $ est sans $p$-torsion
et donc
$(\mathcal{H} ^0 u ^{!} ( {\E}) )_i := 
\D ^{(m)} _{Z _i} \otimes ^\L  _{\widehat{\D}  ^{(m)} _{\ZZ}}\mathcal{H} ^0 u ^{!} ( {\E})
\liso \D ^{(m)} _{Z _i} \otimes  _{\widehat{\D}  ^{(m)} _{\ZZ}}\mathcal{H} ^0 u ^{!} ( {\E})$.
Par contre, il paraît faux que 
$(\mathcal{H} ^0 u ^{!} ( {\E}) )_i$ soit un $\D ^{(m)} _{Z _i} $-module quasi-cohérent. 
Ainsi, on a a priori
$\mathcal{H} ^0 u ^{!} ( {\E}) \not \in D ^\mathrm{b} _{\mathrm{qc}} (\widehat{\D}  ^{(m)} _{\ZZ})$.
Par contre,
on vérifie par un calcul que le morphisme canonique
\begin{equation}
\label{H0u!-limproj}
\mathcal{H} ^0 u ^{!} ( {\E}) 
\to
\underset{\underset{i}{\longleftarrow}}{\lim\,} 
\mathcal{H} ^0 u _i ^! ({\E} _i) 
\end{equation}
est un isomorphisme ( 
cela résulte aussi de l'isomorphisme
$u ^{!}  ({\E})
\riso
\R \underset{\underset{i}{\longleftarrow}}{\lim\,} 
u _i ^! ({\E} _i) $,
auquel on applique le foncteur 
$\mathcal{H} ^0 $).
Comme 
$u _i ^{!} ( {\E} _i )
\riso 
\D ^{(m)} _{Z _i} \otimes ^\L  _{\widehat{\D}  ^{(m)} _{\ZZ}}u ^{!} ( {\E})$,
on obtient l'isomorphisme
$\mathcal{H} ^0  u _i ^{!} ( {\E} _i )
\riso 
\mathcal{H} ^0  (\D ^{(m)} _{Z _i} \otimes ^\L  _{\widehat{\D}  ^{(m)} _{\ZZ}}u ^{!} ( {\E}))$. 
En particulier, les $\D ^{(m)} _{Z _i} $-modules apparaissant dans la limite projective 
\ref{H0u!-limproj} sont quasi-cohérents. 
\end{rema}

\subsection{Images directes et images inverses extraordinaires de $D$-modules par une immersion fermée}
\label{section2.2}

Même si cela n'est pas toujours nécessaire, supposons dans cette section que $\X$ est affine et possède des coordonnées locales $t _1, \dots, t _n$ 
telles que $\ZZ = V ( t _1, \dots, t _r)$. 
Comme cela est expliqué au début de la section précédente, nous travaillerons ici 
dans les catégories de $\widehat{D}  ^{(m)} _{\X}$-modules (ou $\widehat{D}  ^{(m)} _{\ZZ}$-modules) sans $p$-torsion, 
séparés et complets pour la topologie $p$-adique. 
Nous définirons sur ces catégories la notion d'image directe (resp. d'image inverse extraordinaire)
de niveau $m$ par $u$ notée $u _+ ^{(m)}$ et 
(resp. $\mathcal{H} ^0 u ^{!}$).

\begin{vide}
[Image directe par $u$ d'un $\widehat{D}  ^{(m)} _{\ZZ}$-module]
\label{desc-imag-direct}
Soit $\smash{{F}} $ un $\widehat{D}  ^{(m)} _{\ZZ}$-module sans $p$-torsion, 
séparé et complet pour la topologie $p$-adique. 
On définit l'image directe de ${F}$ par $u$ de niveau $m$
en posant
$$u ^{ (m)} _{+} (\smash{{F}}):=
\underset{\underset{i}{\longleftarrow}}{\lim\,} 
u  _{i+}( {F _i}) 
=
\widehat{D}  ^{(m)} _{\X \hookleftarrow \ZZ} \widehat{\otimes} _{\widehat{D}  ^{(m)} _{\ZZ}} {F}
\underset{\widehat{\tau} _{\ZZ,\X}}{\riso}
\V \{\partial _1,\dots, \partial _r \} ^{(m)} \widehat{\otimes} _\V  {F}.
$$
La formule \ref{u+-comm-Gamma} justifie la notation.
On obtient ainsi
un $\widehat{D}  ^{(m)} _{\X}$-module sans $p$-torsion, 
séparé et complet pour la topologie $p$-adique. 
Via cet isomorphisme, les éléments de 
$u ^{(m)} _{+}  (\smash{{F}} )$
s'écrivent de manière unique sous la forme
$\sum _{\underline{k}\in \N ^{r}} \underline{\partial} ^{<\underline{k}>} \otimes x _{\underline{k}}$,
avec $x _{\underline{k}} \in\smash{{F}} $ 
et $\lim _{|\underline{k} |\to \infty} x _{\underline{k}} =0$ (pour la topologie $p$-adique de $\smash{{F}}$). 
De plus, les éléments de 
$\left(u ^{(m)} _{+}  (\smash{{F}} ) \right )_\Q$
s'écrivent de manière unique sous la forme
$\sum _{\underline{k}\in \N ^{r}} \underline{\partial} ^{<\underline{k}>} \otimes x _{\underline{k}}$,
avec $x _{\underline{k}} \in\smash{{F}} _\Q$ 
et $\lim _{|\underline{k} |\to \infty} x _{\underline{k}} =0$ (pour la topologie de $\smash{{F}} _\Q$ induite par la topologie $p$-adique de
$\smash{{F}} $). Pour ne pas alourdir les notations nous avons noter 
$\underline{\partial} ^{<\underline{k}>}$ la classe de 
$\underline{\partial} ^{<\underline{k}>}$ dans
$\widehat{D} ^{(m)} _{\X\leftarrow \ZZ}$. 

\end{vide}

\begin{lemm}
\label{inclusion-u+}
Soit ${F} \hookrightarrow {G} $
un monomorphisme de $\widehat{D} ^{(m)} _{\ZZ}$-modules sans $p$-torsion, 
séparés et complets pour la topologie $p$-adique. 
\begin{enumerate}
\item 
Le morphisme 
$u ^{(m)} _{+}  ( {F} )
\to 
u ^{(m)} _{+}  (  {G})$
(resp. $ (u ^{(m)} _{+}  ( {F} )) _\Q
\to 
(u ^{(m)} _{+}  ( {G} )) _\Q$)
est alors un monomorphisme. 

\item 
Le morphisme canonique
$u ^{(m)} _{+}  ( {F} )
\to 
u ^{(m)} _{+}  ( {G} )$
(resp. $ (u ^{(m)} _{+}  ( {F} )) _\Q
\to 
(u ^{(m)} _{+}  ( {G} )) _\Q$)
 est alors un isomorphisme si et seulement si
le morphisme canonique 
${F} \hookrightarrow {G} $
(resp. ${F} _\Q\hookrightarrow {G} _\Q$)
est un isomorphisme.
\end{enumerate}
\end{lemm}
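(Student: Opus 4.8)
The statement is essentially a formal consequence of the explicit description of $u^{(m)}_+$ given in \ref{desc-imag-direct}, namely the $\widehat{\tau}_{\ZZ,\X}$-isomorphism $u^{(m)}_+(F)\riso \V\{\partial_1,\dots,\partial_r\}^{(m)}\widehat{\otimes}_\V F$, together with exactness/faithfulness of the functor $\V\{\partial_1,\dots,\partial_r\}^{(m)}\widehat{\otimes}_\V(-)$ on $p$-torsion-free modules. I will treat the two parts in order, deriving the second from the first.

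For part (1), I would use the unique-writing description of elements of $u^{(m)}_+(F)$ (resp.\ $(u^{(m)}_+(F))_\Q$) as convergent sums $\sum_{\underline k\in\N^r}\underline\partial^{<\underline k>}\otimes x_{\underline k}$ with $x_{\underline k}\in F$ (resp.\ $F_\Q$), $\lim_{|\underline k|\to\infty}x_{\underline k}=0$. The map $u^{(m)}_+(F)\to u^{(m)}_+(G)$ sends such a sum to $\sum_{\underline k}\underline\partial^{<\underline k>}\otimes\iota(x_{\underline k})$, where $\iota\colon F\hookrightarrow G$. If this image is zero, then by the uniqueness of the writing on the target side each $\iota(x_{\underline k})=0$, hence each $x_{\underline k}=0$ since $\iota$ is a monomorphism, hence the original sum is zero. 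This gives injectivity; the rational case is identical, using $F_\Q\hookrightarrow G_\Q$ (which holds since localization at $p$ is exact and $F\hookrightarrow G$). One slightly delicate point to address is that the target of the coefficient comparison really is $G$ (resp.\ $G_\Q$) with the same convergence condition, so that ``uniqueness of the writing'' applies verbatim; this is exactly the content of \ref{desc-imag-direct} applied to $G$.

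For part (2), the ``if'' direction is immediate: if $F\hookrightarrow G$ is an isomorphism (resp.\ $F_\Q\hookrightarrow G_\Q$), applying the functor $\V\{\partial_1,\dots,\partial_r\}^{(m)}\widehat{\otimes}_\V(-)$ and the isomorphism $\widehat{\tau}_{\ZZ,\X}$ shows $u^{(m)}_+(F)\to u^{(m)}_+(G)$ is an isomorphism (resp.\ after $\otimes\Q$). For ``only if'', suppose $u^{(m)}_+(F)\to u^{(m)}_+(G)$ is an isomorphism. I recover $F$ inside $u^{(m)}_+(F)$ as the submodule of sums $\sum\underline\partial^{<\underline k>}\otimes x_{\underline k}$ supported on $\underline k=\underline 0$ — concretely, via the adjunction $\mathrm{adj}\colon F\to\mathcal H^0u^!(u^{(m)}_+(F))$ of \ref{idtou+u!iso}, or more directly by noting that $F=\cap_{s=1}^r\ker(t_s)$ on $u^{(m)}_+(F)$, which is a natural-in-$F$ identification compatible with the map induced by $\iota$. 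Surjectivity of $u^{(m)}_+(F)\to u^{(m)}_+(G)$ then forces every $\sum\underline\partial^{<\underline k>}\otimes y_{\underline k}$ ($y_{\underline k}\in G$) to be hit; comparing coefficients of $\underline\partial^{<\underline 0>}$ shows every $y\in G$ lies in the image of $\iota$, so $\iota$ is surjective, hence an isomorphism. The rational statement follows the same way with $F_\Q,G_\Q$ and $t_s$-torsion replaced by its rationalization.

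The main (minor) obstacle is bookkeeping rather than substance: one must be careful that the ``coefficient extraction'' $\sum\underline\partial^{<\underline k>}\otimes x_{\underline k}\mapsto x_{\underline 0}$ is genuinely functorial in the $\widehat D^{(m)}_{\ZZ}$-module — i.e.\ that the identification $F\riso\cap_s\ker(t_s\mid u^{(m)}_+(F))$ from \ref{vide-inv!-nota}/\ref{idtou+u!iso} commutes with the morphism induced by $\iota$ — and that passing to $(-)_\Q$ preserves both the unique-writing/convergence description (stated in \ref{desc-imag-direct}) and the kernel description of $F_\Q$. Both are direct consequences of the explicit formulas already established, so no real difficulty is expected.
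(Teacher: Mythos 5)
Your proof is correct and follows the same route the paper takes: the paper's entire proof is a one-line citation to the unique-writing description of \ref{desc-imag-direct} (or alternatively to \ref{prop-idtou+u!isohat}), which is precisely the machinery you spell out — coefficientwise comparison via $\widehat{\tau}_{\ZZ,\X}$ for part (1) and for the ``only if'' of part (2), plus the trivial functorial ``if'' direction.
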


\begin{proof}
Cela découle des descriptions de \ref{desc-imag-direct} (ou alors de \ref{prop-idtou+u!isohat}).
\end{proof}

\begin{lemm}
\label{coro-inclusion-u+}
Soit ${F}$
un $\widehat{D} ^{(m)} _{\ZZ}$-module sans $p$-torsion, 
séparé et complet pour la topologie $p$-adique. 
Alors ${F}$ est 
un $\widehat{D} ^{(m)} _{\ZZ}$-module cohérent si et seulement si 
$u ^{(m)} _{+}  ( {F} )$ est 
un $\widehat{D} ^{(m)} _{\X}$-module cohérent.
\end{lemm}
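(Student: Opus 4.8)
Je me placerais dans le cadre de la section \ref{section2.2} : $\X$ affine muni de coordonnées locales $t _1, \dots, t _n$ avec $\ZZ = V (t _1, \dots, t _r)$, et l'on travaille dans la catégorie des $\widehat{D} ^{(m)}$-modules sans $p$-torsion, séparés et complets. Via les théorèmes de type $A$ (cf. \cite{Be1}), la cohérence d'un tel module équivaut à celle du $\widehat{\D} ^{(m)}$-module quasi-cohérent sans $p$-torsion associé, ce qui permet d'invoquer les énoncés de stabilité de la cohérence de Berthelot. Pour montrer que la cohérence de $F$ entraîne celle de $u ^{(m)} _+ (F)$, je partirais d'une présentation finie $(\widehat{D} ^{(m)} _{\ZZ}) ^b \to (\widehat{D} ^{(m)} _{\ZZ}) ^a \to F \to 0$ et lui appliquerais $u ^{(m)} _+$ : comme $\widehat{D} ^{(m)} _{\X \hookleftarrow \ZZ}$ est $\widehat{D} ^{(m)} _{\ZZ}$-libre via $\widehat{\tau} _{\ZZ,\X}$, il résulte de la description \ref{desc-imag-direct} (et de \ref{inclusion-u+}) que $u ^{(m)} _+$ transforme cette présentation en une présentation de $u ^{(m)} _+ (F)$ par des sommes directes finies de copies de $\widehat{D} ^{(m)} _{\X \hookleftarrow \ZZ} = \widehat{D} ^{(m)} _{\X} / \widehat{D} ^{(m)} _{\X} I$. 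Comme $\widehat{D} ^{(m)} _{\X}$ est un anneau cohérent (cf. \cite{Be1}) et $I = (t _1, \dots, t _r)$ de type fini, ce module est cohérent sur $\widehat{D} ^{(m)} _{\X}$, donc $u ^{(m)} _+ (F)$ aussi, comme conoyau d'un morphisme entre sommes directes finies de copies de ce module.

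Réciproquement, supposons $u ^{(m)} _+ (F)$ cohérent. J'utiliserais l'isomorphisme d'adjonction de \ref{prop-idtou+u!isohat}, qui identifie $F$ à $\mathcal{H} ^0 u ^{!(m)} (u ^{(m)} _+ (F))$ lorsque $F$ est sans $p$-torsion, puis la stabilité de la cohérence par $u ^{!(m)}$ : pour une immersion fermée, $u ^{!(m)}$ s'obtient, à décalage près, par produit tensoriel dérivé par le bimodule de transfert cohérent $\widehat{\D} ^{(m)} _{\ZZ \hookrightarrow \X}$ (cf. \cite{Beintro2}), donc $u ^{!(m)} (u ^{(m)} _+ (F)) \in D ^{\mathrm{b}} _{\mathrm{coh}} (\widehat{\D} ^{(m)} _{\ZZ})$. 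En particulier le faisceau $\mathcal{H} ^0 u ^{!(m)} (u ^{(m)} _+ (F))$ est cohérent, et en prenant ses sections globales sur l'affine $\ZZ$ (lesquelles s'identifient à $\mathcal{H} ^0 u ^{!(m)}$ au sens des modules d'après \ref{vide-inv!-nota}), on conclut que $F$ est un $\widehat{D} ^{(m)} _{\ZZ}$-module cohérent.

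Le point le plus délicat est cette réciproque, et au fond l'isomorphisme \ref{prop-idtou+u!isohat} lui-même : au niveau des schémas le morphisme d'adjonction n'est pas surjectif (ce sont les sections du type $\partial _1 ^{<p ^{N}>} \otimes x$, pour $N$ grand, sur une base annulée par une puissance de $p$), et c'est l'absence de $p$-torsion de $F$ qui rétablit la bijectivité. Concrètement, sur $u ^{(m)} _+ (F) \riso \V \{\partial _1, \dots, \partial _r \} ^{(m)} \widehat{\otimes} _{\V} F$, l'action de chaque $t _s$ (pour $1 \leq s \leq r$) décale les indices via un coefficient binomial de niveau $m$, qui est un entier non nul, donc un non-diviseur de zéro sur le module sans $p$-torsion $F$ ; une section $\sum _{\underline{k}} \underline{\partial} ^{<\underline{k}>} \otimes x _{\underline{k}}$ annulée par tous les $t _s$ a donc tous ses termes d'ordre $\geq 1$ nuls, d'où $\bigcap _{s=1} ^r \ker (t _s) = 1 \otimes F$. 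Une fois cet isomorphisme acquis, l'énoncé est une conséquence formelle des stabilités de la cohérence par $u ^{(m)} _+$ et $u ^{!(m)}$ et des théorèmes de type $A$ sur l'affine $\ZZ$.
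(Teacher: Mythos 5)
Le sens direct est essentiellement celui de l'article : les deux approches reviennent au fait que l'image directe par une immersion ferm\'ee pr\'eserve la coh\'erence.

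Pour le sens r\'eciproque, la justification que vous donnez comporte une erreur. Vous invoquez la stabilit\'e de la coh\'erence par $u ^{!(m)}$ au motif que le bimodule de transfert $\widehat{\D} ^{(m)} _{\ZZ \hookrightarrow \X}$ serait coh\'erent. Il est bien coh\'erent comme $\widehat{\D} ^{(m)} _{\X}$-module \`a droite, mais il n'est \emph{pas} coh\'erent comme $\widehat{\D} ^{(m)} _{\ZZ}$-module \`a gauche : d'apr\`es \ref{D_(Z->X)} (et par compl\'etion $p$-adique), il s'identifie \`a $\widehat{D} ^{(m)} _{\ZZ} \widehat{\otimes} _{\V} \V \{\partial _1, \dots, \partial _r\} ^{(m)}$, qui est libre de rang infini sur $\widehat{D} ^{(m)} _{\ZZ}$ d\`es que $r \geq 1$, et c'est pourtant cette structure gauche qui gouverne la $\widehat{\D} ^{(m)} _{\ZZ}$-coh\'erence de $u ^{!(m)}(-)$. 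Le foncteur $u ^{!(m)}$ ne pr\'eserve donc pas la coh\'erence pour une immersion ferm\'ee : d\'ej\`a $u ^{!(m)} (\widehat{\D} ^{(m)} _{\X})$ n'est pas $\widehat{\D} ^{(m)} _{\ZZ}$-coh\'erent. Ce qui sauve votre conclusion, c'est que $u ^{(m)} _{+} (F)$ est \`a support dans $Z$ ; pour les complexes coh\'erents \`a support dans $Z$, $u ^{!}$ pr\'eserve bien la coh\'erence, mais c'est alors le th\'eor\`eme de Berthelot--Kashiwara qu'il faut invoquer, et non un argument de coh\'erence du bimodule. L'article \'evite tout recours \`a Berthelot--Kashiwara et proc\`ede par contraposition : si $F$ n'est pas coh\'erent, il existe une cha\^\i ne strictement croissante $(F _n) _n$ de sous-$\widehat{D} ^{(m)} _{\ZZ}$-modules coh\'erents de $F$, et d'apr\`es \ref{inclusion-u+} les $(u ^{(m)} _{+} (F _n)) _n$ forment une cha\^\i ne strictement croissante de sous-$\widehat{D} ^{(m)} _{\X}$-modules coh\'erents de $u ^{(m)} _{+} (F)$, ce qui contredit la noeth\'erianit\'e de $\widehat{D} ^{(m)} _{\X}$. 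Cette voie est plus \'el\'ementaire et ne fait intervenir ni $u ^{!(m)}$ ni l'adjonction \ref{prop-idtou+u!isohat}.
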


\begin{proof}
Comme le foncteur $u ^{(m)} _{+} $ défini par Berthelot pour les faisceaux 
préserve la cohérence, comme on dispose des théorèmes de type $A$, 
l'isomorphisme \ref{u+-comm-Gamma} nous permet de conclure la nécessité de l'assertion du lemme. 
Réciproquement, supposons que ${F}$ n'est pas
un $\widehat{D} ^{(m)} _{\ZZ}$-module cohérent.
Il existe alors une suite strictement croissante $({F_n}) _{n\in\N}$
de sous-$\widehat{D} ^{(m)} _{\ZZ}$-module cohérent de 
${F}$. Il résulte alors du lemme \ref{inclusion-u+} que l'on obtient la
suite strictement croissante $(u ^{(m)} _{+}  ({F_n})) _{n\in\N}$
de sous-$\widehat{D} ^{(m)} _{\X}$-module cohérent
de $u ^{(m)} _{+}  ( {F} )$. 
Comme $\widehat{D} ^{(m)} _{\X}$ est noethérien, 
ce dernier n'est pas
un $\widehat{D} ^{(m)} _{\X}$-module cohérent. 
\end{proof}

\begin{rema}
Soit $H ^{(m)}$ un $\widehat{D}  ^{(m)} _{\ZZ,\Q}$-module tel qu'il existe un 
$\widehat{D}  ^{(m)} _{\ZZ}$-module 
$\smash{\overset{_\circ}{H}} ^{(m)}$ 
sans $p$-torsion, séparé et complet (pour la topologie $p$-adique), 
tel que 
$\smash{\overset{_\circ}{H}} ^{(m)} _{\Q} \riso H ^{(m)}.$ 
On remarque que la topologie sur $H ^{(m)}$ induite par la base de voisinage 
de $p ^{n}\smash{\overset{_\circ}{H}} ^{(m)}$ 
fait de $H ^{(m)}$ un $K$-espace de Banach. 

En outre, si $H ^{(m)}$ est un $\widehat{D}  ^{(m)} _{\ZZ,\Q}$-module cohérent, alors
cette topologie est indépendant du choix d'un tel 
$\widehat{D}  ^{(m)} _{\ZZ}$-module 
$\smash{\overset{_\circ}{H}} ^{(m)}$ et ce dernier est forcément $\widehat{D}  ^{(m)} _{\ZZ}$-cohérent.
En effet, cela résulte du théorème de Banach (e.g. voir \cite[2.8.1]{bosch})
et de la  noethérianité de $\widehat{D}  ^{(m)} _{\ZZ}$.

On en déduit que si $H ^{(m)}$ est un $\widehat{D}  ^{(m)} _{\ZZ,\Q}$-module cohérent alors 
$\left(u ^{(m)} _{+}  (\smash{\overset{_\circ}{H}} ^{(m)} ) \right )_\Q$
 est un $\widehat{D}  ^{(m)} _{\X,\Q}$-module cohérent et 
ne dépend pas du 
$\widehat{D}  ^{(m)} _{\ZZ}$-module séparé complet $\smash{\overset{_\circ}{H}} ^{(m)} $ sans $p$-torsion, 
tel que 
$\smash{\overset{_\circ}{H}} ^{(m)} _{\Q} \riso H  ^{(m)}.$ 
On le note alors $u ^{(m)} _{+}  (H  ^{(m)})$.
\end{rema}

\begin{vide}
[Image inverse extraordinaire par $u$ d'un $\widehat{D}  ^{(m)} _{\X}$-module]
\label{im-inv-D-mod}
Soient  ${\E}$ un $\widehat{\D}  ^{(m)} _{\X}$-module sans $p$-torsion et quasi-cohérent
et 
$\smash{{E}} := \Gamma ( \X, {\E})$ 
le $\widehat{D}  ^{(m)} _{\X}$-module sans $p$-torsion, 
séparé et complet pour la topologie $p$-adique correspondant. 
On pose ${\E} _i := \D ^{(m)} _{X _i} \otimes _{\widehat{\D}  ^{(m)} _{\X}} {\E}
\liso \D ^{(m)} _{X _i} \otimes ^\L  _{\widehat{\D}  ^{(m)} _{\X}} {\E}$
et 
${E} _i := D ^{(m)} _{X _i} \otimes _{\widehat{D}  ^{(m)} _{\X}} {E}
\liso 
D ^{(m)} _{X _i} \otimes ^\L  _{\widehat{D}  ^{(m)} _{\X}} {E}$.
On rappelle la notation 
$\mathcal{H} ^0 u _i ^{!} ( {E}  _i)= \cap _{s=1} ^r \ker ( {E} _i\overset{t _s}{\longrightarrow} {E}  _i)$
(voir \ref{vide-inv!-nota}).
On définit l'image inverse extraordinaire de ${E}$ par $u$
en posant
\begin{equation}
\label{H0u!-limproj-mod}
\mathcal{H} ^0 u ^{!} ( {E}) 
:=
\underset{\underset{i}{\longleftarrow}}{\lim\,} 
\mathcal{H} ^0 u _i ^! ({E} _i) 
\subset 
\underset{\underset{i}{\longleftarrow}}{\lim\,} 
{E} _i ={E}.
\end{equation}
Il découle de \ref{H0u!-limproj} que l'on a 
l'isomorphisme 
$\Gamma (\ZZ, \mathcal{H} ^0 u ^{!} ( {\E}) )
=
\mathcal{H} ^0 u ^{!} ( {E})$, ce qui justifie la notation.

On calcule
$\mathcal{H} ^0 u ^{!} ( {E})= \cap _{s=1} ^r \ker ( {E} \overset{t _s}{\longrightarrow} {E})$.
On munit 
$\mathcal{H} ^0 u ^{!} ({E})$
d'une structure canonique de $\widehat{D} ^{(m)} _{\ZZ} $-module à gauche via l'isomorphisme
$\widehat{\tau} _{\ZZ,\X}\,:\, \widehat{D} ^{(m)} _{\ZZ,\X}/\widehat{D} ^{(m)} _{\ZZ,\X} I
\riso
\widehat{D} ^{(m)} _{\ZZ}   $ de la manière suivante: 
pour tout $P \in\widehat{D} ^{(m)} _{\ZZ,\X}$, pour tout $x \in \mathcal{H} ^0 u ^{!} ( {E})$, 
on pose 
\begin{equation}
\label{def-D-modu!hat}
\widehat{\tau} _{\ZZ,\X} ([P ] ' _{\ZZ})\cdot x : = P\cdot x,
\end{equation}
où $[-]' _{\ZZ}$ désigne le morphisme canonique
$\widehat{D} ^{(m)} _{\ZZ,\X}
\to 
\widehat{D} ^{(m)} _{\ZZ,\X}/\widehat{D} ^{(m)} _{\ZZ,\X} I$.

Comme pour $s =1, \dots, r$ les applications 
$t _s \,:\, {E} \to {E}$
sont continues, 
comme 
$\mathcal{H} ^0 u ^{!} ( {E})= \cap _{s=1} ^r \ker ( {E} \overset{t _s}{\longrightarrow} {E})$,
cela entraîne que
$\mathcal{H} ^0 u ^{!} ( {E})$ est séparé complet pour la topologie induite par la topologie $p$-adique 
de $ {E}$.
Comme $ {E}$ est sans $p$-torsion, on déduit aussi
du calcul 
$\mathcal{H} ^0 u ^{!} ( {E})= \cap _{s=1} ^r \ker ( {E} \overset{t _s}{\longrightarrow} {E})$
l'égalité
$\mathcal{H} ^0 u ^{!} ( {E})  \cap p ^{i+1} {E} = 
p ^{i+1} \mathcal{H} ^0 u ^{!} ( {E}) $. 
En particulier, la topologie $p$-adique de $\mathcal{H} ^0 u ^{!} ( {E}) $
et la topologie de $\mathcal{H} ^0 u ^{!} ( {E}) $ induite par la topologie $p$-adique 
de $ {E}$ sont identiques.
Il en résulte que 
$\mathcal{H} ^0 u ^{!} ( {E}) $ 
est un $\widehat{D}  ^{(m)} _{\ZZ}$-module sans $p$-torsion, 
séparé et complet pour la topologie $p$-adique.

Terminons par une remarque : lorsque $\mathcal{H} ^0 u ^{!} ( {\E})$ est de surcroît quasi-cohérent,
$\mathcal{H} ^0 u ^{!} ( {E})$ est alors 
le $\widehat{D}  ^{(m)} _{\ZZ}$-module associé à 
$\mathcal{H} ^0 u ^{!} ( {\E})$.

\end{vide}

\begin{rema}
\label{u+limproj-pre}
Soit $\smash{{E}} $ un $\widehat{D}  ^{(m)} _{\X}$-module sans $p$-torsion, 
séparé et complet pour la topologie $p$-adique. 
En posant $\left (\mathcal{H} ^0 u ^{!} ( {E})  \right ) _i:=
D ^{(m)} _{Z _i} \otimes _{\widehat{D}  ^{(m)} _{\ZZ}} 
\mathcal{H} ^0 u ^{!} ( {E})$,
comme 
$\mathcal{H} ^0 u ^{!} ( {E})  \cap p ^{i+1} {E} = 
p ^{i+1} \mathcal{H} ^0 u ^{!} ( {E}) $,
on obtient l'inclusion
\begin{equation}
\label{u!isubsetui!}
\left (\mathcal{H} ^0 u ^{!} ( {E})  \right ) _i
\subset 
\mathcal{H} ^0 u _i ^! ({E} _i).
\end{equation}
En prenant la structure de $D ^{(m)} _{Z _i}$-module sur 
$\left (\mathcal{H} ^0 u ^{!} ( {E})  \right ) _i$ induite par la structure
de $\widehat{D}  ^{(m)} _{\ZZ}$-module de $\mathcal{H} ^0 u ^{!} ( {E})$ définie par la formule \ref{def-D-modu!hat}
et la structure de $D ^{(m)} _{Z _i}$-module de $\mathcal{H} ^0 u _i ^! ({E} _i)$
donnée par l'égalité \ref{def-D-modu!},
on vérifie que 
l'inclusion \ref{u!isubsetui!} est $D ^{(m)} _{Z _i}$-linéaire.
En général, celle-ci n'est pas bijective.  

\end{rema}

\begin{rema}
\label{u+limproj}
Avec les notations de \ref{u+limproj-pre},
le morphisme canonique 
$ u _+ \mathcal{H} ^0 u ^{!} ( {E})
=
\underset{\underset{i}{\longleftarrow}}{\lim\,} 
u  _{i+}\left (\mathcal{H} ^0 u ^{!} ( {E})  \right ) _i
\to 
\underset{\underset{i}{\longleftarrow}}{\lim\,} 
 u _{i+} \mathcal{H} ^0 u _i ^{!} ( {E} _i)$ est un isomorphisme, i.e. 
 que le morphisme canonique 
$$\V \{\partial _1,\dots, \partial _r \} ^{(m)} \widehat{\otimes} _\V \mathcal{H} ^0 u ^{!} ( {E})
\to 
\underset{\underset{i}{\longleftarrow}}{\lim\,} 
(O _{S_i} \{\partial _1,\dots, \partial _r \} ^{(m)} \otimes _{O _{S_i}}
 \mathcal{H} ^{0}   u  _i ^! (E _i))$$
 est un isomorphisme.
En effet, 
les éléments $x \in \V \{\partial _1,\dots, \partial _r \} ^{(m)} \widehat{\otimes} _\V \mathcal{H} ^0 u ^{!} ( {E})$
s'écrivent de manière unique sous la forme
$x = \sum _{\underline{l} \in \N ^{r}}  \underline{\partial} ^{<\underline{l}>} \otimes x _{\underline{l}}$,
avec $x _{\underline{l}} \in \mathcal{H} ^0 u ^{!} ( {E})$ (et convergeant vers $0$).
Soit $(x _i) _i \in \underset{\underset{i}{\longleftarrow}}{\lim\,} 
(O _{S_i} \{\partial _1,\dots, \partial _r \} ^{(m)} \otimes _{O _{S_i}}
 \mathcal{H} ^{0}   u  _i ^! (E _i))$.
On écrit de manière unique $x _i$ sous la forme
$x _i= \sum _{\underline{l} \in \N ^{r}}  \underline{\partial} ^{<\underline{l}>}  \otimes  x _{\underline{l},i}$,
avec $x _{\underline{l},i} \in \mathcal{H} ^0 u ^{!} ( {E}_i)$ et la somme étant cette fois-ci finie.
Par unicité, on obtient que $(x _{\underline{l},i}) _i \in \underset{\underset{i}{\longleftarrow}}{\lim\,} 
\mathcal{H} ^0 u _i ^! ({E} _i)$.
L'égalité
$\mathcal{H} ^0 u^! ({E})=
\underset{\underset{i}{\longleftarrow}}{\lim\,} \mathcal{H} ^0 u _i ^! ({E} _i)$ 
nous permet alors de conclure.
\end{rema}

\subsection{Adjonction}
Nous reprenons les notations et hypothèses de la section \ref{section2.2}.
Nous vérifions que le foncteur
$u _+ ^{(m)}$ est adjoint à gauche de
$\mathcal{H} ^0 u ^{!}$.

\begin{prop}
[Morphisme d'adjonction I]
\label{prop-idtou+u!isohat}
Soit $\smash{{F}} ^{(m)}$ un $\widehat{D}  ^{(m)} _{\ZZ}$-module sans $p$-torsion, 
séparé et complet pour la topologie $p$-adique. 
On définit le morphisme canonique d'adjonction
\begin{gather}
\label{idtou+u!isohat}
\mathrm{adj}\,:\,
\smash{{F}} ^{(m)}
\to
\mathcal{H} ^{0} u ^{ !} \left (u ^{(m)} _{+}  (\smash{{F}} ^{(m)})  \right) ,
\end{gather}
en posant
$\mathrm{adj} (x )= 1 \otimes x$ (i.e. $\mathrm{adj} (x )=  \sum _{\underline{k}\in \N ^{r}} \underline{\partial} ^{<\underline{k}>} \otimes x _{\underline{k}}$ 
avec $x _{\underline{k}} =x$ si $\underline{k}=0$ et $x _{\underline{k}} =\underline{0}$ sinon),
 pour tout $x \in \smash{{F}} ^{(m)}$.
Le morphisme $\mathrm{adj}$ est 
 un isomorphisme de $\widehat{D}  ^{(m)} _{\ZZ}$-modules. 
\end{prop}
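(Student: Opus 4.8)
The plan is to reduce the statement to an explicit computation with the descriptions obtained in \ref{desc-imag-direct} and \ref{im-inv-D-mod}. First I would unwind the definitions: by \ref{desc-imag-direct}, the elements of $u ^{(m)} _{+} (\smash{{F}} ^{(m)})$ are written uniquely as $\sum _{\underline{k}\in \N ^{r}} \underline{\partial} ^{<\underline{k}>} \otimes x _{\underline{k}}$ with $x _{\underline{k}} \in \smash{{F}} ^{(m)}$ converging $p$-adically to $0$; and by \ref{im-inv-D-mod}, $\mathcal{H} ^{0} u ^{!} (u ^{(m)} _{+} (\smash{{F}} ^{(m)})) = \cap _{s=1} ^{r} \ker ( t _s)$ acting on this module, equipped with its $\widehat{D} ^{(m)} _{\ZZ}$-structure via $\widehat{\tau} _{\ZZ,\X}$. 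So I must (i) check $\mathrm{adj}$ is well-defined and lands in that kernel, (ii) check $\widehat{D} ^{(m)} _{\ZZ}$-linearity, and (iii) exhibit an inverse.

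For (i): since $1$ lies in the center of $\widehat{D} ^{(m)} _{\X}$, the action of $t _s$ on $1\otimes x$ is $t _s \otimes x = 0$ in $\V\{\partial _1,\dots,\partial _r\}^{(m)}\widehat\otimes _\V {F}$ (the relation $t _s = 0$ holds on the right-hand factor after identifying with $\widehat{D} ^{(m)} _{\X\hookleftarrow\ZZ}$), so $\mathrm{adj}(x)$ indeed lies in the kernel of every $t _s$. For (ii), I would invoke the same mechanism already used for the non-completed statement in the excerpt: the right-hand formula in \ref{sigma-tau-act-O-ind} for $\underline{i}=\underline{0}$, which is valid for all $P\in D ^{(m)} _{\ZZ}$, $Q\in D ^{(m)} _{\ZZ,\X}$ precisely because $1$ is central; passing to the $p$-adic completion and using the definition \ref{def-D-modu!hat} of the $\widehat{D} ^{(m)} _{\ZZ}$-structure on $\mathcal{H} ^0 u ^!$, this gives exactly that $\mathrm{adj}(P\cdot x) = P\cdot \mathrm{adj}(x)$ for $P$ in the generating subalgebra $\widehat{D} ^{(m)} _{\ZZ,\X}$, hence for all of $\widehat{D} ^{(m)} _{\ZZ}$ by $\O _\X$-linearity and continuity.

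For (iii), the inverse sends $\sum _{\underline{k}} \underline{\partial} ^{<\underline{k}>} \otimes x _{\underline{k}}$ in the kernel to $x _{\underline 0}$. The content is that membership in $\cap _{s} \ker(t _s)$ forces $x _{\underline{k}} = 0$ for $\underline{k}\neq \underline 0$; this is a direct calculation using \cite[2.2.4.(iv)]{Be1} to commute $t _s$ past $\underline{\partial} ^{<\underline{k}>}$, exactly in the spirit of the computations in \ref{acycl-u+} (where the analogous torsion-freeness argument shows $t _1$ acts injectively on $O _S\{\partial\}\otimes\FF$). Since $\smash{{F}} ^{(m)}$ has no $p$-torsion, the coefficient extraction is unambiguous, and one checks the two maps are mutually inverse and both $\O _\X$-linear. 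I expect the only genuinely delicate point to be bookkeeping with the convergence condition on the coefficients $x _{\underline{k}}$ and the compatibility of the $p$-adic topology on $\mathcal{H} ^0 u ^!$ with that on $u ^{(m)} _+({F} ^{(m)})$ — but \ref{im-inv-D-mod} already records that $\mathcal{H} ^0 u ^!$ is separated and $p$-adically complete with the induced topology, so this should go through routinely; alternatively one simply reduces everything modulo $\pi ^{i+1}$, applies the scheme-level adjunction of the first chapter level by level, and passes to the projective limit using $\mathcal{H} ^0 u ^!(E) = \varprojlim \mathcal{H} ^0 u _i ^!(E _i)$.
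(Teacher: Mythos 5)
Your proposal takes essentially the same route as the paper's proof: linearity comes down to the scheme-level computation (the paper passes through the commutative diagram \ref{diag1-idtou+u!isohat} after reducing mod $\pi^{i+1}$; you invoke the formula \ref{sigma-tau-act-O-ind} with $\underline{i}=\underline{0}$ directly and complete — both rest on the same scheme-level fact and the definition \ref{def-D-modu!hat}), and bijectivity follows from the explicit $t_s$-commutation relation from \cite[2.2.4]{Be1}, which is exactly the paper's displayed identity
$t_i\cdot\sum_{\underline{k}}\underline{\partial}^{<\underline{k}>}\otimes x_{\underline{k}}
=-\sum_{\underline{k}\geq 1_i}\left\{\begin{smallmatrix}k_i\\1\end{smallmatrix}\right\}\underline{\partial}^{<\underline{k}-1_i>}\otimes x_{\underline{k}}$,
combined with the $p$-torsion-freeness of $\smash{{F}} ^{(m)}$ to conclude $x_{\underline{k}}=0$ for $\underline{k}\neq\underline{0}$.

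One caution about the alternative you sketch at the end (``reduce mod $\pi^{i+1}$, apply the scheme-level adjunction level by level, and pass to the projective limit''): as the paper points out right after \ref{idtou+u!iso}, over a base killed by a power of $p$ the scheme-level map $\FF\to\mathcal{H}^{0}u^{!}u_{+}(\FF)$ is injective but in general \emph{not} surjective (the counterexample being $\partial_1^{<p^N>}\otimes x$ for $N$ large, since $\left\{\begin{smallmatrix}p^N\\1\end{smallmatrix}\right\}_{(m)}$ is divisible by a high power of $p$). So the level-wise maps are not isomorphisms, and surjectivity really comes from working in the $p$-adically complete, torsion-free module, as in your primary argument; the level-by-level shortcut as stated would not go through.
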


\begin{proof}
Établissons d'abord sa $\widehat{D}  ^{(m)} _{\ZZ}$-linéarité.
Pour tout $\widehat{D}  ^{(m)} _{\ZZ}$-module à gauche
$\smash{{G}}$, on note 
$[-] _i$ le morphisme canonique 
$\smash{{G}} \to \smash{{G}}/ \pi ^{i+1} \smash{{G}}$.
On dispose du diagramme commutatif: 
\begin{equation}
\label{diag1-idtou+u!isohat}
\xymatrix{
{\smash{{F}} ^{(m)}} 
\ar[r] ^-{\mathrm{adj}}
\ar[d] ^-{[-] _i}
& 
{\mathcal{H} ^{0} u ^{ !} \circ u ^{(m)} _{+}  (\smash{{F}} ^{(m)})  } 
\ar[d] ^-{[-] _i}
\\ 
{\smash{{F_i}} ^{(m)}} 
\ar[r] 	^-{\mathrm{adj}}
& 
{\left ( \mathcal{H} ^{0} u ^{ !} \circ u ^{(m)} _{+}  (\smash{{F}} ^{(m)}) \right ) _i } 
\ar@{^{(}->}[r] ^-{}
& 
{\mathcal{H} ^{0} u _i ^{ !} \circ u ^{(m)} _{i +}  (\smash{{F_i}} ^{(m)}) , } 
}
\end{equation}
dont la flèche en bas à droite découle de \ref{u!isubsetui!}.
On calcule que le morphisme composé du bas est le morphisme 
$\mathrm{adj}$ de \ref{idtou+u!iso} qui est  $D ^{(m)} _{Z _i}$-linéaire. 
Comme l'inclusion  \ref{u!isubsetui!} est $D ^{(m)} _{Z _i}$-linéaire, il en résulte que la flèche en bas
à gauche de \ref{diag1-idtou+u!isohat} est aussi $D ^{(m)} _{Z _i}$-linéaire.
Par passage à la limite projective, il en résulte que le morphisme  du haut de
\ref{diag1-idtou+u!isohat} est $\widehat{D}  ^{(m)} _{\ZZ}$-linéaire.

Vérifions à présent sa bijectivité. L'injectivité est évidente. 
Or, 
via la description \ref{desc-imag-direct} de l'image directe et avec ses notations, 
pour $1\leq i\leq r$, on obtient dans $u ^{(m)} _{+}  (\smash{{F}} ^{(m)}) $
la relation
$$t _i \cdot \sum _{\underline{k}\in \N ^{r}} (\underline{\partial} ^{<\underline{k}>} \otimes x _{\underline{k}})
= -\sum _{\underline{k} \geq 1 _i} \left\{
\begin{smallmatrix}
k _i   \\
1   
\end{smallmatrix}
\right \}
\underline{\partial} ^{<\underline{k}-1 _i>} \otimes x _{\underline{k}},$$ 
où
$1 _i= (0, \dots, 0,1,0, \dots, 0)$ avec $1$ à la $i$ème place (en effet, il suffit de vérifier l'égalité modulo $\pi ^{i+1}$, ce qui découle
des formules de \cite[2.2.4]{Be1} et \ref{D_(X<-Z)}).
On en déduit aussitôt que tous les éléments de 
$\mathcal{H} ^{0} u ^{ !} \circ u ^{(m)} _{+}  (\smash{{F}} ^{(m)})  $
sont de la forme 
$1\otimes x$ avec $x \in \smash{{F}} ^{(m)}$.
\end{proof}

\begin{prop}
[Morphisme d'adjonction II]
\label{prop-u_+u^!toif-limproj}
Soit $\smash{{E}} ^{(m)}$
un $\widehat{D} ^{(m)} _{\X}$-module sans $p$-torsion, séparé et complet pour la topologie $p$-adique. 
Considérons la flèche
\begin{equation}
\label{u_+u^!toif-limproj}
\mathrm{adj}\,:\,u ^{(m)} _{+} \circ \mathcal{H} ^{0} u ^{ !} (\smash{{E}} ^{(m)})
\to 
\smash{{E}} ^{(m)}
\end{equation}
donnée par 
$\sum _{\underline{k}\in \N ^{r}} \underline{\partial} ^{<\underline{k}>} \otimes x _{\underline{k}}
\to 
\sum _{\underline{k}\in \N ^{r}} \underline{\partial} ^{<\underline{k}>}  \cdot x _{\underline{k}}$,
avec $x _{\underline{k}} \in \mathcal{H} ^{0} u ^{ !} (\smash{{E}} ^{(m)})$ 
et $\lim _{|\underline{k} |\to \infty} x _{\underline{k}} =0$. 
L'application $\mathrm{adj}$ est 
 un morphisme de $\widehat{D}  ^{(m)} _{\X}$-modules. 
\end{prop}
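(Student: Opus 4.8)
The plan is to realise $\mathrm{adj}$ as the $p$-adic completion of the tautological co-unit map written at the level of uncompleted modules, and to deduce from that description both that the formula makes sense and that it is $\widehat{D}^{(m)}_{\X}$-linear. First I would recall, from \ref{desc-imag-direct} and \ref{compar-u+-qc-mod}, that via $\widehat{\tau}_{\ZZ,\X}$ a general element of $u^{(m)}_+\circ\mathcal{H}^0 u^!(E^{(m)})=\widehat{D}^{(m)}_{\X\hookleftarrow\ZZ}\widehat{\otimes}_{\widehat{D}^{(m)}_{\ZZ}}\mathcal{H}^0 u^!(E^{(m)})$ is uniquely of the shape $\sum_{\underline{k}\in\N^r}\underline{\partial}^{<\underline{k}>}\otimes x_{\underline{k}}$ with $x_{\underline{k}}\in\mathcal{H}^0 u^!(E^{(m)})$ and $x_{\underline{k}}\to 0$ for the $p$-adic topology. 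To check that $\sum_{\underline{k}}\underline{\partial}^{<(\underline{k},\underline{0})>_{(m)}}\cdot x_{\underline{k}}$ converges in $E^{(m)}$ I would invoke the fact, established in \ref{im-inv-D-mod}, that the topology on $\mathcal{H}^0 u^!(E^{(m)})$ induced by the $p$-adic topology of $E^{(m)}$ is its own $p$-adic topology: hence $x_{\underline{k}}\in p^{N_{\underline{k}}}E^{(m)}$ with $N_{\underline{k}}\to\infty$, and since the operators $\underline{\partial}^{<(\underline{k},\underline{0})>_{(m)}}$ are $\V$-linear they preserve $p^{N}E^{(m)}$, so $\underline{\partial}^{<(\underline{k},\underline{0})>_{(m)}}\cdot x_{\underline{k}}\in p^{N_{\underline{k}}}E^{(m)}\to 0$; completeness of $E^{(m)}$ concludes. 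Uniqueness of the expansion then makes the prescription additive and $\V$-linear, so it is a well-defined map.

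For $\widehat{D}^{(m)}_{\X}$-linearity I would proceed as follows. Since $\mathcal{H}^0 u^!(E^{(m)})=\cap_{s=1}^r\ker(t_s)$ we have $I\cdot\mathcal{H}^0 u^!(E^{(m)})=0$, so the left ideal $\widehat{D}^{(m)}_{\X}I$ annihilates $\mathcal{H}^0 u^!(E^{(m)})$ and the pairing $([P]'_{\ZZ},x)\mapsto P\cdot x$ defines a map $\widehat{D}^{(m)}_{\X\hookleftarrow\ZZ}\times\mathcal{H}^0 u^!(E^{(m)})\to E^{(m)}$; it is $\widehat{D}^{(m)}_{\ZZ}$-balanced because, writing a section of $\widehat{D}^{(m)}_{\ZZ}$ as $\widehat{\tau}_{\ZZ,\X}([Q]'_{\ZZ})$ with $Q\in\widehat{D}^{(m)}_{\ZZ,\X}$, the balancing reduces to $(PQ)\cdot x=P\cdot(Q\cdot x)$ together with the very definition \ref{def-D-modu!hat} of the $\widehat{D}^{(m)}_{\ZZ}$-action on $\mathcal{H}^0 u^!(E^{(m)})$. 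The induced map $\widehat{D}^{(m)}_{\X\hookleftarrow\ZZ}\otimes_{\widehat{D}^{(m)}_{\ZZ}}\mathcal{H}^0 u^!(E^{(m)})\to E^{(m)}$ is $\widehat{D}^{(m)}_{\X}$-linear for the left structure on $\widehat{D}^{(m)}_{\X\hookleftarrow\ZZ}=\widehat{D}^{(m)}_{\X}/\widehat{D}^{(m)}_{\X}I$ (since $Q'\cdot([P]'_{\ZZ}\otimes x)=[Q'P]'_{\ZZ}\otimes x\mapsto Q'\cdot(P\cdot x)$), and the same $p$-divisibility remark as above shows it is continuous (it sends $p$-divisible elements to $p$-divisible ones of the same height). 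Therefore it extends uniquely to $\widehat{D}^{(m)}_{\X\hookleftarrow\ZZ}\widehat{\otimes}_{\widehat{D}^{(m)}_{\ZZ}}\mathcal{H}^0 u^!(E^{(m)})=u^{(m)}_+\mathcal{H}^0 u^!(E^{(m)})$; evaluating this extension on an elementary tensor and transporting by $\widehat{\tau}_{\ZZ,\X}$ identifies it with the map of \ref{u_+u^!toif-limproj}, and being a continuous extension of a $\widehat{D}^{(m)}_{\X}$-linear map it is itself $\widehat{D}^{(m)}_{\X}$-linear.

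As a cross-check, and closer in spirit to the scheme-level \ref{prop-u_+u^!toif}, one can instead reduce modulo $\pi^{i+1}$: by \ref{u+limproj} we have $u^{(m)}_+\mathcal{H}^0 u^!(E^{(m)})\riso\varprojlim_i u_{i+}(\mathcal{H}^0 u^!(E^{(m)}))_i$, and along the inclusion $(\mathcal{H}^0 u^!(E^{(m)}))_i\subset\mathcal{H}^0 u_i^!(E_i^{(m)})$ of \ref{u!isubsetui!} one may restrict the $D^{(m)}_{X_i}$-linear morphisms $\mathrm{adj}_i$ of \ref{prop-u_+u^!toif}. Their compatibility with the transition maps follows from the explicit characterisation \ref{u_+u^!toif-form} together with the compatibility of the $\overline{\sigma}$'s and $\overline{\tau}$'s with reduction, so they pass to the projective limit and produce, once more by \ref{u_+u^!toif-form}, the morphism \ref{u_+u^!toif-limproj}; a projective limit of $\widehat{D}^{(m)}_{\X}$-linear maps is $\widehat{D}^{(m)}_{\X}$-linear. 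In either approach the only genuinely delicate point is the topological bookkeeping: that $x_{\underline{k}}\to 0$ yields honest powers of $p$, which is exactly the content of \ref{im-inv-D-mod}, and that the uncompleted co-unit map is continuous for the relevant $p$-adic topologies; everything else is formal or a verbatim transcription of the computations of \ref{prop-u_+u^!toif}.
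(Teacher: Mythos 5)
Your proof is correct, and in fact it contains the paper's argument as your ``cross-check'': reduce modulo $\pi^{i+1}$, fit the map into a square whose bottom row passes through the $D^{(m)}_{Z_i}$-linear inclusion \ref{u!isubsetui!} and the scheme-level adjunction of \ref{prop-u_+u^!toif}, verify commutativity by chasing $\sum_{\underline{k}}\underline{\partial}^{<\underline{k}>}\otimes x_{\underline{k}}\mapsto\sum_{\underline{k}}\underline{\partial}^{<\underline{k}>}\cdot[x_{\underline{k}}]_i$ along both paths, and conclude by projective limit. Your first route, however, is genuinely different and is the more interesting contribution. Instead of descending to the truncated level, you realise $\mathrm{adj}$ as the factorisation of a $\widehat{D}^{(m)}_{\ZZ}$-balanced pairing $\widehat{D}^{(m)}_{\X\hookleftarrow\ZZ}\times\mathcal{H}^0 u^!(E^{(m)})\to E^{(m)}$, $([P]'_{\ZZ},x)\mapsto P\cdot x$, whose balancing comes straight from the ring-isomorphism property of $\widehat{\tau}_{\ZZ,\X}$ and the definition \ref{def-D-modu!hat}, and whose induced map is visibly left $\widehat{D}^{(m)}_{\X}$-linear; you then invoke the universal property of $p$-adic completion (all $\V$-linear maps being automatically continuous, and $E^{(m)}$ being separated and complete) to extend, and you match the extension against the series formula on the dense subset of finite sums $\sum_{\underline{k}}\underline{\partial}^{<\underline{k}>}\otimes x_{\underline{k}}$. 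What this buys is that you never re-run the three-case computation ($P=a$, $P=\underline{\partial}^{<(\underline{i}',\underline{0})>}$, $P=\underline{\partial}^{<(\underline{0},\underline{j})>}$) underlying \ref{prop-u_+u^!toif}; the cost is some topological bookkeeping, and here the one non-formal input you need --- that the subspace topology on $\mathcal{H}^0u^!(E^{(m)})$ coincides with its $p$-adic topology --- is exactly \ref{im-inv-D-mod}, which you quote correctly. One small imprecision: the image of the algebraic tensor product $\widehat{D}^{(m)}_{\X\hookleftarrow\ZZ}\otimes_{\widehat{D}^{(m)}_{\ZZ}}\mathcal{H}^0u^!(E^{(m)})$ is not literally the set of finite sums $\sum_{\underline{k}}\underline{\partial}^{<\underline{k}>}\otimes x_{\underline{k}}$, since sections of $\widehat{D}^{(m)}_{\X\hookleftarrow\ZZ}$ are themselves convergent series in $\underline{\partial}^{<\underline{k}>}$; but the genuinely finite sums lie in that image and are still dense, which is all your density argument requires.
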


\begin{proof}
Considérons le diagramme canonique suivant
\begin{equation}
\label{diag1-u_+u^!toif-limproj}
\xymatrix{
{u ^{(m)} _{+} \circ \mathcal{H} ^{0} u ^{ !} (\smash{{E}} ^{(m)})} 
\ar[rr] ^-{\mathrm{adj}} _-{\ref{u_+u^!toif-limproj}}
\ar[d] ^-{[-] _i}
& &
{\smash{{E}} ^{(m)} } 
\ar[d] ^-{[-] _i}
\\ 
{u ^{(m)} _{i+} \left ( \mathcal{H} ^{0} u ^{ !} (\smash{{E}} ^{(m)}) \right) _i  } 
\ar@{^{(}->}[r] ^-{}
& 
{u ^{(m)} _{i+} \circ \mathcal{H} ^{0} u _i ^{ !} (\smash{{E _i}} ^{(m)}) } 
\ar[r] ^-{\mathrm{adj}} _-{\ref{u_+u^!toif}}
& 
{\smash{{E_i}} ^{(m)} } 
}
\end{equation}
dont la flèche en bas à gauche se construit via l'inclusion \ref{u!isubsetui!}.
Avec \ref{prop-u_+u^!toif}, le morphisme composé du bas de \ref{diag1-u_+u^!toif-limproj}
est $D ^{(m)} _{X _i}$-linéaire. 
Or, on calcule que les deux chemins possible du diagramme 
envoie un élément de la forme 
$\sum _{\underline{k}\in \N ^{r}} \underline{\partial} ^{<\underline{k}>} \otimes x _{\underline{k}}$ 
sur 
$\sum _{\underline{k}\in \N ^{r}} \underline{\partial} ^{<\underline{k}>} \cdot [x _{\underline{k}}] _i$.
D'où la commutativité de \ref{diag1-u_+u^!toif-limproj}.
Ainsi la réduction modulo $\pi ^{i+1}$ du morphisme $\mathrm{adj}$
de \ref{u_+u^!toif-limproj} 
est $D ^{(m)} _{X _i}$-linéaire. 
On obtient le résultat par passage à la limite projective. 
\end{proof}

\begin{vide}
Soient $\smash{{E}}$ (resp. $\smash{{F}}$)
un $\widehat{D} ^{(m)} _{\X}$-module (resp. $\widehat{D} ^{(m)} _{\ZZ}$-module)
sans $p$-torsion, séparé et complet pour la topologie $p$-adique. 
Via le morphisme d'adjonction 
\ref{idtou+u!isohat}, le foncteur 
$\mathcal{H} ^{0} u ^{ !} $
induit l'application
\begin{equation}
\label{Hom-adj-I}
\mathrm{Hom} _{\widehat{D} ^{(m)} _{\X}}
(u ^{(m)} _{+}  (\smash{{F}} ), 
\smash{{E}} ) 
\to 
\mathrm{Hom} _{\widehat{D} ^{(m)} _{\ZZ}}
(\smash{{F}}, 
\mathcal{H} ^{0} u ^{ !}  \smash{{E}} ).
\end{equation}

Via le morphisme d'adjonction \ref{u_+u^!toif-limproj},
le foncteur 
$u ^{(m) } _+$
induit l'application
\begin{equation}
\label{Hom-adj-II}
\mathrm{Hom} _{\widehat{D} ^{(m)} _{\ZZ}}
(\smash{{F}}, 
\mathcal{H} ^{0} u ^{ !}  \smash{{E}} )
\to
\mathrm{Hom} _{\widehat{D} ^{(m)} _{\X}}
(u ^{(m)} _{+}  (\smash{{F}} ), 
\smash{{E}} ) .
\end{equation}

On calcule que le morphisme composé
\begin{equation}
\label{comp-adj=id1}
u ^{(m)} _{+}  (\smash{{F}} )
\overset{\ref{idtou+u!isohat}}{\longrightarrow}
u ^{(m)} _{+}   \circ \mathcal{H} ^{0} u ^{ !}  \circ u ^{(m)} _{+}  (\smash{{F}} )
\overset{\ref{u_+u^!toif-limproj}}{\longrightarrow}
u ^{(m)} _{+}  (\smash{{F}} ),
\end{equation}
dont la première flèche est l'image par $u ^{(m)} _{+}$
de \ref{idtou+u!isohat} et dont la deuxième
est \ref{u_+u^!toif-limproj} utilisé pour $\smash{{E}}= u ^{(m)} _{+}  (\smash{{F}} )$,
est l'identité. Comme la première flèche est un isomorphisme, la seconde aussi. 
Par un calcul, on vérifie de plus que le morphisme composé
$$\mathcal{H} ^{0} u ^{ !}  (\smash{{E}} )
\overset{\ref{u_+u^!toif-limproj}}{\longrightarrow}
\mathcal{H} ^{0} u ^{ !}  \circ u ^{(m)} _{+} \circ \mathcal{H} ^{0} u ^{ !}  (\smash{{E}} )
\overset{\ref{idtou+u!isohat}}{\longrightarrow}
\mathcal{H} ^{0} u ^{ !}   (\smash{{E}} )$$
est l'identité. Il en résulte que les applications \ref{Hom-adj-I} et \ref{Hom-adj-II} ci-dessus sont réciproques l'une de l'autre.

\end{vide}

\begin{rema}
Par manque de stabilité de la catégorie des faisceaux quasi-cohérents mise en exergue dans la section \ref{section2.1},
on ne dispose pas a priori d'un analogue dans ce contexte. 
\end{rema}

\section{La surcohérence implique l'holonomie}

\subsection{Compléments sur l'holonomie sans structure de Frobenius}
On suppose pour simplifier que $\X$ est intègre. On désigne par $m$ un entier positif. 
\begin{nota}
\label{Nota-suite-spectrale}

Soit $\E$ un $\smash{\widehat{\D}} ^{(m)} _{\X,\,\Q}$-module cohérent. 
Notons $(F ^{i}  _{(m)}(\E) ) _{i \geq 0}$ la filtration décroissante définie par la suite spectrale 
\begin{equation}
\label{StSp}
E _2 ^{r,s}=\mathcal{E} xt ^{r}  _{\smash{\widehat{\D}} ^{(m)} _{\X,\,\Q}} (\mathcal{E} xt ^{-s}  _{\smash{\widehat{\D}} ^{(m)} _{\X,\,\Q}} ( \E,\smash{\widehat{\D}} ^{(m)} _{\X,\,\Q}),\smash{\widehat{\D}} ^{(m)} _{\X,\,\Q})
\Rightarrow 
\mathcal{H} ^{n}  ( \E)
\end{equation}
pour $n=0$, i.e.,  $F ^{i}  _{(m)}(\E) /F ^{i+1}  _{(m)}(\E) = E _{\infty} ^{i,-i}$.
Lorsque le niveau ne fait pas de doute, on écrira $F ^{i}  (\E)$ au lieu de $F ^{i}  _{(m)}(\E)$.
Il découle aussitôt de \cite[1.3]{caro-holo-sansFrob}  et avec ses notations que, 
pour tout $0 \leq i<  \mathrm{codim} ^{(m)} (\E )$,
on obtient $E _2 ^{i,i}=0$.
Il en résulte,
pour tout $0 \leq i\leq  \mathrm{codim} ^{(m)} (\E )$, l'égalité $F ^{i} (\E) =\E$.

\end{nota}

\begin{lemm}
\label{formule-codim}
Soit $0\to \E' \underset{f}{\longrightarrow} \E \underset{g}{\longrightarrow} \E'' \to 0$ une suite exacte de $\smash{\widehat{\D}} ^{(m)} _{\X,\,\Q}$-modules cohérents.
Alors, on dispose de la formule 
$\mathrm{codim} ^{(m)} (\E ) = \min \{ \mathrm{codim} ^{(m)} (\E ') , \mathrm{codim} ^{(m)} (\E '')\}$.
Autrement dit, pour tout entier $i \in \N$ fixé, 
l'égalité $\mathrm{codim} ^{(m)} (\E ) \geq i$ est vérifiée si et seulement si 
les deux égalités $\mathrm{codim} ^{(m)} (\E ') \geq i$ et $\mathrm{codim} ^{(m)} (\E'' ) \geq i$
sont satisfaites.
\end{lemm}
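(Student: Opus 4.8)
The plan is to reduce the statement to a computation with the $\mathcal{E}xt$-sheaves that define $\mathrm{codim}^{(m)}$, using the long exact sequence of $\mathcal{E}xt$-cohomology attached to the short exact sequence $0\to\E'\to\E\to\E''\to 0$. Recall (from \cite{caro-holo-sansFrob} and the setup of \ref{Nota-suite-spectrale}) that $\mathrm{codim}^{(m)}(\M)$ is the smallest integer $j$ such that $\mathcal{E}xt^{j}_{\smash{\widehat{\D}}^{(m)}_{\X,\,\Q}}(\M,\smash{\widehat{\D}}^{(m)}_{\X,\,\Q})\neq 0$ (with the convention that the codimension of the zero module is $+\infty$). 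So the whole statement is: for a coherent module $\M$, $\mathrm{codim}^{(m)}(\M)\geq i$ iff $\mathcal{E}xt^{j}(\M,\smash{\widehat{\D}}^{(m)}_{\X,\,\Q})=0$ for all $j<i$, and I must show this vanishing propagates correctly across the exact sequence.

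First I would apply the contravariant functor $\mathbb{R}\mathcal{H}om_{\smash{\widehat{\D}}^{(m)}_{\X,\,\Q}}(-,\smash{\widehat{\D}}^{(m)}_{\X,\,\Q})$ to $0\to\E'\to\E\to\E''\to 0$, obtaining the long exact sequence
\begin{equation}
\cdots\to\mathcal{E}xt^{j}(\E'')\to\mathcal{E}xt^{j}(\E)\to\mathcal{E}xt^{j}(\E')\to\mathcal{E}xt^{j+1}(\E'')\to\cdots
\end{equation}
where I abbreviate $\mathcal{E}xt^{j}(-)=\mathcal{E}xt^{j}_{\smash{\widehat{\D}}^{(m)}_{\X,\,\Q}}(-,\smash{\widehat{\D}}^{(m)}_{\X,\,\Q})$. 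Fix $i\in\N$. If $\mathrm{codim}^{(m)}(\E')\geq i$ and $\mathrm{codim}^{(m)}(\E'')\geq i$, then for every $j<i$ both $\mathcal{E}xt^{j}(\E')$ and $\mathcal{E}xt^{j}(\E'')$ vanish, hence the middle term $\mathcal{E}xt^{j}(\E)$ vanishes by exactness; therefore $\mathrm{codim}^{(m)}(\E)\geq i$. Conversely, suppose $\mathrm{codim}^{(m)}(\E)\geq i$. I would argue by descending induction on $j$ from $j=i-1$ down to $j=0$ that $\mathcal{E}xt^{j}(\E')=\mathcal{E}xt^{j}(\E'')=0$: for the top index $j=i-1$, the segment $\mathcal{E}xt^{i-1}(\E)\to\mathcal{E}xt^{i-1}(\E')\to\mathcal{E}xt^{i}(\E'')$ is not quite enough by itself, so instead I would run the induction in the increasing direction. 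Precisely: the segment $0=\mathcal{E}xt^{0}(\E)\to\mathcal{E}xt^{0}(\E')\to\mathcal{E}xt^{1}(\E'')\to\mathcal{E}xt^{1}(\E)=0$ forces $\mathcal{E}xt^{0}(\E')\simeq\mathcal{E}xt^{1}(\E'')$, and separately $0=\mathcal{E}xt^{0}(\E'')\to\mathcal{E}xt^{0}(\E)$ is automatic; so at the bottom we must instead use the three-term piece $\mathcal{E}xt^{j}(\E'')\to\mathcal{E}xt^{j}(\E)\to\mathcal{E}xt^{j}(\E')\to\mathcal{E}xt^{j+1}(\E'')$ together with $\mathcal{E}xt^{j}(\E)=0$ for $j<i$, which gives an isomorphism $\mathcal{E}xt^{j}(\E')\simeq\mathcal{E}xt^{j+1}(\E'')$ for each $j\leq i-1$ and a surjection for $j=i-1$ onto a subobject. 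Chaining these, $\mathcal{E}xt^{0}(\E')\simeq\mathcal{E}xt^{1}(\E'')$; but we do not a priori know $\mathcal{E}xt^{1}(\E'')=0$, so this chain alone does not close.

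The right way around this is the key point, and it is where I expect the only real subtlety: one should not try to prove $\geq i$ termwise in isolation but use the characterization via the minimal nonvanishing index directly. Set $c=\mathrm{codim}^{(m)}(\E')$ and $c'=\mathrm{codim}^{(m)}(\E'')$ and let $c''=\min(c,c')$; I claim $\mathrm{codim}^{(m)}(\E)=c''$. From the long exact sequence, $\mathcal{E}xt^{j}(\E)=0$ for $j<c''$ exactly as in the first paragraph, so $\mathrm{codim}^{(m)}(\E)\geq c''$. For the reverse inequality, consider the smallest index $j_0=c''$ where at least one of $\mathcal{E}xt^{j_0}(\E')$, $\mathcal{E}xt^{j_0}(\E'')$ is nonzero. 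Look at the segment
\begin{equation}
\mathcal{E}xt^{j_0-1}(\E')\to\mathcal{E}xt^{j_0}(\E'')\to\mathcal{E}xt^{j_0}(\E)\to\mathcal{E}xt^{j_0}(\E')\to\mathcal{E}xt^{j_0+1}(\E'').
\end{equation}
The first term vanishes since $j_0-1<c\leq$ well, $j_0-1<c''\leq c$, so $\mathcal{E}xt^{j_0}(\E'')$ injects into $\mathcal{E}xt^{j_0}(\E)$; and the cokernel of that injection injects into $\mathcal{E}xt^{j_0}(\E')$. Hence $\mathcal{E}xt^{j_0}(\E)=0$ would force both $\mathcal{E}xt^{j_0}(\E'')=0$ and $\mathcal{E}xt^{j_0}(\E')=0$, contradicting the choice of $j_0$. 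Therefore $\mathcal{E}xt^{j_0}(\E)\neq 0$, i.e. $\mathrm{codim}^{(m)}(\E)\leq j_0=c''$, and equality follows. The degenerate cases where $\E'=0$ or $\E''=0$ (so one codimension is $+\infty$) are immediate since then $\E\simeq\E''$ or $\E\simeq\E'$. The reformulation with a fixed integer $i$ is then just the translation "$\min(c,c')\geq i\iff c\geq i$ and $c'\geq i$", which is the trivial property of the minimum; this completes the proof.
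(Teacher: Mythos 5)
Your long-exact-sequence approach is natural, but the crucial step contains a genuine gap. In the ``reverse inequality'' part you consider the exact segment
\begin{equation*}
\mathcal{E}xt^{j_0-1}(\E')\to\mathcal{E}xt^{j_0}(\E'')\to\mathcal{E}xt^{j_0}(\E)\to\mathcal{E}xt^{j_0}(\E')\to\mathcal{E}xt^{j_0+1}(\E'')
\end{equation*}
at $j_0=c''=\min(c,c')$ and claim that $\mathcal{E}xt^{j_0}(\E)=0$ would force \emph{both} $\mathcal{E}xt^{j_0}(\E'')=0$ and $\mathcal{E}xt^{j_0}(\E')=0$. The first implication is fine: since $\mathcal{E}xt^{j_0-1}(\E')=0$, the term $\mathcal{E}xt^{j_0}(\E'')$ injects into $\mathcal{E}xt^{j_0}(\E)$. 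But the second does not follow: once $\mathcal{E}xt^{j_0}(\E)=0$, the cokernel of that injection is the zero sheaf, and saying that $0$ injects into $\mathcal{E}xt^{j_0}(\E')$ is vacuous. All exactness actually gives you is an injection $\mathcal{E}xt^{j_0}(\E')\hookrightarrow\mathcal{E}xt^{j_0+1}(\E'')$, which is precisely the non-closing chain you had (correctly) flagged one paragraph earlier. So your new argument has the same hole: when $c<c'$, i.e.\ $\mathrm{codim}^{(m)}(\E')<\mathrm{codim}^{(m)}(\E'')$ (and concretely when $c'=c+1$), the long exact sequence alone does not exclude $\mathcal{E}xt^{j_0}(\E)=0$ with $\mathcal{E}xt^{j_0}(\E')\neq 0$ hidden inside $\mathcal{E}xt^{j_0+1}(\E'')$.

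To close the gap homologically one needs a Bj{\"o}rk-type input beyond the formal LES, e.g.\ that for a nonzero coherent $\M$ of codimension $c$ the sheaf $\mathcal{E}xt^{c}(\M)$ has codimension \emph{exactly} $c$, so that it cannot embed into a module of codimension $\geq c+1$; but the last step (``a submodule of a module of codimension $\geq c+1$ has codimension $\geq c+1$'') tends to invoke the very lemma being proven, so one must take care not to argue in a circle. The paper sidesteps all of this by a different route: it chooses a $p$-torsion-free coherent lattice $\smash{\overset{_\circ}{E}}$ of $E$, forms $\smash{\overset{_\circ}{E'}}:=f^{-1}(\smash{\overset{_\circ}{E}})$ and $\smash{\overset{_\circ}{E''}}:=g(\smash{\overset{_\circ}{E}})$ to get a short exact sequence of torsion-free coherent lattices, reduces modulo $\pi$, and quotes Berthelot's dimension formula \cite[5.2.4.(i)]{Beintro2} for $\D^{(m)}$-modules in characteristic $p$. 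There codimension is a geometric quantity (codimension of the characteristic variety), and the formula follows immediately from the additivity of characteristic varieties along short exact sequences, with no homological subtlety. You should either supply the precise Bj{\"o}rk-type statement from \cite{caro-holo-sansFrob} or, more cleanly, follow the paper's reduction to characteristic $p$.
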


\begin{proof}
Il ne coûte pas grand chose de supposer $\X$ affine.
Soient $ \smash{\overset{_\circ}{E}}$ un $\smash{\widehat{D}} ^{(m)} _{\X}$-module cohérent
sans $p$-torsion tel que 
$\smash{\overset{_\circ}{E }} _\Q \riso E$.
Par nothérianité de $\smash{\widehat{D}} ^{(m)} _{\X}$, 
on obtient des $\smash{\widehat{D}} ^{(m)} _{\X}$-modules cohérents
sans $p$-torsion en posant 
$\smash{\overset{_\circ}{E '}}:= f ^{-1}(\smash{\overset{_\circ}{E }})$, 
$\smash{\overset{_\circ}{E'' }}:= g (\smash{\overset{_\circ}{E }}) $.
On vérifie que la suite canoniquement induite
$0\to \smash{\overset{_\circ}{E }}' \to \smash{\overset{_\circ}{E }} \to \smash{\overset{_\circ}{E }}'' \to 0$
est exacte et que l'on dispose des isomorphismes canoniques $\smash{\overset{_\circ}{E ' }} _\Q \riso E'$
et $\smash{\overset{_\circ}{E '' }} _\Q \riso E''$.
La suite exacte obtenue modulo $\pi$, combinée avec
\cite[5.2.4.(i)]{Beintro2} donne la formule souhaitée.
\end{proof}

\begin{prop}
\label{Vir-III.3.6}
Avec les notations de \ref{Nota-suite-spectrale}, 
pour tout $0 \leq i\leq d$, 
$F ^{i} (\E)$ est le plus grand sous $\smash{\widehat{\D}} ^{(m)} _{\X,\,\Q}$-module cohérent de codimension supérieure ou égale à $i$ de $\E$.
\end{prop}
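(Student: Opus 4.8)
Le plan est de procéder en deux temps : montrer d'abord que $F ^{i}  (\E)$ est un sous-$\smash{\widehat{\D}} ^{(m)} _{\X,\,\Q}$-module cohérent de $\E$ de codimension $\geq i$, puis établir la maximalité. Pour le premier point, j'utiliserais la suite spectrale \ref{StSp} : par construction de la filtration aboutissant à $\mathcal{H} ^{0} (\E) = \E$, on a $F ^{0}  (\E) = \E$ et les quotients successifs $F ^{i} (\E)/F ^{i+1} (\E) = E _{\infty} ^{i,-i}$ sont des sous-quotients de $E _2 ^{i,-i} = \mathcal{E} xt ^{i} (\mathcal{E} xt ^{i} (\E, \smash{\widehat{\D}} ^{(m)} _{\X,\,\Q}), \smash{\widehat{\D}} ^{(m)} _{\X,\,\Q})$. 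Or il résulte des propriétés standards du foncteur dual (appliquées comme dans \cite[1.3]{caro-holo-sansFrob}) que $\mathcal{E} xt ^{i} (\mathcal{N}, \smash{\widehat{\D}} ^{(m)} _{\X,\,\Q})$ est de codimension $\geq i$ pour tout cohérent $\mathcal{N}$, donc que $E _2 ^{i,-i}$, et partant $F ^{i} (\E)/F ^{i+1} (\E)$, est de codimension $\geq i$. Avec le lemme \ref{formule-codim} et une récurrence descendante (en partant de $F ^{d+1} (\E) = 0$, ce qui vaut puisque la dimension cohomologique est bornée par $d$), on conclut que $F ^{i} (\E)$ est de codimension $\geq i$.

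Pour la maximalité, soit $\G \subseteq \E$ un sous-$\smash{\widehat{\D}} ^{(m)} _{\X,\,\Q}$-module cohérent de codimension $\geq i$ ; il s'agit de montrer $\G \subseteq F ^{i} (\E)$. L'idée est de considérer la composée $\G \hookrightarrow \E \twoheadrightarrow \E/F ^{i}(\E)$ et de prouver qu'elle est nulle. Par construction de la filtration, $\E/F ^{i}(\E)$ admet une filtration dont les gradués sont les $E _{\infty} ^{j,-j}$ pour $0 \leq j \leq i-1$, qui sont des sous-quotients de $\mathcal{E} xt ^{j}(\mathcal{E} xt ^{j}(\E,-),-)$, donc de codimension exactement $j < i$ (ou bien $E_\infty^{j,-j}$ nul). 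Par \ref{formule-codim}, tout sous-module cohérent non nul d'un tel gradué — et plus généralement de $\E/F^i(\E)$ — a une composante de codimension $< i$ dès qu'il est non nul ; or l'image de $\G$ dans $\E/F^i(\E)$ est de codimension $\geq i$ (comme quotient de $\G$, via \ref{formule-codim} appliqué à $0 \to \ker \to \G \to \mathrm{im} \to 0$). La seule possibilité est que cette image soit nulle, d'où $\G \subseteq F ^{i}(\E)$.

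L'obstacle principal sera de rendre rigoureux l'argument « un sous-module cohérent non nul de $\E/F^i(\E)$ a une composante de codimension $< i$ » : il faut prendre garde au fait que la filtration de $\E/F^i(\E)$ ne se scinde pas, et donc qu'on ne peut pas conclure directement gradué par gradué. La bonne formulation passe plutôt par le lemme \ref{formule-codim} sous sa forme « $\mathrm{codim} ^{(m)}(\E) \geq i$ ssi $\mathrm{codim} ^{(m)}(\E') \geq i$ et $\mathrm{codim} ^{(m)}(\E'') \geq i$ », appliqué par récurrence sur la longueur de la filtration : si $\H$ est cohérent de codimension $\geq i$ contenu dans un module $\M$ muni d'une filtration à gradués de codimension $< i$, alors l'intersection de $\H$ avec le premier cran est de codimension $\geq i$ (intersection de deux sous-modules de $\M$, on utilise \ref{formule-codim} sur $0 \to \H \cap \M_1 \to \H \to \H/(\H\cap\M_1) \to 0$ et l'injection $\H/(\H\cap\M_1) \hookrightarrow \M/\M_1$) mais aussi $\leq$ codim du gradué $< i$, donc nulle ; on descend alors d'un cran. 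Ceci force $\H = 0$, appliqué à $\H = \mathrm{im}(\G \to \E/F^i(\E))$ et $\M = \E/F^i(\E)$. Une fois ce point technique maîtrisé, le reste n'est qu'assemblage des lemmes précédents.
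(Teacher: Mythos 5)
The first half of your argument (establishing that $F^{i}(\E)$ has codimension $\geq i$) is correct and is essentially the paper's argument: $\mathrm{codim}^{(m)}(E_2^{i,-i}) \geq i$ from \cite[1.3]{caro-holo-sansFrob}, then $\mathrm{codim}^{(m)}(E_\infty^{i,-i}) \geq i$ via \ref{formule-codim} because it is a subquotient, then the descending recursion from $F^{d+1}(\E)=0$.

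The maximality half is a genuinely different route from the paper, and as written it has a real gap. You need the key intermediate claim that the graded pieces $E_\infty^{j,-j}$ of $\E/F^{i}(\E)$, for $j < i$, have codimension $< i$ (you assert ``codimension exactement $j$'' or zero). But the only reason offered is that $E_\infty^{j,-j}$ is a subquotient of $E_2^{j,-j}$, which has codimension $\geq j$. That only gives $\mathrm{codim}^{(m)}(E_\infty^{j,-j}) \geq j$, not $\leq j$: a quotient of a pure module of codimension $j$ can perfectly well have codimension $> j$ (think of killing an irreducible component by an ideal of strictly larger height), so $E_\infty^{j,-j}$ could in principle have codimension $\geq i$ even for $j<i$. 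Establishing that the graded pieces are pure of codimension exactly $j$ is itself a nontrivial theorem of the Björk--Virrion theory, not a formal consequence of being a subquotient; invoking it without proof leaves the argument circular with the very statement you are trying to prove. Once that upper bound is available, your descent through the filtration of $\E/F^{i}(\E)$ (intersecting $\H$ with successive crans, using \ref{formule-codim} and the injection $\H/(\H\cap\M_1)\hookrightarrow \M/\M_1$) would indeed work, but that missing upper bound is exactly the hard part.

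The paper sidesteps this entirely via functoriality of the spectral sequence \ref{StSp}: given a coherent $\E^i \subseteq \E$ with $\mathrm{codim}^{(m)}(\E^i) \geq i$, the remark at the end of \ref{Nota-suite-spectrale} gives $F^{i}(\E^i) = \E^i$, and functoriality produces a commutative square with vertical inclusions $F^{i}(\E^i)\hookrightarrow F^{i}(\E)$ and $\E^i \hookrightarrow \E$; reading it off gives $\E^i \subseteq F^{i}(\E)$ directly, with no need for any statement about the codimension of the graded pieces below level $i$. I would recommend replacing your maximality argument by this functoriality argument, or else supplying a proof (or a precise reference) for the purity of $E_\infty^{j,-j}$.
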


\begin{proof} 
D'après \cite[1.3]{caro-holo-sansFrob}, 
$\mathrm{codim} ^{(m)} (E _{2} ^{i,i}) \geq i$.
Avec le lemme \ref{formule-codim}, on en déduit que $\mathrm{codim} ^{(m)} (E _{\infty} ^{i,i}) \geq i$.
Comme $F ^{i} (\E)  /F ^{i+1} (\E) = E _{\infty} ^{i,i}$, 
il en résulte que $F ^{i} (\E) $ est de codimension supérieure ou égale à $i$. 
Soit $\E ^{i}$ un sous-$\smash{\widehat{\D}} ^{(m)} _{\X,\,\Q}$-module cohérent de $\E$ de codimension supérieure ou égale à $i$.
Par fonctorialité de la suite spectrale de \ref{StSp}, 
on obtient le diagramme commutatif: 
$$\xymatrix{
{F ^{i} (\E ^{i}) } 
\ar@{=}[r] ^-{}
\ar@{^{(}->}[d] ^-{}
& {\E ^{i}} 
\ar@{^{(}->}[d] ^-{}
\\ 
{F ^{i} (\E) } 
\ar@{^{(}->}[r] ^-{}
& {\E .} 
}$$
La flèche du haut est bien une égalité d'après \ref{Nota-suite-spectrale} et parce que 
$\E ^{i}$ est de codimension supérieure ou égale à $i$. Il en résulte l'inclusion  
$\E ^{i} \hookrightarrow F ^{i} (\E ) $ factorisant ce diagramme commutatif. 
D'où le résultat.
\end{proof}

\begin{prop}
\label{Filtration-extension}
Soient $m ' \geq m \geq$ deux entiers, $\E  ^{(m)}$ un $\smash{\widehat{\D}} ^{(m)} _{\X,\,\Q}$-module cohérent.
On note $\E  ^{(m')}:= \smash{\widehat{\D}} ^{(m')} _{\X,\,\Q} \otimes _{\smash{\widehat{\D}} ^{(m)} _{\X,\,\Q}}\E  ^{(m)}$, 
$(F ^{i} _{(m)} (\E^{(m)}) ) _{i \geq 0}$ et $(F ^{i} _{(m')} (\E^{(m')}) ) _{i \geq 0}$ les filtrations décroissantes de \ref{Nota-suite-spectrale}.
On dispose de l'isomorphisme canonique 
$$\smash{\widehat{\D}} ^{(m')} _{\X,\,\Q} \otimes _{\smash{\widehat{\D}} ^{(m)} _{\X,\,\Q}}F ^{i} _{(m)} (\E^{(m)}) \riso F ^{i} _{(m')} (\E^{(m')}).$$
\end{prop}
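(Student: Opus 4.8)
The plan is to show that the base change functor $\rho ^{*} := \widehat{\D} ^{(m')} _{\X,\,\Q} \otimes _{\widehat{\D} ^{(m)} _{\X,\,\Q}} (-)$ carries the whole biduality spectral sequence \ref{StSp} attached to $\E ^{(m)}$ onto the one attached to $\E ^{(m')}$, and then to read off the two filtrations from the abutment. As the statement is local on $\X$, I would begin by reducing to the case where $\X$ is affine.

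The starting point is that the level-raising morphism $\widehat{\D} ^{(m)} _{\X,\,\Q} \to \widehat{\D} ^{(m')} _{\X,\,\Q}$ is flat (Berthelot, cf. \cite{Be1}), so that $\rho ^{*}$ is exact and preserves coherence. Next, using that $\widehat{\D} ^{(m)} _{\X,\,\Q}$ is a coherent ring and that $\E ^{(m)}$ is coherent, hence admits locally a resolution by finite free $\widehat{\D} ^{(m)} _{\X,\,\Q}$-modules, I would check that $\rho ^{*}$ commutes with $\mathcal{E}xt$: for every coherent right $\widehat{\D} ^{(m)} _{\X,\,\Q}$-module $\M$ there is a canonical isomorphism $\rho ^{*} \mathcal{E}xt ^{k} _{\widehat{\D} ^{(m)} _{\X,\,\Q}} (\E ^{(m)}, \M) \riso \mathcal{E}xt ^{k} _{\widehat{\D} ^{(m')} _{\X,\,\Q}} (\E ^{(m')}, \rho ^{*} \M)$. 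Applying this with $\M = \widehat{\D} ^{(m)} _{\X,\,\Q}$, and then once more (each $\mathcal{E}xt ^{-s} _{\widehat{\D} ^{(m)} _{\X,\,\Q}} (\E ^{(m)}, \widehat{\D} ^{(m)} _{\X,\,\Q})$ being coherent) on the right-module side, would give that $\rho ^{*}$ sends the $E _{2}$-page of \ref{StSp} for $\E ^{(m)}$ isomorphically onto the $E _{2}$-page for $\E ^{(m')}$; by functoriality of the biduality spectral sequence this isomorphism is compatible with the differentials, and the abutment $\mathcal{H} ^{0}$, identified with $\E ^{(m)}$ (resp. $\E ^{(m')}$) by biduality, is compatible with $\rho ^{*}$, the biduality isomorphism commuting with the flat base change $\rho ^{*}$.

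To conclude I would use once more the exactness of $\rho ^{*}$: it commutes with the passage from one page of a spectral sequence to the next and with the formation of $E _{\infty}$, so $\rho ^{*} E _{\infty} ^{i,-i} (\E ^{(m)}) \riso E _{\infty} ^{i,-i} (\E ^{(m')})$ for every $i$; applying $\rho ^{*}$ to the finite filtration $\E ^{(m)} \supset F ^{1} _{(m)} (\E ^{(m)}) \supset \cdots$ then yields a finite filtration of $\rho ^{*} \E ^{(m)} = \E ^{(m')}$ whose $i$-th graded piece is $\rho ^{*} E _{\infty} ^{i,-i} (\E ^{(m)}) = E _{\infty} ^{i,-i} (\E ^{(m')})$, i.e. precisely the abutment filtration of the spectral sequence for $\E ^{(m')}$; by descending induction on $i$ the canonical morphism $\rho ^{*} F ^{i} _{(m)} (\E ^{(m)}) \to F ^{i} _{(m')} (\E ^{(m')})$, which is injective since $\rho ^{*}$ is exact, is then an isomorphism.

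As for the main obstacle: one could alternatively try to argue through \ref{Vir-III.3.6}, once the compatibility of $\mathrm{codim} ^{(m)}$ with $\rho ^{*}$ is available, but that route requires knowing in addition that $\rho ^{*}$ of the largest coherent submodule of codimension $\geq i$ is again the largest such submodule, which is less transparent; I expect the real technical point to be instead the commutation of $\mathcal{E}xt$ with the flat base change $\rho ^{*}$ in this coherent, non-noetherian sheaf-theoretic setting, which one has to derive from the coherence of $\widehat{\D} ^{(m)} _{\X,\,\Q}$ and the existence of local finite free resolutions.
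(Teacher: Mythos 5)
Your proof is correct and follows essentially the same route as the paper: the paper's proof is a two-line reference to the flatness of $\smash{\widehat{\D}} ^{(m)} _{\X,\,\Q} \to \smash{\widehat{\D}} ^{(m')} _{\X,\,\Q}$ (Berthelot) and the compatibility of the dual functor with this flat extension (Virrion), which is exactly your commutation of $\rho^*$ with $\mathcal{E}xt$. You have simply made explicit the spectral-sequence bookkeeping that the paper leaves implicit.
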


\begin{proof}
Cela résulte du fait que l'extension
$\smash{\widehat{\D}} ^{(m)} _{\X,\,\Q} \to 
\smash{\widehat{\D}} ^{(m')} _{\X,\,\Q}$ est plate (voir \cite{Be1}) et
que le foncteur dual commute aux extensions (voir \cite{virrion}).
\end{proof}

\begin{vide}
\label{passage-m-dag}
Les assertions de \ref{Nota-suite-spectrale}, \ref{formule-codim}
et \ref{Vir-III.3.6} (resp. \ref{Filtration-extension})
restent valables en remplaçant $\smash{\widehat{\D}} ^{(m)} _{\X,\,\Q} $ (resp. $\smash{\widehat{\D}} ^{(m')} _{\X,\,\Q} $) 
par $\D ^\dag _{\X, \Q}$.
\end{vide}

\begin{nota}
\label{Nota-hol}
Soit $\E$ un $\D ^\dag _{\X, \Q}$-module cohérent. On note $\E ^{*}:= \mathcal{H} ^{0} \DD (\E)$. 
On pose aussi $\E ^\mathrm{hol}:= (\E ^{*})^{*}$. 
On dispose ainsi du foncteur 
$(- )^\mathrm{hol}\,:\, 
\mathrm{Coh} ( \D ^\dag _{\X, \Q}) 
\to 
\mathrm{Hol} ( \D ^\dag _{\X, \Q})$ défini par 
$\E \mapsto \E ^\mathrm{hol}$. 
En considérant la suite spectrale de \ref{StSp} 
(avec $\smash{\widehat{\D}} ^{(m)} _{\X,\,\Q} $ remplacé 
par $\D ^\dag _{\X, \Q}$),
on vérifie l'égalité $F ^{d} (\E) =\E ^\mathrm{hol} $.
D'après \ref{Vir-III.3.6} (avec la remarque \ref{passage-m-dag}), 
$\E ^\mathrm{hol} $ est le plus grand sous-module holonome de $\E$. 
On remarque de plus que 
le foncteur oubli
$\mathrm{Hol} ( \D ^\dag _{\X, \Q}) 
\to 
\mathrm{Coh} ( \D ^\dag _{\X, \Q})$
est adjoint à gauche du foncteur $(- )^\mathrm{hol}$.
En particulier, le foncteur $(- )^\mathrm{hol}$ est exact à gauche et commute aux limites projectives.
\end{nota}

\begin{lemm}
\label{stab-hol-f_+fet}
Soient $f \,:\, \X \to \Y$ un morphisme fini étale, $\E$ un $\D ^\dag _{\X, \Q}$-module cohérent.
\begin{enumerate}
\item \label{stab-hol-f_+fet1} On dispose des isomorphismes
$f _+ (\E ^\mathrm{hol} ) 
\riso 
f _+ (\E ) ^\mathrm{hol} $ et $f _+ (\E /\E ^\mathrm{hol} ) 
\riso 
f _+ (\E ) /f _+ (\E ) ^\mathrm{hol} $. 
\item $\E$ est holonome si et seulement si $f _+ (\E)$ est holonome. 
\item Si $\E$ est surcohérent dans $\X$ (voir la définition \ref{defi-surcoh})
alors
$f _+ (\E)$ est un $\D ^\dag _{\Y, \Q}$-module surcohérent dans $\Y$.
\end{enumerate}

\end{lemm}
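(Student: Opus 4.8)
The key structural fact I would use is that for a finite étale morphism $f\,:\,\X \to \Y$, the direct image $f_+$ is a particularly tame operation: since $f$ is finite, $f_+ = f_* = \R f_*$ with no shift, and since $f$ is étale, $\D ^\dag _{\X,\Q}$ is just $f^* \D ^\dag _{\Y,\Q}$ and $f_+$ on $\D$-modules agrees with the naive pushforward of $\O$-modules (equipped with its transferred $\D$-module structure). In particular $f_+$ is exact, preserves coherence, and — this is the crucial point for item (1) — it commutes with the duality functor $\DD$. Indeed, for finite étale $f$ one has $f^! = f^*$ (up to the trivial twist/shift, which vanishes since $f$ is étale of relative dimension $0$), and the standard adjunction/duality compatibility $\DD \circ f_+ \riso f_+ \circ \DD$ holds because $f_+$ commutes with the $\mathcal{E}xt$-into-$\D^\dag$ functors appearing in the definition of $\E^* = \mathcal{H}^0\DD(\E)$. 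I would cite this from the general formalism of the six operations for $\D^\dag$-modules (as referenced in the introduction via \cite{Beintro2}), checking only that no structure of Frobenius is needed here.

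\textbf{Step 1 (item 1).} Granting $\DD \circ f_+ \riso f_+ \circ \DD$, and noting $f_+$ is exact so it commutes with $\mathcal{H}^0$, I get $f_+(\E^*) \riso (f_+\E)^*$ and hence $f_+(\E^\mathrm{hol}) = f_+((\E^*)^*) \riso ((f_+\E)^*)^* = (f_+\E)^\mathrm{hol}$. For the second isomorphism, apply the exact functor $f_+$ to the short exact sequence $0 \to \E^\mathrm{hol} \to \E \to \E/\E^\mathrm{hol} \to 0$ and combine with the isomorphism just obtained, identifying $f_+(\E/\E^\mathrm{hol})$ with the cokernel $f_+(\E)/f_+(\E)^\mathrm{hol}$.

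\textbf{Step 2 (item 2).} If $\E$ is holonome, then $\E = \E^\mathrm{hol}$, so by item (1) $f_+(\E) = f_+(\E^\mathrm{hol}) \riso f_+(\E)^\mathrm{hol}$, hence $f_+(\E)$ is holonome. Conversely, if $f_+(\E)$ is holonome then $f_+(\E/\E^\mathrm{hol}) \riso f_+(\E)/f_+(\E)^\mathrm{hol} = 0$; since $f$ is finite étale it is faithfully flat (surjective), so $f_+ = f_*$ is faithful (a nonzero coherent module has nonzero pushforward — e.g. because $f^*f_+ \E$ surjects onto $\E$, or simply because $f_*$ of a nonzero sheaf on a nonempty fiber is nonzero), whence $\E/\E^\mathrm{hol} = 0$, i.e.\ $\E$ is holonome. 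The only subtlety is to make sure this holds at the level of $\D^\dag_\Q$-coherent modules, where one works over the generic fiber; here one uses that $f$ being finite étale, $f_*$ is exact and faithful already on quasi-coherent $\O$-modules, and this is inherited after tensoring with $\Q$.

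\textbf{Step 3 (item 3).} Surcoherence in $\X$ means $\E$ stays coherent after arbitrary extraordinary inverse image (after any base change, in the terminology of the paper). To prove $f_+(\E)$ is surcoherent in $\Y$, one reduces, by composing morphisms, to checking stability under $g^!$ for $g$ ranging over the relevant class; the standard move is base change: form the cartesian square with $g$ and $f$, obtaining $\X' \to \Y'$ finite étale and $\X' \to \X$, and use the base-change isomorphism $g^! f_+ \riso f'_+ g'^!$ (again part of the six-operations formalism, valid without Frobenius for this tame case). Since $\E$ is surcoherent in $\X$, $g'^!(\E)$ is coherent, and then $f'_+$ of a coherent module is coherent because $f'$ is finite. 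Running this over all $g$ gives surcoherence of $f_+(\E)$ in $\Y$.

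\textbf{Main obstacle.} The genuine work is Step 1: establishing $\DD \circ f_+ \riso f_+ \circ \DD$ for finite étale $f$ without appealing to a Frobenius structure, and making precise that $f^! = f^*$ with trivial twist in this setting. Everything downstream (items 2 and 3) then follows formally from exactness, faithfulness, finiteness, and base change. If the duality-compatibility is not already available in the literature in exactly the form needed, one would have to unwind the definition of $\DD$ via the transfer bimodule $\D^\dag_{\Y \leftarrow \X}$ and check directly that $f_+$ commutes with $\mathcal{R}\mathcal{H}om_{\D^\dag_{\X,\Q}}(-,\D^\dag_{\X,\Q})$ using that $\D^\dag_{\X,\Q}$ is free of finite rank over $f^{-1}\D^\dag_{\Y,\Q}$.
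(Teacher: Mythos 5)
Your Step 1 and Step 2 take essentially the same route as the paper. The ``main obstacle'' you flag — that $\DD \circ f_+ \riso f_+ \circ \DD$ must be established — is not actually an obstacle: it is exactly the relative duality theorem of Virrion (\cite{Vir04}), valid for proper (in particular finite) morphisms and requiring no Frobenius structure, and this is precisely what the paper cites. Once that is granted, your derivation of $f_+(\E^\mathrm{hol}) \riso f_+(\E)^\mathrm{hol}$ via exactness of $f_+$ is the paper's argument, and your proof of item 2 via faithfulness of $f_+$ is equivalent to the paper's phrasing ``a monomorphism $\E' \hookrightarrow \E$ of coherent $\D^\dag_{\X,\Q}$-modules is an isomorphism iff $f_+(\E') \hookrightarrow f_+(\E)$ is.''

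For Step 3, however, you have misread the definition. In this paper, ``surcohérent dans $\X$'' (Definition \ref{defi-surcoh}) means only that $\E$ is coherent and that $(\hdag T)(\E)$ remains coherent for every divisor $T$ of $X$; it does \emph{not} mean stability under arbitrary extraordinary inverse images, which is the older, stronger notion of surcoherence from \cite{caro_surcoherent}. Consequently the argument needed is not a base-change argument over a cartesian square with a general $g$: one simply takes a divisor $T$ of $Y$, notes $f^{-1}(T)$ is a divisor of $X$ (since $f$ is finite étale), and uses the commutation $(\hdag T) \circ f_+ \riso f_+ \circ (\hdag f^{-1}(T))$ from \cite[2.2.18.2]{caro_surcoherent} together with the fact that $f_+$ is acyclic and coherence-preserving because $f$ is finite étale. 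Your sketch gestures at the right family of tools, but because it is aimed at a stronger and not-quite-relevant statement, it would not directly yield the lemma as stated; the fix is to replace the ``all $g^!$'' reduction with the single commutation formula for the localization functor.
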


\begin{proof}
La première assertion \ref{stab-hol-f_+fet1} résulte du théorème de dualité relative (voir \cite{Vir04}) 
et de l'exactitude du foncteur $f _+$ lorsque $f$ est fini et étale.
La seconde résulte du premier isomorphisme de \ref{stab-hol-f_+fet1} via le fait qu'un monomorphisme 
$\E' \hookrightarrow \E$ de $\D ^\dag _{\X, \Q}$-modules cohérents est un isomorphisme si et seulement si 
$f _+ (\E') \hookrightarrow  f _+ ( \E)$ est un isomorphisme. 
La dernière assertion découle de l'isomorphisme de commutation du foncteur localisation aux images directes (voir
\cite[2.2.18.2]{caro_surcoherent}) et du fait que le foncteur $f _+$ est acyclique car $f$ est fini et étale.

\end{proof}

\begin{lemm}
\label{hol-cart}
Soit $\E' \subset \E$ un monomorphisme de $\D ^\dag _{\X, \Q}$-modules cohérents.
Le morphisme $\E ^{\prime \mathrm{hol}}\to \E ^{\mathrm{hol}}$ induit par 
le foncteur $(- )^\mathrm{hol}$ (voir \ref{Nota-hol}) est alors un monomorphisme.
De plus, $\E ^{\prime \mathrm{hol}}=\E ' \cap  \E ^{\mathrm{hol}}$.
\end{lemm}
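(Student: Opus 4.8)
The statement splits into two parts, and the natural strategy is to derive the equality $\E^{\prime\mathrm{hol}} = \E' \cap \E^{\mathrm{hol}}$ first and read off injectivity of $\E^{\prime\mathrm{hol}} \to \E^{\mathrm{hol}}$ as a corollary, since a submodule of the cartesian intersection that maps to $\E^{\mathrm{hol}}$ can only do so injectively. The key inputs are all already in the excerpt: by \ref{Nota-hol}, $\E^{\prime\mathrm{hol}} = F^{d}(\E')$ and $\E^{\mathrm{hol}} = F^{d}(\E)$, and by \ref{Vir-III.3.6} together with the remark \ref{passage-m-dag}, $F^{d}(\E)$ is the \emph{largest} coherent $\D^\dag_{\X,\Q}$-submodule of $\E$ of codimension $\geq d$ (i.e.\ holonomic or zero), similarly for $\E'$. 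So the whole lemma is a formal consequence of this maximality property plus the elementary behaviour of codimension under submodules.

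First I would observe that $\E' \cap \E^{\mathrm{hol}}$ is a coherent $\D^\dag_{\X,\Q}$-submodule of $\E'$ (intersection of two coherent submodules of the coherent module $\E$, using noetherianity of the relevant rings, cf.\ the argument in \ref{formule-codim}). Next, since $\E' \cap \E^{\mathrm{hol}} \subset \E^{\mathrm{hol}}$ and $\E^{\mathrm{hol}}$ has codimension $\geq d$, Lemma \ref{formule-codim} (in the $\D^\dag_{\X,\Q}$-version, \ref{passage-m-dag}) applied to the short exact sequence $0 \to \E'\cap\E^{\mathrm{hol}} \to \E^{\mathrm{hol}} \to \E^{\mathrm{hol}}/(\E'\cap\E^{\mathrm{hol}}) \to 0$ forces $\mathrm{codim}(\E'\cap\E^{\mathrm{hol}}) \geq d$. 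Hence $\E'\cap\E^{\mathrm{hol}}$ is a coherent submodule of $\E'$ of codimension $\geq d$, so by the maximality in \ref{Vir-III.3.6} it is contained in $F^{d}(\E') = \E^{\prime\mathrm{hol}}$.

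For the reverse inclusion, $\E^{\prime\mathrm{hol}} = F^{d}(\E')$ is a coherent submodule of $\E'$ of codimension $\geq d$; viewing it inside $\E$ via $\E'\subset\E$, it is then a coherent submodule of $\E$ of codimension $\geq d$, so by \ref{Vir-III.3.6} again it lies in $F^{d}(\E) = \E^{\mathrm{hol}}$. Thus $\E^{\prime\mathrm{hol}} \subset \E' \cap \E^{\mathrm{hol}}$, giving the equality. Finally, the map $\E^{\prime\mathrm{hol}} \to \E^{\mathrm{hol}}$ induced by functoriality of $(-)^{\mathrm{hol}}$ is, under these identifications, simply the inclusion $\E'\cap\E^{\mathrm{hol}} \hookrightarrow \E^{\mathrm{hol}}$, hence a monomorphism. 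I do not anticipate a serious obstacle here; the only point requiring a little care is checking that the functorial map $\E^{\prime\mathrm{hol}}\to\E^{\mathrm{hol}}$ really is compatible with the concrete description $\E^{\prime\mathrm{hol}}=\E'\cap\E^{\mathrm{hol}}$ as submodules of $\E$ — this follows from the functoriality of the spectral sequence \ref{StSp} exactly as invoked in the proof of \ref{Vir-III.3.6}, so it amounts to unwinding that commutative square rather than any new idea.
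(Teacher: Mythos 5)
Your argument is correct and lands on the same key lemma \ref{Vir-III.3.6} (maximality of $F^{d}$) as the paper, but with the two halves of the statement in the opposite logical order. The paper first gets injectivity of $\E^{\prime\mathrm{hol}}\to\E^{\mathrm{hol}}$ directly from the left-exactness of $(-)^{\mathrm{hol}}$ recorded in \ref{Nota-hol}, then establishes one inclusion $\E'\cap\E^{\mathrm{hol}}\subset\E^{\prime\mathrm{hol}}$ by simply observing that $\E'\cap\E^{\mathrm{hol}}$ is holonomic because it is a coherent submodule of the holonomic $\E^{\mathrm{hol}}$ and invoking maximality; the reverse inclusion is then implicit from the injectivity already proved. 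You instead prove the equality $\E^{\prime\mathrm{hol}}=\E'\cap\E^{\mathrm{hol}}$ first — unpacking the ``submodule of holonomic is holonomic'' step into an explicit codimension estimate via \ref{formule-codim} in its $\D^\dag$-version, and running \ref{Vir-III.3.6} in both directions — and then read off injectivity as a corollary. Both proofs are sound; yours is slightly more explicit (it makes visible the reliance on \ref{formule-codim}, which the paper leaves tacit behind the parenthetical ``car inclus dans un module holonome''), while the paper's is shorter by leveraging left-exactness up front. The only point deserving a word in your write-up — which you do flag — is that the functorial map $\E^{\prime\mathrm{hol}}\to\E^{\mathrm{hol}}$ agrees with the set-theoretic inclusion under the identification $\E^{\prime\mathrm{hol}}=\E'\cap\E^{\mathrm{hol}}$; this is immediate from the functoriality of the spectral sequence \ref{StSp}, exactly as in the proof of \ref{Vir-III.3.6}.
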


\begin{proof}
La première assertion résulte de l'exactitude à gauche du foncteur $(- )^\mathrm{hol}$ (voir \ref{Nota-hol}).
Comme $\E ' \cap  \E ^{\mathrm{hol}}$ est un $\D ^\dag _{\X, \Q}$-module holonome (car inclus dans un module $\D ^\dag _{\X, \Q}$-holonome) inclus dans
$\E'$, comme $\E ^{\prime \mathrm{hol}}$ est
le plus grand sous-$\D ^\dag _{\X, \Q}$-module holonome de $\E'$, 
on en déduit que $\E ^{\prime \mathrm{hol}}=\E ' \cap  \E ^{\mathrm{hol}}$.
\end{proof}

\begin{lemm}
\label{E1*=0}
Soit $\E$ un $\D ^\dag _{\X, \Q}$-module cohérent. 
Le morphisme canonique 
$\E ^* \to (\E ^\mathrm{hol}) ^{*} $
est un isomorphisme, i.e.,
$(\E / \E ^{\mathrm{hol}}) ^{*}=0$.
En particulier, $(\E / \E ^{\mathrm{hol}}) ^{\mathrm{hol}}=0$.
\end{lemm}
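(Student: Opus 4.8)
The plan is to set $\NN := \E / \E ^{\mathrm{hol}}$ and to establish the sharpest statement $\NN ^{*} = 0$; the isomorphism $\E ^{*} \riso (\E ^{\mathrm{hol}}) ^{*}$ and the equality $(\E / \E ^{\mathrm{hol}}) ^{\mathrm{hol}} = 0$ will both follow. For the equivalence of the two formulations, I would apply $\DD$ to the exact sequence $0 \to \E ^{\mathrm{hol}} \to \E \to \NN \to 0$: since $\E ^{\mathrm{hol}}$ is holonomic (see \ref{Nota-hol}) the complex $\DD (\E ^{\mathrm{hol}})$ is concentrated in degree $0$, and since $\DD$ sends a coherent $\D ^\dag _{\X, \Q}$-module to a complex concentrated in degrees $\leq 0$, the long exact cohomology sequence yields the short exact sequence $0 \to \NN ^{*} \to \E ^{*} \to (\E ^{\mathrm{hol}}) ^{*} \to 0$; hence $\E ^{*} \to (\E ^{\mathrm{hol}}) ^{*}$ is an isomorphism if and only if $\NN ^{*} = 0$, and $\NN ^{\mathrm{hol}} = (\NN ^{*}) ^{*}$ by definition. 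It thus suffices to prove $\NN ^{*} = 0$.

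First I would check that $\NN ^{\mathrm{hol}} = 0$, i.e., $F ^{d} (\NN) = 0$, by a codimension argument. Let $\M \subset \NN$ be a coherent $\D ^\dag _{\X, \Q}$-submodule with $\mathrm{codim} (\M) \geq d$, and let $\widetilde{\M} \subset \E$ be its inverse image under the canonical surjection $p \,:\, \E \to \NN$, so that one has the exact sequence $0 \to \E ^{\mathrm{hol}} \to \widetilde{\M} \to \M \to 0$. Since $\E ^{\mathrm{hol}}$ is holonomic its codimension is $\geq d$, whence $\mathrm{codim} (\widetilde{\M}) \geq d$ by the additivity formula \ref{formule-codim} (valid for $\D ^\dag _{\X, \Q}$ by \ref{passage-m-dag}). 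But $\E ^{\mathrm{hol}} = F ^{d} (\E)$ is, by \ref{Vir-III.3.6} and \ref{Nota-hol}, the largest coherent submodule of $\E$ of codimension $\geq d$; therefore $\widetilde{\M} \subset \E ^{\mathrm{hol}}$, and $\M = p (\widetilde{\M}) = 0$. Hence $F ^{d} (\NN) = 0$, that is $(\NN ^{*}) ^{*} = \NN ^{\mathrm{hol}} = 0$.

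Next I would observe that $\NN ^{*} = \mathcal{H} ^{0} \DD (\NN)$ is holonomic: its codimension is $\geq d$ (see \cite[1.3]{caro-holo-sansFrob}), so this coherent module is either zero or of codimension $d$. For a holonomic $\D ^\dag _{\X, \Q}$-module $\M$ one has $(\M ^{*}) ^{*} = \M ^{\mathrm{hol}} = \M$, since $\M ^{\mathrm{hol}}$ is by construction (\ref{Nota-hol}) the largest holonomic submodule of $\M$ and $\M$ is holonomic. Applying this to $\M = \NN ^{*}$ gives $((\NN ^{*}) ^{*}) ^{*} = \NN ^{*}$; as $(\NN ^{*}) ^{*} = \NN ^{\mathrm{hol}} = 0$ by the previous step, it follows that $\NN ^{*} = 0 ^{*} = 0$, which concludes the argument.

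The only genuinely delicate point seems to be the duality input used above: the facts that $\DD$ preserves holonomicity placing the result in degree $0$, that it sends coherent modules to complexes concentrated in degrees $\leq 0$, and the purity estimate $\mathrm{codim} (\mathcal{H} ^{0} \DD (\NN)) \geq d$. These rest on the Auslander--Gorenstein character of $\D ^\dag _{\X, \Q}$ and on \cite{caro-holo-sansFrob} (compare with the proof of \ref{Vir-III.3.6}); everything else is formal manipulation of the short exact sequences and of the functor $(-) ^{\mathrm{hol}}$.
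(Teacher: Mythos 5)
Your proof is correct, and it takes a genuinely different route from the paper's. The paper argues functorially: it derives the same short exact sequence $0 \to \NN^* \to \E^* \to (\E^{\mathrm{hol}})^* \to 0$, then observes that since this is a morphism of holonomic modules it suffices, by biduality, to check that $\mathcal{H}^0\DD$ of the right-hand map is an isomorphism; this latter map is identified with $(-)^{\mathrm{hol}}(\mathrm{adj})\,:\,(\E^{\mathrm{hol}})^{\mathrm{hol}} \to \E^{\mathrm{hol}}$, and a small commutative square built from the functoriality of the adjunction $\mathrm{adj}\colon\FF^{\mathrm{hol}}\hookrightarrow\FF$ (the top and right arrows coincide, the left arrow is bijective because $\E^{\mathrm{hol}}$ is already holonomic) shows it to be an isomorphism. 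You instead attack $\NN^{\mathrm{hol}}$ directly, via the filtration-theoretic characterization $\E^{\mathrm{hol}} = F^d(\E)$ of \ref{Nota-hol} and the maximality property \ref{Vir-III.3.6}: pulling a coherent submodule $\M\subset\NN$ of codimension $\geq d$ back to $\widetilde{\M}\subset\E$ and invoking the codimension additivity \ref{formule-codim} (extended via \ref{passage-m-dag}) forces $\widetilde{\M}\subset F^d(\E)=\E^{\mathrm{hol}}$, hence $\M=0$, so $\NN^{\mathrm{hol}}=0$; then the purity estimate plus biduality on holonomic modules upgrade this to $\NN^*=0$. The paper's argument is shorter and purely formal once one accepts the adjunction; yours is more computational but has the advantage of making the vanishing of $\NN^{\mathrm{hol}}$ explicit from the structure of the filtration, and it isolates clearly where the Auslander--Gorenstein purity input is used. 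Both rest on the same foundational facts (\ref{Nota-hol}, \ref{Vir-III.3.6}, \ref{formule-codim}, and biduality for holonomic modules).
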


\begin{proof}
Posons $\E _\mathrm{n\text{-}hol} := \E / \E ^{\mathrm{hol}}$.
Comme $\E ^\mathrm{hol}$ est holonome, alors $\mathcal{H} ^{-1} \DD (\E  ^\mathrm{hol})=0$.
En appliquant le foncteur $\mathcal{H} ^{0} \DD$ à la suite exacte 
$0 \to \E ^\mathrm{hol} \underset{\mathrm{adj}}{\longrightarrow} \E \longrightarrow \E _\mathrm{n\text{-}hol} \to 0$, il en résulte que l'on obtient 
la suite exacte 
$0 \to \E _\mathrm{n\text{-}hol} ^* \to \E ^* \to (\E ^\mathrm{hol}) ^{*}  \to 0$.
Comme c'est un morphisme de $\D ^\dag _{\X, \Q}$-modules holonomes, quitte à utiliser l'isomorphisme de bidualité, il suffit de vérifier qu'en appliquant le foncteur 
$\mathcal{H} ^{0} \DD $ au morphisme $\E ^* \to (\E ^\mathrm{hol}) ^{*} $, on obtient un isomorphisme. 
Or, ce dernier
est le morphisme $(-)^\mathrm{hol} (\mathrm{adj})\,:\,(\E ^\mathrm{hol}) ^\mathrm{hol} \to \E ^\mathrm{hol}$.
Comme le morphisme d'adjonction de la forme $\mathrm{adj}\,:\,\FF ^\mathrm{hol} \subset \FF $
est fonctoriel en $\FF$, on dispose 
du diagramme commutatif
\begin{equation}
\notag
\xymatrix@ C=2cm {
{\E ^\mathrm{hol}} 
\ar@{^{(}->}[r] ^-{\mathrm{adj}}
& {  \E } 
\\
 {(\E ^\mathrm{hol}) ^\mathrm{hol} } 
 \ar@{^{(}->}[r] ^-{(-)^\mathrm{hol} (\mathrm{adj})}
 \ar@{^{(}->}[u] ^-{\mathrm{adj}}
 & 
 {\E ^\mathrm{hol} .} 
 \ar@{^{(}->}[u] ^-{\mathrm{adj}}
 }
\end{equation}
Comme $\E ^\mathrm{hol}$ est holonome, la flèche de gauche est bijective. 
De plus, les flèches du haut et de droite sont identiques. Il en résulte que la flèche du bas est un isomorphisme.

\end{proof}

\subsection{Surcohérence dans un $\V$-schéma formel lisse}
\label{section3.2}
On désignera par $\X$ un $\V$-schéma formel lisse et $D$ un diviseur de sa fibre spéciale $X$.
Pour tout sous-schéma fermé $Z$ de $X$,
on note $(\hdag Z)$ le foncteur localisation en dehors de $Z$ (voir \cite[2]{caro_surcoherent}). 
L'objectif de cette section est de valider l'isomorphisme \ref{pre-a*Gamma}. Pour cela, introduisons d'abord la notion 
suivante.

\begin{defi}\label{defi-surcoh}
  Soit $\E$ un objet de ($F$-)$D (\D ^\dag _{\X} (\hdag D) _{\Q})$. 
\begin{itemize}
\item  Le complexe $\E$ est {\og $\D ^\dag _{\X} (\hdag D) _{\Q}$-surcohérent dans $\X$\fg} si  
$\E \in (F$-$)D _\mathrm{coh} ^\mathrm{b} (\D ^\dag _{\X} (\hdag D) _{\Q})$ et si,
    pour tout diviseur $T $ de $X$, on ait 
    $(\hdag T )  (\E) \in (F$-$)D _\mathrm{coh} ^\mathrm{b} (\D ^\dag _{\X} (\hdag D) _{\Q})$. 

\item     
On dit que $\E$ est {\og $\D ^\dag _{\X} (\hdag D) _{\Q}$-surcohérent dans $\X$ et après tout changement de base\fg} si, pour tout morphisme d'anneaux de valuation discrète complets
    d'inégales caractéristiques $(0,p)$ de corps résiduels parfaits de la forme le morphisme canonique $\V \to \V '$,
    $\S := \Spf (\V)$, 
    $\S ':= \Spf (\V')$, $f\,:\, \X ':= \X \times _{\S} \S'\to \X$ le morphisme canonique, 
    $D ' := f ^{-1} (D)$, le complexe induit par changement de base par $\V \to \V'$ noté
    $f ^* (\E):= \D ^\dag _{\X'/\S'} (\hdag D') _{\Q} \otimes _{f ^{-1} \D ^\dag _{\X/\S} (\hdag D) _{\Q}} f ^{-1} \E$ 
    est $\D ^\dag _{\X'} (\hdag D') _{\Q}$-surcohérent dans $\X '$ (pour les opérateurs différentiels d'ordre fini, 
    le changement de base a été défini et étudié dans 
    \cite[3.2]{Be2} ; on en déduit la version $\D ^{\dag}$ de manière usuelle). 
    De même, on ajoutera {\og après tout changement de base\fg} pour d'autres notions, e.g. la surholonomie. 

\item   Enfin un ($F$-)$\D ^\dag _{\X} (\hdag D) _{\Q}$-module est $\D ^\dag _{\X} (\hdag D) _{\Q}$-surcohérent dans $\X$ s'il l'est en tant
  qu'objet de ($F$-)$D ^\mathrm{b} (\D ^\dag _{\X} (\hdag D) _{\Q})$. Lorsque $D$ est vide ou s'il n'y a aucune ambiguïté sur $D$, on dira plus simplement
surcohérent dans $\X$.

\end{itemize}
\end{defi}

%\newpage

%Yes, we can keep the first version of overholonomicity after any base change, i.e. any base change is allowed (but at least with perfect residue fields). 
%Below is the answer to the question of why overconvergent $F$-isocrystals are overholonomic after any base change:

%\bigskip 

%\newpage

\begin{exem}
\label{holo-chtdebase}
\begin{enumerate}
\item Les propriétés de cohérence et d'holonomie sont stables par changement de base.
Plus précisément, soit un morphisme d'anneaux de valuation discrète complets
    d'inégales caractéristiques $(0,p)$ de corps résiduels parfaits de la forme le morphisme canonique $\V \to \V '$,
$\S := \Spf (\V)$, 
    $\S ':= \Spf (\V')$, 
 soient $f\,:\, \X ':= \X \times _{\S} \S' \to \X$ le morphisme canonique, 
    $D ' := f ^{-1} (D)$. Si $\E \in D ^{\mathrm{b}} _{\mathrm{coh}} (\D ^\dag _{\X} (\hdag D) _{\Q})$, 
    alors $f ^* (\E) \in D ^{\mathrm{b}} _{\mathrm{coh}} (\D ^\dag _{\X '} (\hdag D') _{\Q})$.
    De plus, si $\E$ est un $\D ^\dag _{\X} (\hdag D) _{\Q}$-module holonome (voir \cite[2.10]{caro-holo-sansFrob} ou \cite{Beintro2} pour une définition 
    lorsque $\E$ est muni d'une structure de Frobenius), alors 
    $f ^* (\E)$ est un $\D ^\dag _{\X'} (\hdag D') _{\Q}$-module holonome. En effet, cela se déduit du fait que l'extension 
    $f ^{-1} \D ^\dag _{\X} (\hdag D) _{\Q} \to \D ^\dag _{\X'} (\hdag D') _{\Q}$ est plate (en effet, avec \cite{Be1}, on se ramène au cas des opérateurs différentiels d'ordre fini, cas qui résulte alors
    du fait que $\V \to \V'$ est plate).

\item  Lorsque $\X$ est un $\V$-schéma formel propre et lisse, 
d'après \cite[2.3.17]{caro-Tsuzuki}, les notions 
%d'après \cite{caro-stab-holo}, les notions de
%    $F \text{-}\D ^\dag _{\X, \Q}$-modules holonomes,
   de $F \text{-}\D ^\dag _{\X,\Q}$-complexes surcohérents, 
   de $F \text{-}\D ^\dag _{\X,\Q}$-complexes surholonomes, 
   de $F$-complexes dévissables en $F$-isocristaux surcohérents sont équivalentes. 
   D'après le théorème de la réduction semistable de Kedlaya (\cite{kedlaya-semistableIV}),
   les $F$-isocristaux surconvergents sont {\og potentiellement unipotent avec des résidues nilpotents\fg}, 
   i.e. après une altération génériquement étale ils deviennent unipotents avec des résidues nilpotents. 
   Il découle du théorème  \cite[2.3.13]{caro-Tsuzuki} (et en calquant la preuve de \cite[2.3.15]{caro-Tsuzuki}) 
   que les isocristaux potentiellement unipotents avec des résidues nilpotents
   sont surholonomes. Comme le fait qu'un isocristal surconvergent soit potentiellement unipotent avec des résidues nilpotents
   est stable par n'importe quel changement de base, 
   il en résulte que les $F$-isocristaux surcohérents sont surholonomes et après tout changement de base. 
   Par dévissage, il en découle qu'un $F \text{-}\D ^\dag _{\X,\Q}$-complexe surholonome
   est un $\D ^\dag _{\X,\Q}$-complexe surholonome et après tout changement de base.

\end{enumerate}

\end{exem}

\begin{lemm}
\label{lemm-surcoh-4}
\begin{enumerate}
\item Soit $(\X _i ) _i $ un recouvrement ouvert de $\X$.
Un complexe $\E\in D ^\mathrm{b} (\D ^\dag _{\X} (\hdag D) _{\Q})$ est surcohérent dans $\X$ si et seulement si $\E |\X _i$ est 
surcohérent dans $\X _i$ pour tout $i$.

\item Si deux termes d'un triangle distingué de $D ^\mathrm{b} (\D ^\dag _{\X} (\hdag D) _{\Q})$ sont surcohérents dans $\X$,
alors le troisième l'est.

\item Un complexe $\E\in D ^\mathrm{b} (\D ^\dag _{\X} (\hdag D) _{\Q})$ est surcohérent dans $\X$ si et seulement si pour tout entier $j \in \Z$ les modules
$\mathcal{H} ^{j} (\E) $ sont surcohérents dans $\X$.

\item Pour tout sous-schéma fermé $Z$ de $X$, pour tout complexe $\E\in D ^\mathrm{b} (\D ^\dag _{\X} (\hdag D) _{\Q})$ surcohérent dans $\X$,
le complexe $(\hdag Z) (\E)$ est surcohérent dans $\X$.

\end{enumerate}

\end{lemm}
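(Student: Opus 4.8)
The plan is to prove the four assertions in the order in which their interdependencies force us: (1) and (2) are essentially formal and feed into (3), while (4) is where the genuine content of the chapter so far will be used. First I would establish (1). The point is that coherence is local on $\X$ and that, for a divisor $T$ of $X$, the functor $(\hdag T)$ commutes with restriction to an open $\X_i$; moreover a divisor $T_i$ of $X_i$ need not extend to a divisor of $X$, but one can always cover $\X_i$ by open affines on which $T_i$ is principal and use that $(\hdag T_i)$ applied to a coherent complex is coherent iff it is so locally. So $\E$ surcohérent dans $\X$ implies $\E|\X_i$ surcohérent dans $\X_i$; conversely, if $T$ is a divisor of $X$, then $(\hdag T)(\E)|\X_i = (\hdag (T\cap X_i))(\E|\X_i)$ is coherent for each $i$, hence $(\hdag T)(\E)$ is coherent, using locality of coherence. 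For (2), note that both $D^\mathrm{b}_\mathrm{coh}(\D^\dag_{\X,\Q})$ is a triangulated subcategory (two-out-of-three for coherence) and that $(\hdag T)$ is a triangulated functor; applying it to a distinguished triangle with two surcohérent vertices yields a triangle in $D^\mathrm{b}_\mathrm{coh}$, whence the third vertex of the original triangle, after $(\hdag T)$, lies in $D^\mathrm{b}_\mathrm{coh}$ as well.

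For (3), the ``only if'' direction is the delicate half. One direction: if each $\mathcal{H}^j(\E)$ is surcohérent dans $\X$, then assembling the $\mathcal{H}^j(\E)$ via the standard tower of distinguished triangles (truncations) and applying (2) repeatedly gives that $\E$ is surcohérent. For the converse, suppose $\E$ is surcohérent; then $\E\in D^\mathrm{b}_\mathrm{coh}$, so each $\mathcal{H}^j(\E)$ is coherent, and it remains to see that $(\hdag T)(\mathcal{H}^j(\E))$ is coherent for every divisor $T$. Here the key input is that $(\hdag T)$ is exact on coherent $\D^\dag_{\X,\Q}$-modules — indeed $(\hdag T)$ is computed by tensoring with the coherent sheaf $\D^\dag_{\X,\Q}(\hdag T)$, which is flat over $\D^\dag_{\X,\Q}$ (this is in \cite{caro_surcoherent}) — so $\mathcal{H}^j((\hdag T)(\E)) = (\hdag T)(\mathcal{H}^j(\E))$, and the left-hand side is coherent because $(\hdag T)(\E)\in D^\mathrm{b}_\mathrm{coh}$. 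Hence each $\mathcal{H}^j(\E)$ is surcohérent dans $\X$.

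Finally (4): given $\E$ surcohérent dans $\X$ and a closed subscheme $Z$ of $X$, we must show $(\hdag Z)(\E)$ is surcohérent. First, $(\hdag Z)(\E)$ is coherent by \cite{caro_surcoherent}. Next, for a divisor $T$ of $X$, one needs $(\hdag T)(\hdag Z)(\E)$ coherent. The mechanism is the transitivity/commutation of localisation functors: $(\hdag T)(\hdag Z) \simeq (\hdag Z)(\hdag T)$ and, more usefully, one can write $(\hdag T)(\hdag Z)(\E)$ in terms of $(\hdag T')(\E)$ for a suitable divisor $T'$ — concretely, working locally where $Z = V(g_1,\dots,g_c)$ and $T = V(h)$, a Mayer--Vietoris/Čech argument over the principal opens expresses $(\hdag Z)(\E)$ as a finite complex built from the $(\hdag (V(h_\alpha)))(\E)$ with each $h_\alpha$ a product of $h$ with some $g_i$'s, hence a divisor of $X$; since each such term is coherent by surcohérence of $\E$, and coherence is stable under finite homotopy limits, $(\hdag T)(\hdag Z)(\E)$ is coherent. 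I expect this last point — the bookkeeping that reduces $(\hdag T)(\hdag Z)$ to a finite diagram of divisorial localisations of $\E$, done carefully enough to be local-on-$\X$ compatible via (1) — to be the main obstacle; everything else is formal manipulation of triangulated categories and the known exactness and stability properties of $(\hdag -)$ on coherent modules.
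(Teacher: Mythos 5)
Your proposal takes essentially the same route as the paper for the substantive parts: for (3) you invoke exactness of $(\hdag T)$ on coherent $\D^\dag_{\X,\Q}$-modules, and for (4) the composition formula $(\hdag T)\circ(\hdag Z)\riso(\hdag T\cup Z)$ in the divisorial case together with an induction on the number of generators of the ideal of $Z$ via Mayer--Vietoris distinguished triangles, exactly as in \cite[2.2.14, 2.2.16]{caro_surcoherent}. Parts (1) and (2) are dismissed as trivial in the paper; you flesh them out, and your (2) is fine.

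There is, however, a genuine gap in your argument for the forward implication of (1), i.e.\ that $\E$ surcoherent in $\X$ implies $\E|\X_i$ surcoherent in $\X_i$. Having asserted that a divisor $T_i$ of $X_i$ need not extend to a divisor of $X$, you propose to cover $\X_i$ by open affines $\U$ on which $T_i$ is principal and to ``use locality of coherence''; but to conclude that $(\hdag T_i)(\E|\X_i)|\U$ is coherent you would already need $\E|\U$ to be surcoherent in $\U$, which is precisely the assertion under proof with $\U$ in place of $\X_i$ --- the argument is circular. In fact the premise is wrong: $X$ is smooth over $k$, hence regular, hence locally factorial, so the closure $\overline{T_i}$ in $X$ of (the support of) a divisor $T_i$ of the open $X_i$ is a codimension-one closed subset of $X$ and therefore again a divisor, with $\overline{T_i}\cap X_i = T_i$ as closed subsets. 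Since $(\hdag -)$ depends only on the underlying closed subset, $(\hdag T_i)(\E|\X_i) = \bigl((\hdag \overline{T_i})(\E)\bigr)|\X_i$, which is coherent by surcoherence of $\E$ in $\X$. With this repair your treatment of all four parts agrees with the paper's.
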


\begin{proof}
Les deux premières assertions sont triviales. La troisième résulte du fait que le foncteur $(\hdag T)$ avec $T$ un diviseur de $X$ est exact sur la catégorie des $\D ^\dag _{\X} (\hdag D) _{\Q}$-modules cohérents. 
Traitons à présent le quatrièmement. Lorsque $Z$ est un diviseur, cela découle de la formule \cite[2.2.14]{caro_surcoherent}: pour tout diviseur $T$ de $X$, on a $(\hdag T ) \circ (\hdag Z) (\E) \riso (\hdag Z \cup T) (\E)$.
On termine la vérification par récurrence sur le nombre de générateurs de l'idéal de $\O _X$ induit par $Z\hookrightarrow X$, 
en utilisant des triangles distingués de Mayer-Vietoris (voir \cite[2.2.16]{caro_surcoherent}).

\end{proof}

\begin{lemm}
[Berthelot-Kashiwara/surcohérence dans un $\V$-schéma formel lisse]
\label{B-K-surcoh}
Soit $u\,:\, \ZZ \hookrightarrow \X$ une immersion fermée 
de $\V$-schémas formels lisses telle que $D \cap Z $ soit un diviseur de $Z$. 
Les foncteurs $u _+$ est $u ^!$ induisent des équivalences quasi-inverses entre la catégorie 
des $\D ^\dag _{\ZZ} (\hdag D \cap Z )_{\Q}$-modules (resp. des complexes de $D ^\mathrm{b} (\D ^\dag _{\ZZ} (\hdag D \cap Z )_{\Q})$)
surcohérents dans $\ZZ$
et celle des $\D ^\dag _{\X} (\hdag D) _{\Q}$-modules (resp. des complexes de $D ^\mathrm{b} (\D ^\dag _{\X} (\hdag D) _{\Q})$)
à support dans $Z$ surcohérents dans $\X$.
\end{lemm}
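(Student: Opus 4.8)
The plan is to reduce the statement to Berthelot's already-known Berthelot-Kashiwara equivalence for coherent $\D^\dag_{\X,\Q}$-modules (resp. complexes in $D^\mathrm{b}_\mathrm{coh}$) and then to check that the property ``surcohérent dans'' is matched on both sides of that equivalence. First I would recall that $u_+$ and $u^!$ already give quasi-inverse equivalences between $D^\mathrm{b}_\mathrm{coh}(\D^\dag_{\ZZ,\Q})$ and the full subcategory of $D^\mathrm{b}_\mathrm{coh}(\D^\dag_{\X,\Q})$ of complexes supported in $Z$ (this is the standard theorem, and $u_+$ for a closed immersion is exact and fully faithful, with $u^! u_+ \riso \mathrm{id}$ and $u_+ u^! \riso \mathrm{id}$ on the support-in-$Z$ side). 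So the only thing left is: if $\E$ is surcohérent dans $\ZZ$, then $u_+(\E)$ is surcohérent dans $\X$; and conversely if $\F$ is supported in $Z$ and surcohérent dans $\X$, then $u^!(\F)$ is surcohérent dans $\ZZ$. By the quasi-inverse relations it suffices to prove the first implication and the converse for $\F = u_+(\E)$, i.e.\ that ``surcohérent dans $\ZZ$'' and ``$u_+(\E)$ surcohérent dans $\X$'' are equivalent conditions on $\E$.

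Next I would use Lemma \ref{lemm-surcoh-4}: by part (3) we may assume $\E$ is a single module, and by part (1) the question is local on $\X$, so we may assume $\X$ affine with local coordinates adapted to $u$, which is exactly the setting of Section \ref{section2.2}. The heart of the matter is to compare, for a divisor $T$ of $X$, the two complexes $(\hdag T)(u_+\E)$ and $u_+$ of something built from $(\hdag ?)(\E)$. I would factor through the two cases already isolated in the literature for the localisation functor and closed immersions: if $T$ does not contain $Z$ (so $T\cap Z$ is a divisor of $Z$), one uses the base-change / commutation isomorphism of the form $(\hdag T)\circ u_+ \riso u_+ \circ (\hdag u^{-1}(T))$ — which is an avatar of the isomorphism \ref{iso-chgt-de-basebis} and \ref{prop-iso-chgt-de-base} proved in Section 1 (transported to the formal/$\dag$ setting via the projective-limit arguments of Section \ref{section2.1}) — together with the fact that $u^{-1}(T)=T\cap Z$ is a divisor of $Z$, so $(\hdag u^{-1}(T))(\E)$ is coherent by hypothesis, hence so is its image under the exact functor $u_+$. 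If on the contrary $Z\subseteq T$, then $(\hdag T)\circ u_+(\E)=0$ by the support condition (a module supported in $Z$ is killed by localising away from a divisor containing $Z$), so coherence is trivial; a general divisor $T$ is handled by writing $T = (T$ meeting $Z$ properly$)\cup(\text{possibly } Z)$ and using the Mayer-Vietoris triangles of \cite[2.2.16]{caro_surcoherent} as in the proof of Lemma \ref{lemm-surcoh-4}(4). This proves $u_+$ sends surcohérent-dans-$\ZZ$ to surcohérent-dans-$\X$.

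For the converse, suppose $u_+(\E)$ is surcohérent dans $\X$; I want $\E$ surcohérent dans $\ZZ$. Given a divisor $T'$ of $Z$, choose locally a divisor $T$ of $X$ with $u^{-1}(T)=T'$ (possible after shrinking $\X$, using the local coordinates); then the same commutation isomorphism gives $u_+\bigl((\hdag T')(\E)\bigr)\riso (\hdag T)(u_+\E)$, whose right-hand side is coherent by hypothesis, and since $u_+$ reflects coherence — this is the formal analogue of Lemma \ref{coro-inclusion-u+}, or directly: $u_+$ is exact, faithful, and a strictly increasing chain of coherent submodules of $(\hdag T')(\E)$ would give one in $(\hdag T)(u_+\E)$, contradicting noetherianity — we conclude $(\hdag T')(\E)$ is coherent, i.e.\ $\E$ is surcohérent dans $\ZZ$. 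The main obstacle I anticipate is establishing cleanly the commutation isomorphism $(\hdag T)\circ u_+ \riso u_+ \circ (\hdag u^{-1}(T))$ at the level of $\D^\dag_{\Q}$-coefficients: one has it at each finite level $m$ from Section 1 (Proposition \ref{prop-iso-chgt-de-base}, compatibly with level change by \ref{lemm-iso-chgt-de-basebis}), and then must pass to the $p$-adic completion and to the inductive limit over $m$ and tensor with $\Q$, checking exactness/flatness at each stage — this is where the preliminary machinery of the first two chapters (the explicit description \ref{u+-comm-Gamma} of $u_+$ and the compatibility diagrams \ref{u+-descpt-chgt-de-base-niv}, \ref{iso-chgt-de-basebis-chgtbase}) is needed, and where I would spend the real effort; alternatively one invokes the already-published commutation of localisation with direct images \cite[2.2.18.2]{caro_surcoherent} once the support and coherence bookkeeping is in place.
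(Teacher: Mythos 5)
Your strategy is correct and ultimately rests on the same ingredients as the paper's proof: the coherent Berthelot--Kashiwara theorem, the commutation of $u_+$ (resp.\ $u^!$) with the localisation functors from \cite[2.2.18.2]{caro_surcoherent} (resp.\ \cite[2.2.18.1]{caro_surcoherent}), and Lemma~\ref{lemm-surcoh-4}. The differences are mostly a matter of economy. You treat the commutation isomorphism $(\hdag T)\circ u_+ \riso u_+\circ(\hdag T\cap Z)$ as a fallback and devote most of your effort to sketching how one might re-derive it from the finite-level base-change results of Sections~1--2 by completion and passage to the limit over $m$; the paper simply cites \cite[2.2.18.2]{caro_surcoherent}, which is the intended route and eliminates that whole discussion. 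You also split into cases according to whether $T$ contains $Z$ or meets it properly (so that $T\cap Z$ is a divisor of $Z$), handling the general case by Mayer--Vietoris; this works locally but is unnecessary: the paper applies Lemma~\ref{lemm-surcoh-4}(4) directly to the closed subscheme $T\cap Z$ of $Z$, whether or not it is a divisor, and thereby gets the coherence of $(\hdag T\cap Z)(\FF)$ uniformly, the Mayer--Vietoris reduction you invoke being already packaged inside the proof of that lemma. Finally, for the converse direction you argue that $u_+$ reflects coherence in the spirit of~\ref{coro-inclusion-u+}, whereas the paper runs the argument symmetrically through $u^!$, the coherent Berthelot--Kashiwara theorem and \cite[2.2.18.1]{caro_surcoherent}; both are fine, but the latter is more direct and avoids having to re-establish a reflection-of-coherence statement at the $\D^\dag_\Q$ level.
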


\begin{proof}
D'après la version cohérente du théorème de Berthelot-Kashiwara (voir \cite[5.3.3]{Beintro2}), 
le lemme est déjà connu en remplaçant la notion de surcohérence dans un $\V$-schéma formel lisse par celle de cohérence
(en effet, le cas respectif découle du cas non respectif, 
car les foncteurs $u _+$ et $u ^!$ sont acycliques sur les catégories non respectives). 
Soient $\FF$ un $\D ^\dag _{\ZZ} (\hdag D \cap Z )_{\Q}$-module surcohérent dans $\ZZ$ et $T$ un diviseur de $X$.
D'après la dernière assertion de \ref{lemm-surcoh-4}, 
$(\hdag T \cap Z) (\FF)$ est un $\D ^\dag _{\ZZ} (\hdag D \cap Z )_{\Q}$-module cohérent.
Il en résulte que $ u _+ ( (\hdag T \cap Z) (\FF))$ est un $\D ^\dag _{\X} (\hdag D) _{\Q}$-module cohérent.
Or, par \cite[2.2.18.2]{caro_surcoherent}, 
$ u _+ \circ (\hdag T \cap Z) (\FF) \riso   (\hdag T ) \circ u _+ ( \FF)$.
On a ainsi vérifié que $u _+ ( \FF)$ est un $\D ^\dag _{\X} (\hdag D) _{\Q}$-module surcohérent dans $\X$.
De même, en utilisant \cite[2.2.18.1]{caro_surcoherent} et via la version cohérente du théorème de Berthelot-Kashiwara, 
on vérifie que si 
$\E$ est un $\D ^\dag _{\X} (\hdag D) _{\Q}$-module à support dans $Z$ surcohérent dans $\X$, 
alors $ u ^! (\E)$ est un $\D ^\dag _{\ZZ} (\hdag D \cap Z )_{\Q}$-module surcohérent dans $\ZZ$.
\end{proof}

\begin{lemm}
\label{stab-surcoh-dansX}
Soient $\E\in D ^\mathrm{b} (\D ^\dag _{\X} (\hdag D) _{\Q})$ un complexe surcohérent dans $\X$
et $u\,:\, \ \ZZ \hookrightarrow \X$ une immersion fermée de $\V$-schémas formels lisses telle que $D \cap Z$ soit un diviseur de $Z$.
Alors, $u ^! (\E) $ est  un complexe de $D ^\mathrm{b} (\D ^\dag _{\ZZ} (\hdag D \cap Z )_{\Q})$ surcohérent dans $\ZZ$. 
\end{lemm}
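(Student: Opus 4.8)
Le plan est de se ramener au th\'eor\`eme de Berthelot--Kashiwara \ref{B-K-surcoh}, qui traite exactement le cas des complexes \`a support dans $Z$. La premi\`ere \'etape consiste \`a remplacer $\E$ par sa cohomologie locale \`a support dans $Z$. Notons $\R \underline{\Gamma} ^{\dag} _{Z}$ ce foncteur et consid\'erons le triangle de localisation $\R \underline{\Gamma} ^{\dag} _{Z} (\E) \to \E \to (\hdag Z) (\E) \overset{+1}{\to}$ dans $D ^\mathrm{b} (\D ^\dag _{\X,\,\Q})$ (voir \cite{caro_surcoherent}). En appliquant $u ^!$ et en utilisant que $u ^! \circ (\hdag Z)$ est nul (en effet l'image inverse sch\'ematique $u ^{-1} (Z)$ a pour espace sous-jacent $\ZZ$ tout entier, de sorte que la commutation de l'image inverse extraordinaire au foncteur localisation, \cite[2.2.18.1]{caro_surcoherent}, donne $u ^! (\hdag Z) (\E) \riso (\hdag \ZZ) (u ^! (\E)) = 0$), on obtient un isomorphisme canonique $u ^! (\E) \riso u ^! (\R \underline{\Gamma} ^{\dag} _{Z} (\E))$. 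Il suffit donc de traiter $\R \underline{\Gamma} ^{\dag} _{Z} (\E)$, qui est \`a support dans $Z$ par construction.

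La deuxi\`eme \'etape est de v\'erifier que $\R \underline{\Gamma} ^{\dag} _{Z} (\E)$ est surcoh\'erent dans $\X$. Par hypoth\`ese $\E$ l'est ; d'apr\`es la derni\`ere assertion de \ref{lemm-surcoh-4}, $(\hdag Z) (\E)$ l'est aussi ; et d'apr\`es la deuxi\`eme assertion de \ref{lemm-surcoh-4} (stabilit\'e par triangle distingu\'e) appliqu\'ee au triangle de localisation, le troisi\`eme sommet $\R \underline{\Gamma} ^{\dag} _{Z} (\E)$ est surcoh\'erent dans $\X$.

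La derni\`ere \'etape est d'invoquer \ref{B-K-surcoh} dans sa version pour les complexes, selon laquelle $u ^!$ transforme un complexe de $D ^\mathrm{b} (\D ^\dag _{\X,\,\Q})$ \`a support dans $Z$ surcoh\'erent dans $\X$ en un complexe de $D ^\mathrm{b} (\D ^\dag _{\ZZ,\,\Q})$ surcoh\'erent dans $\ZZ$. En l'appliquant \`a $\R \underline{\Gamma} ^{\dag} _{Z} (\E)$ et en utilisant l'isomorphisme de la premi\`ere \'etape, on obtient que $u ^! (\E)$ est surcoh\'erent dans $\ZZ$, ce qui est le r\'esultat voulu.

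Le point d\'elicat est l'annulation $u ^! \circ (\hdag Z) = 0$ ainsi que l'identification des supports, qui reposent tous deux sur la commutation de $u ^!$ aux foncteurs localisation ; je m'appuierais sur \cite[2.2.18.1]{caro_surcoherent} pour cela. Une variante plus calculatoire utiliserait \ref{lemm-surcoh-4}(1) pour se ramener au cas o\`u $\X$ est affine muni de coordonn\'ees locales et $Z$ int\`egre, rel\`everait un diviseur $T'$ de $Z$ en un diviseur $T$ de $X$ avec $T \cap Z = T'$, puis combinerait la commutation $u ^! \circ (\hdag T) \riso (\hdag T') \circ u ^!$ avec la pr\'eservation de la coh\'erence par $u ^!$ pour une immersion ferm\'ee ; mais le contr\^ole du rel\`evement et de la condition de non-diviseur de z\'ero rend cette voie plus subtile que l'argument global via \ref{B-K-surcoh}, que je privil\'egierais donc.
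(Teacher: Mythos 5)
Votre d\'emonstration est correcte et suit essentiellement la m\^eme strat\'egie que celle du papier : triangle de localisation en $Z$, stabilit\'e de la surcoh\'erence par \ref{lemm-surcoh-4}, puis Berthelot--Kashiwara (\ref{B-K-surcoh}) pour le terme $\R \underline{\Gamma}^{\dag}_{Z}(\E)$ \`a support dans $Z$. La seule diff\'erence est cosm\'etique : vous justifiez $u^{!}(\E) \riso u^{!}(\R \underline{\Gamma}^{\dag}_{Z}(\E))$ en appliquant $u^{!}$ au triangle et en v\'erifiant que $u^{!}\circ(\hdag Z)(\E)$ s'annule, tandis que le papier utilise directement la commutation $u^{!}\circ \R\underline{\Gamma}^{\dag}_{Z} \riso \R\underline{\Gamma}^{\dag}_{Z}\circ u^{!}$, les deux reposant sur \cite[2.2.18.1]{caro_surcoherent}.
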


\begin{proof}

Par \ref{lemm-surcoh-4}, via le triangle de localisation en $Z$ de $\E$ (e.g. voir \cite[5.3.6]{Beintro2}), 
la $\D ^\dag _{\X} (\hdag D) _{\Q}$-surcohérence dans $\X$ de $\E$ et $(\hdag Z )  (\E)$ implique celle de 
$\R \underline{\Gamma} ^{\dag} _Z(\E)$.
Comme ce dernier est à support dans $Z$, avec le théorème de Berthelot-Kashiwara de \ref{B-K-surcoh},
on obtient alors 
que $u ^! \R \underline{\Gamma} ^{\dag} _Z(\E)$
est $\D ^\dag _{\ZZ} (\hdag D \cap Z )_{\Q}$-surcohérent dans $\ZZ$. 
Comme d'après \cite[2.2.18.1]{caro_surcoherent}, 
$u ^! \circ \R \underline{\Gamma} ^{\dag} _Z(\E) \riso \R \underline{\Gamma} ^{\dag} _Z \circ u ^! (\E)
= u ^! (\E)$, on a terminé la démonstration. 
\end{proof}

\begin{vide}
\label{§def-a^*}
Soit $a\,:\, \ZZ \hookrightarrow \X$ une immersion fermée de $\V$-schémas formels lisses. 
Pour tout $\widehat{\D}  ^{(m)} _{\X}$-module $\smash{{\E }} ^{(m)}$, 
pour tout $\widehat{D}  ^{(m)} _{\X}$-module $\smash{{E }} ^{(m)}$,
on définit un complexe à gauche de $\widehat{\D}  ^{(m)} _{\ZZ}$-module ou respectivement de
$\widehat{D}  ^{(m)} _{\ZZ}$-module en posant
\begin{align}
\label{def-a^*}
\L a ^{*} (\smash{{\E }} ^{(m)}) &:= \widehat{\D}  ^{(m)} _{\ZZ \hookrightarrow \X} \otimes ^{\L} _{a ^{-1}\widehat{\D}  ^{(m)} _{\X}}
a ^{-1} \smash{{\E }} ^{(m)},
\\
\L a ^{*} (\smash{{E }} ^{(m)}) &:= \widehat{D}  ^{(m)} _{\ZZ \hookrightarrow \X} \otimes ^{\L} _{\widehat{D}  ^{(m)} _{\X}}
\smash{{E }} ^{(m)}.
\end{align}
On définit de la même façon le foncteur
$\L a ^*$ sur la catégorie des $\D ^\dag _{\X, \Q}$-modules (resp. des $D ^\dag _{\X, \Q}$-modules)
et le foncteur
$\L a _{i} ^{*}$ sur la catégorie des $\D ^{(m)} _{X _i}$-modules (resp. des $D ^{(m)} _{X _i}$-modules).
\end{vide}

\begin{vide}
\label{§La*coh}
Soit $a\,:\, \ZZ \hookrightarrow \X$ une immersion fermée de $\V$-schémas formels affines et lisses. 
Soit $\E $ un $\D  ^{\dag} _{\X,\Q}$-module cohérent. 
Soit $\I$ (resp. $\I _i$) l'idéal de $\O _\X$ (resp. $\O _{X _i}$) 
induit par l'immersion fermée $\ZZ \hookrightarrow \X$ (resp. $Z _i \hookrightarrow X _i $).
Comme $\O _{Z _i}= a  ^{-1} \O _{X _i} / a  ^{-1}  \I _i$,
on vérifie alors $\D ^{(m)} _{Z _i \hookrightarrow X _i}:= a _i ^{*} \D ^{(m)} _{X _i} = a  ^{-1} (\D ^{(m)} _{X _i} / \I _i \D ^{(m)} _{X _i} )$.
Comme $\I$ est $\O _\X$-cohérent, $\I \otimes _{\O _\X} \widehat{\D} ^{(m)} _{\X}$ est un 
$\widehat{\D} ^{(m)} _{\X}$-module à droite cohérent. 
Comme $\I$ est $\O _\X$-plat, alors il en résulte que $\I \widehat{\D} ^{(m)} _{\X}$ et donc 
$ \widehat{\D} ^{(m)} _{\X} / \I  \widehat{\D} ^{(m)} _{\X}$ sont des 
$\widehat{\D} ^{(m)} _{\X}$-modules à droite cohérent.
Via \cite[3.3.9]{Be1}, cela entraîne que le morphisme canonique
\begin{equation}
\label{a*D=D/ID1}
 \widehat{\D} ^{(m)} _{\X} / \I  \widehat{\D} ^{(m)} _{\X}
\to \underset{\underset{i}{\longleftarrow}}{\lim\,} 
\D  ^{(m)} _{X _i} / \I _i \D  ^{(m)} _{X _i}
\end{equation}
est un isomorphisme.
De plus, il découle de \cite[3.2.3]{Be1} que le morphisme canonique
\begin{equation}
\label{a*D=D/ID2}
 \widehat{D} ^{(m)} _{\X} / I  \widehat{D} ^{(m)} _{\X}
\to \underset{\underset{i}{\longleftarrow}}{\lim\,} 
D  ^{(m)} _{X _i} / I _i D  ^{(m)} _{X _i}
\end{equation}
est un isomorphisme.
Comme le foncteur $\Gamma (\X,-)$ commute aux limites projectives,
les isomorphismes \ref{a*D=D/ID1} et \ref{a*D=D/ID2} entraînent
$\Gamma (\X, \widehat{\D} ^{(m)} _{\X} / \I  \widehat{\D} ^{(m)} _{\X})\riso  \widehat{D} ^{(m)} _{\X} / I  \widehat{D} ^{(m)} _{\X}$.
Comme $\widehat{\D}  ^{(m)} _{\ZZ  \hookrightarrow \X} =
\underset{\underset{i}{\longleftarrow}}{\lim\,} 
\D  ^{(m)} _{Z _i  \hookrightarrow X _i} $,
comme $a _*$ commute aux limites projectives,
on déduit de \ref{a*D=D/ID1} l'isomorphisme
$a _* \widehat{\D}  ^{(m)} _{\ZZ  \hookrightarrow \X} \riso
\widehat{\D} ^{(m)} _{\X} / \I  \widehat{\D} ^{(m)} _{\X}$, i.e. 
$\widehat{\D}  ^{(m)} _{\ZZ  \hookrightarrow \X} \riso
a ^{-1} ( \widehat{\D} ^{(m)} _{\X} / \I  \widehat{\D} ^{(m)} _{\X})$.
On en tire
$\widehat{D}  ^{(m)} _{\ZZ  \hookrightarrow \X} \riso
\widehat{D} ^{(m)} _{\X} / I  \widehat{D} ^{(m)} _{\X}$.

En passant au produit tensoriel par $\Q$ sur $\Z$ puis à la limite inductive sur le niveau, 
on obtient alors : 
$\D  ^{\dag} _{\ZZ  \hookrightarrow \X} =
a ^{-1} ( \D ^{\dag} _{\X,\Q} / \I  \D ^{\dag} _{\X,\Q})$
et
$D  ^{\dag} _{\ZZ  \hookrightarrow \X} =
D ^{\dag} _{\X,\Q} / I  D ^{\dag} _{\X,\Q}$.
D'où les isomorphismes: 
\begin{gather}
\label{La*coh}
a ^{!} (\E  ) [ d _a]
\riso 
\L a ^{*} (\E  )\riso a ^{-1} ( \D ^{\dag} _{\X,\Q} / \I  \D ^{\dag} _{\X,\Q} \otimes ^{\L} _{ \D ^{\dag} _{\X,\Q}} \E),\\
\label{GammaLa*coh}
\L a ^{*} (E  )\riso D ^{\dag} _{\X,\Q} / I  D ^{\dag} _{\X,\Q} \otimes ^{\L} _{ D ^{\dag} _{\X,\Q}} E,
\end{gather}
où $d _a $ est la dimension relative de $a$, le premier isomorphisme provenant de \cite[4.3.2.2]{Beintro2}, 
le second résultant du fait que le foncteur $a ^{-1}$ préserve la platitude.

\end{vide}

\begin{rema}
\label{rema-surcoh-a*}
Avec les notations de \ref{§def-a^*}, il découle du premier isomorphisme de \ref{La*coh} et de \ref{stab-surcoh-dansX} que 
si $\E$ est un $\D  ^{\dag} _{\X,\Q}$-module surcohérent dans $\X$ alors 
$\L a ^{*} (\E  ) $ est surcohérent dans $\ZZ$. 
\end{rema}

\begin{lemm}
\label{lemm-pre-a*Gamma}
Soit $a\,:\,\ZZ \hookrightarrow \X$ une immersion fermée de $\V$-schémas formels affines et lisses 
dont l'idéal de $\O_\X$ de définition est principal.
Soit $\E $ un $\D  ^{\dag} _{\X,\Q}$-module cohérent tel que 
$ \mathcal{H} ^{0}a ^! (\E) $ soit un $\D  ^{\dag} _{\ZZ,\Q}$-module cohérent. 
On dispose alors de l'isomorphisme canonique:
\begin{gather}
\label{pre-a*Gamma}
a ^{*} (E)  
\riso 
\Gamma (\ZZ, a ^{*} (\E)).
\end{gather}
\end{lemm}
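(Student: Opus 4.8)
The strategy is to reduce the statement about $\D^\dag_{\cdot,\Q}$-modules to a finite-level statement that can be handled by the machinery of Section~\ref{section2.2}, and then to pass to the limit over the level $m$. First, since $\ZZ \hookrightarrow \X$ is a closed immersion of affine lisse formal schemes with principal ideal, write $\E$ as the inductive limit $\E \riso \varinjlim_m \widehat{\D}^{(m)}_{\X,\Q}\otimes_{\widehat{\D}^{(m_0)}_{\X,\Q}} \E^{(m_0)}$ for some coherent $\widehat{\D}^{(m_0)}_{\X,\Q}$-module $\E^{(m_0)}$, and correspondingly fix a $\widehat{\D}^{(m_0)}_{\X}$-lattice without $p$-torsion, whose module of global sections is separated and complete. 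Using the description \ref{La*coh} and \ref{GammaLa*coh}, namely $a^!(\E)[d_a]\riso \L a^*(\E)\riso a^{-1}(\D^\dag_{\X,\Q}/\I\D^\dag_{\X,\Q}\otimes^\L_{\D^\dag_{\X,\Q}}\E)$ and the analogous formula for the global sections $E$, the claim \ref{pre-a*Gamma} amounts to checking that $\Gamma(\ZZ,-)$ commutes with the formation of $a^*$. Because $\ZZ$ and $\X$ are affine, this follows from theorems of type~$A$ and $B$ applied at each finite level, provided we know that the relevant complexes have coherent cohomology; this is exactly where the hypothesis that $\mathcal{H}^0 a^!(\E)$ is $\D^\dag_{\ZZ,\Q}$-coherent enters.

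In more detail, I would argue as follows. The ideal being principal, say generated by $t\in O_\X$, the complex $\L a^*(\E)$ is concentrated in degrees $-1$ and $0$, with $\mathcal{H}^0 a^*(\E) = \mathcal{H}^0 a^!(\E)[-d_a]$ (up to the shift) equal to $\E/t\E$ in the relevant normalization, and $\mathcal{H}^{-1}a^*(\E) = \ker(t\,:\,\E\to\E)$. Both are $\D^\dag_{\ZZ,\Q}$-coherent: the former by hypothesis, the latter because it is a coherent submodule of the coherent module $\E$ and $\D^\dag_{\ZZ,\Q}$ (equivalently $\widehat{\D}^{(m)}_{\ZZ,\Q}$ for $m$ large) is noetherian, using that $\ker(t)$ is the cohomology of a perfect complex. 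The same holds for the global-sections side: $a^*(E)$ has cohomology $E/tE$ and $\ker(t\,:\,E\to E)$, which are the modules of global sections of the corresponding sheaves by theorem~$A$ applied on the affine $\ZZ$. The desired isomorphism \ref{pre-a*Gamma} is then obtained by applying $\Gamma(\ZZ,-)$ to the two-term complex $[\E \overset{t}{\to}\E]$ representing $\L a^*(\E)$ (up to shift): since $\Gamma(\ZZ,-)=\Gamma(\X,-)$ here sends $\E$ to $E$ and is exact on quasi-coherent $\D$-modules on an affine (theorem~$B$), it commutes with taking cohomology of this complex, giving $\Gamma(\ZZ,a^*(\E))\riso a^*(E)$.

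The one point requiring care—and the main obstacle—is the compatibility with the level structure: the sheaf $a^*(\E)$ and the module $a^*(E)$ are built by the usual $\varinjlim_m$ over levels from the objects $\L a^*(\cdot^{(m)})$, and one must know that the finite-level comparison map $\Gamma(\ZZ, \L a^*(\E^{(m)}))\to \L a^*(E^{(m)})$ is compatible with the transition morphisms and passes to the limit, and that $\Gamma(\ZZ,-)$ commutes with this particular $\varinjlim$. On an affine formal scheme, $\Gamma$ commutes with representable inductive limits of quasi-coherent modules, so this is fine once one checks that the $\L a^*(\E^{(m)})$ have coherent cohomology uniformly enough to apply theorems~$A$ and $B$ at each level—this in turn is guaranteed by the isomorphisms \ref{a*D=D/ID1} and \ref{a*D=D/ID2} of \ref{�La*coh}, which identify $\widehat{\D}^{(m)}_{\ZZ\hookrightarrow\X}$ with $a^{-1}(\widehat{\D}^{(m)}_\X/\I\widehat{\D}^{(m)}_\X)$, a coherent sheaf. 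With these inputs in place the argument is essentially bookkeeping; the substantive hypothesis used is the $\D^\dag_{\ZZ,\Q}$-coherence of $\mathcal{H}^0 a^!(\E)$, without which $a^*(\E)$ need not even lie in the coherent derived category on $\ZZ$ (compare Remark~\ref{pas-stab-qc}).
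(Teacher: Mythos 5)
Your proposal contains a genuine gap, rooted in a misidentification of which cohomology group the hypothesis controls.

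You write that $\mathcal{H}^0 a^!(\E)$ (up to the shift) is $\E/t\E$ and then attribute the coherence of $\E/t\E$ to the hypothesis. But with the sign convention $a^!(\E)=\L a^*(\E)[d_a]$, $d_a=-1$, one has $\mathcal{H}^0 a^!(\E)=\mathcal{H}^{-1}\L a^*(\E)=\ker(t\colon\E\to\E)$, not the cokernel. The cokernel $a^*(\E)=\E/t\E=\mathcal{H}^0\L a^*(\E)$ is \emph{not} covered by the hypothesis and need not be $\D^\dag_{\ZZ,\Q}$-coherent. Your fallback justification that $\ker(t)$ is coherent "because it is a coherent submodule of the coherent module $\E$ and $\D^\dag_{\ZZ,\Q}$ is noetherian" does not work either: multiplication by $t$ is not $\D^\dag_{\X,\Q}$-linear, so $\ker(t)$ is not a $\D^\dag_{\X,\Q}$-submodule of $\E$; it carries a $\D^\dag_{\ZZ,\Q}$-structure, but a priori it does not sit inside any coherent $\D^\dag_{\ZZ,\Q}$-module, so noetherianity gives nothing. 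If this argument were valid, the hypothesis of the lemma would be automatic, which it is not.

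The final step is also flawed: you invoke a theorem B that is "exact on quasi-coherent $\D$-modules on an affine," but on a formal scheme there is no such vanishing theorem for quasi-coherent $\D^\dag$-modules—only for coherent $\D^\dag_{\X,\Q}$-modules (or $D^b_{\mathrm{qc}}$-complexes). Since $\E/t\E$ is not known to be coherent, this cannot be applied directly. The paper instead extracts the vanishing in two stages: theorem B for the coherent modules $\E$ and $\mathcal{H}^0 a^!(\E)=\ker(t)$ gives, via the long exact sequence attached to $0\to a_*\ker(t)\to\E\to\E/a_*\ker(t)\to 0$, that $H^i(\X,\E/a_*\ker(t))=0$ for $i\ge 1$; then applying $\Gamma(\X,-)$ to $0\to\E/a_*\ker(t)\xrightarrow{t}\E\to\E/t\E\to 0$ yields $\Gamma(\X,\E/t\E)\riso E/tE$. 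This is where the hypothesis on $\mathcal{H}^0 a^!(\E)$ genuinely enters, and that mechanism is entirely missing from your argument. (Your opening paragraph about reducing to finite level $m$ and passing to the limit is also an unnecessary detour: the paper's proof works directly with $\D^\dag_{\X,\Q}$.)
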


\begin{proof}
Soit $t$ un élément engendrant l'idéal de définition de $a$. 
Via la résolution plate 
$0\to  \D ^{\dag} _{\X,\Q}\overset{t}{\longrightarrow}  \D ^{\dag} _{\X,\Q}\to  \D ^{\dag} _{\X,\Q} / t  \D ^{\dag} _{\X,\Q}\to 0$,
on déduit de 
 \ref{La*coh} les deux égalités
$a _* a ^{*} (\E) =  \E / t \E$
et
$a _* \mathcal{H} ^{0} a ^{!} (\E) = \ker ( t\,:\, \E \to \E)$.
De même, par \ref{GammaLa*coh}, on obtient $a ^* (E) =  E /t E$.
Comme 
$\mathcal{H} ^{0} a ^{!} (\E)$ 
est un
 $\D  ^{\dag} _{\ZZ,\Q}$-module cohérent, il vérifie les théorèmes de type $A$ et $B$. 
Comme le foncteur $a _*$ est exact et préserve les injectifs (car son adjoint à gauche $a ^{-1}$ est exact), il en résulte que, 
pour tout $i \geq 1$,
$H ^{i} ( \X, a _* \mathcal{H} ^{0} a ^{!} (\E))=0$.
Comme $\E$ vérifie aussi le théorème de type $B$, 
en appliquant le foncteur $\R \Gamma ( \X, -)$ à la suite exacte
$0 \to a _* \mathcal{H} ^{0} a ^{!} (\E) \to \E \to \E/ a _* \mathcal{H} ^{0} a ^{!} (\E)\to 0$, 
il en dérive que, 
pour tout $i \geq 1$,
$H ^{i} ( \X,\E / a _* \mathcal{H} ^{0} a ^{!} (\E))=0$
et 
$\Gamma (\X, \E / a _* \mathcal{H} ^{0} a ^{!} (\E))= E / \Gamma (\ZZ, \mathcal{H} ^{0} a ^{!} (\E))$.
On en déduit qu'en appliquant le foncteur 
$\Gamma (\X, -)$ à la suite exacte
$0\to \E / a _* \mathcal{H} ^{0} a ^{!} (\E) \overset{t}{\longrightarrow} \E \to \E /t \E\to 0$, 
on obtient l'isomorphisme 
$\Gamma (\X, \E /t \E) \riso E /t E$.

\end{proof}

\begin{prop}
Soit $a\,:\, \ZZ \hookrightarrow \X$ une immersion fermée de $\V$-schémas formels affines et lisses. 
On suppose $\X$ muni de coordonnées locales $t _1, \dots, t _n$  telles que $\ZZ = V ( t _1, \dots, t _r)$.
Soient $\E$ un $\D ^\dag _{\X, \Q}$-module surcohérent dans $\X$.
On bénéficie de l'isomorphisme canonique:
\begin{gather}
\label{a*Gamma}
a ^{*} (E)  
\riso 
\Gamma (\ZZ, a ^{*} (\E)).
\end{gather}
\end{prop}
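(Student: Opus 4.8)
The plan is to argue by induction on the integer $r$ for which $\ZZ = V(t_1, \dots, t_r)$. The case $r = 0$ is trivial, and the case $r = 1$ is exactly Lemma \ref{lemm-pre-a*Gamma}: its hypothesis, that $\mathcal{H}^0 a^!(\E)$ be a coherent $\D ^\dag _{\ZZ, \Q}$-module, is satisfied here because $\E$ is overcoherent in $\X$. Indeed, by Remark \ref{rema-surcoh-a*} the complex $\L a^*(\E)$ is then overcoherent in $\ZZ$, hence all its cohomology modules are coherent $\D ^\dag _{\ZZ, \Q}$-modules; and by the isomorphisms of \ref{La*coh} the module $\mathcal{H}^0 a^!(\E)$ is, up to shift, one of those cohomology modules. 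This is the single place where overcoherence, rather than mere coherence, intervenes.

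For $r \geq 2$, factor $a$ as $\ZZ \overset{a'}{\hookrightarrow} \X' \overset{a''}{\hookrightarrow} \X$ with $\X' := V(t_1)$, so that $a''$ has principal defining ideal $(t_1)$ and $a'$ is the induced closed immersion. On $\X'$ the functions $t_2, \dots, t_n$ form local coordinates and $\ZZ = V(t_2, \dots, t_r)$, so $a'$ is defined by $r-1$ equations. Put $\E' := a''^*(\E) = \mathcal{H}^0 \L a''^*(\E)$. Since $\L a''^*(\E)$ is overcoherent in $\X'$ by Remark \ref{rema-surcoh-a*}, each of its cohomology modules, in particular $\E'$, is overcoherent in $\X'$ by Lemma \ref{lemm-surcoh-4}; so the induction hypothesis applies to $a'$ and $\E'$ and yields $a'^*(\Gamma(\X', \E')) \riso \Gamma(\ZZ, a'^*(\E'))$.

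It remains to splice these identifications together compatibly. First I would invoke the transitivity isomorphisms $\L a^* \riso \L a'^* \circ \L a''^*$, valid both for complexes of sheaves and for complexes of modules over the global sections, which follow formally from the transitivity of the transfer bimodules (e.g. through the descriptions \ref{La*coh} and \ref{GammaLa*coh}). Because $\L a''^*$ has cohomological amplitude $[-1, 0]$ and $\L a'^*$ amplitude $[-(r-1), 0]$, applying $\L a'^*$ to the distinguished triangle relating $\L a''^*(\E)$ to its truncations $\mathcal{H}^{-1}\L a''^*(\E)[1]$ and $\E'$, and then taking $\mathcal{H}^0$, yields $a^*(\E) \riso a'^*(\E')$; the same computation over the global sections gives $a^*(E) \riso a'^*(a''^*(E))$. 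By Lemma \ref{lemm-pre-a*Gamma} applied to the principal closed immersion $a''$ (whose only hypothesis is met exactly as in the base case), $a''^*(E) \riso \Gamma(\X', a''^*(\E)) = \Gamma(\X', \E')$, so that $a^*(E) \riso a'^*(\Gamma(\X', \E'))$. Composing with the induction hypothesis and then with $\Gamma(\ZZ, -)$ applied to $a^*(\E) \riso a'^*(\E')$ produces $a^*(E) \riso \Gamma(\ZZ, a^*(\E))$, and a routine check of the functoriality of all the arrows involved identifies it with the canonical map.

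The main obstacle I anticipate is not a single deep point but the coherent bookkeeping: confirming that $\mathcal{H}^0$ genuinely commutes with the composite (derived) pullbacks via the amplitude/truncation argument, and, at each rung of the resulting tower of one-equation closed immersions, keeping the coherence hypothesis of Lemma \ref{lemm-pre-a*Gamma} available. This forces one to carry overcoherence down the tower (through Remark \ref{rema-surcoh-a*} and Lemma \ref{lemm-surcoh-4}) rather than working with plain coherence of $\E$, and it is also the reason the statement cannot simply be proved in one stroke from \ref{La*coh} and Theorem A and B.
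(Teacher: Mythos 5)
Your proof is correct and follows essentially the same route as the paper's: induction on $r$ with base case Lemma \ref{lemm-pre-a*Gamma}, the factorisation through $\X' = V(t_1)$, the overcoherence of $b^*(\E)$ via Remark \ref{rema-surcoh-a*} and Lemma \ref{lemm-surcoh-4}, then the non-derived transitivity of pullbacks to splice the two steps together. The paper states the transitivity isomorphisms $a^*(E)\riso c^*b^*(E)$ and $a^*(\E)\riso c^*b^*(\E)$ without comment and leaves the coherence of $\mathcal{H}^0 a^!(\E)$ implicit, whereas you spell both out via Tor-amplitude and truncation; this is the only difference.
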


\begin{proof}
On procède par récurrence sur $r$. Lorsque $r=1$, cela découle du lemme \ref{lemm-pre-a*Gamma}. 
Notons $\X':=  V ( t _1)$. D'après \ref{rema-surcoh-a*}, 
en notant $b \,:\, \X' \hookrightarrow \X$
l'immersion fermée canonique, $\E' := b ^{*} (\E)$ est un $\D ^\dag _{\X', \Q}$-module surcohérent dans $\X'$.
En notant $c\,:\, \ZZ \hookrightarrow \X'$ l'immersion fermée canonique, 
par hypothèse de récurrence, on obtient les isomorphismes 
$b ^{*} (E)  
\riso 
\Gamma (\X', b ^{*} (\E))=E'$
et
$c ^{*} (E')  
\riso 
\Gamma (\ZZ, c ^{*} (\E'))$.
Comme $a ^{*} (E)\riso c ^{*} \circ b ^* (E)$ et 
$a ^{*} (\E)\riso c ^{*} \circ b ^* (\E)$, on en déduit le résultat.

\end{proof}

\begin{rema}
L'isomorphisme \ref{a*Gamma} est utilisé dans la démonstration du lemme \ref{E1-hol-ouvdense} (voir l'étape II.4)).
J'ignore si cet isomorphisme reste valable lorsque $\E$ est seulement un $\D ^\dag _{\X, \Q}$-module cohérent.
\end{rema}

\subsection{Preuve du résultat principal}

Soient $\X$ un $\V$-schéma formel lisse et $D$ un diviseur de sa fibre spéciale $X$.

\begin{lemm}
Soit $a\,:\, \ZZ \hookrightarrow \X$ une immersion fermée de $\V$-schémas formels affines et lisses. 
Soient $\smash{\overset{_\circ}{\E }} ^{(m)}$ un $\widehat{\D}  ^{(m)} _{\X}$-module cohérent sans $p$-torsion
et $\smash{\overset{_\circ}{E }} ^{(m)}:= \Gamma (\X, \smash{\overset{_\circ}{\E }} ^{(m)})$.
Notons 
$\smash{\overset{_\circ}{\E }} ^{(m)}  _i := 
\smash{\overset{_\circ}{\E}} ^{(m)} \otimes ^\L _{\V} \V / \pi ^{i+1} \V 
\liso 
\smash{\overset{_\circ}{\E }} ^{(m)} \otimes _{\V} \V / \pi ^{i+1} \V $,
$\smash{\overset{_\circ}{E }} ^{(m)}  _i:= \Gamma (\X, \smash{\overset{_\circ}{\E }} ^{(m)}  _i)$.

Si pour tout entier $i$ le morphisme canonique 
$\L a _i ^{*} (\smash{\overset{_\circ}{\E_i}} ^{(m)})\to 
a _i ^{*} (\smash{\overset{_\circ}{\E_i}} ^{(m)})$
est un isomorphisme,
on dispose alors des isomorphismes canoniques :
\begin{gather}
\label{MLsansu+cal}
\L a ^{*} (\smash{\overset{_\circ}{\E }} ^{(m)}) \riso a ^{*} (\smash{\overset{_\circ}{\E }} ^{(m)}) \riso
\underset{\underset{i}{\longleftarrow}}{\lim\,} 
a _i ^{*} (\smash{\overset{_\circ}{\E_i}} ^{(m)}), \\
\label{MLsansu+}
\L a ^{*} (\smash{\overset{_\circ}{E }} ^{(m)}) \riso a ^{*} (\smash{\overset{_\circ}{E }} ^{(m)}) \riso
\underset{\underset{i}{\longleftarrow}}{\lim\,} 
a _i ^{*} (\smash{\overset{_\circ}{E_i}} ^{(m)}), \\
\label{MLsansu+isocoro}
a ^{*} (\smash{\overset{_\circ}{E }} ^{(m)})  
\riso 
\Gamma (\ZZ, a ^{*} (\smash{\overset{_\circ}{\E }} ^{(m)}) ).
\end{gather}
\end{lemm}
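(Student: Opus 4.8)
The plan is to reduce the three desired isomorphisms to the assumed isomorphism $\L a_i^*(\smash{\overset{_\circ}{\E_i}}^{(m)}) \riso a_i^*(\smash{\overset{_\circ}{\E_i}}^{(m)})$ at each finite level, together with a Mittag-Leffler argument at the level of the projective limit. The situation being affine with local coordinates $t_1,\dots,t_n$ cutting out $\ZZ$, by induction on $r$ (using that $a$ factors through the successive divisors $V(t_1,\dots,t_s)$ and that the functors $\L a^*$ and $a_i^*$ compose) we may as well reduce to the case where the ideal $I$ of $a$ is principal, generated by a single $t = t_1$. Then, as in the proof of \ref{lemm-pre-a*Gamma}, the flat resolution $0 \to \widehat{\D}^{(m)}_{\X} \overset{t}{\longrightarrow} \widehat{\D}^{(m)}_{\X} \to \widehat{\D}^{(m)}_{\X}/t\widehat{\D}^{(m)}_{\X} \to 0$ (and its reduction mod $\pi^{i+1}$, and its global-sections counterpart over $\widehat{D}^{(m)}_{\X}$) turns everything into a question about the $t$-torsion and cokernel of multiplication by $t$.

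First I would treat \ref{MLsansu+cal}. The hypothesis says $\mathcal{H}^{-1}\L a_i^*(\smash{\overset{_\circ}{\E_i}}^{(m)}) = 0$ for every $i$, i.e. $t$ acts injectively on each $\smash{\overset{_\circ}{\E_i}}^{(m)}$; hence $t$ acts injectively on $\smash{\overset{_\circ}{\E}}^{(m)} = \varprojlim_i \smash{\overset{_\circ}{\E_i}}^{(m)}$ as well, which gives $\mathcal{H}^{-1}\L a^*(\smash{\overset{_\circ}{\E}}^{(m)}) = 0$ and the first isomorphism of \ref{MLsansu+cal}. For the second, apply $a_i^* = \mathrm{coker}(t)$ compatibly in $i$: the kernels $\ker(t\,:\,\smash{\overset{_\circ}{\E_i}}^{(m)} \to \smash{\overset{_\circ}{\E_i}}^{(m)})$ vanish, so the system $(\smash{\overset{_\circ}{\E_i}}^{(m)})_i$ has surjective (Mittag-Leffler) transition maps and the short exact sequences $0 \to \smash{\overset{_\circ}{\E_i}}^{(m)} \overset{t}{\to} \smash{\overset{_\circ}{\E_i}}^{(m)} \to a_i^*(\smash{\overset{_\circ}{\E_i}}^{(m)}) \to 0$ pass to the projective limit exactly, yielding $a^*(\smash{\overset{_\circ}{\E}}^{(m)}) \riso \varprojlim_i a_i^*(\smash{\overset{_\circ}{\E_i}}^{(m)})$. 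The statement \ref{MLsansu+} is the exact same argument carried out on the side of global sections $\smash{\overset{_\circ}{E}}^{(m)} = \Gamma(\X, \smash{\overset{_\circ}{\E}}^{(m)})$, using that $\Gamma(\X,-)$ commutes with projective limits and, via the type-$B$ theorem for coherent $\widehat{\D}^{(m)}_{\X}$-modules (Berthelot \cite[3.3.2]{Be1}), that $\Gamma(\X,-)$ is exact on the relevant short exact sequences and kills the higher cohomology.

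Finally, \ref{MLsansu+isocoro} follows by comparing the two descriptions: $\Gamma(\ZZ, a^*(\smash{\overset{_\circ}{\E}}^{(m)})) = \Gamma(\X, a_*a^*(\smash{\overset{_\circ}{\E}}^{(m)})) = \Gamma(\X, \smash{\overset{_\circ}{\E}}^{(m)}/t\smash{\overset{_\circ}{\E}}^{(m)})$, and applying $\Gamma(\X,-)$ to $0 \to \smash{\overset{_\circ}{\E}}^{(m)} \overset{t}{\to} \smash{\overset{_\circ}{\E}}^{(m)} \to \smash{\overset{_\circ}{\E}}^{(m)}/t\smash{\overset{_\circ}{\E}}^{(m)} \to 0$ together with the vanishing $H^1(\X, \smash{\overset{_\circ}{\E}}^{(m)}) = 0$ gives $\Gamma(\X, \smash{\overset{_\circ}{\E}}^{(m)}/t\smash{\overset{_\circ}{\E}}^{(m)}) = \smash{\overset{_\circ}{E}}^{(m)}/t\smash{\overset{_\circ}{E}}^{(m)} = a^*(\smash{\overset{_\circ}{E}}^{(m)})$ by \ref{GammaLa*coh}; for the non-principal case one chains these isomorphisms along the induction, exactly as in the proposition preceding \ref{a*Gamma}. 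The main obstacle I expect is not any single step but keeping the Mittag-Leffler bookkeeping honest: one must check that the derived limit $\R\varprojlim$ agrees with $\varprojlim$ at each stage (so that $\L a^*$ genuinely computes as the naive $a^*$), which is where the surjectivity of the transition maps — itself a consequence of the level-$i$ hypothesis — is essential, and where one uses \cite[7.20]{Berthelot-Ogus-cristalline} as elsewhere in the paper.
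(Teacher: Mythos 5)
There is a genuine gap in your inductive reduction to the principal case $r=1$. You factor $a = a'' \circ a'$ with $a'\colon \X' = V(t_1) \hookrightarrow \X$ of codimension one and claim "we may as well reduce" to $a'$. But the lemma's hypothesis only furnishes the acyclicity of the \emph{full} $\L a_i^*$, i.e.\ that the Koszul complex $K(t_1,\dots,t_r;\smash{\overset{_\circ}{\E_i}}^{(m)})$ has cohomology concentrated in degree $0$ (the sequence is ``quasi-r\'eguli\`ere''). That does \emph{not} by itself imply that $t_1$ is a nonzerodivisor on $\smash{\overset{_\circ}{\E_i}}^{(m)}$, nor that $\L a''_i{}^*$ is acyclic on $a'_i{}^*(\smash{\overset{_\circ}{\E_i}}^{(m)})$: the usual statement ``Koszul acyclicity implies regularity'' requires local Noetherian finiteness (it is Matsumura 16.5 for finitely generated modules over a Noetherian local ring), which fails here since $\smash{\overset{_\circ}{\E_i}}^{(m)}$ is not of finite type over $\O_{X_i}$ and $\D^{(m)}_{X_i}$ is noncommutative. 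You model your induction on the proposition proving \ref{a*Gamma}, but there the hypothesis (surcoherence) visibly propagates through $a'^*$ via \ref{rema-surcoh-a*}; here the acyclicity hypothesis has no analogous stability, and your argument does not establish it.

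The paper avoids this issue entirely. It invokes Berthelot's comparison \cite[3.4.5]{Beintro2} to get $\L a^*(\smash{\overset{_\circ}{\E}}^{(m)}) \riso \R\varprojlim_i \L a_i^*(\smash{\overset{_\circ}{\E_i}}^{(m)})$ for the full $a$ at once, substitutes the hypothesis, and then observes that the two ends of the resulting chain are a left-bounded complex (a left-derived functor $\L a^*$ applied to a module) and a right-bounded one ($\R\varprojlim$ of modules), hence both concentrated in degree $0$; the global-sections version then uses type-$A$ theorems and the Mittag-Leffler result of \cite[7.20]{Berthelot-Ogus-cristalline}. Your principal-case observations (injectivity of $t$, ML on the kernel, $\Gamma(\X,-)$ commuting with $\varprojlim$) are sound and in fact, if run termwise on the full Koszul complex rather than on the factorization, recover the conclusion without any reduction; but the reduction as stated is not justified.
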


\begin{proof}
D'après \cite[3.4.5]{Beintro2}, 
on dispose de l'isomorphisme 
$\L a ^{*} (\smash{\overset{_\circ}{\E }} ^{(m)})
\riso 
\R \underset{\underset{i}{\longleftarrow}}{\lim\,} 
\L a _i ^{*} (\smash{\overset{_\circ}{\E_i}} ^{(m)})$.
On obtient ainsi les deux isomorphismes canoniques du milieu 
\begin{equation}
\label{MLsansu+iso}
a ^{*} (\smash{\overset{_\circ}{\E }} ^{(m)}) \liso \L a ^{*} (\smash{\overset{_\circ}{\E }} ^{(m)})
\riso 
\R \underset{\underset{i}{\longleftarrow}}{\lim\,} 
\L a _i ^{*} (\smash{\overset{_\circ}{\E_i}} ^{(m)})
\riso 
\R \underset{\underset{i}{\longleftarrow}}{\lim\,} 
a _i ^{*} (\smash{\overset{_\circ}{\E_i}} ^{(m)})
\liso 
\underset{\underset{i}{\longleftarrow}}{\lim\,} 
a _i ^{*} (\smash{\overset{_\circ}{\E_i}} ^{(m)}).
\end{equation}
Comme $\L a ^{*} (\smash{\overset{_\circ}{\E }} ^{(m)})$ (resp. $\R \underset{\underset{i}{\longleftarrow}}{\lim\,} 
a _i ^{*} (\smash{\overset{_\circ}{\E_i}} ^{(m)})$)
est un complexe à gauche (resp. à droite), ces deux complexes sont donc isomorphes à un module. D'où les deux isomorphismes aux extrémités de \ref{MLsansu+iso}.

Traitons à présent le second isomorphisme. 
Via le théorème de type $A$ pour les $\D  ^{(m)} _{X _i} $-modules quasi-cohérents, 
on obtient par associativité du produit tensoriel l'isomorphisme canonique
$\L a _i ^{*} (\smash{\overset{_\circ}{\E_i}} ^{(m)})
\riso
\D  ^{(m)} _{Z _i}
\otimes _{D  ^{(m)} _{Z _i}}
\L a _i ^{*} (\smash{\overset{_\circ}{E_i}} ^{(m)})$.
Via le théorème de type $A$ pour les modules $\D  ^{(m)} _{Z _i} $-modules quasi-cohérents, 
comme $\L a _i ^{*} (\smash{\overset{_\circ}{\E_i}} ^{(m)})\riso 
a _i ^{*} (\smash{\overset{_\circ}{\E_i}} ^{(m)})$,
il 
en résulte l'isomorphisme 
$\L a _i ^{*} (\smash{\overset{_\circ}{E_i}} ^{(m)})\riso
a _i ^{*} (\smash{\overset{_\circ}{E_i}} ^{(m)})$.
De plus, on dispose d'après \ref{ThB-coro} de l'isomorphisme
canonique
$\smash{\overset{_\circ}{E }} ^{(m)}  _i
\riso 
\smash{\overset{_\circ}{E}} ^{(m)} \otimes  _{\V} \V / \pi ^{i+1} \V $.
On obtient alors de manière analogue (on utilise l'appendice $B$ de \cite{Berthelot-Ogus-cristalline} au lieu de  \cite[3.4.5]{Beintro2}) 
les isomorphismes \ref{MLsansu+iso} avec des lettres droites. En particulier, on a vérifié \ref{MLsansu+}. 
Comme le foncteur $\Gamma (\ZZ,-)$ commute aux limites projectives, 
comme $\Gamma (\ZZ, a _i ^{*} (\smash{\overset{_\circ}{\E_i}} ^{(m)}))
\riso a _i ^{*} (\smash{\overset{_\circ}{E_i}} ^{(m)})$, on en déduit l'isomorphisme \ref{MLsansu+isocoro} à partir de
\ref{MLsansu+cal} et \ref{MLsansu+}. 
\end{proof}

\begin{lemm}\label{lemm2.2.7-courbe}
  Soient $R$ un anneau de valuation discrète complet d'inégales caractéristiques $(0,p)$,
  $L$ son corps des fractions et
  $\overset{\circ}{M}$ un ${R}$-module séparé pour la topologie $p$-adique sans $p$-torsion.

  Pour toute famille $L$-libre ($P _1,\dots, P_r$) de $\overset{\circ}{M} _\Q$,
  il existe alors un entier $N _0 \geq 0$ tel que, pour tout entier $n$,
  on ait l'inclusion :
  \begin{equation}
    \label{libre-inclusion1}
    ({L} P _1 +\dots +{L} P_r)\cap p ^{N _0 +n} \overset{\circ}{M} \subset p ^n ({R} P _1 +\dots +{R} P_r).
  \end{equation}
\end{lemm}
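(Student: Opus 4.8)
The plan is to reduce the assertion to a comparison of two filtrations on the free $R$-module $N:=RP_1+\dots+RP_r$, and then to obtain that comparison from Chevalley's lemma together with a rescaling argument that uses the separatedness of $\overset{\circ}{M}$.

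First I would dispose of some trivial reductions. Replacing each $P_i$ by $p^aP_i$ for $a$ large leaves $LP_i$ unchanged and, since $p^a(RP_1+\dots+RP_r)\subseteq RP_1+\dots+RP_r$, does not affect the validity of the conclusion, so one may assume $P_1,\dots,P_r\in\overset{\circ}{M}$; one may also assume, as is the case in the applications of the lemma, that $\overset{\circ}{M}$ has no $p$-torsion, so that $\overset{\circ}{M}\hookrightarrow\overset{\circ}{M}_\Q$. Because the family is $L$-free it is $R$-free, so $N:=\sum_iRP_i$ is a free $R$-module of rank $r$ and a lattice in $V:=\sum_iLP_i=N_\Q$. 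Setting $M_k:=p^k\overset{\circ}{M}\cap N$, it then suffices to find $C\ge 0$ with $M_{n+C}\subseteq p^nN$ for all $n\ge 0$: indeed, with $N_0:=C$, an $x=\sum_i\lambda_iP_i\in V\cap p^{N_0+n}\overset{\circ}{M}$ can be multiplied by $p^s$ with $p^s\lambda_i\in R$, giving $p^sx\in N\cap p^{N_0+n+s}\overset{\circ}{M}=M_{n+s+C}\subseteq p^{n+s}N$, hence $x\in p^nN$.

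For the comparison itself, the first observation is that $(M_k)$ is a decreasing sequence of $R$-submodules of $N$ with $\bigcap_kM_k=\bigl(\bigcap_kp^k\overset{\circ}{M}\bigr)\cap N=0$, by separatedness of $\overset{\circ}{M}$. Since $N$ is finitely generated over the complete Noetherian local ring $R$, Chevalley's lemma then shows that the filtration $(M_k)$ defines the $p$-adic topology of $N$; in particular there is some $K$ with $M_K\subseteq pN$. To get the uniform shift I would argue by contradiction: if no $C$ worked, then for each $C\ge 0$ there would be $n\ge 1$ and $z\in M_{n+C}$ with $z\notin p^nN$; writing $z=p^dw$ where $d<n$ is the exact order of $z$ in $N$ and $w\in N\setminus pN$, and writing $z=p^{n+C}m$ with $m\in\overset{\circ}{M}$ (possible since $z\in p^{n+C}\overset{\circ}{M}$), cancellation of $p^d$ — legitimate because $\overset{\circ}{M}$ is torsion-free and $d<n\le n+C$ — gives $w=p^{\,n+C-d}m\in p^{C+1}\overset{\circ}{M}$, hence $w\in M_{C+1}\setminus pN$. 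This would force $M_j\not\subseteq pN$ for every $j\ge 1$, contradicting $M_K\subseteq pN$.

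The hard part, and really the only point, is the \emph{uniformity} of $N_0$, i.e. the bounded shift $C$: Chevalley's lemma only gives, for each $n$ separately, some $k(n)$ with $M_{k(n)}\subseteq p^nN$, and there is no formal reason for $k(n)-n$ to stay bounded — a filtration with the same abstract features (decreasing, intersection $0$, $pM_k\subseteq M_{k+1}$, $p^kN\subseteq M_k$) can have unbounded shift, e.g. $M_k=p^{\lfloor\sqrt k\rfloor}N$ already in rank $1$. The boundedness is forced by the fact that $(M_k)$ really comes from a separated module $\overset{\circ}{M}$: too slow a growth of the induced order would produce primitive vectors in $N\setminus pN$ lying arbitrarily deep in $\overset{\circ}{M}$, contradicting $\bigcap_kp^k\overset{\circ}{M}=0$. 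That is exactly the rescaling step above, and it is also where the torsion-freeness of $\overset{\circ}{M}$ enters essentially.
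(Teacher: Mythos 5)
Your proof is correct and takes a genuinely different route from the paper's. The paper disposes of the lemma in two sentences: the filtrations $(LP_1+\dots+LP_r)\cap p^n\overset{\circ}{M}$ and $p^n(RP_1+\dots+RP_r)$ each define a non-archimedean norm on the finite-dimensional $L$-vector space $V:=LP_1+\dots+LP_r$, and any two norms on a finite-dimensional space over the complete field $L$ are equivalent --- the equivalence is exactly the two-sided shift comparison sought. You instead reduce to the lattice $N=\sum_i RP_i$ equipped with the filtration $M_k:=p^k\overset{\circ}{M}\cap N$, use separatedness to get $\bigcap_k M_k=0$, apply Chevalley's lemma to the finitely generated $R$-module $N$ to obtain $M_K\subseteq pN$ for a single $K$, and then promote this to a uniform shift by your rescaling trick: an offending $z\in M_{n+C}\setminus p^nN$, once its exact power $p^d$ ($d<n$) is factored out and cancelled against $z=p^{n+C}m$, yields a primitive $w\in M_{C+1}\setminus pN$, so if no $C$ worked one would have $M_j\not\subseteq pN$ for every $j\ge 1$, contradicting Chevalley. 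Your argument is longer but more elementary, avoiding the equivalence-of-Banach-norms black box and making visible where separatedness actually enters; the paper's is slicker but opaque by comparison.

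One small imprecision worth fixing: you add the hypothesis that $\overset{\circ}{M}$ is $p$-torsion-free, justifying it by appeal to the applications --- but that is not a reduction, since passing to $\overset{\circ}{M}/T$ ($T$ the $p$-torsion) preserves the conclusion and is torsion-free, yet need not remain $p$-adically separated when $T$ is unbounded. Your proof genuinely uses the hypothesis (for $N\hookrightarrow\overset{\circ}{M}$, for the cancellation $p^dw=p^{n+C}m\Rightarrow w=p^{n+C-d}m$, and for $\bigcap_k M_k=0$). The paper's proof carries the same silent dependence: without it, or at least separatedness of $\overset{\circ}{M}/T$, the filtration $V\cap p^n\overset{\circ}{M}$ only yields a semi-norm. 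Since the lemma is invoked in étape $8)$ of the proof of Lemma~\ref{E1-hol-ouvdense} only on the modules $b^*(\smash{\overset{_\circ}{F}}^{(m)})$, which are declared $p$-torsion-free in étape $I.5)$, nothing is actually at stake; still, it would be cleaner to state the extra hypothesis explicitly than to inherit it from the context.
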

\begin{proof}
Comme le sous-$L$-espace vectoriel 
$L P _1 +\dots + L P_r$ de $\overset{\circ}{M} _\Q$ est de dimension finie,
d'après \cite[4.13]{Schneider-NonarchFuncAn}, 
les filtrations 
données respectivement par $(L P _1 +\dots + L P_r)\cap p ^{N _0 +n} \overset{\circ}{M}$
et $p ^n ({R} P _1 +\dots +{R} P_r)$) induisent deux normes équivalentes. 
\end{proof}

\begin{lemm}
\label{E1-hol-ouvdense}
Soient $\E$  un $\D ^\dag _{\X, \Q}$-module surcohérent dans $\X$ et après tout changement de base. 
Notons $Z$ le support de $ \E / \E ^{\mathrm{hol}}$. 
Par l'absurde, on suppose que $Z$ est non-vide. 
Pour toute composante irréductible $Z'$ de $Z$,
il existe alors un ouvert affine $\Y$ de $\X$ tel que 
\begin{enumerate}
\item l'ouvert $Y \cap Z'$ soit lisse et dense dans $Z'$ ;
\item le module $(\E / \E ^{\mathrm{hol}} )| \Y$ soit holonome. 
\end{enumerate}

\end{lemm}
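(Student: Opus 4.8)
I would split the argument according to the two conditions: condition~(1) is purely geometric, while condition~(2) I expect to follow by reducing, through Berthelot--Kashiwara, to a holonomy statement on a smaller smooth formal scheme, and then inducting on $\dim\X$.

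\emph{Constructing $\Y$ (condition~(1)).} Since $k$ is perfect, the integral scheme $Z'$ is generically smooth; removing from $Z'$ its singular locus together with the trace $Z'\cap\overline{Z\setminus Z'}$ of the other irreducible components of $Z$ leaves a dense open $U'\subseteq Z'$ which is smooth and disjoint from the other components of $Z$, so there is an open $\Y_0\subseteq\X$ with $Y_0\cap Z=Y_0\cap Z'=U'$ smooth and dense in $Z'$. Shrinking $\Y_0$ to an affine open $\Y$ still meeting $U'$, and using that $U'\cap Y$ is then smooth and closed in $Y$, I may arrange local coordinates $t_1,\dots,t_n$ on $\Y$ relative to $\V$ with $Z'\cap Y=V(t_1,\dots,t_r)$, $r=n-\dim Z'$; this yields~(1). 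Replacing $\X$ by $\Y$ and $\E$ by $\E|\Y$ is harmless by \ref{lemm-surcoh-4} and the locality of $(-)^{\mathrm{hol}}$, and I write $u\,:\,\ZZ=V(t_1,\dots,t_r)\hookrightarrow\Y$ for the associated smooth closed formal subscheme.

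\emph{Reducing condition~(2).} Put $\F:=\E/\E^{\mathrm{hol}}$. Since $\E^{\mathrm{hol}}$ is holonomic it is overcoherent in $\X$ after any change of base (holonomy being stable by the functors $(\hdag T)$ and by change of base, \cite{caro-holo-sansFrob}); as $(-)^{\mathrm{hol}}$ commutes with change of base and $(\hdag T)$ is exact, \ref{lemm-surcoh-4} gives the same for $\F$, hence for $\F|\Y$. Now $\F|\Y$ is supported in the special fibre of $\ZZ$, so by \ref{B-K-surcoh} we may write $\F|\Y\riso u_+\G$ with $\G$ a $\D^\dag_{\ZZ,\Q}$-module which, the equivalence commuting with change of base, is overcoherent in $\ZZ$ after any change of base. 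As $u_+$ commutes with $\DD$ (relative duality, \cite{Vir04, virrion}) it commutes with $(-)^{\mathrm{hol}}$; hence $\F|\Y$ is holonomic iff $\G$ is. It therefore suffices to prove $\G$ holonomic.

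\emph{Conclusion and the main obstacle.} I would induct on $d=\dim\X$, the case $d=0$ being immediate. If $\dim Z'<d$, then $\dim\ZZ<d$ and the induction step — the assertion ``a $\D^\dag$-module overcoherent after any change of base is holonomic'', proved jointly with the present lemma via \ref{E1*=0} — applied on $\ZZ$ shows $\G$ holonomic, hence $\F|\Y$ holonomic. If $\dim Z'=d$, then $Z'=X$ (as $\X$ is integral), $\ZZ=\Y$, and $\G=\F|\Y$ has full support: this is the main difficulty, and the place where the hypothesis of overcoherence \emph{after any change of base} is genuinely needed. The plan there is, via a base change $\V\to\V'$ enlarging the residue field enough to produce a generic smooth hypersurface $\ZZ_1$ in $\X'=\X\times_{\Spf\V}\Spf\V'$, to establish a non-characteristic restriction estimate reducing the holonomy of the base-changed module to that of its restriction to $\ZZ_1$ — a smooth formal scheme of dimension $d-1$, where the induction hypothesis applies — and then to descend by the invariance of the dimension under base change; the comparison \ref{a*Gamma} between sheaf-level and section-level inverse images enters exactly in carrying out this restriction at the level of global sections (as flagged in the remark following \ref{a*Gamma}, step II.4). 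I expect this last non-characteristic-restriction and change-of-base step to be the crux of the argument.
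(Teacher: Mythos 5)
Your reduction via Berthelot--Kashiwara is on the right track, and condition~(1) is handled essentially as in the paper (generic smoothness of $Z'$, Elkik lifting, and a further reduction via a Kedlaya finite \'etale map to $\widehat{\A}^d$ which you omit but which is harmless). However, the plan for condition~(2) rests on a claim that is precisely the open converse of the theorem being proved. You assert that $\E^{\mathrm{hol}}$ is overcoherent after any base change because ``holonomy is stable by $(\hdag T)$.'' Without a Frobenius structure this is \emph{not} known: the introduction of the paper states explicitly that while ``surcoh\'erent apr\`es tout changement de base $\Rightarrow$ holonome'' is what is being proved, ``il n'est pas clair que la r\'eciproque soit vraie sans structure de Frobenius.'' Stability of holonomy under localisation along a divisor is part of Berthelot's conjectures and is not available here. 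Consequently you cannot deduce that $\F=\E/\E^{\mathrm{hol}}$, and hence $\G$ on $\ZZ$, is overcoherent after base change, so the lower-dimensional theorem cannot be invoked and the proposed dimension induction breaks down. (Nor can you bypass this via ``submodule of coherent is coherent'': $\D^\dag_{\X,\Q}$ is coherent but not noetherian, so $(\hdag T)(\E^{\mathrm{hol}})\subset(\hdag T)(\E)$ does not automatically inherit $\D^\dag_{\X,\Q}$-coherence.)

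A secondary point: the case $\dim Z'=d$, which you flag as ``the main difficulty,'' does not in fact arise. The paper invokes the generic-holonomy theorem \cite[3.7]{caro-holo-sansFrob} together with Proposition~\ref{stab-surcoh-dansX} at the very start of Step~I to show that $\E$ is holonomic on a dense open of $\X$, hence $\dim Z<\dim X$ always; your ``non-characteristic restriction estimate'' for the full-support case is therefore superfluous, and in any event is left entirely unspecified. The real difficulty, and the content of the paper's Steps~II.1--II.9, lies elsewhere: since $\FF$ is only \emph{coherent} on $\ZZ$ (not overcoherent), one cannot conclude by any soft induction, and instead one must prove directly --- after base change to an algebraically closed uncountable residue field and a careful choice of a generic closed point $\ZZ'$ --- that the fibre $b^*(\smash{\overset{_\circ}{F}}^{(m)})$ is eventually a finite $\V$-module. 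This uses the adjunctions of \S\ref{section2.2}, the finite generation of $a^*(E_{\mathrm{n\text{-}hol}})$ over $D^\dag_{\X',\Q}$ (which is where the overcoherence of $\E$ itself, via the comparison \ref{a*Gamma}, enters at global-sections level), and a delicate Banach-norm contradiction via Lemma~\ref{lemm2.2.7-courbe}. None of this machinery appears in your sketch, and it is not dispensable.
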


\begin{proof}
On remarquera que l'hypothèse de finitude sur $\E$ n'est utilisée qu'à l'étape $II.4)$.

{\it Étape $I)$: Préliminaires, réduction du problème et notations}.

{\it Étape $1).$}
Le lemme est local en $\X$. 
Quitte à remplacer $\X$ par un ouvert affine $\Y$ de $\X$ tel que $Y \cap Z'=Y \cap Z$ et tel que $Y \cap Z'$ soit un ouvert lisse et dense dans $Z'$,
on peut ainsi supposer $\X$ affine, $Z=Z'$ et $Z$ lisse. 
D'après le théorème \cite[3.7]{caro-holo-sansFrob} et la proposition \ref{stab-surcoh-dansX}, il existe un ouvert dense de $\X$ sur lequel $\E$ devient holonome.
Il en résulte que la dimension du support de $\E _{\mathrm{n\text{-}hol}}:= \E / \E ^{\mathrm{hol}}$ est strictement plus petite que celle de $X$.
Notons $r\geq 1$ la codimension de $Z$ dans $X$.
D'après \cite[Exp. III]{sga1},
il existe alors une immersion fermée de $\V$-schémas formels affines et lisses de la forme
$\ZZ \hookrightarrow \X$ qui relève $Z\hookrightarrow X$.
Grâce au théorème de Kedlaya de \cite{Kedlaya-coveraffinebis}, quitte à rétrécir $\X$, 
il existe un morphisme fini et étale de la forme $h \,:\, \X \to \widehat{\A} ^{d} _{\V}$ tel que 
$h (Z) \subset \A ^{d-r} _k$.
Grâce au lemme \ref{stab-hol-f_+fet},
on se ramène ainsi au cas où $\ZZ \hookrightarrow \X$ est l'immersion fermée $\widehat{\A} ^{d-r} _\V \hookrightarrow \widehat{\A} ^{d} _\V $.

{\it Étape $2).$
Comme $\E _{\mathrm{n\text{-}hol}}$ est à support dans $Z$, d'après le théorème de Berthelot-Kashiwara, il existe un $\D ^\dag _{\ZZ, \Q}$-module cohérent $\FF $ tel que
$ \E _{\mathrm{n\text{-}hol}} \riso u _+ (\FF )$.
Il suffit de prouver qu'il existe un ouvert dense $\mathfrak{V}$ de $\ZZ$ tel que $\FF |\mathfrak{V}$ soit
un $\O _{\mathfrak{V},\Q}$-module libre de type fini, ce qui sera établi à l'étape $II)$.}

En effet, cela entraîne a fortiori que $\FF |\mathfrak{V}$ est un $\D^\dag _{\mathfrak{V}, \Q}$-module holonome (voir l'exemple \cite[2.11]{caro-holo-sansFrob}). 
De plus, 
comme l'holonomie est préservée par l'image directe d'une immersion fermée, 
comme $\E _{\mathrm{n\text{-}hol}} \riso u _+ ( \FF)$,
il en résulte l'holonomie de $\E _{\mathrm{n\text{-}hol}} |\Y$
pour tout ouvert $\Y$ de $\X$ tel que 
$ \Y \cap \ZZ = \mathfrak{V}$. 
D'où la vérification de l'étape $2)$.

{\it Étape $3).$}
D'après \cite[3.6]{caro-holo-sansFrob}, 
quitte à faire un changement de base, grâce à l'étape $2)$, 
on se ramène au cas où $k$ est algébriquement clos et non dénombrable. 

{\it Étape $4).$}
Il existe $m _0$ assez grand, 
il existe un $\widehat{\D} ^{(m _0)} _{\ZZ,\Q}$-module cohérent $\FF  ^{(m _0)}$ 
tel que l'on dispose de l'isomorphisme 
$\D ^\dag _{\ZZ, \Q} \otimes _{\widehat{\D} ^{(m_0)} _{\ZZ,\Q}} \FF   ^{(m _0)} \riso \FF $.
À présent, $m$ désignera toujours un entier plus grand que $m _0$.
Notons 
$\FF   ^{(m)} :=\widehat{\D} ^{(m)} _{\ZZ,\Q}\otimes _{\widehat{\D} ^{(m_0)} _{\ZZ,\Q}} \FF   ^{(m _0)}.$ 
On dispose des morphismes canoniques $\widehat{D}  ^{(m)} _{\ZZ,\Q}$-linéaires
$\rho _{m}\,:\,F   ^{(m)} \to F   ^{(m+1)}$ (on rappelle que les lettres droites désignent les sections globales du faisceau correspondant).
Soit $\smash{\overset{_\circ}{\FF}} ^{(m _0)}$ un $\widehat{\D}  ^{(m_0)} _{\ZZ}$-module cohérent sans $p$-torsion
tel que $\smash{\overset{_\circ}{\FF}} ^{(m_0)} _\Q \riso \FF   ^{(m_0)}$. 
On construit par récurrence sur $m \geq m _0 +1$ 
un $\widehat{\D}  ^{(m)} _{\ZZ}$-module cohérent $\smash{\overset{_\circ}{\FF}} ^{(m)}$ sans $p$-torsion
tel que $\smash{\overset{_\circ}{\FF}} ^{(m)} _\Q \riso \FF   ^{(m)}$
et tel que $\rho _{m -1} (\smash{\overset{_\circ}{F}} ^{(m-1)}) \subset 
\smash{\overset{_\circ}{F}} ^{(m)}$ (en effet, 
via des théorèmes de type $A$, 
cela découle de la noethérianité de $\widehat{D}  ^{(m)} _{\ZZ}$).
Comme $\smash{\overset{_\circ}{\FF }} ^{(m)}$ est sans $p$-torsion, 
on pose et on obtient 
$\smash{\overset{_\circ}{\FF }} ^{(m)}  _i :=   \V / \pi ^{i+1} \V  \otimes ^\L _{\V}\smash{\overset{_\circ}{\FF }} ^{(m)} 
\riso 
 \V / \pi ^{i+1} \V  \otimes  _{\V}\smash{\overset{_\circ}{\FF }} ^{(m)} $.

On note ensuite $\smash{\overset{_\circ}{\E }} _{\mathrm{n\text{-}hol}} ^{(m)}:= u _+ ^{(m)} (\smash{\overset{_\circ}{\FF}} ^{(m)})$
et $\E _{\mathrm{n\text{-}hol}} ^{(m)} := \smash{\overset{_\circ}{\E }} _{\mathrm{n\text{-}hol},\Q} ^{(m)} $. 
Grâce à la première étape (étape qui n'utilise pas l'hypothèse de finitude sur les fibres) 
du théorème \cite[3.4]{caro-holo-sansFrob}, il existe un ouvert dense $\ZZ _{(m)}$ de $\ZZ$ tel que 
$\smash{\overset{_\circ}{\FF}} ^{(m)}|\ZZ _{(m)}$ est isomorphe au complété $p$-adique d'un  $\O _{\ZZ _{(m)}}$-module libre. 
Il est donc de la forme 
$\smash{\overset{_\circ}{\FF}} ^{(m)} |\ZZ _{(m)}\riso ( \O _{\ZZ _{(m)}} ^{(E _m)}) ^{\widehat{}}$ où $E _m$ est un ensemble a priori dénombrable.

Désignons alors par $t _1, \dots, t _d$ les coordonnées canoniques de $\X$
telles que $\ZZ = V ( t _1, \dots, t _r)$.
On obtient des sous-$\V$-schémas formels fermés de $\X$ en posant
$\X':= V ( t _{r+1}, \dots, t _d)$ et $\ZZ':= V ( t _{1}, \dots, t _d)$.
On obtient alors le diagramme canonique cartésien :
\begin{equation}
\xymatrix{
{\ZZ} 
\ar@{^{(}->}[r] ^-{u}
& 
{\X } 
\\ 
{\ZZ'} 
\ar@{^{(}->}[r] ^-{u'}
\ar@{^{(}->}[u] ^-{b}
& 
{\X'.} 
\ar@{^{(}->}[u] ^-{a}
}
\end{equation}

Comme $k$ est algébriquement clos et non dénombrable (voir l'étape $3)$), 
quitte à faire un changement de coordonnées, on peut supposer
que $ |Z'| \in \cap _{m \in \N} \, \ZZ _{(m)}$.

{\it Étape $5).$}
Comme $|Z'| \in  \ZZ _{(m)}$, via en outre \ref{MLsansu+},
on obtient l'isomorphisme
$ b ^{*} (\smash{\overset{_\circ}{\FF}} ^{(m)})
\riso ( \V ^{(E _m)}) ^{\widehat{}}$.
Ainsi, $ b ^{*} (\smash{\overset{_\circ}{\FF}} ^{(m)})$
est sans $p$-torsion, séparé et complet pour la topologie $p$-adique.
D'après \ref{MLsansu+} et \ref{MLsansu+isocoro} on bénéficie de plus des isomorphismes
$$b ^{*} (\smash{\overset{_\circ}{\FF}} ^{(m)})=
b  ^{*} (\smash{\overset{_\circ}{F}} ^{(m)})
\riso 
\underset{\underset{i}{\longleftarrow}}{\lim\,} b _i ^{*} (\smash{\overset{_\circ}{F_i}} ^{(m)}).$$
Remarquons enfin que via \ref{ThB-coro}, on bénéficie de l'isomorphisme 
$b _i ^{*} (\smash{\overset{_\circ}{F_i}} ^{(m)})
\riso 
\V / \pi ^{i+1} \V \otimes _{\V} b  ^{*} (\smash{\overset{_\circ}{F}} ^{(m)})$.

{\it Étape $II)$: il existe un entier $m _2$ assez grand tel que 
$\FF |\ZZ _{(m _2)}$ soit un $\O _{\ZZ _{(m _2)}, \Q}$-module libre de type fini.}

\medskip 
{\it Étape $1)$. Acyclicité.}
D'après \ref{iso-chgt-de-base}, on dispose des isomorphismes 
$\L a _i ^{*} \circ u _{i+} (\smash{\overset{_\circ}{\FF _i}} ^{(m)}) \riso u ^{\prime} _{i+}  \circ \L b _i ^{*} (\smash{\overset{_\circ}{\FF_i}} ^{(m)}) $ (en effet, on remarque que les dimensions relatives de $a$ et $b$ sont identiques).
Comme $\smash{\overset{_\circ}{\FF }} ^{(m)}|\ZZ _{(m)}$ est plat, 
on obtient $ \L b _i ^{*} (\smash{\overset{_\circ}{\FF_i}} ^{(m)})
\liso 
 b _i ^{*} (\smash{\overset{_\circ}{\FF_i}} ^{(m)})$.
 D'où les isomorphismes
 \begin{equation}
 \label{A*E-sansL}
 \L a _i ^{*} \circ u _{i+} (\smash{\overset{_\circ}{\FF _i}} ^{(m)}) 
 \riso 
a _i ^{*} \circ u _{i+} (\smash{\overset{_\circ}{\FF _i}} ^{(m)}) 
\riso u ^{\prime} _{i+}  \circ b _i ^{*} (\smash{\overset{_\circ}{\FF_i}} ^{(m)}).
\end{equation}

\medskip 
{\it Étape $2)$. Le module $a ^{*} (\smash{\overset{_\circ}{\E }} _{\mathrm{n\text{-}hol}} ^{(m)}) $ est pseudo-quasi-cohérent (voir la définition \ref{def-qcsstor}).
Plus précisément, on dispose des isomorphismes canoniques
\begin{gather}
\D ^{(m)} _{X ' _i}
 \otimes  _{\widehat{\D} ^{(m)} _{\X '}}
\left (a ^{*} \circ u _{+} (\smash{\overset{_\circ}{\FF }} ^{(m)}) \right )
\riso 
a _i ^{*} \circ u _{i+} (\smash{\overset{_\circ}{\FF _i}} ^{(m)}) 
\\
\label{3.2.3.4}
a ^{*} \circ u _{+} (\smash{\overset{_\circ}{\FF }} ^{(m)}) \riso 
\underset{\underset{i}{\longleftarrow}}{\lim\,} 
a _i ^{*} \circ u _{i+} (\smash{\overset{_\circ}{\FF _i}} ^{(m)}).
\end{gather}
}

\noindent Preuve: On bénéficie de l'isomorphisme canonique
\begin{equation}
\label{iso-3.2.3.3}
 \D ^{(m)} _{X ' _i}
 \otimes  ^{\L}_{\widehat{\D} ^{(m)} _{\X '}}
\left (\L a ^{*} \circ u _{+} (\smash{\overset{_\circ}{\FF }} ^{(m)}) \right )
\riso 
\L a _i ^{*} \left ( \D ^{(m)} _{X _i}
 \otimes ^{\L} _{\widehat{\D} ^{(m)} _{\X}}
 u _{+} (\smash{\overset{_\circ}{\FF }} ^{(m)})\right ).
\end{equation}
Comme $\smash{\overset{_\circ}{\FF }} ^{(m)}$ est cohérent et sans $p$-torsion,
il en est de même de $u _{+} (\smash{\overset{_\circ}{\FF }} ^{(m)})$. 
D'après \cite[3.5.1]{Beintro2},  
on dispose alors des isomorphismes canoniques
\begin{equation}
\label{iso-u+sansL}
\D ^{(m)} _{X _i}
 \otimes ^{\L} _{\widehat{\D} ^{(m)} _{\X}}
 u _{+} (\smash{\overset{_\circ}{\FF }} ^{(m)})
 \riso
 \D ^{(m)} _{X _i}
 \otimes  _{\widehat{\D} ^{(m)} _{\X}}
 u _{+} (\smash{\overset{_\circ}{\FF }} ^{(m)})
 \riso 
  u _{i+} (\smash{\overset{_\circ}{\FF _i}} ^{(m)}).
\end{equation}
Via le premier isomorphisme de \ref{A*E-sansL}, on en déduit les isomorphismes: 
\begin{equation}
\label{iso-3.2.3.3bis}
 \D ^{(m)} _{X ' _i}
 \otimes  ^{\L}_{\widehat{\D} ^{(m)} _{\X '}}
\left (\L a ^{*} \circ u _{+} (\smash{\overset{_\circ}{\FF }} ^{(m)}) \right )
\riso 
 \D ^{(m)} _{X ' _i}
 \otimes  _{\widehat{\D} ^{(m)} _{\X '}}
\left ( a ^{*} \circ u _{+} (\smash{\overset{_\circ}{\FF }} ^{(m)}) \right )
\riso
a _i ^{*} \circ u _{i+} (\smash{\overset{_\circ}{\FF _i}} ^{(m)}).
\end{equation}
D'après \cite[3.4.5]{Beintro2}, 
comme $u _{+} (\smash{\overset{_\circ}{\FF }} ^{(m)})$
est cohérent, 
$\L a ^{*} \circ u _{+} (\smash{\overset{_\circ}{\FF }} ^{(m)})$ est quasi-cohérent. 
 Il en dérive 
\begin{equation}
\label{MLu+RL}
\L a ^{*} \circ u _{+} (\smash{\overset{_\circ}{\FF }} ^{(m)})
\riso 
\R \underset{\underset{i}{\longleftarrow}}{\lim\,}  
\D ^{(m)} _{X ' _i}
 \otimes  ^{\L}_{\widehat{\D} ^{(m)} _{\X '}}
\left (\L a ^{*} \circ u _{+} (\smash{\overset{_\circ}{\FF }} ^{(m)}) \right ).
\end{equation}
Il dérive alors de \ref{iso-3.2.3.3bis} et \ref{MLu+RL} les isomorphismes du haut du diagramme ci-dessous:
\begin{equation}
\label{MLu+}
\xymatrix{
{\L a ^{*} \circ u _{+} (\smash{\overset{_\circ}{\FF }} ^{(m)})} 
\ar[r] ^-{\sim}
\ar[d] ^-{\sim}
& 
{\R \underset{\underset{i}{\longleftarrow}}{\lim\,} 
 \D ^{(m)} _{X ' _i}
 \otimes _{\widehat{\D} ^{(m)} _{\X '}}
\left (a ^{*} \circ u _{+} (\smash{\overset{_\circ}{\FF }} ^{(m)}) \right ) }
\ar[r] ^-{\sim}
& 
{\R \underset{\underset{i}{\longleftarrow}}{\lim\,} 
a _i ^{*} \circ u _{i+} (\smash{\overset{_\circ}{\FF _i}} ^{(m)})}
 \\ 
 {a ^{*} \circ u _{+} (\smash{\overset{_\circ}{\FF }} ^{(m)})} 
\ar[r] ^-{\sim}
 &
 {\underset{\underset{i}{\longleftarrow}}{\lim\,} 
 \D ^{(m)} _{X ' _i}
 \otimes _{\widehat{\D} ^{(m)} _{\X '}}
\left (a ^{*} \circ u _{+} (\smash{\overset{_\circ}{\FF }} ^{(m)}) \right ) } 
\ar[r] ^-{\sim}
\ar[u] ^-{\sim}
 &
 {\underset{\underset{i}{\longleftarrow}}{\lim\,} 
a _i ^{*} \circ u _{i+} (\smash{\overset{_\circ}{\FF _i}} ^{(m)}).} 
\ar[u] ^-{\sim}
  } 
\end{equation}
Comme la flèche horizontale du haut à droite est un isomorphisme entre un complexe à gauche sur la gauche et un complexe à droite sur la droite, 
il en résulte que 
les morphismes verticaux (et donc ceux du bas) sont bien des isomorphismes. 
D'où le résultat.

Remarquons au passage que le fait que 
le morphisme canonique
$\underset{\underset{i}{\longleftarrow}}{\lim\,}  u ^{\prime} _{i+}  \circ b _i ^{*} (\smash{\overset{_\circ}{\FF_i}} ^{(m)})
\to 
\R \underset{\underset{i}{\longleftarrow}}{\lim\,} 
u ^{\prime} _{i+}  \circ b _i ^{*} (\smash{\overset{_\circ}{\FF_i}} ^{(m)})$
soit un isomorphisme se voit directement.  
En effet, d'après \cite[2.2.3.2 et 2.4.2]{Beintro2}, 
la formation de $u ^{\prime} _{i+}  \circ b _i ^{*} (\smash{\overset{_\circ}{\FF_i}} ^{(m)})$ commute 
aux changements de base. 
Il en résulte que le système 
$(u ^{\prime} _{i+}  \circ b _i ^{*} (\smash{\overset{_\circ}{\FF_i}} ^{(m)})) _i$ est quasi-cohérent au sens de Berthelot.
Par \cite[7.20]{Berthelot-Ogus-cristalline}, on dispose alors d'une version faisceautique quasi-cohérente de Mittag-Leffler qui s'applique ici.
D'où le résultat.

\medskip
{\it Étape $3)$
On dispose des isomorphismes canoniques:
\begin{equation}
\label{iso-1''}
a ^* (\smash{\overset{_\circ}{E }} _{\mathrm{n\text{-}hol}} ^{(m)}) 
\riso
a ^{*} \circ u _{+} (\smash{\overset{_\circ}{F }} ^{(m)}) \riso 
\underset{\underset{i}{\longleftarrow}}{\lim\,} 
a _i ^{*} \circ u _{i+} (\smash{\overset{_\circ}{F _i}} ^{(m)})
\riso
\underset{\underset{i}{\longleftarrow}}{\lim\,} 
u ^{\prime} _{i+}  \circ b _i ^{*} (\smash{\overset{_\circ}{F_i}} ^{(m)})
\riso 
u ^{\prime(m)} _{+}  \circ b  ^{*} (\smash{\overset{_\circ}{F}} ^{(m)}).
\end{equation}
}

Preuve: Par \ref{u+-comm-Gamma}, il vient 
$\Gamma (\X, u _{+} (\smash{\overset{_\circ}{\FF }} ^{(m)}))
\riso 
u _{+} (\smash{\overset{_\circ}{F }} ^{(m)}) $.
Comme $\smash{\overset{_\circ}{\E }} _{\mathrm{n\text{-}hol}} ^{(m)} =u _{+} (\smash{\overset{_\circ}{\FF }} ^{(m)})$,
on a donc vérifié le premier isomorphisme de \ref{iso-1''}.
Vérifions à présent le second.
Grâce au premier isomorphisme de \ref{A*E-sansL} et à \ref{iso-u+sansL}, on remarque que l'on peut utiliser \ref{MLsansu+isocoro} pour 
$u _{+} (\smash{\overset{_\circ}{\FF }} ^{(m)})$.
On obtient donc le premier isomorphisme :
$\Gamma (\X', a ^* (u _{+} (\smash{\overset{_\circ}{\FF }} ^{(m)})))
\riso
a ^* (  \Gamma (\X, u _{+} (\smash{\overset{_\circ}{\FF }} ^{(m)})))
\riso
a ^{*} \circ u _{+} (\smash{\overset{_\circ}{F }} ^{(m)})$. 
Or, en utilisant les théorèmes de type $A$ pour les modules quasi-cohérents sur les schémas,
on vérifie l'isomorphisme canonique:
$\Gamma (X' _i, a _i ^{*} \circ u _{i+} (\smash{\overset{_\circ}{\FF _i}} ^{(m)}) )
\riso 
a _i ^{*} \circ u _{i+} (\smash{\overset{_\circ}{F _i}} ^{(m)})$.
Comme le foncteur $\Gamma (\X',-)$ commute aux limites projectives,
il en résulte 
$\Gamma (\X', \underset{\underset{i}{\longleftarrow}}{\lim\,} 
a _i ^{*} \circ u _{i+} (\smash{\overset{_\circ}{\FF _i}} ^{(m)}))
\riso
\underset{\underset{i}{\longleftarrow}}{\lim\,} 
a _i ^{*} \circ u _{i+} (\smash{\overset{_\circ}{F _i}} ^{(m)})$.
On a ainsi vérifié qu'en appliquant le foncteur 
$\Gamma (\X',-)$ à l'isomorphisme \ref{3.2.3.4} 
on obtient (à isomorphisme canonique près)
le deuxième isomorphisme de \ref{iso-1''}. 
De même, le troisième isomorphisme \ref{iso-1''} 
résulte du deuxième isomorphisme de \ref{A*E-sansL}.
Enfin, le dernier isomorphisme de \ref{iso-1''} résulte de l'étape I.5).
D'où le résultat.

\medskip
{\it Étape $4)$: construction de $G ^{(m)}$.} 

Via les théorèmes de type $A$ et $B$ pour les $\D _{\X,\Q} ^\dag$-modules cohérents, 
on dispose de la surjection
$E \twoheadrightarrow E _{\mathrm{n\text{-}hol}}$.
En lui appliquant le foncteur $a ^{*}$ exact à droite, 
on obtient la surjection 
$a ^{*} (E )\twoheadrightarrow a ^{*} (E _{\mathrm{n\text{-}hol}})$.
Comme par hypothèse $\E$ est surcohérent dans $\X$, grâce à \ref{a*Gamma}, on bénéficie alors de l'isomorphisme 
$\Gamma (\X', a ^{*} (\E  ) )
\riso
a ^{*} (E )$.
Comme $a ^{*} (\E )$ est un $\D ^\dag _{\X', \Q}$-module cohérent,
via le théorème de type $A$ pour les $\D ^\dag _{\X', \Q}$-modules cohérents (voir \cite[3.6.5]{Be1}), 
on en déduit que $a ^{*} (E )$ est un $D ^\dag _{\X', \Q}$-module de type fini. 
Il existe donc des éléments $ x _{1},\dots, x _{N} \in a ^{*} (E _{\mathrm{n\text{-}hol}})$ tels que 
$a ^{*} (E _{\mathrm{n\text{-}hol}})= \sum _{l=1} ^{N} D ^\dag _{\X', \Q}\cdot  x _{l}$.

Comme $a ^{*} (E _{\mathrm{n\text{-}hol}}) 
= 
\underset{\underset{m}{\longrightarrow}}{\lim\,} 
a ^{*} (E _{\mathrm{n\text{-}hol}} ^{(m)}) $, 
quitte à augmenter $m _0$,
il existe 
$x _{1} ^{(0)},\dots , x _{N} ^{(0)} \in a ^{*} (E ^{(m _0)} _\mathrm{n\text{-}hol})$
tels que, pour tout $l = 1,\dots, N$, 
$x _{l} ^{(0)} $ s'envoie sur $x _{l} $ via le morphisme canonique
$a ^{*} (E ^{(m _0)} _\mathrm{n\text{-}hol}) 
\to 
a ^{*} (E _{\mathrm{n\text{-}hol}})$.
Pour tout $m \geq m _0$, 
posons
$G ^{(m)} := \sum _{l=1} ^{N}  \widehat{D} ^{(m)} _{\X',\Q}\cdot  x _{l} ^{(m-m _0)}
\subset a ^{*} (E _{\mathrm{n\text{-}hol}} ^{(m)})$, 
où 
$x _{l} ^{(m-m _0)} $ désigne l'image de 
$x _{l} ^{(0)} $ via le morphisme canonique
$a ^{*} (E ^{(m _0)} _\mathrm{n\text{-}hol}) 
\to 
a ^{*} (E _{\mathrm{n\text{-}hol}} ^{(m)})$.

Terminons cette étape $4$ par une remarque. 
Avec les notations ci-dessus, pour tout $x ^{(0)} \in a ^{*} (E _{\mathrm{n\text{-}hol}} ^{(m _0)})$, il existe 
$m \geq m _0$ assez grand tel que l'image de $x^{(0)}$ via le morphisme 
$ a ^{*} (E _{\mathrm{n\text{-}hol}} ^{(m _0)})
\to 
 a ^{*} (E ^{(m )} _\mathrm{n\text{-}hol})$
 appartienne à 
 $G ^{(m)}$.

\medskip
{\it Étape $5)$ : construction de $\smash{\overset{_\circ}{G}} ^{(m)}$.}

Comme $G ^{(m)}$ est $\widehat{D} ^{(m)} _{\X',\Q}$-cohérent, il existe un $\widehat{D} ^{(m)} _{\X'}$-module cohérent sans $p$-torsion
$\smash{\overset{_\circ}{G}} ^{(m)}$
tel que 
$\smash{\overset{_\circ}{G}} ^{(m)} _\Q
\riso 
G ^{(m)}$.
Puisque l'on dispose de l'inclusion $\widehat{D} ^{(m)} _{\X',\Q}$-linéaire 
$G ^{(m)}\subset a ^{*} (E _{\mathrm{n\text{-}hol}} ^{(m)})$,
comme $\left (a ^{*} (\smash{\overset{_\circ}{E}}  ^{(m)}  _{\mathrm{n\text{-}hol}}) \right )_\Q
= a ^{*} (E _{\mathrm{n\text{-}hol}} ^{(m)})$,
quitte à multiplier par une puissance de $p$ convenable,
on peut en outre supposer
$\smash{\overset{_\circ}{G}} ^{(m)} 
\subset
a ^{*} (\smash{\overset{_\circ}{E}}  ^{(m)}  _{\mathrm{n\text{-}hol}})$.

\medskip
{\it Étape $6)$ : construction de $\smash{\overset{_\circ}{H}} ^{(m)}$ et $H^{(m)}$.}

D'après \ref{iso-1''}, on dispose de l'isomorphisme
$a ^* (\smash{\overset{_\circ}{E }} _{\mathrm{n\text{-}hol}} ^{(m)}) 
\riso 
u ^{\prime(m)} _{+}  \circ b  ^{*} (\smash{\overset{_\circ}{F}} ^{(m)})$
que l'on notera $\theta ^{(m)}$. On pose alors
$\smash{\overset{_\circ}{H}} ^{(m)}:= \theta ^{(m)} (\smash{\overset{_\circ}{G}} ^{(m)})$
et
$H^{(m)} :=( \smash{\overset{_\circ}{H}} ^{(m)} )_\Q$.

\medskip
{\it  Étape $7)$ : le $\V$-module
$\mathcal{H} ^{0} u ^{\prime !} (\smash{\overset{_\circ}{H}} ^{(m)})$ est libre de rang fini.}

Comme $\mathcal{H} ^{0} u ^{\prime !} (\smash{\overset{_\circ}{H}} ^{(m)})$ est sans $p$-torsion, 
séparé et complet pour la topologie $p$-adique (voir \ref{im-inv-D-mod}), celui-ci est le complété $p$-adique d'un $\V$-module libre. 
Il reste à vérifier qu'il est de type fini sur $\V$.
D'après l'étape $I.5)$, le module $ b ^{*} (\smash{\overset{_\circ}{F}} ^{(m)})$ est sans $p$-torsion, séparé et complet pour la topologie $p$-adique.
On dispose alors du diagramme canonique 
$$\xymatrix{
{\smash{\overset{_\circ}{H}} ^{(m)}}
\ar@{^{(}->}[r] ^-{} 
 &
 {u ^{\prime (m)} _{+}  \circ b ^{*} (\smash{\overset{_\circ}{F}} ^{(m)})}
 \ar@{=}[r]
  &
 {u ^{\prime (m)} _{+}  \circ b ^{*} (\smash{\overset{_\circ}{F}} ^{(m)})} 
 \\ 
{u ^{\prime (m)} _{+}  \mathcal{H} ^{0} u ^{\prime !} (\smash{\overset{_\circ}{H}} ^{(m)})} 
\ar[u] ^-{} 
\ar@{^{(}->}[r] ^-{}
&
{u ^{\prime (m)} _{+}  \mathcal{H} ^{0} u ^{\prime !} (u ^{\prime (m)} _{+}  \circ b ^{*} (\smash{\overset{_\circ}{F}} ^{(m)}))}
\ar[u] ^-{\sim} 
& 
{u ^{\prime (m)} _{+}  \circ b ^{*} (\smash{\overset{_\circ}{F}} ^{(m)}),} 
\ar[l] ^-{\sim}
\ar@{=}[u]
}$$
dont les morphismes horizontaux de gauche sont induits fonctoriellement par l'inclusion canonique
$\smash{\overset{_\circ}{H}} ^{(m)} 
\subset
u ^{\prime(m)} _{+}  \circ b  ^{*} (\smash{\overset{_\circ}{F}} ^{(m)})$, 
dont les flèches verticales de gauche et du centre sont induits par les morphismes d'adjonction de \ref{u_+u^!toif-limproj},
dont le morphisme horizontal en bas à droite se construit à partir du morphisme d'adjonction \ref{idtou+u!isohat}.
Le carré de gauche est commutatif par fonctorialité.
D'après \ref{comp-adj=id1}, le carré de droite est commutatif et ses flèches sont des isomorphismes. 
En particulier, la flèche verticale du centre est un isomorphisme.
Comme le foncteur $\mathcal{H} ^{0} u ^{\prime !}$ est exact à gauche,
on obtient l'inclusion canonique
$
\mathcal{H} ^{0} u ^{\prime !} (\smash{\overset{_\circ}{H}} ^{(m)} )
\subset 
\mathcal{H} ^{0} u ^{\prime !} (u ^{\prime(m)} _{+}  \circ b  ^{*} (\smash{\overset{_\circ}{F}} ^{(m)}))$.
Par \ref{inclusion-u+}, en lui appliquant 
le foncteur $u ^{\prime (m)} _{+}$, il en résulte que la flèche horizontale du bas à gauche est 
injective. 
On en déduit que la flèche canonique 
$u ^{\prime } _+ \mathcal{H} ^{0} u ^{\prime !} (\smash{\overset{_\circ}{H}} ^{(m)}) \to 
\smash{\overset{_\circ}{H}} ^{(m)}$ 
est injective. 
Comme $\smash{\overset{_\circ}{H}} ^{(m)}$ est $\widehat{D} ^{(m)} _{\X'}$-cohérent,
il en résulte par noethérianité de $\widehat{D} ^{(m)} _{\X'}$ que 
$u ^{\prime } _+ \mathcal{H} ^{0} u ^{\prime !} (\smash{\overset{_\circ}{H}} ^{(m)}) $ est $\widehat{D} ^{(m)} _{\X'}$-cohérent.
Il découle du lemme 
\ref{coro-inclusion-u+} que 
$\mathcal{H} ^{0} u ^{\prime !} (\smash{\overset{_\circ}{H}} ^{(m)}) $ est un $\V$-module libre de rang fini.

\medskip
{\it Étape $8)$ : Par l'absurde, on suppose que, pour tout $m \geq m _0$, 
$b ^{*} (\smash{\overset{_\circ}{F}} ^{(m)})$ n'est pas de type fini sur $\V$. On aboutit alors à une contradiction.}

i) Avec notre hypothèse par l'absurde, nous construisons par récurrence sur $s\in \N $ des éléments 
$y ^{(0)} _0, \dots, y ^{(0)}_s \in b ^{*} (\smash{\overset{_\circ}{F}} ^{(m _0)})$ 
ainsi que des entiers
$n _0, \dots, n _s$ de la manière suivante. 

Il résulte de \ref{idtou+u!isohat} que
le morphisme canonique
$ b  ^{*} (\smash{\overset{_\circ}{F}} ^{(m)}) \to  \mathcal{H} ^{0} u ^{\prime !} (u ^{\prime(m)} _{+}  (b  ^{*} (\smash{\overset{_\circ}{F}} ^{(m)})))$ est un isomorphisme.
Via cet isomorphisme, on identifiera $\mathcal{H} ^{0} u ^{\prime !} (\smash{\overset{_\circ}{H}} ^{(m)} )$
comme un sous-ensemble de $ b  ^{*} (\smash{\overset{_\circ}{F}} ^{(m)})$.
Comme  
$b ^{*} (\smash{\overset{_\circ}{F}} ^{(m _0)})$ est sans $p$-torsion, séparé et complet pour la topologie $p$-adique (voir l'étape $I.5)$),
comme on suppose que $b ^{*} (\smash{\overset{_\circ}{F}} ^{(m _0)})$ n'est pas un $\V$-module de type fini,
le $K$-espace vectoriel $b ^{*} (F ^{(m _0)}) =\left (b ^{*} (\smash{\overset{_\circ}{F}} ^{(m _0)}) \right ) _\Q$ 
n'est pas de dimension finie.
Comme 
$\mathcal{H} ^{0} u ^{\prime !} (\smash{\overset{_\circ}{H}} ^{(m)} )$ est un $\V$-module libre de type fini,
il existe donc un élément 
$y ^{(0)} _0 \in b ^{*} (\smash{\overset{_\circ}{F}} ^{(m _0)})$ tel que 
$y ^{(0)} _0 \not \in \mathcal{H} ^{0} u ^{\prime !} (H ^{(m _0)}) $.
Comme de plus le $K$-espace vectoriel $K y ^{(0)} _0 +\mathcal{H} ^{0} u ^{\prime !} (H ^{(m)})$ est de dimension finie,
d'après \ref{lemm2.2.7-courbe}, 
il en résulte qu'il existe un entier $n _0 \geq 0$ tel que, pour tout entier $n$,
  on ait l'inclusion :
  \begin{equation}
    \label{libre-inclusion1-application0}
  \left ( K y ^{(0)} _0 \oplus   \mathcal{H} ^{0} u ^{\prime !} (H ^{(m _0)}) \right )
  \cap 
  p ^{n _0 +n} b ^{*} (\smash{\overset{_\circ}{F }} ^{(m_0)})
  \subset 
  p ^n \left (\V y ^{(0)} _0 \oplus   \mathcal{H} ^{0} u ^{\prime !} (\smash{\overset{_\circ}{H}} ^{(m_0)}) \right ).
  \end{equation}

Supposons à présent construits $y ^{(0)} _0, \dots, y ^{(0)} _s \in b ^{*} (\smash{\overset{_\circ}{F}} ^{(m _0)})$ 
ainsi que les entiers $n _0, \dots, n _s$. 
Comme pour tout $m \geq m _0$ le  $K$-espace vectoriel $b ^{*} (F ^{(m )})$ 
n'est pas de dimension finie,
il résulte de \cite[2.2.8]{caro_courbe-nouveau} que  l'image de
$b ^{*} (F ^{(m _0)}) \to b ^{*} (F ^{(m)})$ est un $K$-espace vectoriel de dimension infinie. 
Notons $\M ^{(s+1)}:= \mathcal{H} ^{0} u ^{\prime !} (H ^{(m_0 +s +1)}) +\sum _{j=0} ^{s} K y _j  ^{(s+1)}$, 
où $y _j  ^{(s+1)}$ désigne l'image de $y ^{(0)} _j $ via le morphisme $b ^{*} (F ^{(m _0)}) \to b ^{*} (F ^{(m_0 +s+1)})$.
Soit $\smash{\overset{_\circ}{\M}} ^{(s +1)}$ un $\V$-module libre tel que 
$\smash{\overset{_\circ}{\M}} ^{(s +1)} \otimes \Q 
\riso 
\M ^{(s+1)}$.
Comme $\M ^{(s+1)}$ est de dimension finie sur $K$,
il existe donc 
$y _{s+1} ^{(0)}\in p^{s+1} b ^{*} (\smash{\overset{_\circ}{F}} ^{(m _0)})$ 
tel que 
$y _{s+1} ^{(s +1)}\not \in\M ^{(s+1)}$. 
Quitte à multiplier par une puissance de $p$, on peut en outre supposer que
pour tout $j = 0,\dots , s$, 
on ait 
$y _{s+1}^{(j)} \in p ^{1+n _j} b ^{*} (\smash{\overset{_\circ}{F}} ^{(m _0 +j)})$.
D'après \ref{lemm2.2.7-courbe}, il existe alors un entier $n _{s+1} \geq n _s +1$ tel que pour tout entier $n$ on ait
\begin{equation}
    \label{libre-inclusion1-applications+1}
  \left (  \M ^{(s+1)}\oplus K y _{s+1} ^{(s+1)} \right )
  \cap 
  p ^{n _{s+1} +n} b ^{*} (\smash{\overset{_\circ}{F }} ^{(m_0 +s+1)})
  \subset 
  p ^n   \left (   \smash{\overset{_\circ}{\M}} ^{(s+1)}\oplus \V y _{s+1} ^{(s+1)}  \right ).
  \end{equation}

ii) Posons alors $y ^{(0)} := \sum _{j=0} ^{\infty} y ^{(0)} _j \in b ^{*} (\smash{\overset{_\circ}{F}} ^{(m _0)})$.
Soit alors $x^{(0)} \in a ^{*} (\smash{\overset{_\circ}{E}} _\mathrm{n\text{-}hol} ^{(m _0)})$
l'élément tel que 
$\theta ^{(m _0)} (x ^{(0)})= 1 \otimes y ^{(0)}$.

D'après la remarque à la fin de l'étape 4), il existe 
$s \geq 1$ assez grand tel que l'image $x ^{(s)} $ de $x^{(0)}$ via le morphisme canonique
$ a ^{*} (E _\mathrm{n\text{-}hol} ^{(m _0)})
\to 
 a ^{*} (E ^{(m_0 +s)} _\mathrm{n\text{-}hol}) $
 appartienne à 
 $G ^{(m_0 +s)}$.
 Or, il découle par complétion $p$-adique du carré de droite de \ref{iso-chgt-de-basebis-chgtbase} (i.e. $S=T$)
 que l'on dispose du diagramme commutatif
 \begin{equation}
\label{iso-chgt-de-basebis-chgtbase-completion}
\xymatrix{
{a ^{*} (E _\mathrm{n\text{-}hol} ^{(m _0)})}
\ar[d] ^-{}
&
{a ^{*} (\smash{\overset{_\circ}{E}} _\mathrm{n\text{-}hol} ^{(m _0)})}
\ar@{^{(}->}[l] ^-{}
\ar@{=}[r] ^-{}
\ar[d] ^-{}
&
{a ^{*} \circ u ^{(m _0)} _+ (\smash{\overset{_\circ}{F}} ^{(m _0)}) } 
\ar[r] ^-{\sim} _-{\theta ^{(m_0)}}
\ar[d] ^-{}
& 
{u ^{\prime^{(m _0)}} _+  \circ b ^{*} (\smash{\overset{_\circ}{F}} ^{(m _0)}) } 
\ar[d] ^-{}
\\ 
{a ^{*} (E _\mathrm{n\text{-}hol} ^{(m _0+s)})}
&
\ar@{^{(}->}[l] ^-{}
{a ^{*} (\smash{\overset{_\circ}{E}} _\mathrm{n\text{-}hol} ^{(m _0+s )})}
\ar@{=}[r] ^-{}
&
{a ^{*} \circ u ^{(m _0+s)} _+ (\smash{\overset{_\circ}{F}} ^{(m _0+s)}) } 
\ar[r] ^-{\sim} _-{\theta ^{(m_0 +s)}}
& 
{u ^{\prime^{(m _0+s)}} _+  \circ b ^{*} (\smash{\overset{_\circ}{F}} ^{(m _0+s)}) .} 
}
\end{equation}
 L'image de $ 1 \otimes y ^{(0)}=\sum _{j=0} ^{\infty} 1 \otimes  y ^{(0)} _j $ via la flèche de droite de 
 \ref{iso-chgt-de-basebis-chgtbase-completion} 
 est
$1 \otimes  y ^{(s)}$, où $y ^{(s)}:= \sum _{j=0} ^{\infty}y _j ^{(s)}\in b ^{*} (\smash{\overset{_\circ}{F}} ^{(m _0+s)})$.
 On en déduit l'égalité
 $\theta ^{(m_0 +s)} (x ^{(s)} )=1 \otimes  y ^{(s)}$.
 Il en résulte que $ 1 \otimes y ^{(s)}
 \in H ^{(m_0 +s)}$. Comme  $ 1 \otimes y ^{(s)}$ est annulé par $t _1, \dots, t _r$, 
 on a en fait 
$ 1 \otimes y ^{(s)}\in \mathcal{H} ^{0} u ^{\prime !} (H ^{(m_0 +s)})$.
Modulo l'identification de  $\mathcal{H} ^{0} u ^{\prime !} (\smash{\overset{_\circ}{H}} ^{(m_0 +s)} )$
a un sous-ensemble de $ b  ^{*} (\smash{\overset{_\circ}{F}} ^{(m_0 +s)})$, 
on obtient donc
$y ^{(s)}  \in \mathcal{H} ^{0} u ^{\prime !} (H ^{(m_0 +s)})$.
 Ainsi, $y ^{(s)} - \sum _{j=0} ^{s} y _j ^{(s)}  \in 
 (\mathcal{H} ^{0} u ^{\prime !} (H ^{(m_0 +s)}) + \sum _{j=0} ^{s-1} K y _j  ^{(s)})+K y _s  ^{(s)}
 = \M ^{(s)} \oplus K y _s  ^{(s)}$.
 Or, pour tout $j \geq s+1$, 
 on a 
 $y _j ^{(s)} \in p ^{1+n _{s}} b ^{*} (\smash{\overset{_\circ}{F}} ^{(m _0 +s)})$.
 Comme 
 $y ^{(s)} - \sum _{j=0} ^{s} y _j ^{(s)} 
 = 
 \sum _{j=s+1} ^{\infty} y _j ^{(s)} $,
 on obtient
$y ^{(s)} - \sum _{j=0} ^{s} y _j ^{(s)}  \in 
\left (  \M ^{(s)}\oplus K y _{s} ^{(s)} \right )
\cap 
 p ^{1+n _{s}} b ^{*} (\smash{\overset{_\circ}{F}} ^{(m _0 +s)})
 \subset 
 p    \left (   \smash{\overset{_\circ}{\M}} ^{(s)}\oplus \V y _{s} ^{(s)}  \right )$.
 Comme $y ^{(s)}- \sum _{j=0} ^{s} y _j ^{(s)}  $ se décompose  dans 
 $ \M ^{(s)} \oplus K y _s  ^{(s)} $ 
 en 
 $(y ^{(s)} - \sum _{j=0} ^{s-1} y _j ^{(s)} ) - y _s ^{(s)}$, on a donc abouti à une contradiction.
 
 {\it Étape $9)$ : conclusion}
 
D'après l'étape $8)$, il existe un entier $m _1$ assez grand tel que 
$b ^{*} (F ^{(m _1)})$ soit un $K$-espace vectoriel de dimension finie.
Par \cite[2.2.8]{caro_courbe-nouveau}, il en résulte que
pour tout $m \geq m _1$, 
$b ^{*} (F ^{(m )})$ 
est un $K$-espace vectoriel de dimension finie.
En reprenant les étapes I.5) et I.6) de la preuve de \cite[3.4]{caro-holo-sansFrob} 
qui restent valables (l'hypothèse de finitude sur les fibres n'est utilisée qu'en amont de la preuve),
on établit qu'il existe un entier $m _2$ assez grand tel que 
$\FF |\ZZ _{(m _2)}$ soit un $\O _{\ZZ _{(m _2)}, \Q}$-module libre de type fini. 
 
\end{proof}

Je remercie le rapporteur pour le lemme ci-dessous et sa preuve. 
Il est vraisemblable que cela soit utile pour simplifier la fin de la preuve du lemme \ref{E1-hol-ouvdense} peu après le début de l'étape II.4, ce qui 
est laissé en exercice au lecteur.
\begin{lemm}
Soit $(M ^{(m)}) _{m\in \N}$ un système inductif d'espaces de Banach tel que 
$M:= \underrightarrow{\lim}\, _m M ^{(m)}$ soit un $K$-espace vectoriel de dimension finie. 
Alors, pour tout entier $m _0$, il existe $m _1 \geq m _0$ tel que, pour tout $m \geq m _1$,
la flèche canonique $\mathrm{Im} (M ^{(m _0)} \to M ^{(m)}) \to M$ soit injective.  
\end{lemm}

\begin{proof}
Soit $m _0 \in \N$.
Traitons d'abord le cas où $M=0$. 
Soit $K _{m}$ le noyau de la flèche $M ^{(m _0)} \to M ^{(m)}$.
Comme $M ^{(m)}$ est séparé, alors $K _{m}$ est fermé. Puisque $M=0$, on a alors
$\cup _{m \geq m _0} K _{m} = M ^{(m _0)} $. Comme $M ^{(m _0)}$ est un $K$-espace de Banach, 
il vérifie la propriété de Baire. On en déduit que pour $m _1$ assez grand 
$K _{m _1}= M ^{(m _0)}$, ce qui nous permet de conclure.
Considérons à présent le cas général.
Quitte à augmenter $m _0$, on peut supposer que 
$M ^{(m _0)} \to M $ soit surjective.
Choisissons $V ^{(m_0)}$ un $K$-sous-espace vectoriel de $M ^{(m _0)} $ de dimension finie
tel que $V  ^{(m_0)} \to M$ soit surjective. 
En notant $V ^{(m)}$ l'image de $V ^{(m _0)}$ dans $M ^{(m)}$ muni de la topologie induite par celle $V ^{(m)}$, 
il suffit alors d'appliquer le premier cas au système inductif 
$(M ^{(m)}/V ^{(m)}) _{m\geq m _0}$.
\end{proof}

\begin{theo}
\label{Pre-surcoh-hol}
Soit $\E$  un $\D ^\dag _{\X} (\hdag D) _{\Q}$-module surcohérent dans $\X$ et après tout changement de base. 
Alors $\E$ est $\D ^\dag _{\X} (\hdag D) _{\Q}$-holonome.
\end{theo}

\begin{proof}
Grâce à \cite[4.3.12]{Be1}, on se ramène au cas où le diviseur $D$ est vide. 
Posons $\E _{\mathrm{n\text{-}hol}} := \E / \E ^{\mathrm{hol}}$.
Notons $Z$ le support de $\E _{\mathrm{n\text{-}hol}}$.
Par l'absurde, supposons $Z$ non-vide et soit $Z'$ une composante irréductible. 
D'après \ref{E1-hol-ouvdense},
il existe alors un ouvert $\Y$ de $\X$ tel que 
l'ouvert $Y \cap Z'$ soit lisse et dense dans $Z'$ et
tel que le module $\E _{\mathrm{n\text{-}hol}} | \Y$ soit holonome.  
Or, $(\E _{\mathrm{n\text{-}hol}} | \Y) ^{\mathrm{hol}}= (\E _{\mathrm{n\text{-}hol}}) ^{\mathrm{hol}}|\Y=0$,
la dernière égalité provenant de \ref{E1*=0}.
De plus, comme le module $\E _{\mathrm{n\text{-}hol}} | \Y$ est holonome,
on obtient l'égalité 
$(\E _{\mathrm{n\text{-}hol}} | \Y) ^{\mathrm{hol}} = (\E _{\mathrm{n\text{-}hol}} | \Y)$ (voir \ref{Nota-hol}).
On a ainsi aboutit à une contradiction. 
\end{proof}

\subsection{Application: autour de la surholonomie}

On désignera par $\X$ un $\V$-schéma formel lisse. 
Le foncteur $\DD $ indique le foncteur dual $\D ^\dag _{\X,\Q}  $-linéaire comme défini par Virrion (voir \cite{virrion}).

\begin{defi}\label{defi-surhol}
  Soit $\E$ un objet de ($F$-)$D (\D ^\dag _{\X,\,\Q})$. On définit par récurrence sur
  l'entier $r\geq 0$, la notion
  de {\it $r$-surholonomie dans $\X$}, de la façon suivante :
  \begin{enumerate}
    \item Le complexe $\E$ est {\og $0$-surholonome dans $\X$\fg} si 
    $\E$ est surcohérent dans $\X$ ;
    \item Pour tout entier $r \geq 1$, $\E$ est {\og $r$-surholonome dans $\X$\fg } si $\E $ est $r-1$-surholonome dans $\X$
    et pour tout diviseur $T $ de $X$, le complexe
    $\DD \circ (\hdag T )  (\E)$ est $r-1$-surholonome dans $\X$.
  \end{enumerate}
  On dit que $\E$ est {\og surholonome dans $\X$\fg}
  si $\E$ est $r$-surholonome dans $\X$ pour tout entier $r$.
  Enfin un ($F$-)$\D ^\dag _{\X,\,\Q}$-module est $r$-surholonome (resp. surholonome) dans $\X$ s'il l'est en tant
  qu'objet de ($F$-)$D ^\mathrm{b} (\D ^\dag _{\X,\,\Q})$.
\end{defi}

\begin{coro}
\label{coro-stab-surcoh-dansX}
Soient $\E\in D ^\mathrm{b} (\D ^\dag _{\X,\,\Q})$ et $r\in \N$. 
 Alors, le complexe $\E$ est $r$-surholonome dans $\X$ (resp. surholonome dans $\X$) après tout changement de base (voir \ref{defi-surcoh} pour la signification de {\og après tout changement de base\fg}) si et seulement si pour tout entier $j \in \Z$ les modules
$\mathcal{H} ^{j} (\E) $ sont $r$-surholonome dans $\X$ (resp. surholonome dans $\X$) après tout changement de base.
\end{coro}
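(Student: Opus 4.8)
The plan is to argue by induction on $r$, deducing the ``surholonome dans $\X$'' case at the end by taking the conjunction over all $r$. Throughout I exploit the following reduction supplied by Theorem~\ref{Pre-surcoh-hol}: if $\E$ is $r$-surholonome in $\X$ after any change of basis (for any $r\ge 0$), then a fortiori $\E$ is surcoherent in $\X$ after any change of basis, so by \ref{lemm-surcoh-4}(3) --- together with the fact that the base-change morphism $f\colon \X\times_{\Spf\V}\Spf\V'\to\X$ is flat (a morphism $\V\to\V'$ of complete discrete valuation rings of mixed characteristic is flat, $\V'$ being torsion-free over the principal ideal domain $\V$), so that $f^{*}$ is exact and commutes with $\mathcal{H}^{j}$ --- every cohomology module $\mathcal{H}^{j}(\E)$ is surcoherent in $\X$ after any change of basis, hence holonomic. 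By the same argument, every complex obtained from $\E$ by repeatedly applying $f^{*}$, $(\hdag T)$ and $\DD$ still has holonomic cohomology, because these operations preserve ``surcoherent in $\X$ after any change of basis'': for $(\hdag T)$ this is \ref{lemm-surcoh-4}(4) combined with the commutation of $(\hdag T)$ with change of basis and with itself (\cite{caro_surcoherent}), and for $\DD\circ(\hdag T)$ it follows because, unfolding the definition of $r$-surholonomy ($r\ge 1$) and using that changes of basis compose, $\DD\circ(\hdag T)(f^{*}\E)$ is $(r-1)$-surholonome in $\X'$ after any further change of basis, hence surcoherent there. Thus, as soon as either side of the claimed equivalence holds, all complexes in sight have holonomic cohomology.

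On such complexes the relevant functors can be computed cohomology by cohomology. Since $(\hdag T)$ is exact on coherent $\D^{\dag}_{\X,\Q}$-modules, $\mathcal{H}^{j}((\hdag T)\E)=(\hdag T)\mathcal{H}^{j}(\E)$; since $\DD$ sends holonomic modules to holonomic modules concentrated in degree $0$ (i.e.\ $\mathcal{E}xt^{k}$ vanishes off the top degree, see \ref{Nota-hol} and \cite{virrion}), the hypercohomology spectral sequence $E_{2}^{p,q}=\mathcal{H}^{p}\DD(\mathcal{H}^{-q}\E)\Rightarrow\mathcal{H}^{p+q}\DD(\E)$ degenerates, whence $\mathcal{H}^{j}\DD(\E)\cong(\mathcal{H}^{-j}\E)^{*}=\mathcal{H}^{0}\DD(\mathcal{H}^{-j}\E)$; and $f^{*}$ commutes with $\mathcal{H}^{j}$ by flatness. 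Combining these, $\mathcal{H}^{j}\bigl(\DD(\hdag T')(f^{*}\E)\bigr)\cong\bigl((\hdag T')(f^{*}\mathcal{H}^{-j}\E)\bigr)^{*}=\mathcal{H}^{0}\DD(\hdag T')(f^{*}\mathcal{H}^{-j}\E)$.

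The base case $r=0$ is exactly \ref{lemm-surcoh-4}(3), applied to $f^{*}\E$ over every $\X\times_{\Spf\V}\Spf\V'$ and using $\mathcal{H}^{j}(f^{*}\E)=f^{*}\mathcal{H}^{j}(\E)$: $\E$ is surcoherent in $\X$ after any change of basis iff each $\mathcal{H}^{j}(\E)$ is. For the inductive step, assume the statement for $r-1$ (for every smooth formal scheme and every bounded complex), and let $\E$ satisfy one side of the equivalence, so all complexes below have holonomic cohomology. Unfolding the definition, $\E$ is $r$-surholonome in $\X$ after any change of basis iff, for every $\V\to\V'$ and every divisor $T'$ of $X'$, both $f^{*}\E$ and $\DD(\hdag T')(f^{*}\E)$ are $(r-1)$-surholonome in $\X'$ (and, since changes of basis compose, even after any further change of basis from $\V'$). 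By the induction hypothesis this holds iff, for all such $\V',T'$ and all $j$, the modules $f^{*}\mathcal{H}^{j}(\E)$ and $\mathcal{H}^{j}\DD(\hdag T')(f^{*}\E)\cong\mathcal{H}^{0}\DD(\hdag T')(f^{*}\mathcal{H}^{-j}\E)$ are $(r-1)$-surholonome in $\X'$; recombining this last condition via biduality for the holonomic module $\mathcal{H}^{-j}\E$ and re-reading the definition of $r$-surholonomy, this says precisely that each $\mathcal{H}^{j}(\E)$ is $r$-surholonome in $\X$ after any change of basis. Intersecting over all $r$ yields the ``surholonome dans $\X$'' statement.

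The main obstacle is the bookkeeping of the quantifier ``apr\`es tout changement de base'', which sits outside the recursive definition of $r$-surholonomy while that recursion's clauses are phrased over a fixed base: one must verify that $f^{*}$, $(\hdag T)$ and $\DD$ commute with base change and with one another (so that a divisor $T'$ appearing only after a base change causes no trouble, and the induction hypothesis still applies), and that on complexes with holonomic cohomology --- precisely the regime forced by Theorem~\ref{Pre-surcoh-hol} --- all three functors are computed degreewise, so that $r$-surholonomy may legitimately be tested on the $\mathcal{H}^{j}$ one at a time. Both of these are where Theorem~\ref{Pre-surcoh-hol} is genuinely used; the requisite commutation statements, together with the degeneration of the duality spectral sequence, are the external inputs (\cite{caro_surcoherent}, \cite{caro_surholonome}, \cite{virrion}).
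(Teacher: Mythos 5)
Your proposal is correct and follows the same strategy as the paper's (considerably terser) proof: invoke Theorem~\ref{Pre-surcoh-hol} to reduce to the holonomic regime, then induct on $r$ using the acyclicity of $\DD$ on holonomic modules and of $(\hdag T)$ on coherent ones. You merely make explicit the degeneration of the duality spectral sequence and the base-change bookkeeping that the paper leaves implicit.
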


\begin{proof}
Il découle de \ref{Pre-surcoh-hol} que
les complexes $r$-surholonomes dans $\X$ sont holonomes. 
On vérifie alors le corollaire par récurrence sur $r$ en utilisant
le fait que le foncteur dual $\DD$ (resp. le foncteur extension $(\hdag T)$ avec $T$ un diviseur de $X$) 
est acyclique sur la catégorie des 
$\D ^\dag _{\X,\,\Q}$-modules holonomes (resp. cohérents et donc holonomes).

\end{proof}

\begin{vide}
D'après 
\cite[3.8]{Abe-Frob-Poincare-dual},
 pour tout morphisme lisse 
$f\,:\, \X \to \Y$ de $\V$-schémas formels lisses de dimension relative $d _f$,
on dispose d'après Abe de l'isomorphisme canonique 
\begin{equation}
\label{th3.8Abe-dual-f!}
f ^{!} \circ \DD \riso \DD \circ f ^{!} (d _f) [2d _f].
\end{equation}
\end{vide}

\begin{prop}
\label{surhol-surholdans}
Soient $\E\in D ^\mathrm{b} (\D ^\dag _{\X,\,\Q})$ et $r\in \N$.
  Alors $\E$ est surcohérent (resp. {$r$-surholonome}, resp. surholonome) si et seulement si,
  pour tout morphisme lisse $f$ : $\X' \rightarrow \X$ de $\V$-schémas formels,
  $f ^{!} (\E)$ est surcohérent (resp. {$r$-surholonome}, resp. surholonome) dans $\X'$.
\end{prop}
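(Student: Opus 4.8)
\textbf{Plan of proof for Proposition~\ref{surhol-surholdans}.}
The strategy is to reduce everything to the two local building blocks already isolated in this paper: an open immersion and a closed immersion, together with the transitivity of the functor $f^!$ under composition. First I would fix the easy implication: if $f^!(\E)$ is surcoherent (resp. $r$-surholonome, resp. surholonome) in $\X'$ for every smooth $f\colon \X'\to\X$, then taking $f=\mathrm{id}$ gives that $\E$ itself is surcoherent (resp. $r$-surholonome, resp. surholonome) in $\X$, which by definition is exactly the absolute notion (recall that the absolute notions are defined as ``in $\X$'' in Definitions~\ref{defi-surcoh} and~\ref{defi-surhol}). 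So the content is the direct implication, and it is this I would organize around a d\'evissage on $f$.

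Concretely, any smooth morphism $f\colon\X'\to\X$ factors, locally on $\X'$, as a composite $\X'\hookrightarrow \widehat{\A}^N_\X \to \X$ of a closed immersion followed by a (relative affine space) smooth projection; by Lemma~\ref{lemm-surcoh-4}(1) and the analogous local-nature statements for $r$-surholonomy, surcoherence (resp. $r$-surholonomy, resp. surholonomy) ``in $\X'$'' can be checked on an open cover of $\X'$, so this local factorization is legitimate. Using the transitivity isomorphism $f^! \riso (\text{closed immersion})^! \circ (\text{smooth projection})^!$, it then suffices to treat separately: (a) $f$ a closed immersion of smooth formal $\V$-schemes, and (b) $f$ a smooth projection (in fact one may further reduce (b) to relative dimension one, i.e. to $\widehat{\A}^1_\X\to\X$, by iterating). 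For case (a) the key input is the Berthelot--Kashiwara theorem for surcoherence in a smooth formal scheme (Lemma~\ref{B-K-surcoh}): via $\L a^* \riso a^![d_a]$ of~\ref{La*coh} and Lemma~\ref{stab-surcoh-dansX}, $a^!(\E)$ is surcoherent in $\ZZ$ whenever $\E$ is surcoherent in $\X$; for the $r$-surholonomy statement one argues by induction on $r$, using that $\DD$ commutes with $a^!$ up to shift and twist (the closed-immersion analogue of~\ref{th3.8Abe-dual-f!}, i.e. relative duality for $a$) together with the commutation of $(\hdag T)$ with $a^!$ (cited as \cite[2.2.18.1]{caro_surcoherent}) to propagate the hypothesis from step $r-1$ to step $r$. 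For case (b) one uses the flatness and the good behaviour of $f^!$ for a smooth $f$, the commutation $(\hdag T')\circ f^! \riso f^!\circ(\hdag T)$ for $T'=f^{-1}(T)$, and crucially Abe's isomorphism~\ref{th3.8Abe-dual-f!}, $f^!\circ\DD \riso \DD\circ f^!(d_f)[2d_f]$, which lets the inductive hypothesis on $r$ be transported through the dual. Surcoherence of $f^!(\E)$ in $\X'$ for smooth $f$ is where one invokes that surcoherence is preserved by extraordinary inverse image (the original raison d'\^etre of the notion, see the introduction), applied after localization by an arbitrary divisor of $X'$ and using that every divisor of $X'$ is, up to shrinking, pulled back from a divisor of $X$ or handled by the divisor case on $X'$ directly.

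Assembling these: run the induction on $r$ with the case $r=0$ (surcoherence) established by cases (a)+(b) as above; at step $r$, given that $\E$ is $r$-surholonome, one must show $f^!(\E)$ is $(r-1)$-surholonome in $\X'$ (true by induction, as $\E$ is in particular $(r-1)$-surholonome) and that $\DD\circ(\hdag T')(f^!\E)$ is $(r-1)$-surholonome in $\X'$ for every divisor $T'$ of $X'$. For the latter, rewrite $\DD\circ(\hdag T')\circ f^! \riso \DD\circ f^!\circ(\hdag T) \riso f^!\circ\DD\circ(\hdag T)(\pm d_f)[\pm 2d_f]$ (combining the commutation of $(\hdag\ )$ with $f^!$ and~\ref{th3.8Abe-dual-f!}), reducing to applying the inductive hypothesis to $\DD\circ(\hdag T)(\E)$, which is $(r-1)$-surholonome in $\X$ by hypothesis on $\E$; twists and shifts do not affect surholonomy. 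The surholonome statement is the conjunction over all $r$, hence follows formally. The main obstacle I anticipate is the careful bookkeeping in case (b): checking that for a smooth projection $f$ every divisor of $X'$ can be reduced (by shrinking $\X'$ and using Mayer--Vietoris triangles, as in Lemma~\ref{lemm-surcoh-4}(4)) to the pullback of a divisor of $X$ plus a ``vertical'' divisor that can be absorbed, so that the commutation $(\hdag T')\circ f^!\riso f^!\circ(\hdag T)$ is actually applicable; the rest is formal manipulation with the cited commutation isomorphisms and Abe's duality isomorphism~\ref{th3.8Abe-dual-f!}.
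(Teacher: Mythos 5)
There are two genuine problems with your proposal. First, your ``easy implication'' rests on a misreading of the statement: the unqualified ``surcoh\'erent'', ``$r$-surholonome'', ``surholonome'' on the left-hand side are the absolute notions imported from \cite{caro_surcoherent} and \cite{caro_surholonome}, whereas Definitions~\ref{defi-surcoh} and~\ref{defi-surhol} introduce the strictly weaker ``dans $\X$'' variants (coherence after $(\hdag T)$ for divisors $T$ of $X$ only, with no quantification over smooth test morphisms). These are not the same thing --- the whole point of the proposition, and of the pair~\ref{coro-stab-surcoh-dansX}/\ref{coro2-stab-surcoh-dansX} which it feeds, is to compare them. Taking $f=\mathrm{id}$ in the right-hand hypothesis therefore yields only ``surcoh\'erent dans $\X$'', and your parenthetical ``which by definition is exactly the absolute notion'' is simply incorrect; you would need to use the full quantification over all smooth $f$ to recover the absolute notion.

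Second, the d\'evissage of $f$ into a closed immersion into $\widehat{\A}^N_\X$ followed by the projection is not the paper's route and introduces a difficulty you flag yourself but do not resolve. Once one actually unwinds the absolute definition, the proposition follows directly from the commutation of $f^!$ with the localization functor $(\hdag T)$ (the paper cites \cite[2.2.18.1]{caro_surcoherent} for this) together with an induction on $r$ using Abe's duality isomorphism~\ref{th3.8Abe-dual-f!} to transport the hypothesis through $\DD$ --- no factorization of $f$, no Berthelot--Kashiwara, no reduction to relative dimension one. By contrast, in your case~(b) a divisor $T'$ of $\A^N_X$ need not be of the form $\pi^{-1}(T)$, so the commutation $(\hdag T')\circ\pi^!\riso\pi^!\circ(\hdag T)$ has nothing to say about the ``vertical'' divisors, and shrinking $\X'$ plus Mayer--Vietoris (Lemma~\ref{lemm-surcoh-4}) does not make an irreducible vertical divisor disappear. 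The only way to handle such $T'$ is to invoke the absolute surcoherence of $\E$ applied directly to the smooth morphism $\pi$, at which point the d\'evissage has bought you nothing and you are back to the paper's one-step argument.
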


\begin{proof}
Le cas concernant la surcohérence résulte de la commutation de l'image inverse extraordinaire au foncteur de localisation 
(voir \cite[2.2.18.1]{caro_surcoherent}).
Le cas de la $r$-surholonome se traite par récurrence sur $r$. On procède de manière analogue en utilisant en plus 
l'isomorphisme \ref{th3.8Abe-dual-f!} vérifié par Abe. 
\end{proof}

\begin{coro}
\label{coro2-stab-surcoh-dansX}
Soient $\E\in D ^\mathrm{b} (\D ^\dag _{\X,\,\Q})$ et $r\in \N$. 
 Alors, le complexe $\E$ est $r$-surholonome  (resp. surholonome) après tout changement de base si et seulement si pour tout entier $j \in \Z$ les modules
$\mathcal{H} ^{j} (\E) $ sont $r$-surholonomes  (resp. surholonomes) après tout changement de base.
\end{coro}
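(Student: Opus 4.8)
The plan is to deduce the statement by combining the two preceding results: Proposition~\ref{surhol-surholdans}, which expresses the notions ``surcoh�rent / $r$-surholonome / surholonome'' through the relative ones ``... dans $\Y$'' as $\Y$ runs over the smooth formal schemes over the base, and Corollaire~\ref{coro-stab-surcoh-dansX}, which settles the passage to the $\mathcal{H}^{j}$ for the relative notion ``$r$-surholonome dans $\X$ apr�s tout changement de base''. I carry out the $r$-surholonome case; the surholonome case then follows by intersecting over $r\in\N$. Throughout, for an extension $\V\to\V'$ of complete discrete valuation rings I write $\X _{\V'}:=\X\times_{\Spf\V}\Spf\V'$ and denote by $f\,:\,\X _{\V'}\to\X$ the canonical morphism, as in Definition~\ref{defi-surcoh}.

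First I would record two routine facts, both immediate from the constructions of Berthelot (\cite{Be1}, \cite{Beintro2}) and of the functor $f^{!}$. (i) \emph{Exactness up to shift.} For a smooth morphism $g$ of relative dimension $d_g$ the transfer bimodule is flat as a right $\D^{\dag}$-module, so $g^{!}$ is exact up to the shift by $d_g$; in particular $g^{!}$ commutes, up to that shift, with the cohomology functors $\mathcal{H}^{k}$ on coherent complexes. Likewise, for an extension $\V\to\V'$ as above, $\V'$ being $\V$-flat, the change-of-base functor $f^{*}$ is exact on coherent complexes and commutes with the $\mathcal{H}^{j}$. (ii) \emph{Commutation with change of base.} For $g$ smooth and an extension $\V'\to\V''$, the functor $g^{!}$ commutes with base change: denoting by $g'$ the base-changed smooth morphism, the natural square relating $g^{!}$, $g'^{!}$ and the two change-of-base functors commutes, and base changes compose.

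Now for the equivalence. By Proposition~\ref{surhol-surholdans} applied over each $\V'$, the complex $\E$ is $r$-surholonome apr�s tout changement de base if and only if, for every $\V\to\V'$ and every smooth $g\,:\,\Y\to\X _{\V'}$, the complex $g^{!}f^{*}(\E)$ is $r$-surholonome dans $\Y$. In the ``only if'' direction, by~(ii) such a complex is in fact $r$-surholonome dans $\Y$ apr�s tout changement de base of $\V'$: a further extension $\V'\to\V''$ identifies its pullback, using~(ii) and composition of base changes, with $g'^{!}$ of the pullback of $\E$ to $\X _{\V''}$, which is $r$-surholonome dans $\Y\times_{\Spf\V'}\Spf\V''$ by Proposition~\ref{surhol-surholdans}; so Corollaire~\ref{coro-stab-surcoh-dansX}, applied to $g^{!}f^{*}(\E)$ on $\Y$, gives that every $\mathcal{H}^{k}(g^{!}f^{*}\E)$ is $r$-surholonome dans $\Y$. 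By~(i) this module is, up to a shift, $g^{!}f^{*}(\mathcal{H}^{j}(\E))$ for the appropriate $j$, so, letting $g$, $\V'$ and $k$ vary and applying Proposition~\ref{surhol-surholdans} in the reverse direction over each $\V'$, each $\mathcal{H}^{j}(\E)$ is $r$-surholonome apr�s tout changement de base. Conversely, if each $\mathcal{H}^{j}(\E)$ is $r$-surholonome apr�s tout changement de base, then for fixed $\V'$ and $g$ the complexes $g^{!}f^{*}(\mathcal{H}^{j}(\E))$ are, by~(ii) and Proposition~\ref{surhol-surholdans}, $r$-surholonome dans $\Y$ apr�s tout changement de base; by~(i) these are, up to shift, precisely the cohomology modules of $g^{!}f^{*}(\E)$, so Corollaire~\ref{coro-stab-surcoh-dansX} yields that $g^{!}f^{*}(\E)$ is $r$-surholonome dans $\Y$, and Proposition~\ref{surhol-surholdans} then gives that $\E$ is $r$-surholonome apr�s tout changement de base. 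The surholonome statement follows by intersecting over $r$.

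The main obstacle is the bookkeeping of the third paragraph: one must check that the hypothesis ``apr�s tout changement de base'' genuinely propagates through the reduction to the relative notion, as this is the only route by which Corollaire~\ref{coro-stab-surcoh-dansX}, and behind it Th�or�me~\ref{Pre-surcoh-hol} — which forces holonomy and hence acyclicity of the dual functor $\DD$ — can be applied. Without holonomy the relative property ``$r$-surholonome dans $\Y$'' is \emph{not} stable under the $\mathcal{H}^{k}$, because $\DD$ mixes cohomological degrees; the change-of-base hypothesis is exactly what repairs this, which is also why Corollaire~\ref{coro2-stab-surcoh-dansX} is not merely a restatement of Corollaire~\ref{coro-stab-surcoh-dansX}. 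The commutation statements~(i)--(ii) are elementary but should be formulated with care at the level of $\D^{\dag}_{\Q}$-modules and of the functors $f^{*}$, $g^{!}$.
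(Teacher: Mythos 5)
Your argument is correct and follows essentially the same route as the paper's proof, which simply cites Corollaire~\ref{coro-stab-surcoh-dansX}, the characterization of Proposition~\ref{surhol-surholdans}, and the acyclicity of $f^*$ for $f$ smooth on coherent $\D^\dag_{\X,\Q}$-modules; you are supplying the bookkeeping (commutation with change of base, propagation of the hypothesis ``apr\`es tout changement de base'' to $g^!f^*(\E)$) that the paper leaves implicit.
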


\begin{proof}
Cela découle de \ref{coro-stab-surcoh-dansX}, de la caractérisation de \ref{surhol-surholdans}
et du fait que le foncteur $f ^*$ avec $f\,:\,\X' \to \X$ un morphisme lisse
est acyclique sur 
la catégorie des 
$\D ^\dag _{\X,\,\Q}$-modules cohérents.

\end{proof}

\begin{rema}
L'isomorphisme \ref{th3.8Abe-dual-f!} d'Abe est nécessaire afin de vérifier la caractérisation \ref{surhol-surholdans} de la surholonomie. 
Cependant, on aurait pu vérifier directement le corollaire \ref{coro2-stab-surcoh-dansX}
à partir de \ref{Pre-surcoh-hol} sans utiliser cet isomorphisme en reprenant les arguments d'acyclicité du foncteur dual, localisation et image inverse extraordinaire par un morphisme lisse.
\end{rema}

\bibliographystyle{smfalpha}
%\bibliography{Bibliotheque}

\begin{thebibliography}{BGR84}

\bibitem[Abe14]{Abe-Frob-Poincare-dual}
{\scshape T.~Abe} -- {\og Explicit calculation of {F}robenius isomorphisms and
  {P}oincar\'e duality in the theory of arithmetic {$\scr{D}$}-modules\fg},
  \emph{Rend. Semin. Mat. Univ. Padova} \textbf{131} (2014), p.~89--149.

\bibitem[And02]{andreHA}
{\scshape Y.~Andr{\'e}} -- {\og Filtrations de type {H}asse-{A}rf et monodromie
  {$p$}-adique\fg}, \emph{Invent. Math.} \textbf{148} (2002), no.~2,
  p.~285--317.

\bibitem[Ber90]{Be0}
{\scshape P.~Berthelot} -- {\og {Cohomologie rigide et th\'eorie des
  $\mathcal{D}$-modules}\fg}, $p$-adic analysis (Trento, 1989), Springer,
  Berlin, 1990, p.~80--124.

\bibitem[Ber96a]{Berig}
\bysame , {\og {Cohomologie rigide et cohomologie rigide \`a support propre.
  Premi{\`e}re partie}\fg}, {Pr\'epublication IRMAR 96-03, Universit\'e de
  Rennes}, 1996.

\bibitem[Ber96b]{Be1}
\bysame , {\og ${\mathcal{d}}$-modules arithm\'etiques. {I}. {O}p\'erateurs
  diff\'erentiels de niveau fini\fg}, \emph{Ann. Sci. \'Ecole Norm. Sup. (4)}
  \textbf{29} (1996), no.~2, p.~185--272.

\bibitem[Ber00]{Be2}
\bysame , {\og {$\mathcal{D}$}-modules arithm\'etiques. {I}{I}. {D}escente par
  {F}robenius\fg}, \emph{M\'em. Soc. Math. Fr. (N.S.)} (2000), no.~81,
  p.~vi+136.

\bibitem[Ber02]{Beintro2}
\bysame , {\og {Introduction \`a la th\'eorie arithm\'etique des
  {$\mathcal{D}$}-modules}\fg}, \emph{Ast\'erisque} (2002), no.~279, p.~1--80,
  Cohomologies {$p$}-adiques et applications arithm\'etiques, {II}.

\bibitem[BGR84]{bosch}
{\scshape S.~Bosch, U.~G{\"u}ntzer {\normalfont \smfandname} R.~Remmert} --
  \emph{Non-{A}rchimedean analysis}, Springer-Verlag, Berlin, 1984, A
  systematic approach to rigid analytic geometry.

\bibitem[BO78]{Berthelot-Ogus-cristalline}
{\scshape P.~Berthelot {\normalfont \smfandname} A.~Ogus} -- \emph{Notes on
  crystalline cohomology}, Princeton University Press, Princeton, N.J., 1978.

\bibitem[Car04]{caro_surcoherent}
{\scshape D.~Caro} -- {\og {$\mathcal{D}$}-modules arithm{\'e}tiques
  surcoh{\'e}rents. {A}pplication aux fonctions {L}\fg}, \emph{Ann. Inst.
  Fourier, Grenoble} \textbf{54} (2004), no.~6, p.~1943--1996.

\bibitem[Car06a]{caro_devissge_surcoh}
\bysame , {\og {D{\'e}vissages des $F$-complexes de $\mathcal{D}$-modules
  arithm{\'e}tiques en $F$-isocristaux surconvergents}\fg}, \emph{Invent.
  Math.} \textbf{166} (2006), no.~2, p.~397--456.

\bibitem[Car06b]{caro_courbe-nouveau}
\bysame , {\og Fonctions {L} associ{\'e}es aux {$\mathcal{D}$}-modules
  arithm{\'e}tiques. {C}as des courbes\fg}, \emph{Compositio Mathematica}
  \textbf{142} (2006), no.~01, p.~169--206.

\bibitem[Car07]{caro-2006-surcoh-surcv}
\bysame , {\og {Overconvergent F-isocrystals and differential
  overcoherence}\fg}, \emph{Invent. Math.} \textbf{170} (2007), no.~3,
  p.~507--539.

\bibitem[Car09a]{caro-construction}
\bysame , {\og {Arithmetic $\cal D$-modules associated with overconvergent
  isocrystals. Smooth case. ($\cal D$-modules arithm{\'e}tiques associ{\'e}s
  aux isocristaux surconvergents. Cas lisse.)}\fg}, \emph{Bull. Soc. Math. Fr.}
  \textbf{137} (2009), no.~4, p.~453--543.

\bibitem[Car09b]{caro_surholonome}
\bysame , {\og $\mathcal{D}$-modules arithm{\'e}tiques surholonomes\fg},
  \emph{Ann. Sci. \'Ecole Norm. Sup. (4)} \textbf{42} (2009), no.~1,
  p.~141--192.

\bibitem[Car11a]{caro-holo-sansFrob}
\bysame , {\og Holonomie sans structure de {F}robenius et crit\`eres
  d'holonomie\fg}, \emph{Ann. Inst. Fourier (Grenoble)} \textbf{61} (2011),
  no.~4, p.~1437--1454 (2012).

\bibitem[Car11b]{caro-pleine-fidelite}
\bysame , {\og {Pleine fid{\'e}lit{\'e} sans structure de Frobenius et
  isocristaux partiellement surconvergents}\fg}, \emph{Math. Ann.} \textbf{349}
  (2011), p.~747--805.

\bibitem[Car11c]{caro-stab-holo-courbe}
\bysame , {\og Stabilit\'e de l'holonomie sans structure de {F}robenius: cas
  des courbes\fg}, \emph{Rend. Semin. Mat. Univ. Padova} \textbf{126} (2011),
  p.~89--106.

\bibitem[Car11d]{caro-stab-holo}
\bysame , {\og Stabilit\'e de l'holonomie sur les vari\'et\'es
  quasi-projectives\fg}, \emph{Compos. Math.} \textbf{147} (2011), no.~6,
  p.~1772--1792.

\bibitem[CT12]{caro-Tsuzuki}
{\scshape D.~Caro {\normalfont \smfandname} N.~Tsuzuki} -- {\og
  Overholonomicity of overconvergent {$F$}-isocrystals over smooth
  varieties\fg}, \emph{Ann. of Math. (2)} \textbf{176} (2012), no.~2,
  p.~747--813.

\bibitem[Gro67]{EGAIV4}
{\scshape A.~Grothendieck} -- {\og \'{E}l\'ements de g\'eom\'etrie
  alg\'ebrique. {IV}. \'{E}tude locale des sch\'emas et des morphismes de
  sch\'emas {IV}\fg}, \emph{Inst. Hautes \'Etudes Sci. Publ. Math.} (1967),
  no.~32, p.~361.

\bibitem[Ked04]{Kedlaya-localmonodromy}
{\scshape K.~S. Kedlaya} -- {\og A {$p$}-adic local monodromy theorem\fg},
  \emph{Ann. of Math. (2)} \textbf{160} (2004), no.~1, p.~93--184.

\bibitem[Ked05]{Kedlaya-coveraffinebis}
\bysame , {\og More \'etale covers of affine spaces in positive
  characteristic\fg}, \emph{J. Algebraic Geom.} \textbf{14} (2005), no.~1,
  p.~187--192.

\bibitem[Ked07]{kedlaya-semistableI}
\bysame , {\og Semistable reduction for overconvergent {$F$}-isocrystals. {I}.
  {U}nipotence and logarithmic extensions\fg}, \emph{Compos. Math.}
  \textbf{143} (2007), no.~5, p.~1164--1212.

\bibitem[Ked08]{kedlaya-semistableII}
\bysame , {\og Semistable reduction for overconvergent {$F$}-isocrystals. {II}.
  {A} valuation-theoretic approach\fg}, \emph{Compos. Math.} \textbf{144}
  (2008), no.~3, p.~657--672.

\bibitem[Ked09]{kedlaya-semistableIII}
\bysame , {\og {Semistable reduction for overconvergent $F$-isocrystals. III:
  Local semistable reduction at monomial valuations.}\fg}, \emph{Compos. Math.}
  \textbf{145} (2009), no.~1, p.~143--172.

\bibitem[Ked11]{kedlaya-semistableIV}
\bysame , {\og Semistable reduction for overconvergent {$F$}-isocrystals, {IV}:
  local semistable reduction at nonmonomial valuations\fg}, \emph{Compos.
  Math.} \textbf{147} (2011), no.~2, p.~467--523.

\bibitem[LS07]{LeStum-livreRigCoh}
{\scshape B.~Le~Stum} -- \emph{Rigid cohomology}, Cambridge Tracts in
  Mathematics, vol. 172, Cambridge University Press, Cambridge, 2007.

\bibitem[Meb02]{mebkhout-monodr-padique}
{\scshape Z.~Mebkhout} -- {\og Analogue {$p$}-adique du th\'eor\`eme de
  {T}urrittin et le th\'eor\`eme de la monodromie {$p$}-adique\fg},
  \emph{Invent. Math.} \textbf{148} (2002), no.~2, p.~319--351.

\bibitem[Sch02]{Schneider-NonarchFuncAn}
{\scshape P.~Schneider} -- \emph{Nonarchimedean functional analysis}, Springer
  Monographs in Mathematics, Springer-Verlag, Berlin, 2002.

\bibitem[SGA1]{sga1}
\emph{Rev\^etements \'etales et groupe fondamental ({SGA} 1)} -- Documents
  Math\'ematiques (Paris) [Mathematical Documents (Paris)], 3, Soci\'et\'e
  Math\'ematique de France, Paris, 2003, S\'eminaire de g\'eom\'etrie
  alg\'ebrique du Bois Marie 1960--61. [Geometric Algebra Seminar of Bois Marie
  1960-61], Directed by A. Grothendieck, With two papers by M. Raynaud, Updated
  and annotated reprint of the 1971 original [Lecture Notes in Math., 224,
  Springer, Berlin; ].

\bibitem[Tsu02]{Tsuzuki-mono-unitroot}
{\scshape N.~Tsuzuki} -- {\og Morphisms of {$F$}-isocrystals and the finite
  monodromy theorem for unit-root {$F$}-isocrystals\fg}, \emph{Duke Math. J.}
  \textbf{111} (2002), no.~3, p.~385--418.

\bibitem[Vir00]{virrion}
{\scshape A.~Virrion} -- {\og Dualit\'e locale et holonomie pour les
  {$\mathcal{D}$}-modules arithm\'etiques\fg}, \emph{Bull. Soc. Math. France}
  \textbf{128} (2000), no.~1, p.~1--68.

\bibitem[Vir04]{Vir04}
\bysame , {\og Trace et dualit\'e relative pour les {$\mathcal{D}$}-modules
  arithm\'etiques\fg}, Geometric aspects of Dwork theory. Vol. I, II, Walter de
  Gruyter GmbH \& Co. KG, Berlin, 2004, p.~1039--1112.

\end{thebibliography}

\def\cprime{$'$}
\providecommand{\bysame}{\leavevmode ---\ }
\providecommand{\og}{``}
\providecommand{\fg}{''}
\providecommand{\smfandname}{et}
\providecommand{\smfedsname}{\'eds.}
\providecommand{\smfedname}{\'ed.}
\providecommand{\smfmastersthesisname}{M\'emoire}
\providecommand{\smfphdthesisname}{Th\`ese}

\bigskip
\noindent Daniel Caro\\
Laboratoire de Mathématiques Nicolas Oresme\\
Université de Caen
Campus 2\\
14032 Caen Cedex\\
France.\\
email: daniel.caro@math.unicaen.fr

\end{document}